\documentclass[11pt,reqno]{amsart}
\usepackage{indentfirst,enumerate,cite,amssymb,amsfonts,amsmath,amsthm,mathrsfs,dsfont}
\usepackage{color}
\usepackage{graphicx}
\usepackage{multicol}
\usepackage[colorlinks=true,linkcolor=blue,citecolor=blue]{hyperref}
\usepackage{enumerate}
\usepackage{geometry}
\numberwithin{equation}{section}

\newcommand{\dbar}{\mathrm{d}\hspace*{-0.15em}\bar{}\hspace*{0.1em}}

\newcommand{\dx}{{\mathrm{d}x}}
\newcommand{\dy}{{\mathrm{d}y}}

\newcommand{\dxi}{{\mathrm{d}\xi}}

\newtheorem{theorem}{Theorem}[section]
\newtheorem{definition}[theorem]{Definition}
\newtheorem{example}[theorem]{Example}
\newtheorem{lemma}[theorem]{Lemma}
\newtheorem{proposition}[theorem]{Proposition}
\newtheorem{corollary}[theorem]{Corollary}

\newtheorem{remark}[theorem]{Remark}

\newcommand{\R}{{\mathbb{R}}} 
\newcommand{\C}{{\mathbb{C}}} 
\newcommand{\Z}{{\mathbb{Z}}} 
\newcommand{\N}{{\mathbb{N}}} 

\newcommand{\caS}{{\mathcal{S}}}
\newcommand{\E}{{\mathcal{E}}}
\newcommand{\D}{{\mathcal{D}}}
\newcommand{\RR}{{\mathcal{R}}}
\newcommand{\ZZ}{{\mathcal{Z}}}

\newcommand{\Op}{{\mathrm{Op}}}
\newcommand{\SG}{{\mathrm{SG}}}
\newcommand{\AG}{{\mathrm{AG}}}
\newcommand{\ST}{{\mathrm{ST}}}
\newcommand{\FS}{{\mathrm{FS}}}

\newcommand{\Mj}{{\{M_j\}_{j\in\mathbb{N}_0}}} 
\newcommand{\Aj}{{\{A_j\}_{j\in\mathbb{N}_0}}} 
\newcommand{\Bj}{{\{B_j\}_{j\in\mathbb{N}_0}}} 

\newcommand{\M}{{\mathcal{M}}} 
\newcommand{\A}{{\mathcal{A}}} 
\newcommand{\B}{{\mathcal{B}}} 

\newcommand{\X}{{\overline{X}}}

\newcommand{\cl}{{\mathrm{cl}}}

\newcommand{\mj}{{\{m_j\}_{j\in\mathbb{N}_0}}}

\allowdisplaybreaks

\begin{document}
	
	\title[Gelfand-Shilov spaces and ultradifferential SG-operators on manifolds]{Gelfand-Shilov spaces\\and operators with ultradifferential weighted symbols\\
	  on non-compact manifolds}
	
	\author[S. Coriasco]{Sandro Coriasco}
	\email{sandro.coriasco@unito.it}
	\address{
		Dipartimento di Matematica ``G. Peano'',
		Universit\`a degli Studi di Torino,  
		V. C. Alberto, n. 10, I-10123 Torino 
		Italy}

	\author[P.M. Tokoro]{Pedro Meyer Tokoro}
	\email{pedro.tokoro@ufpr.br}
	\address{
		Programa de P\'os-Gradua\c c\~ao em Matem\'atica,  
		Universidade Federal do Paran\'a,  
		Caixa Postal 19096\\  CEP 81530-090, Curitiba, Paran\'a,  
		Brasil}
	

	\subjclass{Primary: 58J40; Secondary: 35S05, 46F05}
	\keywords{Weight sequence, Gelfand-Shilov space, Analytic SG-manifold, Ultradifferential SG-operators}
	
	\begin{abstract}
		We invariantly define Gelfand-Shilov spaces on classes of non-compact manifolds with a certain ``structure at infinity''.
		We also construct and study a global calculus of pseudodifferential operators on such manifolds, locally defined by symbols
		satisfying estimates associated with weight sequences. The operators so obtained act naturally on the previously
		defined Gelfand-Shilov spaces. These generalise analogous functional spaces and operators defined on Euclidean spaces.
	\end{abstract}
	
	\maketitle

	
\section{Introduction}\label{sec:intro}

When studying properties of solutions of equations in distributional setting on a non-compact manifold,
one is interested, in addition to the local regularity, also to the behaviour in \textit{neighbourhoods of points at infinity}, 
a notion which has itself to be made precise. In the microlocal approach, this reflects into defining calculi of operators involving
symbols satisfying global estimates also in the space variable, in contrast to the usual requirements to the symbols employed 
in the usual H\"ormander's pseudodifferential calculus on compact manifolds.

On $\R^n$, many calculi where the symbols satisfy global estimates exist, 
within the so-called Weyl-H\"ormander calculus, see \cite[Ch. XVIII]{Hormander_3}.
Among them, well known ones are Shubin's calculus and the SG-calculus, see \cite{Cordes76,cordes,NicRod,Parenti1972,Shubin1987}. 
While the former does not have good invariance properties in general (see \cite{Krainer} for an invariantly defined 
calculus on a manifold with ends $X$, which reduces to Shubin's calculus when $X=\R^n$), the latter can be invariantly 
defined on a wide class of non-compact manifolds, the so-called SG-manifolds, see \cite{Schrohe86}.
Scattering manifolds (interior of a compact manifold with boundary with suitable metric structure close to the boundary, which represents ``the points at infinity'') are 
another relevant class of non-compact manifolds where the calculus of operators associated with local classical SG-symbols can be invariantly defined as well, see \cite{Melrose_GST,Melrose1996}. A further class of manifolds with similar properties is given by the $\mathcal{S}$-manifolds, see \cite[Ch. 4]{cordes}.
In these three cases, the informal phrase ``at infinity'' is replaced by precise boundary components in a compactified phase space, and/or by a specific structure outside
compact submanifolds, and/or by requirements on the change of coordinates for large values of the variables when unbounded charts domains are involved. 
Ellipticity is then not only invertibility for large covectors over compact sets: it also includes invertibility at spatial infinity and, in the SG-classical case, 
``compatibility at the corner'', where both $|x|$ and $|\xi|$ are large (see \eqref{eq:sgell} and Sections \ref{sec:SGABHypoell}--\ref{sec:sgucl} below). 
In the sequel, we then focus on the SG-calculus and study its ultradifferential version on suitable 
analytic non-compact manifolds. We now summarize in short only a few of the
main features of the basic SG-calculus and the associated scale of (weighted) Sobolev spaces. 

Let $\langle\cdot\rangle = (1+|\cdot|)^{\frac{1}{2}}$. For $m,\mu\in\R$, the symbol class $\SG^{m,\mu}(\R^n)$ consists of all $a\in C^\infty(\R^n\times\R^n)$ such that 
\begin{equation}\label{eq:sg-symbol}
  |\partial_x^\beta\partial_\xi^\alpha a(x,\xi)|
  \leq C_{\alpha\beta}\,\langle x\rangle^{m-|\beta|}\langle \xi\rangle^{\mu-|\alpha|},
  \qquad x,\xi\in\R^n, \alpha,\beta\in\N^n_0,
\end{equation}
see, e.g., \cite{cordes,NicRod} for a detailed description of the associated calculus on $\R^n$. Thus the symbol has a two-components order: 
the usual ``differential order'' $\mu$ and 
an ``exit'' (or spatial growth/decay) order $m$. The quantization is the usual one, given by
\[
  [\Op(a)u](x)=(2\pi)^{-n}\iint e^{i(x-y)\cdot\xi}a(x,\xi)u(y)\,\dy\,\dxi=(2\pi)^{-n}\int e^{ix\cdot\xi}a(x,\xi)\widehat{u}(\xi)\,\dxi,
\]
initially on $u\in\mathcal S(\R^n)$ and then extended by duality to the whole $\mathcal S'(\R^n)$. Such operators are a closed class under composition and adjoint,
with asymptotic expansions formulae for the corresponding symbols completely analogous to those which hold true, for instance, for the standard H\"ormander calculus.

Defining, for $s,\sigma\in\R$, the so-called Sobolev-Kato spaces (or \textit{weighted Sobolev spaces}) as
\[
  H^{s,\sigma}(\R^n)=\{u\in\mathcal S'(\R^n): \langle{x}\rangle^{s}u\in H^\sigma(\R^n)\},
\]
where $H^\sigma(\R^n)$ is the usual Sobolev space of smoothness $\sigma$, equipped
with the naturally associated Hilbert norm, one has that any $a\in \SG^{m,\mu}(\R^n)$ gives a linear continuous operator
\[
	\Op(a)\colon H^{s,\sigma}(\R^n)\to H^{s-m,\sigma-\mu}(\R^n).
\]

Notice that we have to topological equalities
\[
	\mathcal{S}(\R^n)=\bigcap_{s,\sigma\in\R}H^{s,\sigma}(\R^n)
	\quad\text{and}\quad
	\mathcal{S}'(\R^n)=\bigcup_{s,\sigma\in\R}H^{s,\sigma}(\R^n).	
\]

A symbol is SG-elliptic if, outside a compact subset of the phase space, we have
\begin{equation}\label{eq:sgell}
  |a(x,\xi)|\geq C\,\langle x\rangle^{m}\langle \xi\rangle^{\mu},
\end{equation}
for some constant $C>0$. Then there exists $b\in \SG^{-\mu,-m}$ satisfying $\Op(a)\Op(b)=I+R_1$ and $\Op(b)\Op(a)=I+R_2$, where the remainders $R_1,R_2$ have Schwartz kernels in $\mathcal S(\R^{2n})$.  

As recalled above, in \cite{Schrohe86} it is proved that the SG-calculus and the weighted spaces on $\R^n$, as well as the analog of Schwartz's spaces of functions and 
distributions, can be invariantly defined on SG-manifolds. 
The latter are non-compact manifolds with 
a specific atlas structure, \textit{compatible} with the SG-operators and the Schwartz's spaces of functions and distributions structure (see \cite{MP2002},
\cite[Ch. 4]{cordes} and \cite{Melrose_GST} for similar results on a class of manifolds with ends, on $\mathcal{S}$-manifolds, and on scattering manifolds,
respectively). 
A relevant subclass of SG-manifolds are those which can be 
regarded as the interior of compact manifolds with boundary. Manifolds $X$ that admit such a compactification, which we denote by $\overline{X}$, 
are of great interest in geometric analysis. In fact, under the suitable choices of a Riemannian metric $g$ on $X$, these objects can model, among others, 
manifolds with conical or cylindrical ends, and, more generally, asymptotically Euclidean manifolds 
(see, e.g. \cite{cordes,Krainer,Melrose_GST,Melrose_SST}), as well as manifolds with conical, or more complicated, singularities 
(see, e.g., \cite{EgSchu97,LauterSeiler2001,Loya1998,Mazzeo1991,Melrose_APS,Melrose1981,SchulzeBook98}). 

The features of SG-calculus and its associated scale of weighted spaces allow to describe power-like decay properties, with respect to the weight $\langle\cdot\rangle$,
of solutions of equations in the temperate distributions space, 
in addition to the usual smoothness ones, as well as the propagation of ``decay and smoothness singularities'' in terms of global wave-front sets 
(see, e.g., \cite{CM2013,CJT_2016,Melrose_GST}). On Euclidean spaces, however, the Sobolev-Kato and/or Schwartz spaces, of course, are not the 
only possible choices when one wants simultaneous 
global control of decay and differentiability.  In particular, interesting and widely investigated settings are the Gelfand--Shilov spaces $\mathcal{S}_\theta^\theta(\R^n)$, $\theta>1$.
These may be described, in the so-called Roumieu form, as smooth functions satifying estimates of the type
\[
  |x^\alpha\partial^\beta u(x)|\leq C h^{|\alpha|+|\beta|}(\alpha!\,\beta!)^\theta,
  \qquad x\in\R^n,
\]
for $C,h>0$ independent on $\alpha,\beta\in\N_0^n$ (see  \cite{GelfandShilov1968}). These spaces are Fourier invariant and encode Gevrey regularity together with subexponential decay at infinity.  
In \cite{CapRod2006}, the authors developed a variant of the SG-calculus adapted to this setting, by replacing the pure polynomial derivative bounds 
\eqref{eq:sg-symbol} with corresponding Gevrey-type estimates, namely
\begin{equation}\label{eq:SGGS}
  |\partial_x^\beta\partial_\xi^\alpha a(x,\xi)|
  \leq C h^{|\alpha|+|\beta|}(\alpha!\,\beta!)^\theta
       \langle x\rangle^{m-|\beta|}\langle \xi\rangle^{\mu-|\alpha|},
  	\qquad x,\xi\in\R^n, \alpha,\beta\in\N^n_0,
\end{equation}
which gives a global counterpart of the local calculus in Gevrey spaces introduced by Boutet de Monvel and Kr\'ee in \cite{BMK1967} (see also \cite{Rod_Gevrey}).

The resulting operators $\Op(a)$ act continuously on $\mathcal{S}_\theta^\theta(\R^n)$ and on its dual space of tempered ultradistributions. 
Their hypoellipticity theory and SG-wavefront set 
keep track, similarly as above, of ordinary microlocal regularity and loss of rapid decay at infinity. It is then natural to ask if families of Gelfand-Shilov spaces,
compatible with a suitable analog of the SG-operators subfamilies in \eqref{eq:SGGS}, can also be invariantly defined on appropriate non-compact manifolds.
In this paper, we show that such question has a positive answer.

In particular, we study general ultradifferential SG-operators and we introduce and analyse in detail Gelfand-Shilov spaces of Roumieu and Beurling type, by means of the so-called weight sequences, extending the results obtained in \cite{CapRod2006}. Moreover, we show that these operators have an invariant meaning on an analytic subclass of the SG-manifolds.

We first recall some preliminary results in Section \ref{sec:prel}, about weight sequences and functional spaces defined by means of them. 
In Section \ref{sec:SGABRn} we introduce ultradifferential SG-symbols on $\R^n$ associated with weight sequences, and prove that they provide operator algebras
closed under composition and adjoint. We also study hypoellipticity, prove the existence of the parametrix to hypoelliptic elements in the calculus, 
describe the SG-classical subclasses on $\R^n$, and study the properties of the versions of global wave-front set adapted to the calculus. 
Section \ref{sec_coord_inv} paves the ground to the extension to manifolds, with the proof of the invariance of the ultradifferential SG-operators
under suitable analytic compatible diffeomorphisms. The extension to manifolds is then performed in the subsequent Sections \ref{sec:compmfs}, \ref{sec:GSspcsGSAmf}, 
and \ref{sec:SGABmf}, where SGA-manifolds are introduced, and the Gelfand-Shilov spaces and ultradifferentiable SG-calculus defined on them, respectively. 
The concluding Section \ref{sec:SGABmwe} is devoted to studying classical SG-operators on
analytic manifolds with ends.

\section*{Acknowledgements}
This study was financed in part by CAPES -- Brasil (Finance Code 001). The first author was supported in part by the Italian Ministry of the University and Research -- MUR, within the framework of the Call relating to the scrolling of the final rankings of the PRIN 2022 -- Project Code 2022HCLAZ8, CUP D53C24003370006 (PI A.~Palmieri, Local unit Sc.~Resp.~S.~Coriasco). Thanks are due to L. Mari, T. Pacini, and A. Raffero, for useful hints and discussions.


\section{Preliminaries}\label{sec:prel}
In this section we recall various concepts and results, which we will employ in the sequel.

\subsection{Weight sequences}
Following Komatsu \cite{Komatsu_ultra_1}, we consider classes of positive sequences, which will be involved
in the many definitions in the sequel.
\begin{definition}
	A weight sequence is a sequence $\M=\Mj$ of positive numbers such that:
	\begin{enumerate}
		\item[$(M0)$] $M_0=M_1=1$;
		\item[$(M1)$] $M_j^2\leq M_{j-1}M_{j+1}$, $j\in\N$;
		\item[$(M2)'$] There exist $A,H\geq 1$ such that $M_{j+1}\leq AH^j M_j$, for all $j\in\N_0$;
	\end{enumerate}
	We say that a weight sequence is non-quasianalytic if it satisfies
	\begin{enumerate}
		\item[$(M3)'$] $\displaystyle\sum_{j=1}^\infty\frac{M_{j-1}}{M_j}<+\infty$.
	\end{enumerate}
\end{definition}
Eventually, we may also need the following stronger conditions:
\begin{itemize}
	\item[$(M2)$] There exist $A,H\geq 1$ such that $\displaystyle M_j \leq AH^j\min_{0\leq k\leq j}\{M_{j-k}M_k\}$, for all $j\in\N_0$;
	
	\item[$(M4)$] $\dfrac{M_{j-k}}{(j-k)!} \dfrac{M_k}{k!} \leq \dfrac{M_j}{j!}$, for all $j\in\N$ and $k\leq j$.
\end{itemize}
It is clear that $(M2)$ implies $(M2)'$ and $(M4)$ implies $(M1)$. 
We remark that $(M3)'$ is a weaker version of Komatsu's condition $(M3)$, which will not be necessary in this work.
\begin{proposition}\label{MjMj-k}
	Let $\M=\Mj$ be a weight sequence. Then, we have $M_{k}M_{j-k}\leq M_{j}$, for all $j,k\in\N_0$ such that $k\leq j$.
\end{proposition}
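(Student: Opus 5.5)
The plan is to use only $(M0)$ and $(M1)$, through the quotient sequence $m_j := M_j/M_{j-1}$ for $j\in\N$. All $M_j$ are positive, so $(M1)$ can be rewritten as $M_j/M_{j-1}\leq M_{j+1}/M_j$, that is, $m_j\leq m_{j+1}$. Hence $\mj$ (with $m_0$ irrelevant) is non-decreasing on $\N$. Moreover, by $(M0)$ we have $M_0=1$, so the telescoping product gives
\[
M_j=\prod_{i=1}^{j} m_i,\qquad j\in\N_0,
\]
with the empty product equal to $1$ when $j=0$.

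Next I dispose of the trivial cases. If $k=0$ or $k=j$, the inequality reads $M_0M_j\leq M_j$, which holds with equality because $M_0=1$. So I may assume $1\leq k\leq j-1$.

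For such $k$, I write
\[
\frac{M_j}{M_{j-k}}=\prod_{i=j-k+1}^{j} m_i, \qquad M_k=\prod_{i=1}^{k} m_i .
\]
Both products have exactly $k$ factors. I pair the $i$-th factor of the second product with the factor $m_{j-k+i}$ of the first, for $i=1,\dots,k$. Since $j-k+i\geq i$ and $\mj$ is non-decreasing, $m_i\leq m_{j-k+i}$ for each $i$. All factors are positive, so multiplying these $k$ inequalities gives $M_k\leq M_j/M_{j-k}$, which is the claim.

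There is no real obstacle. The only points requiring care are the positivity of the $M_j$, which justifies dividing in $(M1)$ and multiplying the inequalities, and the use of $M_0=1$ from $(M0)$, both in the boundary cases and in the telescoping identity. Conditions $(M2)'$ and $(M3)'$ are not needed. Equivalently, one can argue by log-convexity: $(M1)$ says that $j\mapsto\log M_j$ is convex with $\log M_0=0$, and the statement is the superadditivity of such a convex sequence.
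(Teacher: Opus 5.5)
Your proof is correct and complete. The paper gives no argument of its own here and only cites \cite[Proposition 2.6]{LV2021}, so your proof supplies a self-contained version of the cited fact. Your argument is the standard one. It rests on the monotonicity of $m_j=M_j/M_{j-1}$ that the paper itself derives from $(M1)$ in Lemma \ref{lemma_mjj_Mj}, and it makes explicit that only $M_0=1$ and $(M1)$ are used.
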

\begin{proof}
	See \cite[Proposition 2.6]{LV2021}.
\end{proof}
Let $\A=\Aj$ and $\M=\Mj$ be two weight sequences. We write $\A\preceq \M$ if there exist $C,H>0$ such that $A_j\leq CH^jM_j$, for all $j\in\N_0$, and $\A\prec \M$ if, for every $H>0$, there exists $C>0$ such that $A_j\leq CH^jM_j$, for all $j\in\N_0$.
\begin{proposition}\label{incl_quasi_factorial}
	Suppose that $\M=\Mj$ satisfies $(M3)'$. Then $\{j!\}_{j\in\N_0}\prec \M$.
\end{proposition}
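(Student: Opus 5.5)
The plan is to pass to the quotient sequence $m_j:=M_j/M_{j-1}$, $j\in\N$. By $(M0)$ we have $M_j=m_1m_2\cdots m_j$ for every $j\in\N_0$, with the empty product equal to $1$. Condition $(M1)$, $M_j^2\leq M_{j-1}M_{j+1}$, is equivalent to $m_j\leq m_{j+1}$, so $\{m_j\}$ is non-decreasing. Condition $(M3)'$ says exactly that $\sum_{j\geq1}1/m_j<+\infty$. Writing $j!$ as the product $\prod_{k=1}^j k$, the claim $j!\leq C H^j M_j$ becomes a statement about the factors $k/(H m_k)$.

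The key step is to show that $k/m_k\to 0$ as $k\to\infty$. Since $\{m_j\}$ is non-decreasing, for every $k\geq2$ the terms $1/m_i$ with $\lceil k/2\rceil\leq i\leq k$ are all at least $1/m_k$. There are at least $k/2$ such indices, so
\[
\frac{k}{2\,m_k}\leq \sum_{i=\lceil k/2\rceil}^{k}\frac{1}{m_i}.
\]
The right-hand side is a tail of a convergent series by $(M3)'$, hence it tends to $0$. Therefore $k/m_k\to0$. This Cauchy-condensation-type estimate is the only non-trivial point. The place where care is needed is that monotonicity of $m_j$, which comes from $(M1)$, is used essentially here.

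Now fix $H>0$ and choose $N\in\N$ such that $k/m_k\leq H$ for all $k\geq N$. Then for every $j\in\N_0$,
\[
\frac{j!}{H^jM_j}=\prod_{k=1}^{j}\frac{k}{H\,m_k}\leq \prod_{k=1}^{\min\{j,N-1\}}\frac{k}{H\,m_k},
\]
because every factor with $k\geq N$ is at most $1$. The right-hand side takes only finitely many values as $j$ varies. Letting $C$ be the maximum of these values together with $1$, we get $j!\leq C H^j M_j$ for all $j$. Since $H>0$ was arbitrary, this gives $\{j!\}_{j\in\N_0}\prec\M$. Condition $(M2)'$ is not needed in this argument.
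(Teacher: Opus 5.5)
Your proof is correct and follows the same route as the paper: reduce to $k/m_k\to 0$ and then control the product $\prod_{k=1}^{j} k/(H m_k)$. The only difference is that you prove $k/m_k\to 0$ directly with the tail estimate that uses the monotonicity of $m_k$, whereas the paper cites Komatsu's Lemma 4.1 for this step.
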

\begin{proof}
	If $\M$ satisfies $(M3)'$, by \cite[Lemma 4.1]{Komatsu_ultra_1}, we have $j/m_j\to 0$, $j\to\infty$, where $m_j = M_j/M_{j-1}$. Hence, as in the beginning of the proof of \cite[Theorem 7.2]{Komatsu_ultra_1}, for every $H>0$, we have
	\[\frac{j!}{H^j M_j} = \frac{1}{Hm_1} \cdot \frac{2}{Hm_2}\cdots \frac{j}{Hm_j} \to 0,\]
	which implies our claim.
\end{proof}

\begin{lemma}\label{lemma_mjj_Mj}
	Let $\M=\Mj$ be a weight sequence and consider the associated sequence $\mj$ defined by $m_0=0$ and $m_j=M_j/M_{j-1}$, $j\geq 1$. Then, $\mj$ is increasing and $m_j^j\geq M_j$ for all $j\in\N_0$, where we use the convention $m_0^0=1$.
\end{lemma}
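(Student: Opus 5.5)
The plan is to derive both claims directly from the log-convexity condition $(M1)$, using $(M0)$ for the initial terms. Since all $M_j>0$, we may divide $M_j^2\leq M_{j-1}M_{j+1}$ by $M_jM_{j-1}$ and obtain, for every $j\in\N$,
\[
	m_j=\frac{M_j}{M_{j-1}}\leq \frac{M_{j+1}}{M_j}=m_{j+1}.
\]
Hence $\{m_j\}_{j\geq1}$ is nondecreasing. To include the index $0$ we only need $m_0=0\leq m_1$. This holds because $m_1=M_1/M_0=1>0$ by $(M0)$, and in fact every $m_j$ with $j\geq 1$ is positive. So $\mj$ is increasing in the (non-strict) sense intended.

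For the inequality $m_j^j\geq M_j$, I would write $M_j$ as a telescoping product. Using $M_0=1$,
\[
	M_j=M_0\prod_{k=1}^{j}\frac{M_k}{M_{k-1}}=\prod_{k=1}^{j}m_k,\qquad j\geq1.
\]
Monotonicity gives $m_k\leq m_j$ for $1\leq k\leq j$. All factors are positive, so multiplying these inequalities yields $M_j\leq m_j^j$. The case $j=0$ is immediate from the convention $m_0^0=1=M_0$.

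I do not expect any genuine obstacle. The only points needing care are the use of positivity of the $M_j$, which justifies the divisions and the multiplication of the inequalities, and the separate treatment of the index $j=0$, where $m_0=0$ is defined by convention rather than as a ratio.
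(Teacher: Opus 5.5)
Your proof is correct and follows the paper's argument essentially verbatim. You derive monotonicity of $\{m_j\}$ from $(M1)$, with $m_0=0\le m_1=1$ coming from $(M0)$, and then use the telescoping product $M_j=m_1\cdots m_j\le m_j^j$, treating $j=0$ separately by the convention $m_0^0=1$.
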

\begin{proof}
	First, the fact that $\mj$ is increasing is equivalent to condition $(M1)$. Indeed, we have
	\[m_{j+1}\geq m_j\ \Leftrightarrow\ \frac{M_{j+1}}{M_j}\geq \frac{M_j}{M_{j-1}}\ \Leftrightarrow\ M_{j-1}M_{j+1}\geq M_j^2,\]
	for every $j\geq 1$, and the case $j=0$ is trivial. Also, since $m_1=1$, it follows $m_j\geq 1$ for all $j\geq 1$. Then, for every $j\geq 1$ we have
	\[M_j = \frac{M_1}{M_0}\cdot\frac{M_2}{M_1}\cdots\frac{M_j}{M_{j-1}} = m_1m_2\cdots m_j \leq m_j^j,\]
	and the case $j=0$ is trivial.
\end{proof} 

\begin{proposition}\label{exp_EM}
	Let $\Mj$ be a non-quasianalytic weight sequence satisfying $(M4)$. If $f\in\E_{[\M]}(\R^n)$ and $h(x)=\exp[f(x)]$, $x\in\R^n$, then $h\in \E_{[\M]}(\R^n)$.
\end{proposition}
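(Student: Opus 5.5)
Since $\E_{[\M]}(\R^n)$ has not yet been defined at this point, I take it in the usual local sense. In the Roumieu case, $f\in\E_{\{\M\}}(\R^n)$ means: for every compact $K\subset\R^n$ there are $C,h>0$ with $|\partial^\alpha f(x)|\le C h^{|\alpha|}M_{|\alpha|}$ for $x\in K$. In the Beurling case the estimate must hold for every $h>0$, with some $C=C_h$. The plan is to fix $K$ and estimate $\partial^\alpha h$ by induction on $|\alpha|$, using the Leibniz rule applied to $\partial_k h=(\partial_k f)\,h$. This avoids Faà di Bruno entirely; $(M4)$ is exactly what makes the binomial sums close up.

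\textbf{Setup.} Put $N_j=M_j/j!$, so that $(M4)$ reads $N_{j-k}N_k\le N_j$. Write $D=\sup_K|h|$; this is finite because $f$ is continuous. I will prove
\[
|\partial^\alpha h(x)|\le D\,L^{|\alpha|}M_{|\alpha|},\qquad x\in K,
\]
for a suitable $L$ depending on $C$ and $h$.

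\textbf{Induction.} Choose a coordinate $k$ with $\alpha=\gamma+e_k$. Leibniz gives
\[
\partial^\alpha h=\sum_{\beta\le\gamma}\binom{\gamma}{\beta}\,\partial^{\beta+e_k}f\;\partial^{\gamma-\beta}h .
\]
Set $j=|\gamma|$ and $i=|\beta|$. Inserting the hypothesis on $f$ and the inductive bound on $h$, the term of order $i$ is at most
\[
C h^{i+1}M_{i+1}\cdot D L^{j-i}M_{j-i}.
\]
By $(M2)'$, $M_{i+1}\le AH^iM_i$. Summing $\binom{\gamma}{\beta}$ over $|\beta|=i$ gives $\binom{j}{i}$, and $(M4)$ gives
\[
\binom{j}{i}M_iM_{j-i}=j!\,N_iN_{j-i}\le j!\,N_j=M_j .
\]
Together with $M_j\le M_{j+1}$ (valid since $m_j\ge1$, Lemma~\ref{lemma_mjj_Mj}), this yields
\[
|\partial^\alpha h|\le CAh\,D\,M_{j+1}L^j\sum_{i=0}^{j}\Big(\frac{hH}{L}\Big)^i .
\]
If $L\ge 2hH$, the geometric sum is at most $2$. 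If in addition $L\ge 2CAh$, the right-hand side is at most $D\,L^{j+1}M_{j+1}$, which closes the induction.

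\textbf{Beurling case.} Given a target $L>0$, apply the hypothesis with $h=\min\{L/(2H),\,L/(2C_h A)\}$. The difficulty is that $C_h$ depends on $h$, so this choice is circular. I would break the circularity by using the constant for a smaller parameter: take $h=\varepsilon$, which fixes $C_\varepsilon$, and then require $L\ge 2\max(H,C_\varepsilon A)\varepsilon$. Since $C_\varepsilon$ may blow up as $\varepsilon\to0$, this might not allow arbitrarily small $L$. If it does not, I would instead run the induction with weights $L^{|\alpha|}$ multiplied by a fixed constant $C'$ that is separate from $L$, so that the $C$-dependence enters only through the first few derivatives. I expect this bookkeeping of constants to be the main obstacle; the Roumieu case is straightforward.

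Non-quasianalyticity is not needed in this argument. It presumably enters only through the way the spaces are set up, where it guarantees nontrivial compactly supported elements.
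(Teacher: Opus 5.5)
Your Roumieu argument is correct, and it is a legitimate alternative to what the paper does: the paper gives no proof and only cites Lemma 6.9 of Wagner's thesis, presumably a Fa\`a di Bruno argument like the computations in Section 6. The genuine gap is the Beurling case. It is half of the statement, you leave it unresolved, and the fallback you suggest does not work as stated.

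The problem is that your recursion is linear in the constant. If the inductive hypothesis is $|\partial^\beta e^f|\le C'L^{|\beta|}M_{|\beta|}$, the step returns $C'$ times your right-hand side. So the closing condition $2C_hAh\le L$ does not involve $C'$, and it holds either at every order or at none. It also cannot be met for small $L$. Taking $|\alpha|=1$ in the hypothesis on $f$ gives $C_hh\ge\max_k\sup_K|\partial_kf|$, which forces $L\ge 2A\max_k\sup_K|\partial_kf|$.

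The obstruction is the $i=0$ term $\partial_kf\,\partial^\gamma e^f$. Measured against the target $L^{j+1}M_{j+1}$, it carries the factor $M_j/M_{j+1}=1/m_{j+1}$. You discarded this factor when you used $M_j\le M_{j+1}$ together with $(M2)'$.

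The missing idea is to apply $(M4)$ directly to the indices $i+1$ and $j-i$, which sum to $j+1$:
\[
\binom{j}{i}M_{i+1}M_{j-i}\le\binom{j}{i}\frac{(i+1)!\,(j-i)!}{(j+1)!}\,M_{j+1}=\frac{i+1}{j+1}\,M_{j+1}.
\]
Given $L>0$, take $h=L/2$ with its constant $C_h$. Your Leibniz step then gives
\[
|\partial^\alpha e^f|\le \frac{C_h}{j+1}\sum_{i\ge 0}(i+1)2^{-(i+1)}\,D_LL^{j+1}M_{j+1}=\frac{2C_h}{j+1}\,D_LL^{j+1}M_{j+1},
\]
and this closes as soon as $j+1\ge 2C_h$. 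Now choose $D_L$ as the maximum of $\sup_K|\partial^\alpha e^f|/(L^{|\alpha|}M_{|\alpha|})$ over the finitely many $|\alpha|\le 2C_h$. The bound then holds at every order.

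This version treats the Roumieu case ($L=2h$) and the Beurling case at once, and it needs neither $(M2)'$ nor non-quasianalyticity. The decay comes from $(M4)$ itself, which gives $m_{j+1}\ge j+1$. So your final remark is correct, but only once the proof is repaired this way.
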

\begin{proof}
	See \cite[Lemma 6.9]{wagner_thesis}.
\end{proof}

\subsection{Ultradifferentiable functions}

Let $\M=\Mj$ be a weight sequence and $U\subset\R^n$ be an open subset. For each $h>0$ and $K\subset\subset U$ compact (that is, as usual, $K\subset U$ and $K$ is compact), we define the spaces
\[\E_{\M}(K;h) = \{f\in C^\infty(U)\,:\, \sup_{x\in K} \sup_{\alpha\in\N_0^n} h^{|\alpha|} M_{|\alpha|}^{-1} |\partial_x^\alpha f(x)| < +\infty\},\]
and
\[\D_{\M}(K;h) = \{f\in C_c^\infty(K)\,:\, \sup_{x\in K} \sup_{\alpha\in\N_0^n} h^{|\alpha|} M_{|\alpha|}^{-1} |\partial_x^\alpha f(x)| < +\infty\},\]
which are Banach spaces with the norm
\[\|f\|_{K,h} = \sup_{x\in K} \sup_{\alpha\in\N_0^n} h^{|\alpha|} M_{|\alpha|}^{-1} |\partial_x^\alpha f(x)|.\]

Then, we define
\[\E_{\{\M\}}(U) = \underset{K\subset\subset U}{\mathrm{proj\,lim}}\ \underset{h>0}{\mathrm{ind\,lim}} \ \E_{\M}(K;h),
\quad
\mathcal{E}_{(\M)}(U) = \underset{K\subset\subset U}{\mathrm{proj\,lim}}\ \underset{h>0}{\mathrm{proj\,lim}} \ \E_{\M}(K;h),\]
\[\D_{\{\M\}}(U) = \underset{K\subset\subset U}{\mathrm{ind\,lim}}\ \underset{h>0}{\mathrm{ind\,lim}} \ \D_{\M}(K;h),
\quad
\D_{(\M)}(U) = \underset{K\subset\subset U}{\mathrm{ind\,lim}}\ \underset{h>0}{\mathrm{proj\,lim}} \ \D_{\M}(K;h).\]

We adopt the notation $\E_{[\M]}(U)$ and $\D_{[\M]}(U)$ when referring to both types of spaces. Clearly $\D_{[\M]}(U)\subset \E_{[\M]}(U)$. If condition $(M3)'$ holds, we remark that the spaces $\D_{[\M]}(U)$ are non-trivial, see \cite{BMT1990,BMM2007} for the details.

The two following results are straightforward consequences of the properties of weight sequences. We refer the reader to \cite{Komatsu_ultra_1} for the proofs.

\begin{proposition}\label{incl_E}
	Let $\A=\Aj$ and $\M=\Mj$ be two weight sequences.
	\begin{enumerate}
		\item[(a)] If $\A\preceq\M$, then $\mathcal{E}_{\{\A\}}(U) \subset \mathcal{E}_{\{\M\}}(U)$ and $\mathcal{D}_{\{\A\}}(U) \subset \mathcal{D}_{\{\M\}}(U)$.
		
		\item[(b)] If $\A\preceq\M$, then $\mathcal{E}_{(\A)}(U) \subset \mathcal{E}_{(\M)}(U)$ and $\mathcal{D}_{(\A)}(U) \subset \mathcal{D}_{(\M)}(U)$.
		
		\item[(c)] If $\A\prec\M$, then $\mathcal{E}_{\{\A\}}(U) \subset \mathcal{E}_{(\M)}(U)$ and $\mathcal{D}_{\{\A\}}(U) \subset \mathcal{D}_{(\M)}(U)$.
	\end{enumerate} 
\end{proposition}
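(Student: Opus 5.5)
The plan is to compare the seminorms $\|\cdot\|_{K,h}$ associated with $\A$ and with $\M$, and then to pass the resulting estimate through the projective and inductive limits. Suppose $A_j\leq CH^jM_j$ for all $j$. For $f$, $K$, $h$, $\alpha$ we have
\[
h^{|\alpha|}M_{|\alpha|}^{-1}|\partial^\alpha f(x)|
\leq C\,(hH)^{|\alpha|}A_{|\alpha|}^{-1}|\partial^\alpha f(x)|,
\]
and therefore
\[
\|f\|^{\M}_{K,h}\leq C\,\|f\|^{\A}_{K,hH},
\]
where the superscript indicates which sequence is used. This gives continuous inclusions $\E_{\A}(K;hH)\hookrightarrow\E_{\M}(K;h)$ and $\D_{\A}(K;hH)\hookrightarrow\D_{\M}(K;h)$. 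The support condition is unaffected, since it does not involve the sequence.

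For (a), the Roumieu case, take $f\in\E_{\{\A\}}(U)$. Then for every compact $K$ there is $h>0$ with $\|f\|^{\A}_{K,h}<\infty$. The estimate above, applied with $h/H$ in place of $h$, gives $\|f\|^{\M}_{K,h/H}<\infty$. Hence $f\in\E_{\{\M\}}(U)$, and the linking maps are compatible with the limit topologies. The $\D$ case is identical: $f$ lies in some $\D_{\A}(K;h)$, hence in $\D_{\M}(K;h/H)$.

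For (b), the Beurling case, take $f$ with $\|f\|^{\A}_{K,h}<\infty$ for every $h$. Given any $h$, the estimate bounds $\|f\|^{\M}_{K,h}$ by $C\|f\|^{\A}_{K,hH}$, which is finite. The $\D$ case follows in the same way.

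For (c), the only step that uses more than a direct substitution, we exploit that the constant $H$ may be chosen arbitrarily small. Let $f\in\E_{\{\A\}}(U)$, fix $K$, and pick $h_0$ with $\|f\|^{\A}_{K,h_0}<\infty$. Given an arbitrary $h>0$, apply $\A\prec\M$ with $H=h_0/h$. This yields a constant $C$ such that $\|f\|^{\M}_{K,h}\leq C\|f\|^{\A}_{K,h_0}<\infty$. Since $h$ was arbitrary, $f\in\E_{(\M)}(U)$, and the same argument gives the $\D$ inclusion.

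There is no real obstacle. The only point needing care is the order of the quantifiers in (c): $h_0$ is fixed first from $f$, and $H$ is then chosen depending on both $h$ and $h_0$.
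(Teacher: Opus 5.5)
Your proof is correct: the seminorm comparison $\|f\|^{\M}_{K,h}\leq C\|f\|^{\A}_{K,hH}$, followed by the right choice of $h$ in each case (and, in (c), of $H=h_0/h$ after fixing $h_0$ from $f$), is the standard argument. The paper does not write out a proof; it calls these inclusions straightforward and refers to \cite{Komatsu_ultra_1}, where they are obtained by this same direct estimate.
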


\begin{proposition}\label{E_cl_prod_diff}
	Let $\M=\Mj$ be a weight sequence. Then, the spaces $\mathcal{E}_{[\M]}(U)$ and $\mathcal{D}_{[\M]}(U)$ are closed under products and differentiation.
\end{proposition}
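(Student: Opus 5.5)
Closedness under differentiation follows from $(M2)'$, and closedness under products from the Leibniz rule combined with Proposition \ref{MjMj-k}. I argue at the level of the Banach steps $\E_{\M}(K;h)$ and $\D_{\M}(K;h)$: each estimate below maps a step into another step, with a loss in $h$ that is harmless in both the Roumieu and the Beurling limits. The supports are handled directly. Differentiation does not enlarge supports, and $\operatorname{supp}(fg)\subset\operatorname{supp}f\cap\operatorname{supp}g$, so the $\D$ spaces need no separate argument beyond the $\E$ estimates on a compact $K$ containing the supports.

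\textbf{Differentiation.} Fix $f\in\E_{\M}(K;h)$ with $\|f\|_{K,h}=C$, so that $|\partial^\alpha f(x)|\le C h^{-|\alpha|}M_{|\alpha|}$ on $K$. For $\partial_{x_k}f$, condition $(M2)'$ gives
\[
|\partial^\alpha\partial_{x_k}f(x)|\le C h^{-|\alpha|-1}M_{|\alpha|+1}\le C A h^{-1}(H/h)^{|\alpha|}M_{|\alpha|}.
\]
Hence $\partial_{x_k}f\in\E_{\M}(K;h/H)$. In the Roumieu case, where some $h>0$ is given, this suffices. In the Beurling case the estimate holds for every $h$, so replacing $h$ by $hH$ shows that $\partial_{x_k}f$ belongs to every step.

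\textbf{Products.} Take $f\in\E_{\M}(K;h_1)$ and $g\in\E_{\M}(K;h_2)$, and set $h=\min(h_1,h_2)$. By the Leibniz rule and Proposition \ref{MjMj-k},
\[
|\partial^\alpha(fg)|\le\sum_{\beta\le\alpha}\binom{\alpha}{\beta}C_1C_2h^{-|\alpha|}M_{|\beta|}M_{|\alpha-\beta|}\le C_1C_2(2/h)^{|\alpha|}M_{|\alpha|},
\]
using $\sum_{\beta\le\alpha}\binom{\alpha}{\beta}=2^{|\alpha|}$. Hence $fg\in\E_{\M}(K;h/2)$. The same reasoning then applies, taking the inductive limit in the Roumieu case and all $h$ in the Beurling case. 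Finally, take the projective limit over $K$ (for $\E$) or the inductive limit over $K$ (for $\D$).

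The only non-mechanical step is the product estimate, and its key input is exactly Proposition \ref{MjMj-k}, which is a consequence of $(M0)$ and $(M1)$. I therefore expect no real obstacle.
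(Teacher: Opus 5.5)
Your argument is correct and is the standard one. Differentiation is handled by $(M2)'$ and products by the Leibniz rule together with $M_kM_{j-k}\le M_j$ (Proposition \ref{MjMj-k}), with both estimates made on the Banach steps $\E_{\M}(K;h)$, $\D_{\M}(K;h)$ and a harmless loss in $h$. The paper gives no proof of its own and refers to Komatsu \cite{Komatsu_ultra_1}, where essentially this argument appears, so your proposal follows the intended route.
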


\begin{example}
	Let $\M=\{j!^s\}_{j\in\N_0}$. If $s>1$, then $\mathcal{E}_{\{\M\}}(U)$ is the Gevrey space $\mathcal{G}^s(U)$, and if $s=1$, $\mathcal{E}_{\{\M\}}(U)$ is the space $C^\omega(U)$ of real-analytic functions on $U$.
\end{example}

Consider also the spaces $\D_{[\M]}'(U)$ and $\E_{[\M]}'(U)$ of $[\M]$-ultradistributions and compactly supported $[\M]$-ultradistributions, which are the topological duals of $\D_{[\M]}(U)$ and $\E_{[\M]}(U)$, respectively. Clearly $\E_{[\M]}'(U)\subset \D_{[\M]}'(U)$.

Every $f\in \E_{[\M]}(U)$ can be regarded as an element in $\D_{[\M]}'(U)$, defining, as usual,
\[\varphi\in \mathcal{D}_{[\M]}(U)\mapsto \langle f, \varphi \rangle = \int_{U} f(x)\varphi(x)\,\dx.\]

\begin{definition}
	Let $V\subset U$ be open sets. We say that $u\in \D_{[\M]}'(U)$ is $\E_{[\M]}$ in $V$ if there exists $f\in \E_{[\M]}(V)$ such that
	\[\langle u,\varphi\rangle = \int_{V}f(x)\varphi(x)\,\dx,\quad \forall\varphi\in \D_{[\M]}(V).\]
\end{definition}

We can now define the $[\M]$-singular supports for $[\M]$-ultradistributions.

\begin{definition}
	Let $u\in \mathcal{D}_{[\M]}'(U)$. We say that $x\in U$ does not belong to $[\M]\text{-\,}\mathrm{sing\,supp}$ if there exists a neighbourhood $V\subset U$ of $x$ such that $u$ is $\mathcal{E}_{[\M]}$ in $V$.
\end{definition}

\begin{theorem}\label{inv_analytic}
	If $f\in \mathcal{E}_{[\M]}(U)$ (respectively, $f\in \mathcal{D}_{[\M]}(U)$) and $g:V\to U$ is an analytic map between open sets $V\subset\R^m$ and $U\subset\R^n$, then $f\circ g\in \mathcal{E}_{[\M]}(V)$ (respectively, $f\circ g\in \mathcal{D}_{[\M]}(V)$).
\end{theorem}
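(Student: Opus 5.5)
We plan to prove the $\mathcal{E}_{[\M]}$ statement by estimating derivatives of $f\circ g$ with the multivariate Faà di Bruno formula, combined with Cauchy-type bounds for the analytic map $g$. The $\mathcal{D}_{[\M]}$ statement will then follow directly, with one caveat. Since the estimates are local, fix $K\subset\subset V$. Analyticity of $g$ gives constants $C_0,r>0$ with
\[
|\partial^\gamma g_i(y)|\le C_0 r^{-|\gamma|}\gamma!
\]
for $y\in K$ and all $\gamma$. The set $L=g(K)$ is a compact subset of $U$. In the Roumieu case we have $|\partial^\beta f(x)|\le C h^{-|\beta|}M_{|\beta|}$ on $L$ for some $h>0$. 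In the Beurling case this holds for every $h>0$, with $C=C_h$.

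The cleanest route to the combinatorics is the majorant method. Write $\partial^\alpha(f\circ g)$ via Faà di Bruno as a sum over $k\le|\alpha|$ of terms $(\partial^\beta f)(g)$, with $|\beta|=k$, times products of $k$ derivatives of components of $g$ of total order $|\alpha|$. Replacing each derivative of $g$ by its bound, the coefficients are dominated by those obtained when $g$ is replaced by the scalar majorant $G(t)=nC_0\,\frac{rt}{r-t}$ (up to translation). Every positive combination of the $\alpha$-derivatives then reduces to the one-variable expression
\[
\sum_{k=1}^{j}\frac{M_k}{h^k}\,B_{j,k},
\]
where $j=|\alpha|$ and $B_{j,k}$ is the Bell-polynomial coefficient for the derivatives of $G$. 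For $G$ of this geometric form these are explicit:
\[
B_{j,k}=\frac{j!}{k!}\binom{j-1}{k-1}(nC_0)^k r^{k-j}.
\]
Hence
\[
|\partial^\alpha (f\circ g)(y)|\le C\sum_{k=1}^{j}\binom{j-1}{k-1}\frac{j!}{k!}M_k\,(nC_0/h)^k r^{k-j}.
\]

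The main obstacle is to bound $\frac{j!}{k!}M_k$ by $(\text{const})^{j}M_j$. We will use the standing conditions. By Lemma~\ref{lemma_mjj_Mj} and $(M1)$, the ratios $m_i=M_i/M_{i-1}$ are increasing. We then need $m_i\gtrsim i$, which yields
\[
\frac{M_j}{M_k}=m_{k+1}\cdots m_j\ge c^{\,j-k}\frac{j!}{k!}.
\]
We will obtain this from the fact that $\{j!\}\preceq\M$, i.e.\ $(M3)'$ through Proposition~\ref{incl_quasi_factorial} in the non-quasianalytic case, or from the standing assumption that $\M$ dominates the analytic class. This comparison requires some care: $j!\le CH^jM_j$ alone does not give ratio bounds directly. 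We will therefore first try to pass to the equivalent sequence $\max(M_j,j!)$ or its log-convex regularization, which defines the same class under $(M1)$. If that fails, we will invoke $(M4)$ directly: $(M4)$ with $k$ replaced by $j-k$ gives $M_kM_{j-k}/(k!(j-k)!)\le M_j/j!$, and since $M_{j-k}\ge1$ this already yields $\frac{j!}{k!}M_k\le (j-k)!\,M_j$. That bound is not quite enough on its own, so we will combine it with the estimate $(j-k)!\le C H^{j-k}M_{j-k}$ and with Proposition~\ref{MjMj-k}.

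Granting the bound $\frac{j!}{k!}M_k\le c^{-(j-k)}M_j$, we get
\[
|\partial^\alpha(f\circ g)|\le C\,2^{j}\max(1,nC_0/h)^j\,(cr)^{-j}M_j.
\]
In the Roumieu case this gives $f\circ g\in\E_{\M}(K;h')$ for some $h'>0$. In the Beurling case, since $h$ can be taken arbitrarily large, $h'$ can be taken arbitrarily large as well (the factor $\max(1,\cdot)$ should be replaced by the term $(nC_0/h)^k$ kept inside the sum, giving a factor of order $(1+nC_0/h)^j$, which is harmless). For $\mathcal{D}_{[\M]}$ the same estimates apply. Compact support of $f\circ g$ is not automatic; we will note that the statement implicitly requires $g$ to be proper (for instance a diffeomorphism) on the support of $f$, which is the case in the applications to coordinate changes.
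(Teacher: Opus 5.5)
Your Fa\`a di Bruno/majorant reduction, to a one-variable composition with a geometric majorant and Lah-number coefficients, is the standard route. Both of your caveats are correct. First, the statement needs $\M$ to dominate the analytic class; it fails for $M_j\equiv 1$. Second, in the $\D_{[\M]}$ case, compact support of $f\circ g$ requires $g$ to be proper on $\mathrm{supp}(f)$; a constant map $g$ is a counterexample otherwise.

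The paper gives no argument of its own. It only refers to H\"ormander's Proposition~8.4.1, where the classes are defined by $L_k^k$ with $L_k$ increasing and $L_k\ge k$, so the comparison $\frac{j!}{k!}L_k^k\le L_j^j$ is built into the hypotheses. In your proposal this comparison, on which everything rests, is only ``granted''. Your fallbacks are either unavailable ($(M4)$ is not a hypothesis of the theorem) or unnecessary. The comparison follows in one line from the paper's own results: by Lemma~\ref{lemma_mjj_Mj}, $m_i\ge M_i^{1/i}$, and by Proposition~\ref{incl_quasi_factorial}, $i!\le CH^iM_i$. Since $i!\ge (i/e)^i$,
\[
m_i\ \ge\ \Bigl(\frac{i!}{CH^i}\Bigr)^{1/i}\ \ge\ \frac{i}{eH\max\{1,C\}},
\qquad\text{hence}\qquad
\frac{j!}{k!}\,M_k=M_j\prod_{i=k+1}^{j}\frac{i}{m_i}\le \bigl(eH\max\{1,C\}\bigr)^{j-k}M_j .
\]
So $j!\le CH^jM_j$ does give ratio bounds once it is combined with log-convexity. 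With this, the Roumieu case is complete.

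The Beurling case, however, does not follow as you claim. Your final bound contains the factor $(cr)^{-j}$, where $r$ is fixed by $g$ and $c$ is a fixed comparison constant. However large $h$ is, you only get $f\circ g\in\E_{\M}(K;h')$ for $h'$ below a fixed multiple of $cr$, not for every $h'>0$.

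Taking $h$ large cannot be the mechanism. For $f(x)=x_1$, which lies in every class, $f\circ g=g_1$ and $h$ plays no role at all. For $M_j=j!$, a weight sequence for which your fixed-$c$ bound holds with $c=1$, your argument would put every real-analytic function in $\E_{(\M)}$, which is false.

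What is needed is the strict comparison $i/m_i\to 0$. This is exactly what $(M3)'$ gives, through Komatsu's lemma in the proof of Proposition~\ref{incl_quasi_factorial}. Equivalently, run the computation above for every $H>0$, using $\{j!\}\prec\M$. It yields, for every $\epsilon\in(0,1]$, a constant $C_\epsilon$ with $\frac{j!}{k!}M_k\le C_\epsilon\,\epsilon^{j-k}M_j$. Your sum is then at most
\[
C\,C_\epsilon\,M_j\Bigl(\frac{\epsilon}{r}\Bigr)^{j}\sum_{k=1}^{j}\binom{j-1}{k-1}\Bigl(\frac{nC_0r}{h\epsilon}\Bigr)^{k}\ \le\ C\,C_\epsilon\,M_j\Bigl(\frac{2\epsilon}{r}\Bigr)^{j}
\qquad\text{for } h\ge \frac{nC_0r}{\epsilon},
\]
which gives every geometric rate. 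In the Beurling case the rate is governed by $\epsilon$, i.e.\ by the strict domination of $\{j!\}$, not by $h$. Note also that H\"ormander's classes are of Roumieu type, so this part is not covered by the paper's reference either.
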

\begin{proof}
	See \cite[Proposition 8.4.1]{Hormander_1}.
\end{proof}

By the previous theorem, we can define the spaces of ultradifferentiable functions on analytic manifolds. We recall that a manifold is analytic if it admits an atlas whose coordinate changes are analytic maps. Recalling Whitney \cite{Whitney}, every smooth manifold admits an analytic structure,
compatible with its smooth structure. Then, from now on, let $X$ be a paracompact manifold with an analytic countable atlas $\{(\Omega_k,\varphi_k)\}_{k\in\N_0}$. 

\begin{definition}
	We define $\mathcal{E}_{[\M]}(X)$ as the space of all $f\in C^\infty(X)$ such that, for every local chart $(\Omega_k,\varphi_k)$, the function $f\circ\varphi_k^{-1}$ belongs to $\mathcal{E}_{[\M]}(U_k)$, where $U_k=\varphi_k(\Omega_k)\subset\R^n$.
\end{definition}
By the paracompactness, we can also assume that $\Omega_k$ has compact closure and we can extend the domains of each chart to a slightly larger open set.

\subsection{Gelfand-Shilov spaces on $\R^n$}
Given a weight sequence $\{\M\}_{j\in\N_0}$ and $h>0$, we define
\[ \caS_{\M}(\R^n;h) = \{f\in C^\infty(\R^n) \, : \, \|f\|_h = \sup_{x\in\mathbb{R}^n} \sup_{\alpha,\beta\in\N_0^n} h^{|\alpha|+|\beta|} M_{|\alpha|}^{-1} M_{|\beta|}^{-1} |x^\beta\partial_x^\alpha f(x)| < +\infty\}, \]
which is a Banach space with the norm $\|\cdot\|_h$. Then, we define
\[\mathcal{S}_{\{\M\}}(\R^n) = \underset{h>0}{\mathrm{ind\,lim}}\ \caS_{\M}(\R^n;h),
\quad \mathcal{S}_{(\M)}(\R^n) = \underset{h>0}{\mathrm{proj\,lim}}\ \caS_{\M}(\R^n;h).\]

The inclusion maps $\caS_{\M}(\R^n;h)\hookrightarrow \caS_{\M}(\R^n;h_+)$ are continuous when $h\leq h_+$ and compact when $h<h_+$. In particular, $\mathcal{S}_{\{\M\}}(\R^n)$ and $\mathcal{S}_{(\M)}(\R^n)$ are DFS and FS spaces, respectively. The proofs of the next two Propositions \ref{prop:gsspcs_a} and \ref{prop:gsspcs_b} are analogous to those found in \cite{Komatsu_ultra_1} for the spaces of ultradifferential functions. For the sake of brevity, we omit them.

\begin{proposition}\label{prop:gsspcs_a}
	If $\A\preceq \M$, then $\mathcal{S}_{\{\A\}}(\R^n) \subset \mathcal{S}_{\{\M\}}(\R^n)$ and $\mathcal{S}_{(\A)}(\R^n) \subset \mathcal{S}_{(\M)}(\R^n)$. Moreover, if $\A\prec\M$, then $\mathcal{S}_{\{\A\}}(\R^n) \subset \mathcal{S}_{(\M)}(\R^n)$.
\end{proposition}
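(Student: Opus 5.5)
The proof works directly at the level of the Banach steps $\caS_{\A}(\R^n;h)$ and $\caS_{\M}(\R^n;h)$, and then passes to the inductive and projective limits. The basic estimate is the following. Suppose $A_j\leq CH^jM_j$ for all $j\in\N_0$, and let $f\in\caS_{\A}(\R^n;h)$. For all $x\in\R^n$ and $\alpha,\beta\in\N_0^n$ we have
\[
|x^\beta\partial_x^\alpha f(x)|\leq \|f\|_h\, h^{-|\alpha|-|\beta|}A_{|\alpha|}A_{|\beta|}\leq C^2\|f\|_h\,(h/H)^{-|\alpha|-|\beta|}M_{|\alpha|}M_{|\beta|}.
\]
Hence $f\in\caS_{\M}(\R^n;h/H)$ with $\|f\|_{h/H}\leq C^2\|f\|_h$. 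In other words, the inclusion $\caS_{\A}(\R^n;h)\hookrightarrow\caS_{\M}(\R^n;h/H)$ is continuous for every $h>0$. The square $C^2$ appears because the weight enters twice, once for $\alpha$ and once for $\beta$; this is harmless.

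\textbf{Roumieu case, $\A\preceq\M$.} Take $f\in\mathcal{S}_{\{\A\}}(\R^n)$. Then $f\in\caS_{\A}(\R^n;h)$ for some $h>0$, so by the estimate $f\in\caS_{\M}(\R^n;h/H)\subset\mathcal{S}_{\{\M\}}(\R^n)$. Continuity of the inclusion follows from the universal property of the inductive limit, since each step map is continuous.

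\textbf{Beurling case, $\A\preceq\M$.} Take $f\in\mathcal{S}_{(\A)}(\R^n)$ and fix an arbitrary $k>0$. Since $f\in\caS_{\A}(\R^n;kH)$, the estimate gives $f\in\caS_{\M}(\R^n;k)$ with $\|f\|_k\leq C^2\|f\|_{kH}$. As $k$ is arbitrary, $f\in\mathcal{S}_{(\M)}(\R^n)$. The norm inequality also shows that the inclusion is continuous for the projective topologies.

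\textbf{Mixed case, $\A\prec\M$.} Take $f\in\caS_{\A}(\R^n;h)$ for some fixed $h>0$, and let $k>0$ be arbitrary. Choose $H=h/k$ in the definition of $\A\prec\M$. This gives $C_k>0$ with $A_j\leq C_k(h/k)^jM_j$ for all $j$. The same computation then yields $\|f\|_k\leq C_k^2\|f\|_h$. Since $k$ is arbitrary, $f\in\mathcal{S}_{(\M)}(\R^n)$. Moreover, each step $\caS_{\A}(\R^n;h)$ maps continuously into the Fréchet space $\mathcal{S}_{(\M)}(\R^n)$, so the inclusion of the inductive limit $\mathcal{S}_{\{\A\}}(\R^n)$ is continuous.

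All three cases rest on the single step-wise estimate, so I expect no real obstacle. The only points needing care are the correct choice of the parameter $h$ in each case and the bookkeeping of the squared constant.
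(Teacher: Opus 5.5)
Your proof is correct and takes essentially the paper's approach: the paper omits this proof and refers to Komatsu's analogous argument for $\E_{[\M]}$, which is the same comparison of Banach steps. That is exactly your bound $\|f\|_{h/H}\leq C^2\|f\|_h$ (norms of $\caS_{\M}(\R^n;h/H)$ and $\caS_{\A}(\R^n;h)$), with $H=h/k$ when $\A\prec\M$, followed by passage to the inductive and projective limits.
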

\begin{proposition}\label{prop:gsspcs_b}
	The spaces $\mathcal{S}_{[\M]}(\R^n)$ are closed under products and differentiation.
\end{proposition}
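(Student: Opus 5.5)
We must show that $\mathcal{S}_{[\M]}(\R^n)$ is closed under differentiation and under products, with continuity of the corresponding maps. The argument works at the level of the Banach steps $\caS_{\M}(\R^n;h)$. The aim is to show that differentiation and multiplication map $\caS_{\M}(\R^n;h)$ (respectively $\caS_{\M}(\R^n;h)\times\caS_{\M}(\R^n;h)$) continuously into $\caS_{\M}(\R^n;h')$, for some $h'=ch$ with $c>0$ depending only on $n$ and on the constants of $\M$. Since $h'$ is proportional to $h$, the conclusion passes to both the inductive limit (Roumieu case, where $h\to0$ is allowed) and the projective limit (Beurling case, where every $h$ is reached by taking $h'$ arbitrary and $h=h'/c$).

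\emph{Differentiation.} Let $f\in\caS_{\M}(\R^n;h)$ and let $e_j$ be a unit multi-index. Then
\[
|x^\beta\partial^\alpha\partial_{x_j}f(x)|\le \|f\|_h\,h^{-|\alpha|-1-|\beta|}M_{|\alpha|+1}M_{|\beta|}.
\]
By $(M2)'$ we have $M_{|\alpha|+1}\le AH^{|\alpha|}M_{|\alpha|}$. Hence
\[
\|\partial_{x_j}f\|_{h/H}\le A h^{-1}\|f\|_h ,
\]
which is the required estimate with $c=1/H$. Only $(M2)'$ is needed here; this is the one place where the stability of $\M$ is used.

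\emph{Products.} For $f,g\in\caS_{\M}(\R^n;h)$ we apply the Leibniz formula and split the monomial $x^\beta=x^{\beta_1}x^{\beta_2}$ with $\beta_1+\beta_2=\beta$. Choosing, for instance, $\beta_1=\beta$ and $\beta_2=0$, we get
\[
|x^\beta\partial^\alpha(fg)|\le\sum_{\gamma\le\alpha}\binom{\alpha}{\gamma}|x^\beta\partial^\gamma f|\,|\partial^{\alpha-\gamma}g|
\le \|f\|_h\|g\|_h\,h^{-|\alpha|-|\beta|}M_{|\beta|}M_0\sum_{\gamma\le\alpha}\binom{\alpha}{\gamma}M_{|\gamma|}M_{|\alpha-\gamma|}.
\]
By Proposition \ref{MjMj-k}, $M_{|\gamma|}M_{|\alpha|-|\gamma|}\le M_{|\alpha|}$. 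Also $\sum_{\gamma\le\alpha}\binom{\alpha}{\gamma}=2^{|\alpha|}$. Therefore
\[
\|fg\|_{h/2}\le\|f\|_h\|g\|_h .
\]

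The main point to check is that Proposition \ref{MjMj-k} (which only relies on $(M0)$ and $(M1)$) gives exactly the submultiplicativity that absorbs the Leibniz sum. No $(M2)$-type condition is needed for products; only the binomial factor $2^{|\alpha|}$ has to be absorbed, and this is done by the change $h\mapsto h/2$. Finally, continuity on the limit spaces follows from the universal properties of the inductive and projective limits, since the constants $c=1/H$ and $c=1/2$ are uniform in $h$. Bilinear continuity in the DFS case is separate continuity on each step together with the standard fact that separately continuous bilinear maps on DFS spaces are continuous, or one can simply state the closure algebraically, as in the proposition.
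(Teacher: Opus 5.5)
Your proof is correct and is the standard argument the paper has in mind when it omits the proof and refers to Komatsu. Condition $(M2)'$ absorbs the extra derivative at the cost of rescaling $h$ by $H$. Proposition \ref{MjMj-k} (log-convexity with $M_0=1$), together with $\sum_{\gamma\le\alpha}\binom{\alpha}{\gamma}=2^{|\alpha|}$, absorbs the Leibniz sum, with all of $x^\beta$ placed on one factor. The only detail left implicit is that in the Roumieu case $f$ and $g$ lie in a common Banach step because the steps are nested; your remarks on bilinear continuity in the DFS case go beyond what the statement requires.
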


We define the spaces $\mathcal{S}_{\{\M\}}'(\R^n)$ and $\mathcal{S}_{(\M)}'(\R^n)$ of tempered ultradistributions as the topological duals of $\mathcal{S}_{\{\M\}}(\R^n)$ and $\mathcal{S}_{(\M)}(\R^n)$. In this case, $\mathcal{S}_{\{\M\}}'(\R^n)$ and $\mathcal{S}_{(\M)}'(\R^n)$ with the strong dual topology are FS and DFS spaces, respectively. As a consequence of the topological properties of $\caS_{[\M]}(\R^n)$, we can characterize the elements of the spaces $\mathcal{S}_{[\M]}'(\R^n)$ as follows.

\begin{proposition}
	A linear functional $u:\mathcal{S}_{\{\M\}}(\R^n)\to\C$ (respectively, $u:\mathcal{S}_{(\M)}(\R^n)\to\C$) belongs to $\mathcal{S}_{\{\M\}}'(\R^n)$ (respectively, $\mathcal{S}_{(\M)}'(\R^n)$) if and only if, for every $h>0$, there exists $C>0$ (respectively, there exist $C,h>0$) such that
	\[ |\langle u,f\rangle| \leq C\sup_{x\in\R^n}\sup_{\alpha,\beta\in\N_0^n} h^{|\alpha|+|\beta|}M_{|\alpha|}M_{|\beta|}|x^\beta\partial_x^\alpha f(x)|.\]
\end{proposition}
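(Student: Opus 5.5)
The plan is to treat the two cases separately and reduce each to the defining universal property of the locally convex limit topology. I read the right-hand side as $C\|f\|_h$, with $\|f\|_h=\sup_{x}\sup_{\alpha,\beta}h^{|\alpha|+|\beta|}M_{|\alpha|}^{-1}M_{|\beta|}^{-1}|x^\beta\partial_x^\alpha f(x)|$ the norm of $\caS_{\M}(\R^n;h)$. The displayed formula has $M_{|\alpha|}M_{|\beta|}$ where the definition of $\|\cdot\|_h$ has the inverse powers; I assume this is a misprint for that norm.

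First I record the monotonicity $\|f\|_h\leq\|f\|_{h'}$ for $0<h\leq h'$, which is immediate since $h^{|\alpha|+|\beta|}\leq h'^{|\alpha|+|\beta|}$. Consequently $\caS_{\M}(\R^n;h')\subset\caS_{\M}(\R^n;h)$ continuously. Thus the Banach steps form a directed system in the right direction both for the inductive limit (as $h\downarrow0$) and for the projective limit (as $h\uparrow\infty$).

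\emph{Roumieu case.} $\mathcal{S}_{\{\M\}}(\R^n)$ carries the finest locally convex topology making every inclusion $\iota_h\colon\caS_{\M}(\R^n;h)\hookrightarrow\mathcal{S}_{\{\M\}}(\R^n)$ continuous. Hence a linear functional $u$ is continuous if and only if $u\circ\iota_h$ is continuous on the Banach space $\caS_{\M}(\R^n;h)$ for every $h>0$. A linear functional on a normed space is continuous exactly when it is bounded by a constant times the norm, so this is the same as requiring, for every $h>0$, a constant $C=C_h$ with $|\langle u,f\rangle|\leq C\|f\|_h$ for all $f\in\caS_{\M}(\R^n;h)$. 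The DFS structure is not needed here; only the definition of the inductive limit topology is used.

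\emph{Beurling case.} $\mathcal{S}_{(\M)}(\R^n)$ is topologised by the family of seminorms $\{\|\cdot\|_h\}_{h>0}$. A linear functional is continuous if and only if it is bounded by a constant multiple of a finite maximum $\max_{1\leq i\leq k}\|\cdot\|_{h_i}$. By the monotonicity above this maximum equals $\|\cdot\|_{h}$ with $h=\max_i h_i$. So continuity is equivalent to the existence of a single pair $C,h>0$ with $|\langle u,f\rangle|\leq C\|f\|_h$. Conversely, such an estimate makes $u$ continuous, since $\|\cdot\|_h$ is one of the defining seminorms.

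There is no real obstacle. The only point needing care is the directedness of the families of norms, i.e. the monotonicity in $h$: it is what lets one collapse finite maxima to a single index in the Beurling case, and what makes the inductive system well defined in the Roumieu case.
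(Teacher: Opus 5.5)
Your proposal is correct and is the argument the paper intends. The paper gives no detailed proof and only says the characterization follows from the topological (inductive/projective limit) structure of $\caS_{[\M]}(\R^n)$. You spell this out through the universal property of the locally convex inductive limit, and, in the projective case, through the directedness of the seminorms $\|\cdot\|_h$. You are also right to read $M_{|\alpha|}M_{|\beta|}$ as a misprint for $M_{|\alpha|}^{-1}M_{|\beta|}^{-1}$: taken literally, the supremum is $+\infty$ for every $f\neq0$ because $M_k$ outgrows every $C^k$ under $(M3)'$, so the condition would be vacuous. Since $h$ ranges over all of $(0,+\infty)$ under ``for every'' or ``there exists'', the statement is also unaffected by the paper's inconsistent convention on whether $\caS_{\M}(\R^n;h)$ grows or shrinks with $h$.
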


\begin{proposition}
	The spaces $\mathcal{S}_{[\M]}(\R^n)$ are nuclear and we have the following isomorphisms of locally convex topological vector spaces:
	\[\mathcal{S}_{[\M]}(\R^n)\,\widehat{\otimes}\, \mathcal{S}_{[\M]}(\R^m) \simeq \mathcal{S}_{[\M]}(\R^{n+m}) \simeq \mathcal{L}(\mathcal{S}_{[\M]}'(\R^n),\mathcal{S}_{[\M]}(\R^m)),\]
	and
	\[\mathcal{S}_{[\M]}'(\R^n)\,\widehat{\otimes}\, \mathcal{S}_{[\M]}'(\R^m) \simeq \mathcal{S}_{[\M]}'(\R^{n+m}) \simeq \mathcal{L}(\mathcal{S}_{[\M]}(\R^n),\mathcal{S}_{[\M]}'(\R^m)).\]
	
	In particular, for every continuous linear operator $T:\mathcal{S}_{[\M]}(\R^n)\to \mathcal{S}_{[\M]}'(\R^n)$, there exists a unique tempered ultradistribution $K_T\in \mathcal{S}_{[\M]}'(\R^{2n})$ satisfying
	\[\langle T f,g\rangle = \langle K_T, f\otimes g\rangle,\]
	for all $f,g\in \mathcal{S}_{[\M]}(\R^n)$.
\end{proposition}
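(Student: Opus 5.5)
The plan is to reduce everything to nuclearity and the abstract kernel theorem of Grothendieck, so the real work is to exhibit $\mathcal{S}_{[\M]}(\R^n)$ as isomorphic to a nuclear sequence space. We work with the Hermite expansion. For $f\in\caS_{\M}(\R^n;h)$ we show that the Hermite coefficients $c_\gamma(f)=(f,H_\gamma)_{L^2}$ satisfy
\[
|c_\gamma(f)|\le C\, \exp\bigl(-\omega_\M(k|\gamma|^{1/2})\bigr),
\]
with a constant $k>0$ depending on $h$, where $\omega_\M(t)=\sup_j\log(t^j/M_j)$ is the associated function. Conversely, decay of this type implies $f\in\caS_{\M}(\R^n;h')$ for a suitable $h'$.

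The key steps, in order, are as follows.
\begin{enumerate}
\item We replace the sup-norm $\|\cdot\|_h$ by equivalent $L^2$-type norms $\sup_{\alpha,\beta}h^{|\alpha|+|\beta|}M_{|\alpha|}^{-1}M_{|\beta|}^{-1}\|x^\beta\partial^\alpha f\|_{L^2}$, up to a change of $h$. This uses Sobolev embedding and $(M2)'$ to absorb finitely many extra derivatives and powers of $x$.
\item We control powers of the harmonic oscillator $\mathcal{H}=|x|^2-\Delta$ via $\|\mathcal{H}^N f\|_{L^2}\le C\,h^{-2N}M_{2N}$, and conversely. The expansion of $\mathcal{H}^N$ into monomials $x^\beta\partial^\alpha$ with $|\alpha|+|\beta|\le 2N$ is handled using $M_kM_{j-k}\le M_j$ (Proposition \ref{MjMj-k}).
\item Since $\mathcal{H}H_\gamma=(2|\gamma|+n)H_\gamma$, optimizing over $N$ gives the coefficient decay bound above, and conversely. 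Here Lemma \ref{lemma_mjj_Mj} and log-convexity $(M1)$ are used to pass between $\inf_N M_{2N}/t^{2N}$ and $e^{-\omega_\M(t)}$.
\end{enumerate}

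This identifies $\mathcal{S}_{\{\M\}}(\R^n)$ and $\mathcal{S}_{(\M)}(\R^n)$ with Köthe echelon spaces $\lambda=\{(c_\gamma):\sup_\gamma|c_\gamma|e^{\omega_\M(k|\gamma|^{1/2})}<\infty\}$. For the Roumieu case this is the inductive limit over $k\to0$, and for the Beurling case the projective limit over $k\to\infty$. Nuclearity then follows from the Grothendieck–Pietsch criterion: we need $\sum_\gamma e^{\omega_\M(k|\gamma|^{1/2})-\omega_\M(k'|\gamma|^{1/2})}<\infty$ for $k<k'$. This holds because $\omega_\M(2t)-\omega_\M(t)\ge\log(2^j)$ for arbitrarily large $j$, by $(M0)$–$(M1)$, so the difference grows faster than any multiple of $\log|\gamma|$. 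Equivalently, the compact-inclusion statement for the steps $\caS_\M(\R^n;h)\hookrightarrow\caS_\M(\R^n;h_+)$ can be upgraded to nuclearity of the inclusion by inserting the Hilbert–Schmidt map from step~1. Once nuclearity holds, the duals (DFS/FS) are nuclear too.

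For the tensor products, Hermite functions on $\R^{n+m}$ are $H_\gamma\otimes H_\delta$, and the weight $\omega_\M(k|(\gamma,\delta)|^{1/2})$ is equivalent, up to a change of $k$, to $\omega_\M(k|\gamma|^{1/2})+\omega_\M(k|\delta|^{1/2})$. Indeed, $\omega_\M(t)+\omega_\M(s)\le\omega_\M(t+s)$ holds directly, while the reverse bound up to changing $k$ uses $(M2)'$ weakly, or only monotonicity plus the $\log$ gap. Hence the sequence spaces factor, and nuclearity gives $\mathcal{S}_{[\M]}(\R^n)\widehat\otimes\mathcal{S}_{[\M]}(\R^m)\simeq\mathcal{S}_{[\M]}(\R^{n+m})$. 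The identification with $\mathcal{L}(\mathcal{S}'_{[\M]}(\R^n),\mathcal{S}_{[\M]}(\R^m))$ is the general isomorphism $E\widehat\otimes F\simeq\mathcal{L}(E'_b,F)$ for $E$ nuclear and complete, with $E$ FS or DFS, hence reflexive. The dual statements follow by taking strong duals, using $(E\widehat\otimes F)'\simeq E'\widehat\otimes F'$ for nuclear Fréchet or DF spaces. The kernel $K_T$ is then the element corresponding to $T$, and uniqueness follows from density of $\mathcal{S}_{[\M]}(\R^n)\otimes\mathcal{S}_{[\M]}(\R^n)$.

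The main obstacle will be step~2 with only $(M2)'$ rather than $(M2)$. Expanding $\mathcal{H}^N$ produces commutator terms and combinatorial factors like $2^{2N}$ times mixed products $M_{|\alpha|}M_{|\beta|}$, and these must be absorbed into $h^{-2N}M_{2N}$. The inequality $M_kM_{j-k}\le M_j$ gives the direction from $\caS_\M$ to $\mathcal{H}$-bounds. The converse, recovering separate $M_{|\alpha|}M_{|\beta|}$ bounds from $M_{|\alpha|+|\beta|}$, naively requires $(M2)$. If needed, we instead derive the reverse estimate by interpolation inequalities, $\|x^\beta\partial^\alpha f\|\lesssim\|\mathcal{H}^{N}f\|$ with $N=\lceil(|\alpha|+|\beta|)/2\rceil$, which only involves $M_{2N}$ versus $M_{|\alpha|+|\beta|}$, controlled by $(M2)'$ with a geometric loss. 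If even this fails for weak sequences, we prove nuclearity directly: we show that $\caS_\M(\R^n;h)\hookrightarrow\caS_\M(\R^n;h_+)$ factors through a weighted $L^2$ space via a Hilbert–Schmidt operator, using $(1+|x|^2)^{-n}$ and Sobolev embedding.
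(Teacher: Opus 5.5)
\textbf{There is a genuine gap, and it sits exactly where $(M2)$ is needed.} The paper does not argue this result; it cites \cite{Pran2013}, Proposition 2, which works under Komatsu's standard hypotheses, including $(M.2)$. Your Hermite/K\"othe-space route is a legitimate alternative, essentially the Pilipovi\'c--Langenbruch approach, but it tries to get by with $(M2)'$ and fails at that point.

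\emph{The tensor-product step.} You assert that $\omega_\M(t)+\omega_\M(s)\le\omega_\M(t+s)$ ``holds directly''. This is false: for $M_j=j!^s$, $s>1$, one has $\omega_\M(t)\asymp t^{1/s}$ and $2t^{1/s}>(2t)^{1/s}$. What is trivial, by monotonicity and $\omega_\M\ge0$, is the opposite bound $\omega_\M(t+s)\le\omega_\M(2t)+\omega_\M(2s)$, i.e.\ Proposition \ref{prop_KMR}(1). The direction you actually need, to identify the weights $\omega_\M(k|\gamma|^{1/2})+\omega_\M(k|\delta|^{1/2})$ with $\omega_\M(k'|(\gamma,\delta)|^{1/2})$, is $\omega_\M(t)+\omega_\M(s)\le\omega_\M(H(t+s))+\log A$. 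That is Proposition \ref{prop_KMR}(2), and it requires $(M2)$. Neither $(M2)'$, nor monotonicity, nor the logarithmic gap gives it.

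\emph{The converse in step 2.} You correctly flag that recovering $M_{|\alpha|}M_{|\beta|}$ bounds from $\|\mathcal{H}^Nf\|\lesssim h^{2N}M_{2N}$ needs $(M2)$. Your interpolation fallback does not fix this: it only yields bounds by $M_{|\alpha|+|\beta|}$. Those bounds describe a space that is in general strictly larger than the paper's $\caS_{[\M]}(\R^n)$, which is defined with $M_{|\alpha|}M_{|\beta|}$. Your Hilbert--Schmidt fallback may rescue nuclearity alone, but not the tensor-product and kernel statements.

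\emph{Why this cannot be patched without $(M2)$.} Take $M_j=H^{j(j-1)/2}$, $H>1$. It satisfies $(M0)$, $(M1)$, $(M2)'$, $(M3)'$ but not $(M2)$. Here $\omega_\M(t)=(\log t)^2/(2\log H)+O(\log t)$, so $2\omega_\M(t)-\omega_\M(ct)\to\infty$ for every $c>0$. Elements of $\caS_{[\M]}(\R)\widehat\otimes\caS_{[\M]}(\R)$ obey bounds with $M_{\alpha_1}M_{\alpha_2}M_{\beta_1}M_{\beta_2}$. One can check that $\caS_{[\M]}(\R^2)$ contains functions violating those bounds, such as $\sum_k a_k e^{it_k(x_1+x_2)}e^{-|x|^2/2}$ with sparse $t_k$ and suitable $a_k$. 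Testing $f_k\otimes f_k$ with $\widehat{f_k}$ bumps at $t_k$ against $\langle Tf,g\rangle=\sum_k e^{\lambda\omega_\M(t_k)}\widehat f(t_k)\widehat g(t_k)$, $1<\lambda<2$, shows that in the Beurling case the kernel theorem with $K_T\in\caS'_{(\M)}(\R^{2})$ fails as well.

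\emph{The fix.} Assume $(M2)$ explicitly, as the cited source does. Proposition \ref{prop_KMR}(2) then makes your weights factor, and your Hermite argument goes through.

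\emph{Two smaller points.}
\begin{enumerate}
\item You have $\omega_\M(2t)-\omega_\M(t)\ge j(t)\log 2$ with $j(t)=\max\{j: m_j\le t\}$. Unboundedness of $j(t)$ does not give superlogarithmic growth; under $(M2)'$ you only get $j(t)\gtrsim\log t/\log H$. You should therefore choose $k'/k$ large, which the Grothendieck--Pietsch criterion allows, rather than claim summability for all $k<k'$.
\item Passing from $\inf_N M_{2N}t^{-2N}$ to $e^{-\omega_\M(t/H)}$ uses $(M2)'$ to handle odd indices, not only $(M1)$ and Lemma \ref{lemma_mjj_Mj}.
\end{enumerate}
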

\begin{proof}
	See \cite[Proposition 2]{Pran2013}.
\end{proof}

\begin{lemma}\label{est_GS_bracket}
	Let $f\in\caS_{\{\M\}}(\R^n)$ (respectively, $f\in\caS_{(\M)}(\R^n)$) and fix $N\in\N_0$. Then, there exist $C,h>0$ (respectively, for every $h>0$, there exist $C>0$) depending on $N$ such that
	\[\sup_{x\in \R^n}|\langle x\rangle^{2N}\partial_x^\alpha f(x)|\leq Ch^{|\alpha|}M_{|\alpha|},\quad\forall \alpha\in\N_0^n.\]
\end{lemma}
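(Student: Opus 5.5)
Expanding $\langle x\rangle^{2N}$ as a polynomial and applying the defining seminorm of $\caS_{\M}(\R^n;h)$ to each monomial turns the lemma into a direct consequence of the definition. The paper writes $\langle x\rangle=(1+|x|)^{1/2}$, but the intended weight is $(1+|x|^2)^{1/2}$, so that $\langle x\rangle^{2N}=(1+|x|^2)^N$. Either way $\langle x\rangle^{2N}\le(1+|x|^2)^N$ for $N\ge0$, so it suffices to bound $(1+|x|^2)^N|\partial_x^\alpha f(x)|$.

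By the multinomial theorem,
\[
(1+|x|^2)^N=\sum_{|\gamma|\le N} c_\gamma\, x^{2\gamma},\qquad c_\gamma=\frac{N!}{(N-|\gamma|)!\,\gamma!},
\]
and $\sum_{|\gamma|\le N}c_\gamma=(n+1)^N$. For each $\gamma$ with $|\gamma|\le N$, set $\beta=2\gamma$, so $|\beta|\le 2N$.

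In the Roumieu case, $f\in\caS_{\M}(\R^n;h_0)$ for some $h_0>0$. The definition of the norm gives
\[
|x^\beta\partial_x^\alpha f(x)|\le \|f\|_{h_0}\,h_0^{-|\alpha|-|\beta|}M_{|\alpha|}M_{|\beta|}
\]
for all $x$, $\alpha$ and $\beta$. Since $|\beta|$ ranges over the finite set $\{0,\dots,2N\}$, the quantity $\max_{|\beta|\le 2N}h_0^{-|\beta|}M_{|\beta|}$ is a finite constant depending only on $N$ and $h_0$. Summing over $\gamma$ then gives
\[
\sup_x\langle x\rangle^{2N}|\partial_x^\alpha f(x)|\le C\,(h_0^{-1})^{|\alpha|}M_{|\alpha|},
\]
with $C=(n+1)^N\|f\|_{h_0}\max_{k\le 2N}h_0^{-k}M_k$. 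This is the claim with $h=h_0^{-1}$.

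In the Beurling case, $f\in\caS_{\M}(\R^n;h')$ for every $h'>0$. Given $h>0$, apply the same argument with $h'=h^{-1}$. This yields the bound $C\,h^{|\alpha|}M_{|\alpha|}$, where $C$ depends on $N$, $h$ and $\|f\|_{1/h}$.

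The constant in the lemma is written as $Ch^{|\alpha|}M_{|\alpha|}$, so $h$ plays the role of the inverse of the $h$ in the definition of $\caS_{\M}(\R^n;h)$. This is consistent with the quantifiers "there exists $h$" in the Roumieu case and "for every $h$" in the Beurling case.

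The only real care needed is this matching of $h$ with its reciprocal and keeping the quantifiers in the right order. The polynomial weight costs just a fixed finite number of powers of $x$, so it is absorbed into the constant, and no property of $\M$ beyond positivity is needed.
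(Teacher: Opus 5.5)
Your proof is correct and follows essentially the paper's argument: expand $(1+|x|^2)^N$ into monomials $x^\beta$ with $|\beta|\le 2N$, apply the defining seminorm of $\mathcal{S}_{\M}(\R^n;h)$ to each, and absorb the finitely many factors $h^{-|\beta|}M_{|\beta|}$ into a constant depending on $N$ (your exact multinomial identity and your explicit tracking of $h$ versus $h^{-1}$ are, if anything, tidier than the paper's bookkeeping). One small slip in your aside: under the literal typo $\langle x\rangle=(1+|x|)^{1/2}$ the inequality $\langle x\rangle^{2N}\le(1+|x|^2)^N$ fails for $0<|x|<1$, but since $1+|x|\le 2(1+|x|^2)$ this only costs a harmless factor $2^N$ in $C$.
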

\begin{proof}
	Let $f\in\caS_{\{\M\}}(\R^n)$ and $C_f,h_f>0$ be such that
	\[\sup_{x\in\R^n}|x^\beta\partial_x^\alpha f(x)|\leq C_fh_f^{|\alpha|+|\beta|}M_{|\alpha|}M_{|\beta|},\quad\forall \alpha,\beta\in\mathbb{N}_0^n.\]
	
	Then, given $\alpha\in\mathbb{N}_0^n$ and $N\in\mathbb{N}_0$, we have
	\begin{align*}
		|\langle x\rangle^{2N}\partial_x^\alpha f(x)| & 
		= \hspace{4mm} |(1+|x|^2)^N\partial_x^\alpha f(x)|
		&& 
		\hspace{-13mm} \leq \hspace{3mm}
		\sum_{\ell=0}^N\binom{N}{\ell}|x|^{2\ell}|\partial_x^\alpha f(x)|\\
		& \leq \hspace{4mm} \sum_{\ell=0}^N\binom{N}{\ell}\sum_{|\beta|=2\ell}\frac{(2\ell!)}{\beta!}|x^\beta\partial_x^\alpha f(x)|
		&& \hspace{-13mm}
		\leq  \hspace{3mm}
		\sum_{\ell=0}^N\binom{N}{\ell}\sum_{|\beta|=2\ell}\frac{(2\ell)!}{\beta!}C_fh_f^{|\alpha|+|\beta|} M_{|\alpha|}M_{|\beta|}\\
		& \leq C_f h_0^{2N} h_f^{|\alpha|}M_{|\alpha|}M_{2N}\sum_{\ell=0}^N\binom{N}{\ell}\sum_{|\beta|=2\ell}\frac{(2\ell)!}{\beta!}\\
		& \leq C_f h_0^{2N} h_f^{|\alpha|}n^{2N}M_{|\alpha|}M_{2N} \sum_{\ell=0}^N\binom{N}{\ell}\sum_{|\beta|=2\ell} 1\\
		& \leq C h_f^{|\alpha|}M_{|\alpha|},
	\end{align*}
	where $h_0=\min\{h_f,1\}$ and $C= C_f2^n(8h_0^2n^2)^N M_{2N}$. Also, we used the estimates and identities (1.2.1), (1.2.3), (1.2.6), and (1.2.7) from \cite{Rod_Gevrey}, and the fact that the number of multi-indices $\beta \in \mathbb{N}_0^n$ such that $|\beta|=2\ell$ is bounded by $2^{n+2\ell}$ (see the comment in the end of page 11 of \cite{Rod_Gevrey}). The projective case is analogous. The proof is complete.
\end{proof}

\begin{lemma}\label{lemma_poly_GS}
	Assume that $\M=\Mj$ is a non-quasianalytic weight sequence, that is, $(M3)'$ holds. Let $f\in\caS_{\{\M\}}(\R^n)$ (respectively, $f\in\caS_{(\M)}(\R^n)$). Then, for every $\gamma\in\mathbb{N}_0^n$, we have $x^\gamma f \in\caS_{\{\M\}}(\R^n)$ (respectively, $x^\gamma f \in\caS_{(\M)}(\R^n)$) and there exists $C,h>0$ (respectively, for every $h>0$, there exists $C>0$) do not depending on $\gamma$ such that
	\[\sup_{x\in\R^n}|\partial_x^\alpha(x^\gamma f(x))|\leq Ch^{|\alpha|+|\gamma|}M_{|\alpha|}M_{|\gamma|},\quad\forall \alpha\in\mathbb{N}_0^n.\]
\end{lemma}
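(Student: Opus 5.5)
We expand the derivative of the product with the Leibniz rule, estimate each term with the Gelfand--Shilov bounds for $f$, and then collapse the resulting binomial sum using the properties of weight sequences. We treat the Roumieu case; the Beurling case is identical, with $h$ arbitrary.

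By the Leibniz rule,
\[
\partial_x^\alpha\bigl(x^\gamma f(x)\bigr)=\sum_{\delta\leq\min\{\alpha,\gamma\}}\binom{\alpha}{\delta}\frac{\gamma!}{(\gamma-\delta)!}\,x^{\gamma-\delta}\,\partial_x^{\alpha-\delta}f(x).
\]
Since $f\in\caS_{\{\M\}}(\R^n)$, there are $C_f,h_f>0$ with $\sup_x|x^{\beta}\partial_x^{\eta}f(x)|\leq C_fh_f^{|\eta|+|\beta|}M_{|\eta|}M_{|\beta|}$. Each term is therefore bounded by
\[
C_f\binom{\alpha}{\delta}\frac{\gamma!}{(\gamma-\delta)!}\,h_f^{|\alpha|+|\gamma|-2|\delta|}\,M_{|\alpha|-|\delta|}\,M_{|\gamma|-|\delta|}.
\]

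The key point is to absorb the factor $\gamma!/(\gamma-\delta)!\leq|\gamma|!/(|\gamma|-|\delta|)!$, which is of size up to $|\delta|!$. Here we use $(M3)'$. By Proposition \ref{incl_quasi_factorial}, for every $H>0$ there is $C_H$ with $|\delta|!\leq C_H H^{|\delta|}M_{|\delta|}$. We write
\[
\frac{|\gamma|!}{(|\gamma|-|\delta|)!}=\binom{|\gamma|}{|\delta|}\,|\delta|!\leq 2^{|\gamma|}\,C_HH^{|\delta|}M_{|\delta|}.
\]
Proposition \ref{MjMj-k} then gives $M_{|\delta|}M_{|\gamma|-|\delta|}\leq M_{|\gamma|}$. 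We also need to control $M_{|\alpha|-|\delta|}$ by $M_{|\alpha|}$. By Lemma \ref{lemma_mjj_Mj} the ratios $m_j$ are at least $1$, so $\M$ is nondecreasing and $M_{|\alpha|-|\delta|}\leq M_{|\alpha|}$.

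Collecting these bounds, each term is at most
\[
C_fC_H\binom{\alpha}{\delta}\,2^{|\gamma|}H^{|\delta|}h_f^{|\alpha|+|\gamma|-2|\delta|}M_{|\alpha|}M_{|\gamma|}.
\]
Choose $H=1$ (any fixed $H$ suffices) and set $h_1=\max\{h_f,1/h_f\}$, so that $h_f^{-2|\delta|}\leq h_1^{2|\delta|}\leq h_1^{|\alpha|+|\gamma|}$. Finally, summing $\binom{\alpha}{\delta}$ over $\delta\leq\alpha$ gives $2^{|\alpha|}$. This yields the claimed estimate with $C=C_fC_1$ and $h=4h_1^2$, both independent of $\gamma$.

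In the projective case $h_f$ can be taken arbitrarily small. The potentially harmful factor $h_f^{-2|\delta|}$ is then handled by choosing $H$ in Proposition \ref{incl_quasi_factorial} small, namely $H=h_f^2$. This gives
\[
H^{|\delta|}h_f^{|\alpha|+|\gamma|-2|\delta|}=h_f^{|\alpha|+|\gamma|},
\]
so the final constant is $4h_f$, which can be made arbitrarily small.

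The main obstacle is exactly this factorial-type factor $\gamma!/(\gamma-\delta)!$. It is the only place where non-quasianalyticity is used, through Proposition \ref{incl_quasi_factorial}, and in the Beurling case it must be paired correctly with the choice of $H$. The membership $x^\gamma f\in\caS_{[\M]}$ follows by applying the same argument to $x^\beta x^\gamma f$: we replace $\gamma$ by $\beta+\gamma$, use $M_{|\beta|+|\gamma|}\leq AH'^{|\beta|+|\gamma|}M_{|\beta|}M_{|\gamma|}$ if $(M2)$ is available, or more simply just bound with the fixed $\gamma$ absorbed into the constant.
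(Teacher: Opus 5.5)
Your argument is correct and follows the paper's route, which the paper only sketches as ``the same lines as Lemma \ref{est_GS_bracket}, employing the Leibniz rule.'' You expand $\partial_x^\alpha(x^\gamma f)$ by Leibniz and apply the Gelfand--Shilov bounds for $f$. You absorb $\gamma!/(\gamma-\delta)!$ through $(M3)'$ and Proposition \ref{incl_quasi_factorial}, then recombine with Proposition \ref{MjMj-k}. Your choice $H=h_f^2$ in the Beurling case is right, and these steps are exactly what make $C,h$ independent of $\gamma$.

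The one point to tighten is the membership claim $x^\gamma f\in\caS_{[\M]}(\R^n)$. Expand $x^\beta\partial_x^\alpha(x^\gamma f)$ directly by Leibniz. Then bound $M_{|\beta|+|\gamma|}\leq A^{|\gamma|}H^{|\gamma|^2}(H^{|\gamma|})^{|\beta|}M_{|\beta|}$ by iterating $(M2)'$, which is part of the definition of weight sequence. Do not appeal to $(M2)$, which the lemma does not assume, and do not route through $\partial_x^\alpha(x^{\beta+\gamma}f)$, which would also require commuting $x^\beta$ past $\partial_x^\alpha$.
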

\begin{proof}
	The proof follows the same lines of the one of Lemma \ref{est_GS_bracket}, employing the Leibniz rule.
\end{proof}

\subsection{Weight functions}

Following Braun, Meise, and Taylor \cite{BMT1990} and Petzsche and Vogt \cite{PV1984}, we consider a useful class of increasing functions.

\begin{definition}
	We say that an increasing map $\omega:[0,+\infty)\to [0,+\infty)$ is a weight function if it satisfies the following conditions:
	\begin{enumerate}
		\item[$(W1)$] $\displaystyle\sup_{\varepsilon\geq 1}\limsup_{t\to+\infty}\frac{\omega(\varepsilon t)}{\varepsilon\omega(t)} < +\infty$;
		
		\item[$(W2)$] $\displaystyle \limsup_{t\to+\infty}\frac{\omega(t)}{t}<+\infty$;
		
		\item[$(W3)$] $\displaystyle\lim_{t\to +\infty}\dfrac{\omega(t)}{\log(t)}=+\infty;$
		
		\item[$(W4)$] $\displaystyle\int_1^{+\infty}\dfrac{\omega(t)}{t^2}\,\mathrm{d}t<+\infty$.
	\end{enumerate}
\end{definition}

Notice that condition $(W1)$ is exactly the Petzsche and Vogt's condition $(\alpha_1)$, which is stronger than Braun, Meise, and Taylor's condition $(\alpha)$.

Given a non-quasianalytic weight sequence $\M=\Mj$, consider the associated function $\omega_\M$ given by $\omega_\M(0)=0$ and
\begin{equation}\label{weight_seq}
	\omega_\M(t) = \sup_{j\in\N_0}\log\left(\frac{t^j}{M_j}\right),\quad t>0.
\end{equation}

By \cite[Proposition 13]{BMM2007} and \cite[Lemma 5.5]{PV1984}, if $\M$ satisfies also $(M2)$ and $(M4)$, then $\omega_\M(t)$ is a weight function. We also have that $\omega_\M(t)$ is continuous, non-negative, monotonically increasing, and $e^{\omega_\M(t)}$ grows faster than any polynomial.

\begin{proposition}\label{prop_KMR}
	For every $t_1,t_2\geq 0$, we have
	\begin{enumerate}
		\item $e^{-\omega_\M(2t_1)}e^{-\omega_\M(2t_2)}\leq e^{-\omega_\M(t_1+t_2)}$;
		
		\item $e^{\omega_\M(t_1)}e^{\omega_\M(t_2)}\leq Ae^{\omega_\M(H(t_1+t_2))}$, where $A,H\geq 1$ are given by condition $(M2)$.
	\end{enumerate}
\end{proposition}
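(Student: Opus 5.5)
Both items follow directly from the definition \eqref{weight_seq} of $\omega_\M$ as a supremum over $j$. Item (1) uses only the log-convexity of $\M$ through Proposition \ref{MjMj-k}; item (2) uses condition $(M2)$. Exponentiating, set $e^{\omega_\M(t)}=\sup_{j}t^j/M_j$ for $t>0$, with value $1$ at $t=0$ (the $j=0$ term). The boundary cases $t_1=0$ or $t_2=0$ are handled by monotonicity, and by $\omega_\M\ge 0$ with $H\ge1$, $A\ge1$. So assume $t_1,t_2>0$.

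For (1), the claim is equivalent to $e^{\omega_\M(t_1+t_2)}\le e^{\omega_\M(2t_1)}e^{\omega_\M(2t_2)}$. Fix $j\in\N_0$ and expand binomially:
\[
\frac{(t_1+t_2)^j}{M_j}=\sum_{k=0}^j\binom{j}{k}\frac{t_1^k t_2^{j-k}}{M_j}\le \sum_{k=0}^j\binom{j}{k}\frac{t_1^k}{M_k}\frac{t_2^{j-k}}{M_{j-k}} \le 2^{-j}\sum_{k=0}^j\binom{j}{k}\,e^{\omega_\M(2t_1)}e^{\omega_\M(2t_2)}.
\]
The first inequality is Proposition \ref{MjMj-k}, $M_kM_{j-k}\le M_j$. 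For the second, note $(2t_1)^k/M_k\le e^{\omega_\M(2t_1)}$, and similarly for $t_2$, so the prefactor $2^{-k}2^{-(j-k)}=2^{-j}$ appears. The remaining factor is $2^{-j}\sum_k\binom{j}{k}=1$. Taking the supremum over $j$ gives (1).

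For (2), fix $j,k\in\N_0$ and use $(M2)$ with index $j+k$: $M_{j+k}\le AH^{j+k}M_jM_k$. Then
\[
\frac{t_1^j}{M_j}\frac{t_2^k}{M_k}\le A\,\frac{(Ht_1)^j(Ht_2)^k}{M_{j+k}}\le A\,\frac{(H(t_1+t_2))^{j+k}}{M_{j+k}}\le A\,e^{\omega_\M(H(t_1+t_2))},
\]
using $t_i\le t_1+t_2$. Taking the supremum over $j$ and $k$ separately on the left gives $e^{\omega_\M(t_1)}e^{\omega_\M(t_2)}$, which is (2).

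No step is a real obstacle; the only care needed is in (1), choosing the factor $2$ to absorb the binomial sum. This is where the $2t_i$ in the statement comes from.
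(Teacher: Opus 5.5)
Your proof is correct. The paper gives no argument of its own and simply cites \cite[Proposition 1]{KMR_2022}. Your derivation is the standard direct one, and it makes the statement self-contained: for (1) you use the binomial expansion together with $M_kM_{j-k}\le M_j$, and for (2) you apply $(M2)$ at index $j+k$ together with $t_1^jt_2^k\le (t_1+t_2)^{j+k}$.

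Your version also shows which hypotheses each part needs. Item (1) uses only $(M0)$ and $(M1)$. Item (2) genuinely needs the full condition $(M2)$, not merely $(M2)'$: taking $t_1=t_2$ in (2) already forces an $(M2)$-type inequality $2\omega_\M(t)\le\omega_\M(2Ht)+\log A$.
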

\begin{proof}
	See \cite[Proposition 1]{KMR_2022}. 
\end{proof}

As shown in \cite[Section 2.5]{CKP_book}, we can obtain an equivalent definitions for the spaces $\mathcal{S}_{[\M]}(\R^n)$ by means of the associated weight function.

\begin{proposition}\label{equiv_GS}
	For each $h,\varepsilon>0$, consider the Banach space
	\[\mathcal{S}_{\M}(\R^n;h,\varepsilon) = \{f\in C^\infty(\R^n)\,:\, \|f\|_{h,\varepsilon}<+\infty\},\]
	where
	\begin{equation}\label{GS_exp}\|f\|_{h,\varepsilon} = \sup_{x\in\R^n}\sup_{\alpha\in\N_0^n}\frac{e^{\omega_\M(\varepsilon\langle x\rangle)}|\partial_x^\alpha f(x)|}{h^{|\alpha|}M_{|\alpha|}}.\end{equation}
	
	Then, we have
	\[\mathcal{S}_{\{\M\}}(\R^n) = \underset{h,\varepsilon>0}{\mathrm{ind\,lim}}\ \mathcal{S}_{\M}(\R^n;h,\varepsilon),
	\quad
	\mathcal{S}_{(\M)}(\R^n) = \underset{h,\varepsilon>0}{\mathrm{proj\,lim}}\ \mathcal{S}_{\M}(\R^n;h,\varepsilon).\]
\end{proposition}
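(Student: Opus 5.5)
We will show that the two families of Banach norms, the $\|\cdot\|_h$ defining $\caS_{\M}(\R^n;h)$ and the $\|\cdot\|_{h,\varepsilon}$ in \eqref{GS_exp}, dominate each other up to a change of the constants $h,\varepsilon$. Because the inductive (respectively projective) limits run over all $h>0$, or all $h,\varepsilon>0$, two continuous inclusions of this kind give the equality of the limits, both as sets and topologically.

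\emph{Step 1: $\caS_{\M}(\R^n;h)\hookrightarrow \mathcal{S}_{\M}(\R^n;h',\varepsilon)$.}
Let $f\in\caS_{\M}(\R^n;h)$. For every $\beta$ we have $|x^\beta\partial^\alpha f|\le \|f\|_h h^{-|\alpha|-|\beta|}M_{|\alpha|}M_{|\beta|}$. Taking the maximum over $|\beta|=j$ and using $|x|^j\le n^{j/2}\max_{|\beta|=j}|x^\beta|$, we get
\[
  |x|^j|\partial^\alpha f(x)|\le \|f\|_h (\sqrt n/h)^{j} h^{-|\alpha|}M_{|\alpha|}M_j .
\]
Next we pass from $|x|$ to $\langle x\rangle$. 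Since $\langle x\rangle^j\le 2^{j}(1+|x|^j)$, the same estimate holds for $\langle x\rangle^j$ after replacing $\sqrt n/h$ by $2\max(1,\sqrt n/h)$. We then divide by $M_j$, multiply by $\varepsilon^j$ with $\varepsilon=(2\max(1,\sqrt n/h))^{-1}$, and take the supremum over $j$. By \eqref{weight_seq} this gives
\[
  e^{\omega_\M(\varepsilon\langle x\rangle)}|\partial^\alpha f(x)|\le 2\|f\|_h\,h^{-|\alpha|}M_{|\alpha|}.
\]
So $\|f\|_{1/h,\varepsilon}\le 2\|f\|_h$.

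\emph{Step 2: the converse inclusion.}
Let $f\in\mathcal{S}_{\M}(\R^n;h,\varepsilon)$. From $e^{\omega_\M(t)}\ge t^j/M_j$ we obtain $\varepsilon^j\langle x\rangle^j|\partial^\alpha f|\le \|f\|_{h,\varepsilon}h^{|\alpha|}M_{|\alpha|}M_j$. Because $|x^\beta|\le\langle x\rangle^{|\beta|}$, this yields
\[
  |x^\beta\partial^\alpha f|\le \|f\|_{h,\varepsilon}\,\varepsilon^{-|\beta|}h^{|\alpha|}M_{|\alpha|}M_{|\beta|}.
\]
Hence $f\in\caS_{\M}(\R^n;\min(1/h,\varepsilon))$, with norm at most $\|f\|_{h,\varepsilon}$. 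Note the convention mismatch: in $\|\cdot\|_h$ the constant $h$ appears as $h^{|\alpha|}$ in the numerator, while in \eqref{GS_exp} it appears in the denominator. So "small" $h$ in one family corresponds to "large" $h$ in the other, and the indices have to be converted accordingly.

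\emph{Step 3: matching the limits.}
In the Roumieu case, every step of one inductive spectrum embeds continuously into some step of the other, so the two inductive limits coincide topologically. In the Beurling case we must check that the maps between the parameters are cofinal in the correct direction. Given a target pair $(h',\varepsilon)$ for the weight-function norm, Step 1 needs a source index $h$ with $1/h\le h'$ and with $\varepsilon$ at most the value produced by $h$. This works because $\varepsilon$ increases as $h$ increases and $\|\cdot\|_{h,\varepsilon}$ increases in $\varepsilon$. Symmetrically, Step 2 handles a given $h$ by choosing $\varepsilon$ large and the weight-function $h$ small.

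The main obstacle is exactly this bookkeeping of quantifiers and parameter directions in the projective case, together with the $2^j$ and $n^{j/2}$ geometric factors. No condition like $(M2)$ is needed, since those factors are absorbed into $\varepsilon$ and $h$.
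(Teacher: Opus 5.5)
Your Step 2 and the whole Roumieu case are fine, since there any single $\varepsilon$ suffices. The Beurling case has a gap: Step 1 does not supply what Step 3 needs. With your choice $\varepsilon=(2\max(1,\sqrt n/h))^{-1}$ you always have $\varepsilon\le 1/2$, with equality as soon as $h\ge\sqrt n$. So Step 1 controls $\|f\|_{1/h,\varepsilon}$ by $\|f\|_h$ only for $\varepsilon\le 1/2$. Your cofinality claim in Step 3 (``given a target pair $(h',\varepsilon)$ \dots'') therefore fails for every target with $\varepsilon>1/2$.

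Because $\|\cdot\|_{h,\varepsilon}$ increases with $\varepsilon$, the large values of $\varepsilon$ are exactly the ones that matter in the projective limit. As written you only obtain $\caS_{(\M)}(\R^n)\subset\bigcap_{h>0}\mathcal{S}_{\M}(\R^n;h,1/2)$. This is in general strictly larger than $\bigcap_{h,\varepsilon>0}\mathcal{S}_{\M}(\R^n;h,\varepsilon)$. For instance, with $M_j=j!^s$, $s>1$, the function $e^{-s\langle x\rangle^{1/s}}$ lies in the former but not in the latter, since $\omega_\M(t)\le s\,t^{1/s}$ while $\omega_\M(\varepsilon t)$ grows like $s\,\varepsilon^{1/s}t^{1/s}$.

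The cap comes from absorbing the constant term of $\langle x\rangle^j\le 2^j(1+|x|^j)$ into the geometric factor through $\max(1,\sqrt n/h)$. Put it into an $\varepsilon$-dependent constant instead. By \eqref{weight_seq},
\[
\frac{(\varepsilon\langle x\rangle)^j}{M_j}\le\frac{(2\varepsilon)^j}{M_j}+\frac{(2\varepsilon|x|)^j}{M_j}\le e^{\omega_\M(2\varepsilon)}+\frac{(2\varepsilon|x|)^j}{M_j}.
\]
Take $\varepsilon=h/(2\sqrt n)$ and combine this with your bound $|x|^j|\partial^\alpha f|\le\|f\|_h(\sqrt n/h)^j h^{-|\alpha|}M_{|\alpha|}M_j$, together with the case $\beta=0$. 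This gives $\|f\|_{1/h,\,h/(2\sqrt n)}\le\bigl(1+e^{\omega_\M(h/\sqrt n)}\bigr)\|f\|_h$.

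Now $\varepsilon\to\infty$ as $h\to\infty$. For any target $(h',\varepsilon')$ you can take $h\ge\max\{1/h',\,2\sqrt n\,\varepsilon'\}$, and an $h$-dependent constant is harmless in a projective limit. With this modification the rest of your argument goes through, including your remark that $(M2)$ is not needed.
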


\begin{proposition}
	We have that $f\in C^\infty(\R^n)\cap L^1(\R^n)$ belongs to $\caS_{\{\M\}}(\R^n)$ (respectively, $\caS_{(\M)}(\R^n)$) if and only if there exist $C,L>0$ (respectively, for every $L>0$, there exists $C>0$) such that
	\[|f(x)|\leq Ce^{-\omega_\M(\varepsilon x)}\quad \text{and}\quad |\widehat{f}(\xi)|\leq Ce^{-\omega_\M(\varepsilon\xi)},\quad \forall x,\xi\in\R^n.\]
\end{proposition}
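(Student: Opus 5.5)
The plan is to prove the two implications separately. Throughout I take $\omega_\M(\varepsilon x)$ to mean $\omega_\M(\varepsilon|x|)$. I use the characterization of $\caS_{[\M]}(\R^n)$ through the weight function in Proposition \ref{equiv_GS}, together with the properties $(M1)$, $(M2)$ of $\M$ and the estimates of Proposition \ref{prop_KMR}. I treat only the Roumieu case; the Beurling case should follow by the same argument with ``there exist $h,\varepsilon$'' replaced by ``for every $h,\varepsilon$''.

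\emph{Necessity.} Let $f\in\caS_{\{\M\}}(\R^n)$.
\begin{itemize}
\item The bound on $f$ is immediate: take $\alpha=0$ in \eqref{GS_exp} and use that $\omega_\M$ is increasing with $|x|\leq\langle x\rangle$.
\item For $\widehat f$, write $\xi^\beta\widehat f(\xi)=\widehat{D^\beta f}(\xi)$, so that
\[
|\xi^\beta\widehat f(\xi)|\leq\|\partial^\beta f\|_{L^1}\leq \|\langle x\rangle^{2N}\partial^\beta f\|_{L^\infty}\,\|\langle x\rangle^{-2N}\|_{L^1}.
\]
Choosing $2N>n$, Lemma \ref{est_GS_bracket} bounds this by $Ch^{|\beta|}M_{|\beta|}$.
\item Since $|\xi|^j\leq n^{j/2}\max_{|\beta|=j}|\xi^\beta|$, we get $|\xi|^j|\widehat f(\xi)|\leq C(\sqrt n\,h)^jM_j$ for every $j$.
\item Dividing by $(\sqrt n\,h)^jM_j$ and taking the supremum over $j$ gives $e^{\omega_\M(|\xi|/(\sqrt n h))}|\widehat f(\xi)|\leq C$, which is the claim with $\varepsilon=1/(\sqrt n h)$.
\end{itemize}

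\emph{Sufficiency.} First I obtain the two ``pure'' families of estimates.
\begin{itemize}
\item From the bound on $f$ and the definition \eqref{weight_seq}, we get $|x|^j|f(x)|\leq C\varepsilon^{-j}M_j$.
\item From the bound on $\widehat f$, Fourier inversion gives $|\partial^\alpha f(x)|\leq(2\pi)^{-n}\int|\xi|^{|\alpha|}|\widehat f(\xi)|\,\dxi$.
\item To make the integral converge, write $|\xi|^{|\alpha|}\leq\langle\xi\rangle^{-n-1}\,\langle\xi\rangle^{|\alpha|+n+1}$. Then bound $\langle\xi\rangle^{|\alpha|+n+1}$ by $C\varepsilon^{-|\alpha|-n-1}M_{|\alpha|+n+1}\,e^{\omega_\M(\varepsilon\langle\xi\rangle)}$.
\item Next compare $e^{\omega_\M(\varepsilon\langle\xi\rangle)}$ with $e^{\omega_\M(\varepsilon|\xi|)}$. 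For this I use Proposition \ref{prop_KMR}(1) with a smaller $\varepsilon$, which introduces a constant factor.
\item Finally, iterate $(M2)'$ $n+1$ times to get $M_{|\alpha|+n+1}\leq C'H'^{|\alpha|}M_{|\alpha|}$.
\item The result is $\sup_x|\partial^\alpha f(x)|\leq Ch^{|\alpha|}M_{|\alpha|}$ and, likewise, $\sup_x|x^\beta f(x)|\leq Ch^{|\beta|}M_{|\beta|}$.
\end{itemize}

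The remaining and main step is to combine these into the mixed estimate $|x^\beta\partial^\alpha f|\leq Ch^{|\alpha|+|\beta|}M_{|\alpha|}M_{|\beta|}$. This is where I expect the difficulty. I would first pass to $L^2$ norms: the $L^\infty$ bounds above, together with $f\in L^1$ and one extra power of $\langle x\rangle$ or $\langle\xi\rangle$ absorbed by $(M2)'$, give $\|x^{\beta}f\|_{L^2}$ and $\|\xi^\alpha\widehat f\|_{L^2}$ of the same form. Then I would integrate by parts:
\[
\|x^\beta\partial^\alpha f\|_{L^2}^2=\bigl|\bigl\langle \partial^\alpha\bigl(x^{2\beta}\partial^\alpha f\bigr),\,f\bigr\rangle\bigr|.
\]
Expanding with the Leibniz rule produces terms $\binom{\alpha}{\gamma}\frac{(2\beta)!}{(2\beta-\gamma)!}\,\langle x^{2\beta-\gamma}\partial^{2\alpha-\gamma}f,\ f\rangle$. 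I would handle these by an induction on $|\alpha|+|\beta|$ of Kashpirovskii/Chung--Chung--Kim type. The ingredients are:
\begin{itemize}
\item Cauchy--Schwarz on each term;
\item log-convexity $(M1)$, which gives $M_{2k}\le M_{k}^2\,(M_{2k}/M_k^2)$ together with the interpolation $M_{j}M_{k}\le M_{j-1}M_{k+1}$ when $j\leq k$;
\item $(M2)$, to control $M_{2k}$ by $AH^{2k}M_k^2$;
\item Proposition \ref{incl_quasi_factorial}, to absorb the factorial factors $\frac{(2\beta)!}{(2\beta-\gamma)!}$ into $h^{|\gamma|}M_{|\gamma|}$.
\end{itemize}
If the induction closes, this yields $\|x^\beta\partial^\alpha f\|_{L^2}\leq Ch^{|\alpha|+|\beta|}M_{|\alpha|}M_{|\beta|}$. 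A Sobolev embedding, namely bounding the $L^\infty$ norm by $L^2$ norms of at most $n$ further derivatives, with the extra indices again absorbed by $(M2)'$, then gives the pointwise estimate, so $f\in\caS_{\{\M\}}(\R^n)$.

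If this induction becomes too unwieldy, a fallback is to cite the abstract result \cite[Section 2.5]{CKP_book}, which already underlies Proposition \ref{equiv_GS}, and simply verify that its hypotheses are exactly the two pure estimates obtained above.
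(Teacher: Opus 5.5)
The paper does not prove this proposition. It is stated without proof right after Proposition~\ref{equiv_GS} and tacitly rests on the same literature (\cite[Section 2.5]{CKP_book}, i.e.\ the Chung--Chung--Kim characterization adapted to weight sequences). Your fallback is therefore exactly the paper's position, and your main plan is a genuinely self-contained alternative.

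Your necessity argument is correct as written, and so are the two pure estimates in the sufficiency part. For the bound on $f$ you need not invoke Proposition~\ref{equiv_GS}, which is itself only quoted: $|x|^j|f(x)|\le C(\sqrt n\,h)^jM_j$ follows from the definition, exactly as you argue for $\widehat f$. What your route buys over the citation is transparency about hypotheses. The condition $(M2)'$ suffices everywhere except in the mixing step, which is the one place where the full $(M2)$ is really used. This is consistent with the standing assumptions of that subsection, although the proposition itself does not list them.

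The step you left conditional does close, and no induction is needed. Moving the real multiplier across and applying Cauchy--Schwarz gives
\[
\|x^\beta\partial^\alpha f\|_{L^2}^2\le\sum_{\gamma\le\alpha,\ \gamma\le 2\beta}\binom{\alpha}{\gamma}\frac{(2\beta)!}{(2\beta-\gamma)!}\,\|\partial^{2\alpha-\gamma}f\|_{L^2}\,\|x^{2\beta-\gamma}f\|_{L^2},
\]
so only the pure bounds $\|\partial^\nu f\|_{L^2},\ \|x^\nu f\|_{L^2}\le Ch^{|\nu|}M_{|\nu|}$ enter. Set $a=|\alpha|$, $b=|\beta|$ and $g=|\gamma|\le\min\{a,2b\}$.
\begin{itemize}
\item Proposition~\ref{MjMj-k} and $(M2)$ give $M_{2a-g}\le AH^{2a}M_a^2/M_g$ and $M_{2b-g}\le AH^{2b}M_b^2/M_g$.
\item Proposition~\ref{incl_quasi_factorial} gives $\frac{(2\beta)!}{(2\beta-\gamma)!}\le\binom{2b}{g}g!\le 4^bC_LL^gM_g$ for every $L>0$.
\item Taking $L=h^2$ cancels the factor $h^{-2g}$. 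Since $M_g\ge1$, each term is then at most $C'\binom{\alpha}{\gamma}(4H^2h^2)^{a+b}M_a^2M_b^2$.
\item Summing over $\gamma$ yields $\|x^\beta\partial^\alpha f\|_{L^2}\le C''(\sqrt8\,Hh)^{a+b}M_aM_b$, with $h$ arbitrary in the Beurling case.
\end{itemize}
The form of log-convexity actually needed is $M_gM_{j-g}\le M_j$. The inequality $M_{2k}\le M_k^2(M_{2k}/M_k^2)$ in your list is a tautology and does no work.

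One point you should add when writing this out: the integration by parts is not automatic, since a priori you do not know that $x^\beta\partial^\alpha f\in L^2$. Insert a cutoff $\chi(x/R)^2$. All derivatives of $f$ are bounded and $f$ decays rapidly, so every term carrying a derivative of the cutoff is $O(R^{-1})$. Dominated convergence on the remaining terms and Fatou's lemma on the left-hand side then give the inequality above in the limit.
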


\begin{remark}
	In particular, the Fourier transform is an automorphism of $\caS_{[\M]}(\R^n)$ and extends to an automorphism of $\caS_{[\M]}'(\R^n)$.
\end{remark}

Given two non-negative functions $f$ and $g$, we write $f\lesssim g$ if there exists $K,R>0$ such that $f(t) \leq Kg(t)$, for all $|t|\geq R$. If $f\lesssim g\lesssim f$, we write $f\asymp g$. Let $\omega_0$ be such that $\omega_0\asymp\omega_\M$. Then, if we employ $\omega_0$ in the place of $\omega_\M$ in \eqref{GS_exp}, we clearly obtain again the same space with equivalent topology.

Notice that $\omega_\M(t)$ is not smooth in principle. However, assuming also condition $(M4)$, we are able to prove that any such weight function admits
an equivalent smooth one, in addition to further properties.

\begin{proposition}\label{prop_wf_E}
	Let $\M=\Mj$ be a non-quasianalytic weight sequence satisfying $(M2)$ and $(M4)$, and $\omega_\M(t)$ be the associated weight function. Then, there exists a weight function $\omega_0(t)$ such that
	\begin{enumerate}[(i)]
		\item For every $t_1,t_2\geq 0$, we have
		\[\omega_0(t_1+t_2)\leq \omega_0(2t_1)+\omega_0(2t_2).\]
		
		\item There exists $C>0$ such that, for every $t_1,t_2\geq 0$,
		\[\omega_0(t_1)+\omega_0(t_2)\leq \omega_0(2H(t_1+t_2))+C,\]
		where $H\geq 1$ is the constant from condition $(M2)'$;
		
		\item $\omega_0\asymp \omega_\M$;
		
		\item Given $\delta>0$, there exist $C,h>0$ (respectively, for every $h>0$, there exists $C>0$) such that %
		\[|\partial_{t}^{k}\omega_0(t)|\leq Ch^{k}M_{k} \omega_0(t),\]
		for all $k\in\N_0$ and $t>\delta$. In particular, $\omega_0 \in \E_{[\M]}((0,+\infty))$.
	\end{enumerate}
\end{proposition}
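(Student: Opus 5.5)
The plan is to build $\omega_0$ by regularising $\omega_\M$ in a way that keeps subadditivity-type properties and gives derivative estimates of class $[\M]$. The most convenient object is the logarithm of a series. I would set
\[
\omega_0(t)=\log\Big(\sum_{j\in\N_0}\frac{t^{j}}{M_j}\Big),\qquad t\geq 0,
\]
which is finite because $(M3)'$ and Proposition \ref{incl_quasi_factorial} give $M_j\geq c\,H^{-j}j!$ for every $H$, so the series is entire in $t$. (If necessary, I would first replace $t$ by a dilation, or use $t^{2j}/M_{2j}$, to ensure the sum is $\geq 1$ for $t\geq 0$.)

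Property (iii) is standard. From $\sup_j\leq\sum_j$ we get $\omega_\M\leq\omega_0$. For the reverse bound,
\[
\sum_j \frac{t^j}{M_j}\leq\sum_j 2^{-j}\frac{(2t)^j}{M_j}\leq 2e^{\omega_\M(2t)},
\]
so $\omega_0(t)\leq\omega_\M(2t)+\log 2$. By $(M2)$ (see \cite{Komatsu_ultra_1}, Prop.~3.6) we have $2\omega_\M(t)\leq\omega_\M(Ht)+\log A$, and condition $(W1)$ for $\omega_\M$ then yields $\omega_\M(2t)\lesssim\omega_\M(t)$.

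Properties (i) and (ii) come from the series structure.

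For (i), expand $(t_1+t_2)^j=\sum_k\binom jk t_1^kt_2^{j-k}$ and use Proposition \ref{MjMj-k} in the form $M_kM_{j-k}\leq M_j$:
\[
\sum_j\frac{(t_1+t_2)^j}{M_j}\leq\sum_{k,l}\binom{k+l}{k}\frac{t_1^kt_2^l}{M_kM_l}\leq\Big(\sum_k\frac{(2t_1)^k}{M_k}\Big)\Big(\sum_l\frac{(2t_2)^l}{M_l}\Big),
\]
since $\binom{k+l}{k}\leq 2^{k+l}$. This gives (i) exactly, with no additive constant.

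For (ii), multiply the two series and use $(M2)$, namely $M_{k+l}\leq AH^{k+l}M_kM_l$:
\[
\sum_{k,l}\frac{t_1^kt_2^l}{M_kM_l}\leq A\sum_j\frac{H^j}{M_j}\sum_{k+l=j}t_1^kt_2^l\leq A\sum_j\frac{(H(t_1+t_2))^j}{M_j}.
\]
This yields (ii) with $C=\log A$ (even with $H$ in place of $2H$). Finally, $\omega_0$ is a weight function because it is equivalent to $\omega_\M$, which is one by \cite{BMM2007,PV1984}. Only $(W1)$ needs a short check, and it transfers through $\asymp$ together with the bound $\omega_\M(2t)\leq C\omega_\M(t)$.

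The main obstacle is (iv). Write $F(t)=\sum_j t^j/M_j$, so that $\omega_0=\log F$ with $F\geq 1$. I would estimate $F^{(k)}(t)/F(t)$ first:
\[
F^{(k)}(t)=\sum_{j\geq k}\frac{j!}{(j-k)!}\frac{t^{j-k}}{M_j}.
\]
By $(M4)$, $\frac{j!}{(j-k)!\,M_j}\leq\frac{k!}{M_kM_{j-k}}\cdot\frac{M_k}{k!}\cdot\frac{k!}{M_k}$, or more precisely $\frac{M_{j-k}M_k}{(j-k)!\,k!}\leq \frac{M_j}{j!}$ gives $\frac{j!}{(j-k)!M_j}\leq\frac{k!}{M_kM_{j-k}}$. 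Hence $F^{(k)}(t)\leq \frac{k!}{M_k}F(t)$, and since $k!\leq CM_k$ this simplifies to $F^{(k)}\leq C F$. To reach $\log F$, I would use Faà di Bruno for the composition $\log\circ F$, or note that $\log$ is analytic on $[1,\infty)$ and invoke Theorem \ref{inv_analytic}-type estimates. This gives
\[
|\partial_t^k\omega_0(t)|\leq C h^k k!\leq C' h^k M_k,
\]
and for the Beurling case $h$ can be taken arbitrarily small by Proposition \ref{incl_quasi_factorial}. The extra factor $\omega_0(t)$ on the right-hand side is harmless for $t>\delta$, since $\omega_0(t)\geq\omega_0(\delta)>0$ there.

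The delicate point is making the Faà di Bruno bound uniform in $t$. The key is that every quantity $F^{(m)}/F$ is uniformly bounded by $m!/M_m\leq 1$, and the derivatives of $\log$ are evaluated at $F(t)\geq 1$, so the usual analytic-composition bound $C h^k k!$ follows.
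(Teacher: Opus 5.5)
Your proof is correct, and it takes a genuinely different route from the paper's.

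\textbf{The paper's route.} The paper mollifies: it sets $\omega_0(t)=\int_{-1}^{1}\omega_\M(t+1-s)\chi(s)\,\mathrm{d}s$ with a cut-off $\chi\in\D_{[\M]}(\R)$. It then obtains
\begin{itemize}
\item (i) and (ii) by integrating the logarithmic forms of Proposition \ref{prop_KMR} against $\chi$;
\item (iii) from monotonicity and $(W1)$;
\item (iv) by letting all derivatives fall on $\chi$.
\end{itemize}
So in the paper $\omega_0$ inherits exactly the Roumieu or Beurling regularity of the chosen $\chi$. The price is the factor $\omega_\M(t+2)\lesssim\omega_0(t)$, which is where the factor $\omega_0(t)$ and the restriction $t>\delta$ come from.

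\textbf{Your route.} You replace the supremum in \eqref{weight_seq} by the sum, and every property becomes an elementary power-series inequality.
\begin{itemize}
\item (i) uses only $M_kM_l\leq M_{k+l}$ and $\binom{k+l}{k}\leq 2^{k+l}$.
\item (ii) uses $(M2)$ directly, and even gives $H$ in place of $2H$, with $C=\log A$.
\item (iv) uses $(M4)$ in the sharp form $F^{(k)}\leq (k!/M_k)F$. Note that $(M4)$ with $M_0=M_1=1$ already gives $M_k\geq k!$, so in fact $F^{(k)}\leq F$.
\end{itemize}
Fa\`a di Bruno for $\log\circ F$, combined with Lemma \ref{lemma_sum} at $R=1$, then gives $|\omega_0^{(k)}(t)|\leq 2^{k-1}k!$ uniformly on $[0,+\infty)$ for $k\geq 1$.

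\textbf{What each approach buys.} Your $\omega_0$ is real analytic, $e^{\omega_0}$ is entire, and (iv) holds in the Roumieu and Beurling senses simultaneously. You need no ultradifferentiable cut-offs and no Proposition \ref{prop_KMR}. The cost is that $(M4)$ is used essentially in the derivative estimates. In the paper, by contrast, $(M4)$ enters only through the fact that $\omega_\M$ is a weight function.

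\textbf{Minor points.}
\begin{itemize}
\item The precaution about ensuring $F\geq 1$ is unnecessary, since the $j=0$ term is $1/M_0=1$.
\item In (iii), the inequality $2\omega_\M(t)\leq\omega_\M(Ht)+\log A$ points the wrong way and plays no role. The bound $\omega_\M(2t)\lesssim\omega_\M(t)$ comes from $(W1)$ alone, which you and the paper both take from the cited literature.
\item Theorem \ref{inv_analytic} would only give local bounds. It is the Fa\`a di Bruno computation that yields uniformity in $t$.
\end{itemize}
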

\begin{proof}
	Let $\chi\in\D_{[\M]}(\R)$ be a non-negative function such that $\mathrm{supp}(\chi)\subset [-1,1]$ and $\chi\equiv 1$ in $\left[-\frac{1}{2},\frac{1}{2}\right]$. Then, consider
	\[\omega_0(t) = \int_{-1}^1 \omega_\M(t+1-s)\chi(s)\,\mathrm{d}s.\]
	
	Applying the logarithm on Proposition \ref{prop_KMR}, we obtain
	\[\omega_\M(t_1)+\omega_\M(t_2) \leq \omega_\M(H(t_1+t_2))+\log(A)\]
	and
	\[\omega_\M(t_1+t_2) \leq \omega_\M(2t_1)+\omega_\M(2t_2),\]
	for all $t_1,t_2\geq 0$, where $A,H\geq 1$ are given by property $(M2)'$.
	
	\noindent\textit{(i)} Given $t_1,t_2\geq 0$, we have
	\begin{align*}
		\omega_0(t_1+t_2) & = \int_{-1}^1 \omega_\M(t_1+t_2+1-s)\chi(s)\,\mathrm{d}s\\
		& = \int_{-1}^1 \omega_\M\left(t_1+\frac{1-s}{2}+t_2 + \frac{1-s}{2}\right)\chi(s)\,\mathrm{d}s\\
		& \leq \int_{-1}^1\omega_\M(2t_1+1-s)\chi(s)\,\mathrm{d}s + \int_{-1}^1\omega_\M(2t_2+1-s)\chi(s)\,\mathrm{d}s\\
		& = \omega_0(2t_1)+\omega_0(2t_2).
	\end{align*}
	
	\noindent\textit{(ii)} Given $t_1,t_2\geq 0$, we have
	\begin{align*}
		\omega_0(t_1)+\omega_0(t_2) & = \int_{-1}^1 \left( \omega_\M(t_1+1-s)+\omega_\M(t_2+1-s)\right)\chi(s)\,\mathrm{d}s\\
		& \leq \int_{-1}^1 \omega_\M (H(t_1+t_2+2-2s))\chi(s)\,\mathrm{d}s + \log(A)\int_{-1}^1\chi(s)\,\mathrm{d}s\\
		& = \int_{-1}^1 \omega_\M \left(H(t_1+t_2)+\frac{1-s}{2}+\frac{(2H-1)(1-s)}{2}\right)\chi(s)\,\mathrm{d}s + C_0\\
		& \leq \omega_0(2H(t_1+t_2))+C,
	\end{align*}
	where
	\[C_0 = \log(A)\int_{-1}^1\chi(s)\,\mathrm{d}s \quad\text{and}\quad C = \int_{-1}^{1}\omega_\M((2H-1)(1-s))\chi(s)\,\mathrm{d}s + C_0.\]
	
	\noindent\textit{(iii)} We have
	\begin{equation*}
		\omega_\M(t) \leq \omega_\M(t+1-s) = \omega_\M(t+1-s)\chi(s) + \omega_\M(t+1-s)(1-\chi(s)),
	\end{equation*}
	for all $s\in[-1,1]$. Then
	\begin{align*}
		\omega_\M(t) & = \int_{-\frac{1}{2}}^{\frac{1}{2}}\omega_\M(t)\,\mathrm{d}s\\
		& \leq \int_{-\frac{1}{2}}^{\frac{1}{2}}\omega_\M(t+1-s)\chi(s)\,\mathrm{d}s + \int_{-\frac{1}{2}}^{\frac{1}{2}}\omega_\M(t+1-s)(1-\chi(s))\,\mathrm{d}s\\
		& \leq \omega_0(t),
	\end{align*}
	for all $t\geq 0$, noticing that $1-\chi(s)$ vanishes for $|s|\leq \frac{1}{2}$ and $\omega_\M(t+1-s)\chi(s)\geq 0$.
	
	On the other hand, by \textit{(ii)} and property $(W1)$ of $\omega_\M$, we can obtain $C_1,C_2>0$ such that
	\begin{align*}
		\omega_0(t) & = \int_{-1}^1 \omega_\M(t+1-s)\chi(s)\,\mathrm{d}s
		\leq \omega_\M(t+2)\int_{-1}^1\chi(s)\,\mathrm{d}(s)\\
		& \leq \left(\omega_\M(2t)+\omega_\M(4)\right)\int_{-1}^1 \chi(s)\,\mathrm{d}s
		\leq C_1\omega_\M(t)+C_2,
	\end{align*}
	for all $t\geq 0$. Hence $\omega_0\asymp \omega_\M$.

	\noindent\textit{(iv)} Finally, by the properties of the convolution, condition $(W2)$, and the fact that $\chi\in\E_{[\M]}(\R)$, we have that there exist $C_1,h>0$ (respectively, for every $h>0$, there exists $C_1>0$) such that
	\begin{align*}
		|\partial_t^k\omega_0(t)|\leq\ & \int_{-1}^1 \omega_\M(t+1-s)|\partial_s^k\chi(s)|\,\mathrm{d}s
		\leq \omega_\M(t+1)Ch^kM_k\\
		& \leq C_1h^{k}M_k(\omega_\M(t)+\omega_\M(1))
		\leq Ch^kM_k \omega_0(t),
	\end{align*}
	for $t\geq\delta$ and $C>0$ not depending on $k$. Here, we used $\omega_\M\asymp\omega_0$. Finally, since $\omega_0(t)$ is bounded on compact subsets (it is smooth by the regularizing property of the convolution), the previous estimate yields $\omega_0\in\E_{[\M]}((0,+\infty))$. The proof is complete.	
\end{proof}

\begin{remark}
	Assuming that conditions $(M2)$ and $(M4)$ hold, we can assume that the weight function $\omega_\M(t)$ has the properties $(i)-(iii)$ of Proposition \ref{prop_wf_E}. In particular, for each $\varepsilon>0$, the functions $e^{\omega_\M(\varepsilon t)}$ and $e^{-\omega_\M(\varepsilon t)}$ belong to $\E_{[\M]}((0,+\infty))$.
\end{remark}

\subsection{Some technical results}\label{sec:app}
For the convenience of the reader, we recall here below a few results,
that we employ in some of the proofs.

Given $k\in\N_0$, the set $\Delta(k)$ of multi-indices 
$\gamma\in \mathbb{N}_0^k $ is defined by
\begin{equation}\label{Delta_k}
	\Delta(k) = \left\{\gamma=(\gamma_1,\dots,\gamma_k)\in\mathbb{N}_0^k \, : \, \sum_{\ell=1}^{k}\ell\gamma_\ell=k\right\}.
\end{equation}

\begin{lemma}[Fa\`a di Bruno Formula]\label{faa}
	Given $f\in C^\infty(\mathbb{R})$ and $k\in\mathbb{N}$, we have that
	\begin{equation*}
		\dfrac{d^k}{dt^k}e^{f(t)}=
		e^{f(t)}\displaystyle\sum_{\gamma\in\Delta(k)}\dfrac{k!}{\gamma!}\prod_{\ell=1}^{k}\left(\dfrac{1}{\ell!}\dfrac{d^\ell}{dt^\ell}f(t)\right)^{\gamma_\ell}.
	\end{equation*}
\end{lemma}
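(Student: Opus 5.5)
Since $e^{f}$ is smooth, the formula is a statement about its $k$-th derivative, and I will argue by induction on $k\in\N$. The ingredient is the classical multivariate form of Fa\`a di Bruno's formula for $g\circ f$, specialised to $g=\exp$. Writing $D=\frac{d}{dt}$ and $f_\ell=\frac{1}{\ell!}D^\ell f$, I set
\[
P_k(t)=\sum_{\gamma\in\Delta(k)}\frac{k!}{\gamma!}\prod_{\ell=1}^{k} f_\ell(t)^{\gamma_\ell}.
\]
Then $D^k e^{f}=e^{f}P_k$ is the claim. For $k=1$ we have $\Delta(1)=\{(1)\}$, so $P_1=f'$, which is immediate.

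For the inductive step, differentiating $e^{f}P_k$ gives
\[
D^{k+1}e^{f}=e^{f}\bigl(f'P_k+P_k'\bigr),
\]
so it suffices to prove the recursion $P_{k+1}=f'P_k+DP_k$ at the level of coefficients. The derivative of a monomial $\prod_\ell f_\ell^{\gamma_\ell}$ is a sum over $\ell$ with $\gamma_\ell\ge 1$. The term for a given $\ell$ lowers $\gamma_\ell$ by one and raises $\gamma_{\ell+1}$ by one, with the factor $\gamma_\ell (\ell+1)$, because $Df_\ell=(\ell+1)f_{\ell+1}$. Multiplication by $f'=f_1$ raises $\gamma_1$ by one. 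Each of these moves sends $\Delta(k)$ into $\Delta(k+1)$, since $\sum_\ell \ell\gamma_\ell$ increases by exactly one. I then fix $\delta\in\Delta(k+1)$ and collect its coefficient. From the $f_1$ move (if $\delta_1\ge1$) the predecessor $\delta-e_1$ contributes $k!/(\delta-e_1)!=k!\,\delta_1/\delta!$. From the move $\ell\to\ell+1$ (if $\delta_{\ell+1}\ge1$) the predecessor $\gamma=\delta+e_\ell-e_{\ell+1}$ contributes
\[
\frac{k!}{\gamma!}\,\gamma_\ell(\ell+1)=\frac{k!}{\delta!}\,\delta_{\ell+1}(\ell+1).
\]

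Summing these contributions gives
\[
\frac{k!}{\delta!}\Bigl(\delta_1+\sum_{\ell\ge1}(\ell+1)\delta_{\ell+1}\Bigr)=\frac{k!}{\delta!}\sum_{m\ge1}m\,\delta_m=\frac{k!}{\delta!}(k+1)=\frac{(k+1)!}{\delta!},
\]
which is the required coefficient. The contributions vanish automatically when the relevant $\delta_m=0$, so no case split is needed.

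The main point to get right is this bookkeeping: the factor $(\ell+1)$ coming from the normalisation $1/\ell!$ combines with $\gamma_\ell=\delta_\ell+1$ so that the sum reduces to the defining constraint $\sum_m m\delta_m=k+1$ of $\Delta(k+1)$. Alternatively, the result can be cited directly from the standard references (e.g. (1.2.x) in \cite{Rod_Gevrey}).
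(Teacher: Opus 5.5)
Your induction is correct. The recursion $P_{k+1}=f_1P_k+DP_k$, with $Df_\ell=(\ell+1)f_{\ell+1}$, is the right one. The two predecessor computations, $\frac{k!}{(\delta-e_1)!}=\frac{k!\,\delta_1}{\delta!}$ and $\frac{k!}{\gamma!}\gamma_\ell(\ell+1)=\frac{k!}{\delta!}(\ell+1)\delta_{\ell+1}$, are accurate. The collapse of their sum to $\frac{k!}{\delta!}\sum_m m\,\delta_m=\frac{(k+1)!}{\delta!}$ closes the step.

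The paper, however, gives no proof at all. It lists the formula among recalled technical results, implicitly as the classical Fa\`a di Bruno formula for $g\circ f$ specialised to $g=\exp$, so that every factor $g^{(|\gamma|)}(f)$ becomes $e^{f}$. Your route is therefore different in kind: a self-contained elementary verification rather than an appeal to the literature. It also checks exactly the normalization used here, namely $k!/\gamma!$ together with the $1/\ell!$ inside the product. That normalization matters later, when the formula is combined with Lemma~\ref{lemma_sum} and Lemma~\ref{lemma_prod_M} in the characterization of Gelfand--Shilov spaces on manifolds with ends. The citation route buys only brevity.

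Two cosmetic points. First, your opening sentence invoking the ``multivariate'' form of the formula is never used, and $f$ is scalar here, so you can drop it; the induction stands on its own. Second, the coefficient comparison is just a regrouping of a finite sum of monomials, so you need no linear independence of the functions $\prod_\ell f_\ell^{\gamma_\ell}$.
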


\begin{lemma}\label{lemma_sum}
	Given $k\in\mathbb{N}_0$ and $R>0$, we have that
	\begin{equation*} 
		\sum_{\gamma\in\Delta(k)}\dfrac{|\gamma|!}{\gamma!}R^{|\gamma|}=R(1+R)^{k-1},
	\end{equation*}
	where $|\gamma|=\gamma_1+\cdots+\gamma_k$.
\end{lemma}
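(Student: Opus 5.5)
The plan is a direct combinatorial count: read $|\gamma|!/\gamma!$ as the number of ordered compositions of $k$ with a given multiset of parts, then sum by the number of parts. I assume $k\geq 1$, as in the intended use with Lemma \ref{faa}. For $k=0$ the set $\Delta(0)$ contains only the empty multi-index, so the left-hand side equals $1$ and the identity must be read with this convention.

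First, fix $\gamma\in\Delta(k)$ and set $m=|\gamma|$. I will interpret $\gamma$ as a multiset of positive integers in which $\ell$ occurs $\gamma_\ell$ times. Since $\sum_\ell \ell\gamma_\ell=k$, this multiset has $m$ elements summing to $k$. The multinomial coefficient $m!/\gamma!=m!/(\gamma_1!\cdots\gamma_k!)$ counts the distinct orderings of this multiset. These orderings are exactly the ordered compositions $(c_1,\dots,c_m)$ of $k$ into $m$ positive parts in which the value $\ell$ appears $\gamma_\ell$ times.

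Second, I group the left-hand side by $m=|\gamma|$. Each composition of $k$ into $m$ positive parts corresponds to exactly one $\gamma\in\Delta(k)$ with $|\gamma|=m$. Hence $\sum_{\gamma\in\Delta(k),\,|\gamma|=m}|\gamma|!/\gamma!$ equals the number of compositions of $k$ into $m$ positive parts. By stars and bars, choosing $m-1$ of the $k-1$ gaps between $k$ units, this number is $\binom{k-1}{m-1}$.

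Third, summing over $m$ and applying the binomial theorem gives
\[
\sum_{\gamma\in\Delta(k)}\frac{|\gamma|!}{\gamma!}R^{|\gamma|}=\sum_{m=1}^{k}\binom{k-1}{m-1}R^m=R\sum_{j=0}^{k-1}\binom{k-1}{j}R^j=R(1+R)^{k-1}.
\]

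As a cross-check, I would also verify the identity with generating functions. By the multinomial theorem, $\sum_{k}t^k\sum_{\gamma\in\Delta(k)}\frac{|\gamma|!}{\gamma!}R^{|\gamma|}=\sum_{m\geq0}\bigl(R\,t/(1-t)\bigr)^m=(1-t)/(1-(1+R)t)$. For $k\geq1$, the coefficient of $t^k$ in this series is $(1+R)^k-(1+R)^{k-1}=R(1+R)^{k-1}$. The only point needing care is the bijection between elements of $\Delta(k)$ weighted by $|\gamma|!/\gamma!$ and ordered compositions; I expect it to be routine.
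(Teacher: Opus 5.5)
Your proof is correct, and your main argument takes a different route from the one the paper relies on.

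\textbf{Comparison with the cited proof.} The paper gives no proof of its own; it cites Krantz--Parks, Lemma 1.4.1. There the identity comes from Fa\`a di Bruno's formula applied at $t=0$ to $f(g(t))$, with $f(s)=(1-Rs)^{-1}$ and $g(t)=t/(1-t)$. Since $f^{(j)}(0)=j!R^j$ and $g^{(\ell)}(0)/\ell!=1$, the $k$-th Taylor coefficient of $f\circ g=(1-t)/(1-(1+R)t)$ is exactly the left-hand side, and it equals $R(1+R)^{k-1}$ for $k\ge1$. This is your generating-function ``cross-check'' written in derivative form.

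Your main argument is instead bijective. You identify $\sum_{|\gamma|=m}|\gamma|!/\gamma!$ with the number $\binom{k-1}{m-1}$ of compositions of $k$ into $m$ positive parts, and then apply the binomial theorem. This version is fully self-contained and explains the coefficient of each power $R^m$. It also shows the identity is a polynomial identity in $R$, so the hypothesis $R>0$ plays no role. The Fa\`a di Bruno route is shorter once Lemma \ref{faa} is available. It also matches how the paper uses the lemma, namely to bound the Fa\`a di Bruno sums arising from $t$-derivatives of $e^{\pm\omega_\M(\varepsilon t)}$.

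\textbf{The case $k=0$.} You are right to exclude $k=0$, but say so plainly rather than as a ``convention''. The set $\Delta(0)$ contains only the empty multi-index, so the left side equals $1$ while the right side equals $R/(1+R)$. Hence the lemma as stated, with $k\in\mathbb{N}_0$, is false at $k=0$ and should be restricted to $k\ge1$, as in Krantz--Parks. Your generating function makes this visible, since the constant coefficient of $(1-t)/(1-(1+R)t)$ is $1$. Nothing later in the paper breaks, because it only uses the consequence $\sum_{\gamma\in\Delta(k)}\frac{|\gamma|!}{\gamma!}C^{|\gamma|}\le(1+C)^k$, which also holds for $k=0$.
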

\begin{proof}
	See \cite[Lemma 1.4.1]{krantz}.
\end{proof}

\begin{lemma}\label{lemma_prod_M}
	Let $\M=\Mj$ be a sequence of positive numbers satisfying $(M1)$. For each $\gamma\in\Delta(k)$, we have
	\begin{equation*}M_{|\gamma|}M_1^{\gamma_1}\cdots M_k^{\gamma_k} \leq M_k.\end{equation*}
\end{lemma}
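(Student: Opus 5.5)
We argue by induction on $|\gamma|$, using only Proposition \ref{MjMj-k}, which holds for any sequence with $(M0)$ and $(M1)$: $M_aM_b\le M_{a+b}$. Assuming also $M_0=1$ (as in $(M0)$), the key is to absorb the factor $M_{|\gamma|}$ together with the factors $M_\ell^{\gamma_\ell}$, so that the total index adds up to exactly $k=\sum_\ell \ell\gamma_\ell$.

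The naive route fails. Iterating superadditivity gives $\prod_\ell M_\ell^{\gamma_\ell}\le M_{\sum_\ell \ell\gamma_\ell}=M_k$, and then $M_{|\gamma|}$ is an extra factor larger than $1$. So $M_{|\gamma|}$ must be paired with something. The observation is that $|\gamma|=\sum_\ell\gamma_\ell$ counts the factors, and each factor $M_\ell$ with $\ell\ge1$ can be traded for $M_{\ell-1}$ at the cost of a single unit of index. Hence we aim for
\[
M_{|\gamma|}\prod_{\ell=1}^k M_\ell^{\gamma_\ell}\;\le\; M_k,
\]
and it suffices to show $M_{|\gamma|}\prod_\ell M_\ell^{\gamma_\ell}\le M_{\sum_\ell \ell\gamma_\ell}$ by a log-convexity argument.

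Set $m_j=M_j/M_{j-1}$, which is increasing by Lemma \ref{lemma_mjj_Mj}. Then $M_\ell=m_1\cdots m_\ell$, and the left-hand side is a product of the following factors:
\begin{itemize}
\item $m_1\cdots m_{|\gamma|}$, coming from $M_{|\gamma|}$;
\item for each of the $|\gamma|$ blocks of length $\ell$, the factors $m_1\cdots m_\ell$.
\end{itemize}
Counting, this is $|\gamma|+k$ factors, whereas $M_k=m_1\cdots m_k$ has only $k$ factors. So the $m$'s do not match one-to-one, and pure counting is insufficient.

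We therefore use $m_1=1$, from $(M0)$. Each block $m_1\cdots m_\ell$ begins with $m_1=1$, so it equals $m_2\cdots m_\ell$, i.e.\ $\ell-1$ nontrivial factors. The total number of nontrivial block factors is $k-|\gamma|$, and $M_{|\gamma|}$ contributes $|\gamma|$ more, for a total of $k$. Now sort all these factors. The factors $m_2,\dots,m_\ell$ inside a block can be matched injectively with distinct indices in $\{1,\dots,k\}$, each mapped to an index at least as large. Concretely, assign to $M_{|\gamma|}$ the indices $1,\dots,|\gamma|$, and place the blocks consecutively in $\{|\gamma|+1,\dots,k\}$, each block receiving $\ell-1$ slots. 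A block factor $m_i$ with $i\le\ell$ lands in a slot of index $\ge |\gamma|+i-1\ge i$, since $|\gamma|\ge1$. Monotonicity of $m_j$ then gives that the product is at most $m_1\cdots m_k=M_k$.

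The main obstacle is making this slot assignment rigorous. To keep it clean, we will phrase it as an induction on the number of blocks. For a block of length $\ell$ appended to a configuration of total index $r$ with $s$ blocks, we use
\[
M_{s+1}M_\ell\le M_sM_{r+\ell}\,/\,M_r\cdot(\text{const }1).
\]
That is, we show $\dfrac{M_{s+1}}{M_s}\cdot\dfrac{M_\ell}{1}\le\dfrac{M_{r+\ell}}{M_r}$. This amounts to $m_{s+1}\,m_2\cdots m_\ell\le m_{r+1}\cdots m_{r+\ell}$, which holds termwise because $s+1\le r+1$ (as $s\le r$) and $i\le r+i$. The base case is $s=0$, $r=0$, where $M_0=1$.
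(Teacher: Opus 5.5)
Your proof is correct, and it takes a different route from the paper, which gives no argument at all and only refers to \cite[Proposition 4.4]{bier}.

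\emph{Your approach.} You write $M_j=m_1\cdots m_j$, where $m_j=M_j/M_{j-1}$ is non-decreasing by $(M1)$, and you drop the trivial factor $m_1=1$ from each block. You then either match the remaining $k$ factors injectively to slots in $\{1,\dots,k\}$ of index at least their own, or, equivalently, you prove by induction on the number $s$ of blocks that $M_s\prod_{i=1}^{s}M_{\ell_i}\le M_r$ for $\ell_i\ge1$ and $r=\sum_i\ell_i$. The inductive step reduces to the termwise comparison $m_{s+1}m_2\cdots m_\ell\le m_{r+1}m_{r+2}\cdots m_{r+\ell}$, which holds because $s\le r$.

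\emph{What each buys.} The citation is shorter. Your version makes the hypotheses transparent: it shows that a normalization such as $(M0)$ is genuinely needed, which the statement as printed (positive numbers satisfying $(M1)$ only) omits. For example, the constant sequence $M_j\equiv c>1$ satisfies $(M1)$, but for $k=1$ it gives $M_1^2=c^2>c=M_1$. Where the paper actually uses the lemma, namely for $\{M_j/j!\}_{j\in\N_0}$, one has $M_0/0!=M_1/1!=1$, so your extra assumption is harmless there.

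\emph{Cosmetic points.} Your opening claim that you use only Proposition \ref{MjMj-k} does not describe your final argument, which instead uses the monotonicity of $m_j$ and $m_1=1$. The displayed inequality containing ``$(\text{const }1)$'' should simply be replaced by the inductive claim $M_s\prod_{i=1}^{s}M_{\ell_i}\le M_r$ stated above.
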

\begin{proof}
	See \cite[Proposition 4.4]{bier}.
\end{proof}

	
\section{A global calculus of SG-symbols of ultradifferential type on $\mathbb{R}^n$}\label{sec:SGABRn}

Let $\A=\Aj$ and $\B=\Bj$ be two non-quasianalytic weight sequences. Given $m,\mu\in\R$ and $h>0$, we denote by $\SG_{\A,\B}^{m,\mu}(\R^n;h)$ the space of all $p\in C^\infty(\R^{2n})$ such that
\[\|p\|_h = \sup_{\alpha,\beta\in\N_0^n}\sup_{x,\xi\in\R^n} h^{|\alpha|+|\beta|} \frac{\langle \xi \rangle^{-\mu+|\alpha|}\langle x\rangle^{-m+|\beta|}}{A_{|\alpha|} B_{|\beta|}}|\partial_x^\beta\partial_\xi^\alpha p(x,\xi)|<+\infty,\]
which is a Banach space with the norm $\|\cdot\|_h$. Then, we define
\[{\SG}_{\{\A,\B\}}^{m,\mu}(\R^n) = \underset{h>0}{\mathrm{ind\,lim}}\ {\SG}_{\A,\B}^{m,\mu}(\R^n;h),
\quad 
{\SG}_{(\A,\B)}^{m,\mu}(\R^n) = \underset{h>0}{\mathrm{proj\,lim}}\ {\SG}_{\A,\B}^{m,\mu}(\R^n;h).\]

In a completely similar fashion, it is possible to define the wider symbol class $\ST_{[\A,\B]}(\R^n)$ (cf. \cite[Ch. 1, \S 1.1]{cordes}, of which 
${\SG}_{[\A,\B]}^{m,\mu}(\R^n)$ is a subclass, see Section \ref{subs:STclasses} below.

\begin{theorem}\label{cont_SG}
	Let $m,\mu\in\R$. Then, a symbol $p\in {\SG}_{[\A,\B]}^{m,\mu}(\R^n)$ defines a continuous operator $\mathrm{Op}(p):\mathcal{S}_{[\M]}(\R^n)\to \mathcal{S}_{[\M]}(\R^n)$ via the usual left-quantization
	\begin{equation}\label{Op_int} 
	\mathrm{Op}(p) u(x) = \int_{\R^n} e^{i\xi\cdot x}p(x,\xi)\widehat{u}(\xi)\,\dbar \xi,\quad u \in \mathcal{S}_{[\M]}(\R^n),
	\end{equation}
	where $\dbar\xi=(2\pi)^{-\frac{n}{2}}\mathrm{d}\xi$.
\end{theorem}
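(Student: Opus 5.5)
The plan is to verify the Gelfand--Shilov seminorm estimates for $\Op(p)u$ directly, working with the characterization by $\sup|x^\beta\partial_x^\alpha f|$. Throughout I assume the standing hypotheses that make the statement meaningful, namely $\A\preceq\M$ and $\B\preceq\M$, with $\M$ satisfying $(M2)$ (in particular $(M2)'$). The conventions are as follows. In the Roumieu case, $u\in\caS_{\M}(\R^n;h_0)$ for some $h_0$, and we produce a step $h_1$ with $\|\Op(p)u\|_{h_1}\le C\|u\|_{h_0}$. In the Beurling case, $h_1$ is given and we find $h_0$ and $C$. In both cases, continuity then follows from the structure of the inductive and projective limits. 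Since the Fourier transform is an automorphism of $\caS_{[\M]}(\R^n)$, $\widehat u\in\caS_{[\M]}(\R^n)$ with a controlled step $h_0'$.

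\textbf{Step 1 (differentiation).} By the Leibniz rule,
\[
\partial_x^\alpha \Op(p)u(x)=\sum_{\gamma\le\alpha}\binom{\alpha}{\gamma}\int e^{ix\cdot\xi}(i\xi)^{\gamma}\,\partial_x^{\alpha-\gamma}p(x,\xi)\,\widehat u(\xi)\,\dbar\xi .
\]

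\textbf{Step 2 (decay in $x$).} To gain decay in $x$, multiply by $x^\beta$ and use $x^\beta e^{ix\cdot\xi}=D_\xi^\beta e^{ix\cdot\xi}$. Integrating by parts, $\xi$-derivatives fall on three factors: on $\partial_x^{\alpha-\gamma}p$, on $\xi^\gamma$, and on $\widehat u$. The $\xi$-derivatives of $p$ produce factors $h^{-|\delta|}A_{|\delta|}\langle\xi\rangle^{\mu-|\delta|}$. The $x$-derivatives of $p$ produce $B_{|\alpha-\gamma|}\langle x\rangle^{m-|\alpha-\gamma|}\le B_{|\alpha-\gamma|}\langle x\rangle^{m_+}$. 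For the factor $\xi^\gamma\partial_\xi^{\varepsilon}\widehat u$, I use Lemma \ref{lemma_poly_GS} together with Lemma \ref{est_GS_bracket}. Choosing $N$ with $2N\ge \mu_++n+1$, the weight $\langle\xi\rangle^{\mu}$ is absorbed and integrability in $\xi$ is guaranteed. The resulting bound is $C h^{-|\gamma|-|\varepsilon|}M_{|\gamma|}M_{|\varepsilon|}\langle\xi\rangle^{-n-1}$. The combinatorial factors (binomials and $\xi$-derivatives of the monomial $\xi^\gamma$) are at most geometric in $|\alpha|+|\beta|$, so they can be absorbed into the constants $h$.

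\textbf{Step 3 (merging the weight sequences).} Next I collect the factorial-type terms. From $\A,\B\preceq\M$ one gets $A_{|\delta|}\le CH^{|\delta|}M_{|\delta|}$ and similarly for $B$. Proposition \ref{MjMj-k} then gives $M_{|\gamma|}M_{|\alpha-\gamma|}\le M_{|\alpha|}$ and $M_{|\delta|}M_{|\varepsilon|}\le M_{|\beta|}$ whenever $\delta+\varepsilon\le\beta$. The outcome is
\[
|x^\beta\partial_x^\alpha\Op(p)u(x)|\le C\,h_1^{-|\alpha|-|\beta|}M_{|\alpha|}M_{|\beta|}\langle x\rangle^{m_+}\|u\|_{h_0}.
\]

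\textbf{Step 4 (removing $\langle x\rangle^{m_+}$) --- the expected main obstacle.} It remains to eliminate the residual growth $\langle x\rangle^{m_+}$ coming from the exit order of $p$. My plan is to apply the bound of Step 3 with $\beta$ replaced by the multi-indices $\beta+\beta'$ with $|\beta'|\le 2N'$, where $2N'\ge m_+$. This controls $\langle x\rangle^{2N'}|x^\beta\partial^\alpha\Op(p)u|$ by a sum of such terms. The cost is replacing $M_{|\beta|}$ by $M_{|\beta|+2N'}$. Iterating $(M2)'$, $M_{j+2N'}\le A^{2N'}H^{2N'j+N'(2N'-1)}M_j$. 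This costs a factor $H^{2N'|\beta|}$, which is again geometric in $|\beta|$ and can be absorbed by adjusting $h$. Dividing by $\langle x\rangle^{2N'}\ge\langle x\rangle^{m_+}$ then gives the desired $\caS_{\M}(\R^n;h_1)$ estimate.

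The delicate point throughout is bookkeeping of the $h$-quantifiers. In the Beurling case, every geometric loss must be compensated by choosing $h_0$ large, and the constant $H$ from $(M2)'$ only enters as a fixed geometric loss, so this is consistent. I would double-check exactly this compatibility at the end.
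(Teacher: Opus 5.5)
Your proposal is correct and follows essentially the paper's argument: Leibniz rule in $x$, $x^\beta e^{ix\cdot\xi}=D_\xi^\beta e^{ix\cdot\xi}$ with integration by parts, Lemmas \ref{est_GS_bracket} and \ref{lemma_poly_GS} for the $\widehat u$ factor, and Proposition \ref{MjMj-k} to merge the sequences. Your Step 4 bootstrap is the paper's extra integration by parts with $(1-\Delta_\xi)^N$ in another form, since $\langle x\rangle^{2N}e^{ix\cdot\xi}=(1-\Delta_\xi)^N e^{ix\cdot\xi}$, and it only uses $(M2)'$, not $(M2)$, because $N'$ is fixed.

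One correction: the $\xi$-derivatives of $\xi^\gamma$ produce falling factorials $\gamma!/(\gamma-\kappa)!$, which are not geometric in $|\alpha|+|\beta|$. They can only be absorbed via $\{j!\}_{j\in\N_0}\prec\M$ (Proposition \ref{incl_quasi_factorial}, i.e.\ non-quasianalyticity), which is exactly why Lemma \ref{lemma_poly_GS} should be applied to $\partial_\xi^{\varepsilon}(\xi^\gamma\widehat u)$ as a single factor, as the paper does.
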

\begin{proof}
	Let $\mathscr{B}\subset\caS_{\{\M\}}(\R^n)$ be a bounded subset, that is, there exist $C,h>0$ such that
	\[\sup_{x\in\R^n} |x^\beta\partial_x^\alpha u(x)| \leq Ch^{|\alpha|+|\beta|}M_{|\alpha|}M_{|\beta|},\]
	for all $\alpha,\beta\in\N_0^n$ and $u\in\mathscr{B}$. Then, given $u\in \mathscr{B}$, Leibniz rule and integration by parts yield
	\begin{align*}
		x^\beta D_x^\alpha[\mathrm{Op}(p)u](x) & = x^\beta\sum_{\gamma\leq\alpha}\binom{\alpha}{\gamma}\int_{\R^n} (D_x^{\gamma}e^{i\xi\cdot x})(D_x^{\alpha-\gamma}p(x,\xi))\widehat{u}(\xi)\,\dbar\xi\\
		& = \sum_{\gamma\leq\alpha}\binom{\alpha}{\gamma}\int_{\R^n} (D_\xi^{\beta}e^{i\xi\cdot x})\xi^{\gamma}(D_x^{\alpha-\gamma}p(x,\xi))\widehat{u}(\xi)\,\dbar\xi\\
		& = (-1)^{|\alpha|}\sum_{\gamma\leq\alpha}\binom{\alpha}{\gamma} \sum_{\delta\leq\beta}\binom{\beta}{\delta}\int_{\R^n} e^{i\xi\cdot x} (D_x^{\alpha-\gamma}D_\xi^{\beta-\delta}p(x,\xi)) D_\xi^{\delta}(\xi^\gamma\widehat{u}(\xi))\,\dbar\xi.
	\end{align*}
	
	Now, notice that, for every $N\in\N_0$, we have
	\begin{align*}
		\int_{\R^n} e^{i\xi\cdot x} (D_x^{\alpha-\gamma}D_\xi^{\beta-\delta}&p(x,\xi)) D_\xi^{\delta}(\xi^\gamma\widehat{u}(\xi))\,\dbar\xi\\
		=\ & \langle x\rangle^{-2N}\int_{\R^n} e^{i\xi\cdot x}(1-\Delta_\xi)^N[(D_x^{\alpha-\gamma}D_\xi^{\beta-\delta}p(x,\xi)) D_\xi^{\delta}(\xi^\gamma\widehat{u}(\xi))]\,\dbar\xi.
	\end{align*}
	
	Given $\ell\in\N_0$, we set
	\[\Pi(\ell)=\{\lambda=(\lambda_1,\dots,\lambda_n)\in\N_0^n\,:\, \lambda_1+\cdots+\lambda_n = \ell\}.\]
	By the Lebniz rule and the multinomial theorem, given $f,g\in C^\infty(\R^n)$, we have
	\begin{align*}
		(1-\Delta_\xi)^N(fg) & = \sum_{\ell=0}^N\binom{N}{\ell}(-1)^\ell\Delta_\xi^\ell(fg)
		 = \sum_{\ell=0}^N\binom{N}{\ell}(-1)^\ell (\partial_{\xi_1}^2+\cdots+\partial_{\xi_n}^2)^\ell(fg)\\
		&= \sum_{\ell=0}^N\binom{N}{\ell}(-1)^\ell \sum_{\lambda\in\Pi(\ell)}\dfrac{\ell!}{\lambda_1!\cdots\lambda_n!}\partial_\xi^{2\lambda}(fg)\\
		& = \sum_{\ell=0}^N\binom{N}{\ell}(-1)^\ell \sum_{\lambda\in\Pi(\ell)}\dfrac{\ell!}{\lambda_1!\cdots\lambda_n!} \sum_{\nu\leq 2\lambda}\binom{2\lambda}{\nu} (\partial_{\xi}^{\nu}f)(\partial_{\xi}^{2\lambda-\nu} g).
	\end{align*}
	Hence, for any $N\in\N_0$, we obtain
	\begin{equation*}
		x^\beta\partial_x^\alpha[\mathrm{Op}(p)u](x) = \sum \int_{\R^n} e^{i\xi\cdot x}\langle\xi\rangle^{-2N} (D_x^{\alpha-\gamma}\partial_\xi^{2\lambda-\nu}D_\xi^{\beta-\delta}p(x,\xi))(\langle\xi\rangle^{2N}\partial_\xi^\nu D_\xi^\delta (\xi^\gamma\widehat{u}(\xi)))\,\dbar\xi,
	\end{equation*}
	where the summation symbol above denotes
	\[(-1)^{|\alpha|}\sum_{\gamma\leq\alpha}\binom{\alpha}{\gamma} \sum_{\delta\leq\beta}\binom{\beta}{\delta}\sum_{\ell=0}^N\binom{N}{\ell}(-1)^\ell \sum_{\lambda\in\Pi(\ell)}\dfrac{\ell!}{\lambda_1!\cdots\lambda_n!} \sum_{\nu\leq 2\lambda}\binom{2\lambda}{\nu}.\]
	By the multinomial theorem, one can easily check that
	\begin{equation}\label{bound_sum_binom}
		\sum_{\gamma\leq\alpha}\binom{\alpha}{\gamma} \sum_{\delta\leq\beta}\binom{\beta}{\delta}\sum_{\ell=0}^N\binom{N}{\ell} \sum_{\lambda\in\Pi(\ell)}\dfrac{\ell!}{\lambda_1!\cdots\lambda_n!} \sum_{\nu\leq 2\lambda}\binom{2\lambda}{\nu} \leq (8n)^N 2^{|\alpha|+|\beta|}.
	\end{equation}
	Since $p\in {\SG}_{\{\A,\B\}}^{m,\mu}(\R^n)$ and $\A,\B\preceq\M$, there exist $C_p,h_p>0$ such that
	\begin{equation}\label{est_p}
		|D_x^{\alpha-\gamma}\partial_\xi^{2\lambda-\nu}D_\xi^{\beta-\delta}p(x,\xi)| \leq C_ph_p^{|\alpha-\gamma|+|\beta-\delta|}h_p^{2N}M_{|\alpha-\gamma|}M_{|\beta-\delta|}M_{2N}\langle\xi\rangle^{\mu}\langle x\rangle^{m}.
	\end{equation}
	Combining Lemmas \ref{est_GS_bracket} and \ref{lemma_poly_GS}, we obtain constants $C_N,h_N>0$ depending on $N$ such that
	\begin{equation}\label{est_xi_u}
		|\langle\xi\rangle^{2N}\partial_\xi^\nu D_\xi^\delta (\xi^\gamma\widehat{u}(\xi))| \leq C_Nh_N^{|\gamma|+|\delta|}M_{|\gamma|}M_{|\delta|}.
	\end{equation}
	It follows, choosing $N\in\N_0$ such that $2N\geq \max\{\mu+n,m\}$,
	$\langle x\rangle^{m-2N}\leq 1$ and
	\begin{equation}\label{const_int}
	\tilde C = \int_{\R^n}\langle\xi\rangle^{\mu-2N}\,\dbar\xi<+\infty.
	\end{equation}
	
	Then, by \eqref{bound_sum_binom}, \eqref{est_p}, and \eqref{est_xi_u}, we obtain
	\[|x^\beta\partial_x^\alpha u(x)| \leq C_0h_0^{|\alpha|+|\beta|}M_{|\alpha|}M_{|\beta|},\]
	for all $x\in\R^n$, $\alpha,\beta\in\N_0^n$, and $u\in\mathscr{B}$, where the constants
	\[C_0 = \tilde{C}C_NC_p(8h_p^{2}n)^NM_{2N}\quad\text{and}\quad h_0 = 2\max\{1,h_N,h_p\}\]
	do not depend on $u\in\mathscr{B}$. Hence, $\mathrm{Op}(p)$ is bounded
	from $\caS_{\{\M\}}(\R^n)$ to itself. The projective case is analogous. 
\end{proof}

The next Corollary \ref{cont_SG_ext} is straightforward.

\begin{corollary}\label{cont_SG_ext}
	Given $p\in \SG_{[\A,\B]}^{m,\mu}(\mathbb{R}^n)$, $\mathrm{Op}(p):\mathcal{S}_{[\M]}(\mathbb{R}^n)\to \mathcal{S}_{[\M]}(\mathbb{R}^n)$ extends to a continuous linear operator from $\mathcal{S}_{[\M]}'(\mathbb{R}^n)$ to itself.
\end{corollary}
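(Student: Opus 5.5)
The natural route is duality. I would first exhibit a transpose of $\mathrm{Op}(p)$ that again maps $\mathcal{S}_{[\M]}(\R^n)$ continuously into itself, and then define the extension as the adjoint of that map. Concretely, for $u,v\in\mathcal{S}_{[\M]}(\R^n)$ one has, by Fubini,
\[
\langle \mathrm{Op}(p)u, v\rangle=\iint e^{i\xi\cdot x}p(x,\xi)\widehat{u}(\xi)v(x)\,\dbar\xi\,\dx
=\int \widehat{u}(\xi)\,\Big(\int e^{i\xi\cdot x}p(x,\xi)v(x)\,\dx\Big)\dbar\xi .
\]
Writing $\widehat{u}(\xi)=\int e^{-i y\cdot\xi}u(y)\,\dy\,(2\pi)^{-n/2}$ and exchanging integrals, this becomes $\langle u, {}^tP v\rangle$ with
\[
{}^tPv(y)=\int e^{-iy\cdot\xi}\Big(\int e^{i\xi\cdot x}p(x,\xi)v(x)\,\dx\Big)\dbar\xi .
\]
This is the operator with $(x,\xi)$-dependent amplitude $p(x,\xi)$, in "right quantization" form, composed with the reflection $y\mapsto -y$ up to conventions.

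The key step is to show that ${}^tP$ maps $\mathcal{S}_{[\M]}(\R^n)$ continuously into itself. I would repeat the argument in the proof of Theorem \ref{cont_SG}. The inner function $w(\xi)=\int e^{i\xi\cdot x}p(x,\xi)v(x)\,\dx$ is handled by estimating $\xi^\beta\partial_\xi^\alpha w$. Using $\xi^\beta e^{i\xi\cdot x}=D_x^\beta e^{i\xi\cdot x}$, one integrates by parts in $x$ and applies Leibniz. This puts $x$-derivatives on $p\,v$ and $\xi$-derivatives on $p$ and $e^{i\xi\cdot x}$, the latter producing powers $x^\gamma$ that are absorbed by $v$ via Lemma \ref{lemma_poly_GS}. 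The growth $\langle x\rangle^{m}$ is controlled by Lemma \ref{est_GS_bracket}, and the growth $\langle\xi\rangle^{\mu}$ is removed with factors $\langle\xi\rangle^{-2N}(1-\Delta_x)^N$. The symbol derivatives are bounded as in \eqref{est_p}, using $\A,\B\preceq\M$ and Proposition \ref{MjMj-k}, while the binomial sums are bounded as in \eqref{bound_sum_binom}. This yields $w\in\mathcal{S}_{[\M]}(\R^n)$ with estimates uniform on bounded sets. The outer integral is then just an inverse Fourier transform, which is an automorphism of $\mathcal{S}_{[\M]}(\R^n)$ by the Remark following the Fourier characterization.

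Having established this, I would define $\widetilde{\mathrm{Op}}(p)u:=u\circ {}^tP$ for $u\in\mathcal{S}_{[\M]}'(\R^n)$. It is continuous for the strong dual topologies, being the transpose of a continuous map. It agrees with $\mathrm{Op}(p)$ on $\mathcal{S}_{[\M]}(\R^n)$ by the Fubini identity above, viewing functions as ultradistributions through $\langle f,\varphi\rangle=\int f\varphi$. The extension is unique if $\mathcal{S}_{[\M]}(\R^n)$ is dense in $\mathcal{S}_{[\M]}'(\R^n)$. I would argue this by reflexivity: the spaces are FS and DFS, and Hahn–Banach shows that an element of the bidual annihilating $\mathcal{S}_{[\M]}$ is zero. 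Alternatively, the statement only asks for existence, so this point can be kept brief.

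I expect the main obstacle to be the uniformity bookkeeping in the second step. The weight $\langle\xi\rangle^{\mu}$ must be absorbed in the $x$-integration, whereas in Theorem \ref{cont_SG} the roles of $x$ and $\xi$ were reversed. The decay in $x$ needed for integrability must also come from $v$ despite the factor $\langle x\rangle^{m}$ from $p$. The constants must stay of the form $Ch^{|\alpha|+|\beta|}M_{|\alpha|}M_{|\beta|}$, for some $h$ in the Roumieu case and for all $h$ in the Beurling case. Choosing $N$ with $2N\ge\max\{m+n,\mu\}$ and proceeding symmetrically to the proof of Theorem \ref{cont_SG} should make this work.
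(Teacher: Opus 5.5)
Your proposal is correct and is the standard transposition argument the paper has in mind when it calls the corollary straightforward: write ${}^t\Op(p)$ as a Fourier transform of $v\mapsto\int e^{i\xi\cdot x}p(x,\xi)v(x)\,\dx$, show this maps $\mathcal{S}_{[\M]}(\R^n)$ continuously into itself, and extend $\Op(p)$ to $\mathcal{S}_{[\M]}'(\R^n)$ as the transpose of that map. You can skip redoing the estimates: with $q(\xi,x)=p(x,\xi)\in\SG^{\mu,m}_{[\B,\A]}(\R^n)$, the inner map is, up to constants, $\Op(q)$ composed with the inverse Fourier transform, so Theorem \ref{cont_SG} applies directly.
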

By the continuity of $\Op(p)$, we can associate to it a ultradistributional kernel $K_p\in\caS_{[\M]}'(\R^{2n})$, which is given by the oscillatory integral
\[K_p(x,y) = \int_{\R^n} e^{i\xi\cdot(x-y)}p(x,\xi)\,\dbar\xi.\]

\begin{lemma}\label{lemma_partition}
	Let $\M=\Mj$ be a weight sequence satisfying $(M_0)$, $(M_1)$, and $(M_3)'$, and $R>2$ an arbitrary constant. Then, there exists a sequence $\{\psi_N\}_{N\in\N_0}$ in $\mathcal{D}_{[\M]}(\R^n)$ such that
	\begin{equation*}
		\sum_{N\in\N_0}\psi_N = 1,\quad \mathrm{supp}(\psi_0) \subset\{\xi\in\R^n\,:\, \langle\xi\rangle < 3R\},
	\end{equation*}
	\begin{equation*}
		\mathrm{supp}(\psi_N)\subset\{\xi\in\R^n\,:\, 2Rm_N < \langle\xi\rangle < 3Rm_{N+1}\},
	\end{equation*}
	for $N\geq 1$, and there exist $C,h>0$ (respectively, for every $h>0$, there exists $C>0$) such that
	\begin{equation*}
		|D_\xi^\alpha\psi_0(\xi)|\leq C\left(\frac{h}{R}\right)^{|\alpha|}M_{|\alpha|} \quad \text{and}\quad |D_\xi^\alpha\psi_0(\xi)|\leq C\left(\frac{h}{Rm_N}\right)^{|\alpha|}M_{|\alpha|},
	\end{equation*}
	for all $N\in\N$, $\xi\in\R^n$, and $\alpha\in\N_0^n$.
\end{lemma}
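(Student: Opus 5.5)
The plan is the standard construction of a dyadic-type partition of unity adapted to the sequence $m_N=M_N/M_{N-1}$, with $[\M]$-cutoffs. Note that the second estimate is meant for $\psi_N$ (the statement writes $\psi_0$ twice).

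\textbf{Cutoff.} By $(M3)'$ the space $\D_{[\M]}(\R)$ is nontrivial, so I will fix $\chi\in\D_{[\M]}(\R)$ with $0\le\chi\le1$, $\chi(t)=1$ for $t\le 2$ and $\chi(t)=0$ for $t\ge 3$. Its derivatives satisfy $|\chi^{(k)}(t)|\le C h^k M_k$: with fixed $h$ in the Roumieu case, and for every $h$ in the Beurling case, using a Beurling cutoff.

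\textbf{Building blocks.} For $N\ge 0$ set
\[
\phi_N(\xi)=\chi\!\left(\frac{\langle\xi\rangle}{R\,m_{N+1}}\right),
\]
with the convention that $m_1=1$. Since $m_j$ is increasing (Lemma \ref{lemma_mjj_Mj}), $\phi_N=1$ on $\{\langle\xi\rangle\le 2Rm_{N+1}\}$ and $\phi_N=0$ where $\langle\xi\rangle\ge 3Rm_{N+1}$. Moreover $\phi_{N}\le \phi_{N+1}$ pointwise, provided the support of $\phi_N$ lies in the region where $\phi_{N+1}\equiv1$. That requires $3m_{N+1}\le 2m_{N+2}$, which fails in general.

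\textbf{Ratio issue.} To handle this, I would either
\begin{itemize}
\item pass to a subsequence along which $m$ grows by a factor at least $3/2$ (possible since $m_j\to\infty$ by $(M3)'$), or
\item more simply, avoid monotonicity altogether and use the telescoping definition below.
\end{itemize}
Set $\psi_0=\phi_0$ and $\psi_N=\phi_N-\phi_{N-1}$ for $N\ge1$. The sum telescopes, and $\phi_N\to1$ pointwise because $m_N\to\infty$, so $\sum_N\psi_N=1$. For the supports: $\psi_0$ is supported in $\langle\xi\rangle<3R$. For $N\ge1$, $\psi_N$ vanishes where $\langle\xi\rangle\le 2Rm_N$, since there both $\phi_N$ and $\phi_{N-1}$ equal $1$ (using $m_N\le m_{N+1}$). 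It also vanishes where $\langle\xi\rangle\ge 3Rm_{N+1}$, since there both vanish (using $m_N\le m_{N+1}$). This gives exactly the stated support. Positivity is not claimed, so no ordering is needed.

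\textbf{Derivative estimates.} This is the main step. Write $\phi_N=\chi(g/\lambda)$ with $g(\xi)=\langle\xi\rangle$ and $\lambda=Rm_{N+1}\ge R$. Derivatives are only nonzero where $2\lambda\le g\le 3\lambda$. I apply Fa\`a di Bruno in the multivariate form, analogous to Lemma \ref{faa} with the same combinatorics. The inner function satisfies analytic bounds
\[
|\partial^\beta\langle\xi\rangle|\le C_0^{|\beta|}\,|\beta|!\,\langle\xi\rangle^{1-|\beta|},
\]
so on the support $|\partial^\beta(g/\lambda)|\le 3\,C_0^{|\beta|}|\beta|!\,\lambda^{-|\beta|}$. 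Combining with $|\chi^{(k)}|\le Ch^kM_k$, a term indexed by $\gamma\in\Delta(k)$ is bounded by
\[
C\,h^{|\gamma|}M_{|\gamma|}\,(3C_0)^{|\gamma|}\,k!\,\lambda^{-k}.
\]
I then control $M_{|\gamma|}\,k!$ by $M_k$ times geometric factors. This uses $\{j!\}\prec\M$ (Proposition \ref{incl_quasi_factorial}) together with Lemma \ref{lemma_prod_M}. Concretely, $M_{|\gamma|}\prod_\ell (\ell!)^{\gamma_\ell}\le C'H^k M_{|\gamma|}\prod_\ell M_\ell^{\gamma_\ell}\le C'H^kM_k$, and the factor $k!/\gamma!$ is absorbed by Lemma \ref{lemma_sum}. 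The bookkeeping of $\ell!$ against $M_\ell$ needs care with the constant $C'$ raised to $|\gamma|$, but it is absorbed into $h$.

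This yields $|D^\alpha\phi_N|\le C(h'/(Rm_{N+1}))^{|\alpha|}M_{|\alpha|}$. The difference $\psi_N$ then inherits the bound with $Rm_N$, since $m_N\le m_{N+1}$. In the Beurling case, $h'$ is arbitrarily small because $h$ can be chosen arbitrarily small.
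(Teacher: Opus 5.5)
Your telescoping construction is sound: $\psi_0=\phi_0$, $\psi_N=\phi_N-\phi_{N-1}$ with $\phi_N(\xi)=\chi(\langle\xi\rangle/(Rm_{N+1}))$ gives $\sum_N\psi_N=1$ and the required support localisation. The paper itself gives no argument and only cites \cite[Lemma 6]{Pran2013}. The gap is in the derivative estimate, which you rightly call the main step: the bookkeeping you describe does not close. Your per-term bound $Ch^{|\gamma|}M_{|\gamma|}(3C_0)^{|\gamma|}k!\,\lambda^{-k}$ drops the factor $1/\gamma!$ of Lemma \ref{faa}. For $\gamma=(k,0,\dots,0)$ it reads $C(3C_0h)^k\,k!\,M_k\,\lambda^{-k}$, and $k!\,M_k$ is not $M_k$ times a geometric factor, so ``control $M_{|\gamma|}k!$ by $M_k$ times geometric factors'' is false. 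Nor is $k!/\gamma!$ absorbed by Lemma \ref{lemma_sum}, which concerns $|\gamma|!/\gamma!$: the single term $\gamma=(0,\dots,0,1)$ already gives $\sum_{\gamma\in\Delta(k)}\frac{k!}{\gamma!}R^{|\gamma|}\ge k!\,R$. Finally, if you trade $\ell!\le C'H^\ell M_\ell$ in order to apply Lemma \ref{lemma_prod_M}, the leftover factor $k!/(\gamma!\prod_\ell(\ell!)^{\gamma_\ell})$ counts set partitions of $\{1,\dots,k\}$ of type $\gamma$. Its sums over $\Delta(k)$, even weighted by $x^{|\gamma|}$ with small $x>0$, grow faster than any geometric sequence (for $x=1$ they are the Bell numbers). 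So the factorials coming from the analytic inner function must not be converted into $M_\ell$'s.

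The estimate does hold with the following bookkeeping. Keep Lemma \ref{faa} intact; its multivariate analogue only adds factors $c_n^{|\alpha|}$. On $\{2\lambda\le\langle\xi\rangle\le3\lambda\}$ the bounds $|\partial^\ell(\langle\xi\rangle/\lambda)|\le C_0^\ell\,\ell!\,\lambda^{-\ell}$, $\ell\ge1$, cancel the $1/\ell!$, so the $\gamma$-term is at most $C\,C_0^k\lambda^{-k}h^{|\gamma|}\frac{k!}{\gamma!}M_{|\gamma|}$. Write $\frac{k!}{\gamma!}M_{|\gamma|}=\frac{|\gamma|!}{\gamma!}\cdot\frac{k!}{|\gamma|!}M_{|\gamma|}$ and use
\[
\frac{k!}{j!}\,M_j=M_k\prod_{i=j+1}^{k}\frac{i}{m_i}\le C_\varepsilon\,\varepsilon^{k-j}M_k,\qquad 0\le j\le k .
\]
This holds for every $\varepsilon>0$ because $i/m_i\to0$ under $(M1)$ and $(M3)'$, the same fact behind Proposition \ref{incl_quasi_factorial}. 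Lemma \ref{lemma_sum} then yields $|\partial^k\phi_N|\le C\,C_\varepsilon\,\big(C_0(\varepsilon+h)\big)^kM_k\,(Rm_{N+1})^{-k}$. This also corrects your Beurling remark: the rate is small only because $\varepsilon$ can be taken small as well as $h$, so $\{j!\}\prec\M$ is indispensable. Indeed, the term $\chi'(f)\,\partial^kf$ is controlled only by $C_0^k\,k!\,\lambda^{-k}$, independently of $h$. Alternatively, you can bypass Fa\`a di Bruno entirely by dilating a fixed $\theta\in\D_{[\M]}(\R^n)$ with $\theta=1$ on $\{|\eta|\le2\}$ and $\mathrm{supp}\,\theta\subset\{|\eta|\le5/2\}$, setting $\phi_N(\xi)=\theta(\xi/(Rm_{N+1}))$. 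Then $D^\alpha\phi_N(\xi)=(Rm_{N+1})^{-|\alpha|}(D^\alpha\theta)(\xi/(Rm_{N+1}))$ gives the estimate at once in both cases. The support conditions in terms of $\langle\xi\rangle$ follow from $|\xi|<\langle\xi\rangle\le|\xi|+1$ and $Rm_{N+1}\ge R>2$, which is where the hypothesis $R>2$ enters.
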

\begin{proof}
	See \cite[Lemma 6]{Pran2013}.
\end{proof}

Given $k\in (0,1)$, set
\begin{equation*}
	\Omega_k = \{(x,y)\in\R^{2n}\,:\,|x-y|>k\langle x\rangle\}.
\end{equation*}

\begin{proposition}\label{est_kernel}
	Let $p\in {\SG}_{[\A,\B]}^{m,\mu}(\R^n)$. Then, for every $k\in(0,1)$, the kernel $K_p$ of $\Op(p)$ is smooth on $\Omega_k$ and there exist $C,h,\varepsilon>0$ (respectively, for every $h,\varepsilon>0$, there exists $C>0$) such that
	\begin{equation*}
		|D_x^\beta D_y^\gamma K_p(x,y)|\leq Ch^{|\beta|+|\gamma|}B_{|\beta|}B_{|\gamma|}e^{-\omega_{\M}(\varepsilon|x|)}e^{-\omega_{\M}(\varepsilon|y|)},
	\end{equation*}
	for all $\beta,\gamma\in\N_0^n$ and $(x,y)\in\overline{\Omega}_k$.
\end{proposition}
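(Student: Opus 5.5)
We prove the estimate on $\overline{\Omega}_k$ by decomposing the kernel dyadically in $\xi$, integrating by parts with the vector field associated with $x-y$ inside each piece, and optimising the number of integrations against the Gevrey growth of the coefficients.

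\textbf{Dyadic decomposition.} Using Lemma \ref{lemma_partition}, we write the oscillatory integral as
\[
K_p(x,y)=\sum_{N\ge0}K_N(x,y),\qquad K_N(x,y)=\int e^{i\xi\cdot(x-y)}p(x,\xi)\psi_N(\xi)\,\dbar\xi ,
\]
where each $K_N$ is a smooth function, since $\psi_N$ has compact support.

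\textbf{Derivatives in $x$ and $y$.} Leibniz rule gives
\[
D_x^\beta D_y^\gamma K_N=\sum_{\beta'\le\beta}\binom{\beta}{\beta'}\int e^{i\xi\cdot(x-y)}\,\xi^{\beta-\beta'}(-\xi)^{\gamma}\,D_x^{\beta'}p(x,\xi)\,\psi_N(\xi)\,\dbar\xi .
\]

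\textbf{Integration by parts in $\xi$.} On $\overline{\Omega}_k$ we have $|x-y|\ge k\langle x\rangle$. Since $|y|\le|x|+|x-y|\le(1+1/k)|x-y|$, also $|x-y|\gtrsim\langle x\rangle+\langle y\rangle$. We integrate by parts $L$ times with
\[
{}^t\!P=\frac{-(x-y)\cdot D_\xi}{|x-y|^2},
\]
which gains a factor $|x-y|^{-L}$. The $\xi$-derivatives then fall on three factors:
\begin{enumerate}[(a)]
\item on $\psi_N$, producing $(h/(Rm_N))^{j}M_j$;
\item on $D_x^{\beta'}p$, producing $h^jA_j\langle\xi\rangle^{\mu-j}$;
\item on the monomial $\xi^{\beta-\beta'+\gamma}$, producing factorial-type factors, which are absorbed via $(M3)'$ and Proposition \ref{incl_quasi_factorial}.
\end{enumerate}
The $x$-derivative of $p$ gives $h^{|\beta'|}B_{|\beta'|}\langle x\rangle^{m-|\beta'|}$, and $\langle x\rangle^{m}$ is absorbed by one extra power of $|x-y|^{-1}$.

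\textbf{The growing powers of $\xi$.} The factor $|\xi|^{|\beta-\beta'|+|\gamma|}$ must be controlled by $B$-type constants. On $\operatorname{supp}\psi_N$ we have $\langle\xi\rangle\sim Rm_{N+1}$. Following the Gevrey approach of \cite{CapRod2006} and \cite{Pran2013}, we choose the number of integrations depending on $N$, namely $L=N+|\beta|+|\gamma|+n+1$ or similar. Each extra integration yields a factor of size $\langle\xi\rangle^{-1}$, up to constants times $m_j$. Lemma \ref{lemma_mjj_Mj} ($m_j^j\ge M_j$) and $(M2)$ then convert $\langle\xi\rangle^{|\beta|+|\gamma|}\langle\xi\rangle^{-(|\beta|+|\gamma|)}\cdot M_{|\beta|+|\gamma|}$ into $h^{|\beta|+|\gamma|}B_{|\beta|}B_{|\gamma|}$, using $\A,\B\preceq\M$ as in Theorem \ref{cont_SG}.

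\textbf{Summing over $N$ and extracting decay.} The remaining factor $|x-y|^{-N}(h/R)^N M_N\,(\text{const})$ is the key one. Taking the infimum over the free integrations gives
\[
\inf_j \frac{M_j h^j}{|x-y|^{j}}=e^{-\omega_\M(|x-y|/h)} .
\]
Then $|x-y|\gtrsim\langle x\rangle+\langle y\rangle$, monotonicity of $\omega_\M$, and Proposition \ref{prop_KMR}(1) split this into
\[
e^{-\omega_\M(\varepsilon|x|)}\,e^{-\omega_\M(\varepsilon|y|)} .
\]
Choosing $R$ large makes $\sum_N (h/R)^N$ converge, so the series $\sum_N K_N$ converges uniformly with all derivatives on $\overline{\Omega}_k$, which also shows smoothness there.

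\textbf{Main obstacle.} The hardest step is the bookkeeping in the integration by parts. We need to distribute the number of integrations between absorbing the polynomial factor $\xi^{\beta+\gamma}$ (to get $B_{|\beta|}B_{|\gamma|}$ rather than a worse sequence) and producing the $e^{-\omega_\M}$ decay, with all constants geometric in $|\beta|+|\gamma|$. To handle this we first split $L=L_1+L_2$: $L_1=|\beta-\beta'|+|\gamma|+\lceil\mu\rceil+n+1$ handles the symbol growth, and $L_2$ is free and optimised for the decay. We use $(M2)$ to recombine $M_{L_1+L_2}\le AH^{L_1+L_2}M_{L_1}M_{L_2}$.

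\textbf{Beurling case.} The projective case follows identically, tracking that for every $h$ the constants exist.
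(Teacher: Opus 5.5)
Your architecture is the paper's: the splitting $K_p=\sum_N K_N$ from Lemma~\ref{lemma_partition}, $N$ integrations by parts in the $N$-th piece to get a geometric factor $(h/R)^N$ via Lemma~\ref{lemma_mjj_Mj}, further integrations that produce $e^{-\omega_\M(\rho|x-y|)}$, and then $|x-y|\gtrsim\langle x\rangle+\langle y\rangle$ on $\overline{\Omega}_k$. The paper obtains the exponential factor from the infinite-order operator $\mathcal{L}_\rho$ and integrates in a single direction $\xi_\ell$. Your infimum over finitely many integrations is an equivalent and cleaner route. Your separate integrations paying for $\xi^{\beta-\beta'+\gamma}$ are also more careful than the paper's direct appeal to Lemma~\ref{lemma_poly_GS} for $\xi^{\gamma+\delta}\psi_N$.

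\textbf{The gap.} It lies in the step converting $M_{|\beta|+|\gamma|}$ into $h^{|\beta|+|\gamma|}B_{|\beta|}B_{|\gamma|}$ ``using $\A,\B\preceq\M$''. That hypothesis gives $B_j\le CH^jM_j$, which is the opposite inequality. The powers of $\xi$ created when $D_x^\beta D_y^\gamma$ hits the phase can only be paid for by $\xi$-derivatives of $p$ and of $\psi_N$. Those cost $A_j$ and $M_j$, never $B_j$. What your argument actually proves is the estimate with $M_{|\beta|}M_{|\gamma|}$, after using $M_{|\beta|+|\gamma|}\le AH^{|\beta|+|\gamma|}M_{|\beta|}M_{|\gamma|}$ from $(M2)$. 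The paper's own argument also ends there: its final display carries $M_{|\beta|}M_{|\gamma|}$. That version suffices for $[\M]$-pseudolocality.

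The $B$-version is false in general unless, e.g., $\M\preceq\B$. Let $A_j=M_j=j!^s$ and $B_j=j!^t$ with $1<t<s$. Pick $f\in\D_{\{\A\}}(\R^n)$ supported in $\{2\le|z|\le 3\}$ but not of class $\{\B\}$ near some $z_0$, and set $p(x,\xi)=\widehat f(\xi)$. Then:
\begin{itemize}
\item $p\in\SG^{0,0}_{\{\A,\B\}}(\R^n)$;
\item $K_p(x,y)=c\,f(x-y)$ with $c\neq 0$;
\item a neighbourhood of $(0,-z_0)$ lies in $\Omega_k$.
\end{itemize}
A bound $Ch^{|\beta|}B_{|\beta|}$ for $D_x^\beta K_p$ there would put $f$ in $\E_{\{\B\}}$ near $z_0$. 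So you should state and prove the result with $M_{|\beta|}M_{|\gamma|}$.

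\textbf{Smaller repairs.}
\begin{itemize}
\item Your two prescriptions, $L=N+|\beta|+|\gamma|+n+1$ and $L=L_1+L_2$, are incompatible. You need $L=N+L_1+L_2$:
  \begin{itemize}
  \item the $N$ integrations give $(h/(Rm_N))^N M_N\le (h/R)^N$, with no leftover $M_N$ (contrary to your ``$|x-y|^{-N}(h/R)^NM_N$'');
  \item $L_1$ absorbs $\xi^{\beta-\beta'+\gamma}$, $\langle\xi\rangle^{\mu+n+1}$ and $\langle x\rangle^{m}$;
  \item $L_2$ is optimised, and $(M2)$ splits $M_{N+L_1+L_2}$.
  \end{itemize}
\item Derivatives of $\psi_N$ gain $(Rm_N)^{-1}$, not $\langle\xi\rangle^{-1}$, so ``$\langle\xi\rangle\sim Rm_{N+1}$'' requires $m_{N+1}\lesssim m_N$. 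This follows from $(M2)$, since $m_{N+1}^N\le M_{2N}/M_N\le AH^{2N}M_N\le AH^{2N}m_N^N$, but not from $(M2)'$ alone.
\item The splitting $e^{-\omega_\M(c(|x|+|y|))}\le A\,e^{-\omega_\M(\varepsilon|x|)}e^{-\omega_\M(\varepsilon|y|)}$ with $\varepsilon=c/H$ is part (2) of Proposition~\ref{prop_KMR}. Part (1), which you cite, goes the other way.
\end{itemize}
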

\begin{proof}
	For each $N\in\N_0$, set
	\[K_N(x,y)=\int_{R^n}e^{i\xi\cdot(x-y)}p(x,\xi)\psi_N(\xi)\,\dbar\xi,\]
	where the $\psi_N$ are given by Lemma \ref{lemma_partition}. In this case, we obtain
	\[K_p(x,y) = \sum_{N\in\N_0}K_N(x,y).\]
	
	Let $k\in(0,1)$ and $(x,y)\in\overline{\Omega}_k$ and consider $\ell\in\{1,\dots,n\}$ such that
	\begin{equation}\label{x_ell_y_ell}
		|x_\ell- y_\ell| \geq \frac{k}{n\langle x\rangle},
	\end{equation}
	where $x_\ell$ and $y_\ell$ denote the $\ell$-th coordinate of $x$ and $y$, respectively. Then, by the Leibniz rule and integration by parts, we obtain
	\begin{align*}
		D_x^\beta D_y^\gamma K_N(x,y) & = (-1)^{|\gamma|}\sum_{\delta\leq\beta}\binom{\beta}{\delta}\int_{\R^n}e^{i\xi\cdot (x-y)}\xi^{\delta+\gamma}\psi_N(\xi) D_x^{\beta-\delta}p(x,\xi)\,\dbar\xi\\
		& = (-1)^{|\gamma|+N}(x_\ell-y_\ell)^{-N}\sum_{\delta\leq\beta}\binom{\beta}{\delta}\int_{\R^n}e^{i\xi\cdot (x-y)}D_{\xi_\ell}^N\left(\xi^{\delta+\gamma}\psi_N(\xi) D_x^{\beta-\delta}p(x,\xi)\right)\,\dbar\xi.
	\end{align*}
	
	Given $\rho>0$, we define the operator
	\begin{equation*}
		\mathcal{L}_\rho = \frac{1}{\lambda_\rho(\langle x-y\rangle)}\sum_{j\in\N_0}\frac{\rho^{2j}}{M_{j}^2}(1-\Delta_\xi)^j,
	\end{equation*}
	where
	\[\lambda_\rho(t) = \sum_{j\in\N_0}\frac{\rho^{2j}}{M_{2j}} t^{2j},\quad t > 0.\]
	
	By \eqref{weight_seq}, we have
	\begin{equation}\label{est_lambda_rho}
		e^{\omega_{\M}(\rho t)} \leq \lambda_\rho(t),
	\end{equation}
	for all $\rho,t>0$.	
	
	It is not difficult to check that $\mathcal{L}_\rho e^{i\xi(x-y)} = e^{i\xi(x-y)}$. Hence, integrating by parts, we obtain
	\begin{align*}
		D_x^\beta D_y^\gamma K_N(x,y) =\ & \frac{(-1)^{|\gamma|+N}(x_\ell-y_\ell)^{-N}}{\lambda_\rho(x,y)}\sum_{\delta\leq\beta}\binom{\beta}{\delta} \sum_{j\in\N_0}\frac{\rho^{2j}}{M_j^2}\\
		& \times \int_{\R^n}e^{i\xi\cdot (x-y)}(1-\Delta_\xi)^j D_{\xi_\ell}^N\left(\xi^{\delta+\gamma}\psi_N(\xi) D_x^{\beta-\delta}p(x,\xi)\right)\,\dbar\xi.
	\end{align*}
	
	Proceeding as in the proof of Theorem \ref{cont_SG}, we can apply the binomial and multinomial theorems to expand $(1-\Delta_\xi)^j$, which yields
	\begin{align*}
		\left| (1-\Delta_\xi)^j D_{\xi_\ell}^N\left(\xi^{\delta+\gamma}\psi_N(\xi) D_x^{\beta-\delta}p(x,\xi)\right) \right| \leq\ & \sum_{j'=0}^j\binom{j}{j'}\sum_{\lambda\in\Pi(j')}\frac{j'!}{\lambda_1!\cdots\lambda_n!}\sum_{\nu\leq 2\lambda}\binom{2\lambda}{\nu}\sum_{N'=0}^N\binom{N}{N'}\\
		& \times |\partial_\xi^\nu\partial_{\xi_\ell}^{N'}(\xi^{\gamma+\delta}\psi_N(\xi))||\partial_\xi^{2\lambda-\nu}\partial_{\xi_\ell}^{N-N'}\partial_{x}^{\beta-\delta}p(x,\xi)|.
	\end{align*}
	
	Now, by Lemmas \ref{lemma_poly_GS} and \ref{lemma_partition} and the fact that $\mathcal{D}_{[\M]}(\R^n)\subset\caS_{[\M]}(\R^n)$, given $R>0$, there exist $C_\psi,h_\psi>0$ (respectively, for every $h_\psi>0$, there exists $C_\psi>0$) such that
	\begin{equation*}
		|\partial_\xi^\nu\partial_{\xi_\ell}^{N'}(\xi^{\gamma+\delta}\psi_N(\xi))| \leq C_\psi h_\psi^{|\gamma|+|\delta|} M_{|\nu|}M_{N'}M_{|\gamma|+|\delta|}\left(\frac{h_\psi}{Rm_N}\right)^{|\nu|+N'}.
	\end{equation*}
	
	Also, since $p\in {\SG}_{[\A,\B]}^{m,\mu}(\R^n)$ and $\A,\B\preceq \M$, there exist $C_p,h_p>0$ (respectively, for every $h_p>0$, there exists $C_p>0$) such that
	\begin{align*}
		|\partial_\xi^{2\lambda-\nu}\partial_{\xi_\ell}^{N-N'}\partial_{x}^{\beta-\delta}p(x,\xi)| \leq\ & C_ph_p^{|2\lambda|-|\nu|+N-N'+|\beta|-|\delta|}M_{|2\lambda|-|\nu|}M_{N-N'}M_{|\beta-\delta|}\\
		& \times \langle\xi\rangle^{\mu-|2\lambda-\nu|-N+N'} \langle x\rangle^{m-|\beta-\delta|}.
	\end{align*}
	
	We have the following bound for the binomial and multinomial sums:
	\begin{equation}
		\sum_{j'=0}^j\binom{j}{j'}\sum_{\lambda\in\Pi(j')}\frac{j'!}{\lambda_1!\cdots\lambda_n!}\sum_{\nu\leq 2\lambda}\binom{2\lambda}{\nu}\sum_{N'=0}^N\binom{N}{N'} \leq 8^N(2n)^j.
	\end{equation}
	
	Combining the previous estimates, we obtain constants $C_1,h_1>0$ (respectively, for every $h_1>0$, we obtain $C_1>0$) such that
	\begin{align*}
		\left| (1-\Delta_\xi)^j D_{\xi_\ell}^N\left(\xi^{\delta+\gamma}\psi_N(\xi) D_x^{\beta-\delta}p(x,\xi)\right) \right| \leq\ & C_1h_1^{2j+N+|\beta|+|\gamma|}M_{j}^2M_NM_{|\beta|}M_{|\gamma|}\left(\frac{1}{Rm_N}\right)^{|\nu|-N'}\\
		& \times \langle\xi\rangle^{\mu - |2\lambda-\nu|-(N-N')} \langle x\rangle^{m}.
	\end{align*}
	
	Now, since $2Rm_N < \langle\xi\rangle < 3Rm_{N+1}$ on $\mathrm{supp}(\psi_N)$, if $\mu> 0$, we obtain
	\begin{equation*}
		\langle\xi\rangle^{\mu - |2\lambda-\nu|-(N-N')} \leq (3Rm_{N+1})^{\mu}\left(\frac{1}{2Rm_N}\right)^{|2\lambda-\nu|+(N-N')},
	\end{equation*}
	and
	\[(3Rm_{N+1})^{\mu} \leq (3R)^{\mu}\tilde{C}(N+1)^{\mu} \leq (3R)^{\mu}\tilde{C}_{\mu}e^N,\]
	where $\tilde{C}>0$ do not depend on $N$ and $\tilde{C}_{\mu}$ depends only on $\tilde{C}$ and $\mu$.
	
	If $\mu\leq 0$, $(3Rm_{N+1})^{\mu}$ is bounded by a constant $\tilde{C}>0$ independent of $N$. Hence, we obtain new constants $C_2,h_2>0$ (respectively, for every $h_2>0$, there exists $C_2>0$) such that
	\begin{align*}
		\left| (1-\Delta_\xi)^j D_{\xi_\ell}^N\left(\xi^{\delta+\gamma}\psi_N(\xi) D_x^{\beta-\delta}p(x,\xi)\right) \right| \leq\ & C_2 h_2^{|\beta|+|\gamma|}M_{|\beta|}M_{|\gamma|}h_2^{2j}M_{j}^2M_N \langle x\rangle^{m}  \left(\frac{h_2}{Rm_N}\right)^{|2\lambda|+N}\\
		\leq\ & C_2 h_2^{|\beta|+|\gamma|}M_{|\beta|}M_{|\gamma|}h_2^{2j}M_{j}^2 \langle x\rangle^{m} \left(\frac{h_2}{R}\right)^{|2\lambda|+N},\\
		\leq\ & C_2 h_2^{|\beta|+|\gamma|}M_{|\beta|}M_{|\gamma|}h_2^{2j}M_{j}^2 \langle x\rangle^{m},
	\end{align*}
	for a sufficiently big $R>0$, and we used Lemma \ref{lemma_mjj_Mj} in the second inequality.
	
	Hence, by the previous estimates and \eqref{x_ell_y_ell} and \eqref{est_lambda_rho},  we obtain $C,h>0$ (respectively, for every $h>0$, we obtain $C>0$) such that
	\begin{align*}
		|D_x^\beta D_y^\gamma K_N(x,y)| \leq\ & \frac{|x_\ell-y_\ell|^{-N}}{\lambda_\rho(\langle x-y\rangle)}\sum_{\delta\leq\beta}\binom{\beta}{\delta}\sum_{j\in\N_0}(\rho h_2)^{2j} C_0h_2^{|\beta|+|\gamma|}M_{|\beta|}M_{|\gamma|}\langle x\rangle^{m}\\
		\leq & Ch^{|\beta|+|\gamma|}M_{|\beta|}M_{|\gamma|}\langle x\rangle^{m-N}e^{-\omega_{\M}(\rho\langle x-y\rangle)},
	\end{align*}
	where we choose $\rho>0$ small enough such that the series $\sum_{j\in\N_0}(\rho h_2)^{2j}$ is convergent. 
	
	Finally, since $(x,y)\in\overline{\Omega}_k$, we obtain $C'>0$ such that $\langle x-y\rangle \geq C'(\langle x\rangle+\langle y\rangle) \geq C'(|x|+|y|)$. Then, for such $(x,y)$ and the properties of $\omega_{\M}$, we obtain
	\[\langle x\rangle^{m-N}e^{-\omega_{\M}(\rho\langle x-y\rangle)} \leq e^{-\omega_{\M}(\rho'|x|)}e^{-\omega_{\M}(\rho'|y|)},\]
	where $\rho'>0$ is chosen in such a way that the exponential absorbs the polynomial $\langle x\rangle^{m}$. Such $\rho'$ can be obtained by Propositions 1 and 2 of \cite{KMR_2022}. The proof is complete.	
\end{proof}

\begin{definition}
	We say that a continuous linear operator $P:\caS_{[\M]}'(\R^n)\to \caS_{[\M]}'(\R^n)$ is $[\M]$-pseudolocal if
	\begin{equation*}
		[\M]\text{-\,}\mathrm{sing\,supp} (Pu) \subset [\M]\text{-\,}\mathrm{sing\,supp}(u),
	\end{equation*}
	for all $u\in \caS_{[\M]}'(\R^n)$.
\end{definition}

As a consequence of the kernel estimate given by Proposition \ref{est_kernel}, we can obtain that, for every $p\in {\SG}_{[\A,\B]}^{m,\mu}(\R^n)$, the operator $\mathrm{Op}(p)$ is $[\M]$-pseudolocal.

\subsection{Ultradifferential symbolic calculus}

Given $t_1,t_2\geq 0$, we set
\begin{equation*}
	Q_{t_1,t_2}=\{(x,\xi)\in\R^{2n}\,:\, \langle \xi\rangle < t_1,\ \langle x\rangle<t_2\}\quad\text{and}\quad Q_{t_1,t_2}^c = \R^{2n}\setminus Q_{t_1,t_2}.
\end{equation*}

If $t_1=t_2=t$, we write simply $Q_{t_1,t_2}=Q_t$ and $Q_{t_1,t_2}^c = Q_t^c$.

\begin{definition}
	We define $\FS_{\{\A,\B\}}^{m,\mu}(\R^n)$ (respectively, $\FS_{(\A,\B)}^{m,\mu}(\R^n)$) as the space of formal sums $\sum_{j\in\N_0}p_j$, where each $p_j(x,\xi)$ is smooth in the interior of $Q_{Bm_j}^c$, for some $B>0$, and there exists $h>0$ such that (respectively, for every $h>0$, we have)
	\begin{equation*}
		\sup_{j\in\N_0}\sup_{\alpha,\beta\in\N_0^n}\sup_{(x,\xi)\in Q_{B(j)}^c} \frac{\langle\xi\rangle^{-\mu+j+|\alpha|}\langle x\rangle^{-m+j+|\beta|}|\partial_\xi^\alpha\partial_x^\beta p_j(x,\xi)|}{h^{|\alpha|+|\beta|+2j} A_{|\alpha|}A_jB_{|\beta|}B_j} < +\infty,
	\end{equation*}
	where $Q_{B(j)}^c = Q_{Ba_j,Bb_j}^c$, with $a_j=A_j/A_{j-1}$, $b_j=B_j/B_{j-1}$, for $j\geq 1$, and $a_0=b_0=0$.
\end{definition}

\begin{definition}\label{def_equiv_FS}
	We say that two formal sums $\sum_{j\in\N_0}p_j, \sum_{j\in\N_0}\tilde{p}_j \in \FS_{[\A,\B]}^{m,\mu}(\R^n)$ are equivalent if there exist $B,h>0$ such that (respectively, there exists $B>0$ such that, for every $h>0$, we have)
	\begin{equation*}
		\sup_{N\in\N_0}\sup_{\alpha,\beta\in\N_0^n}\sup_{(x,\xi)\in Q_{B(N)}^c} \frac{\langle\xi\rangle^{-\mu+N+|\alpha|}\langle x\rangle^{-m+N+|\beta|} \left| \sum_{j<N}\partial_\xi^\alpha\partial_x^\beta(p_j(x,\xi)-\tilde{p}_j(x,\xi))\right|}{h^{|\alpha|+|\beta|+2N} A_{|\alpha|}A_NB_{|\beta|}B_N} < +\infty.
	\end{equation*}
	In this case, we write $\sum_{j\in\N_0}p_j\sim\sum_{j\in\N_0}\tilde{p}_j$.
\end{definition}

Every $p\in {\SG}_{[\A,\B]}^{m,\mu}(\R^n)$ can be regarded as an element of $\FS_{[\A,\B]}^{m,\mu}(\R^n)$ by setting 
$p_0=p$ and $p_j=0$, $j\geq 1$. Conversely, we have the following Proposition \ref{prop_p_pj}.

\begin{proposition}\label{prop_p_pj}
	Let $\sum_{j\in\N_0}p_j\in \FS_{[\A,\B]}^{m,\mu}(\R^n)$. Then, there exists $p\in {\SG}_{[\A,\B]}^{m,\mu}(\R^n)$ such that $p\sim \sum_{j\in\N_0}p_j$ in $\FS_{[\A,\B]}^{m,\mu}(\R^n)$.
\end{proposition}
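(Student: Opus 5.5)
We follow the classical Boutet de Monvel--Kr\'ee / Rodino construction of a realization of a formal sum, adapted to two weight sequences and to the SG-setting. Fix $\varphi\in\D_{[\M]}(\R^{2n})$ with $0\le\varphi\le1$, $\varphi\equiv1$ on $Q_{2}$ and $\mathrm{supp}\,\varphi\subset Q_{3}$; in the Roumieu case take $\varphi\in\D_{\{\A\}}$ in $\xi$ and $\D_{\{\B\}}$ in $x$, for instance a tensor product $\varphi(x,\xi)=\chi_1(\langle\xi\rangle)\chi_2(\langle x\rangle)$. This is possible by $(M3)'$. For $R>0$ large (and, in the Beurling case, depending on $h$ through a diagonal argument), set
\[
\varphi_j(x,\xi)=1-\chi_1\!\Big(\frac{\langle\xi\rangle}{Ra_j}\Big)\chi_2\!\Big(\frac{\langle x\rangle}{Rb_j}\Big),\qquad \varphi_0\equiv 1,
\]
so that $\varphi_j$ vanishes on $Q_{2Ra_j,2Rb_j}$ and equals $1$ outside $Q_{3Ra_j,3Rb_j}$. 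We then define $p=\sum_j\varphi_jp_j$, choosing $R\ge B$ so that $p_j$ is smooth wherever $\varphi_j\neq0$.

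\textbf{Step 1: $p\in\SG^{m,\mu}_{[\A,\B]}(\R^n)$.} Derivatives of $\varphi_j$ of order $(\alpha,\beta)$ with $|\alpha|+|\beta|\ge1$ are supported where $\langle\xi\rangle\sim Ra_j$ or $\langle x\rangle\sim Rb_j$. There, by the chain rule and the ultradifferentiability of $\chi_i$, they are bounded by $C h^{|\alpha|+|\beta|}A_{|\alpha|}B_{|\beta|}(Ra_j)^{-|\alpha|}(Rb_j)^{-|\beta|}$, which we convert into $\langle\xi\rangle^{-|\alpha|}\langle x\rangle^{-|\beta|}$ with a harmless factor $3^{|\alpha|+|\beta|}$. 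Leibniz' rule combined with the $\FS$ estimate gives
\[
|\partial_\xi^\alpha\partial_x^\beta(\varphi_jp_j)|\le C h^{|\alpha|+|\beta|+2j}A_{|\alpha|}B_{|\beta|}A_jB_j\langle\xi\rangle^{\mu-|\alpha|-j}\langle x\rangle^{m-|\beta|-j}
\]
on the support, where we use Proposition \ref{MjMj-k} to merge $A_{|\alpha-\alpha'|}A_{|\alpha'|}\le A_{|\alpha|}$. Since $\langle\xi\rangle\ge 2Ra_j$ or $\langle x\rangle\ge 2Rb_j$ off $Q$, and in each case both factors are $\ge1$, we must deal with the quantity $A_jB_j\langle\xi\rangle^{-j}\langle x\rangle^{-j}$. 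This is the delicate point, because only one of the two lower bounds holds on each part of the support. We handle it by noting that $\varphi_j$ is supported outside the box $Q_{2Ra_j,2Rb_j}$, and we split the support into $\{\langle\xi\rangle\ge 2Ra_j\}$ and $\{\langle x\rangle\ge2Rb_j\}$. If this splitting only yields $(A_j/(Ra_j)^j)\,B_j$ with $B_j$ uncompensated, we will instead use a product cutoff $(1-\chi_1)(1-\chi_2)$, so that both $\langle\xi\rangle\ge2Ra_j$ and $\langle x\rangle\ge2Rb_j$ hold; on the SG corner region this is the natural choice. Then Lemma \ref{lemma_mjj_Mj} gives $A_j\le a_j^j$ and $B_j\le b_j^j$, so the $j$-th term is bounded by $(h^2/(4R^2))^j$ times the symbol bound, and the series converges geometrically once $R>h$. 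This also shows that the sum is locally finite, hence smooth.

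\textbf{Step 2: $p\sim\sum p_j$.} On $Q_{B'(N)}^c$ with $B'=3R$, we have $\varphi_j=1$ for all $j<N$, because $a_j\le a_N$ and $b_j\le b_N$ by Lemma \ref{lemma_mjj_Mj}. Hence
\[
p-\sum_{j<N}p_j=\sum_{j\ge N}\varphi_jp_j
\]
there. We then repeat the estimate of Step 1, now factoring out $A_NB_N\langle\xi\rangle^{-N}\langle x\rangle^{-N}h^{2N}$ from each term with $j\ge N$. To do so we write $A_j/A_N\le a_j^{\,j-N}$, which follows from monotonicity of $a_j$, and use the lower bounds on $\langle\xi\rangle$ and $\langle x\rangle$ to absorb the remaining factor into $(h/R)^{2(j-N)}$. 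The resulting geometric series gives exactly the estimate of Definition \ref{def_equiv_FS}.

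\textbf{Beurling case.} We choose $R=R(h)$ growing along a sequence $h_k\to0$ and glue the constructions, so that every $h$ works beyond a finite index. The finitely many remaining terms are harmless, since each of them is an honest symbol.

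The main obstacle is the bookkeeping in Step 1: choosing the cutoffs so that both $A_j\langle\xi\rangle^{-j}$ and $B_j\langle x\rangle^{-j}$ are compensated simultaneously, while keeping the derivatives of $\varphi_j$ within the $A$/$B$ Gevrey-type growth. We expect the product cutoff together with Lemma \ref{lemma_mjj_Mj} to resolve it.
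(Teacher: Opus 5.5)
\textbf{There is a genuine gap: Steps 1 and 2 need two incompatible cutoffs, and your proposal switches between them.}

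The first claim of Step 2 is that $\varphi_j\equiv 1$ on $Q^c_{3R(N)}$ for every $j<N$. This holds for the box-complement cutoff $1-\chi_1\chi_2$, because $Q^c_{3R(N)}=\{\langle\xi\rangle\ge 3Ra_N\}\cup\{\langle x\rangle\ge 3Rb_N\}$ is a \emph{union}.

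The other estimates need something different. In Step 1 you absorb $A_jB_j\langle\xi\rangle^{-j}\langle x\rangle^{-j}$, and in the tail of Step 2 you absorb $(A_j/A_N)(B_j/B_N)\langle\xi\rangle^{N-j}\langle x\rangle^{N-j}$. Both require \emph{both} lower bounds on $\mathrm{supp}\,\varphi_j$, and only the product cutoff $(1-\chi_1)(1-\chi_2)$ provides them.

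These requirements cannot be met together. The points $(0,\xi)$ with $|\xi|\to\infty$ lie in $Q^c_{3R(j+1)}$ but not in $\{\langle\xi\rangle\ge 2Ra_j\}\cap\{\langle x\rangle\ge 2Rb_j\}$. So no $\varphi_j$ is $\equiv1$ on the former while being supported in the latter.

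The product cutoff really does destroy the equivalence. Take $p_0=0$, $p_1=\langle\xi\rangle^{\mu-1}\langle x\rangle^{m-1}$ and $p_j=0$ for $j\ge 2$; this lies in $\FS^{m,\mu}_{[\A,\B]}(\R^n)$ by Proposition \ref{incl_quasi_factorial}. Since $b_1=1$, your $\varphi_1$ vanishes on $\{x=0\}$ once $R>1/2$. Hence $p=\varphi_1p_1$ gives $p-p_0-p_1=-\langle\xi\rangle^{\mu-1}$ there. But Definition \ref{def_equiv_FS} with $N=2$ demands $O(\langle\xi\rangle^{\mu-2})$ on $\{x=0,\ \langle\xi\rangle\ge B'a_2\}\subset Q^c_{B'(2)}$. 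With the box-complement cutoff, on the other hand, one of $A_j$, $B_j$ stays uncompensated, exactly as you observed.

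\textbf{The paper does not resolve this either.} Its proof uses precisely your first cutoff, $\chi_j=1-\varphi_1(\xi/(Ra_j))\varphi_2(x/(Rb_j))$, and passes the critical point by asserting $\langle\xi\rangle\langle x\rangle\ge R^2a_jb_j$ on $\mathrm{supp}\,\chi_j$. On a box complement this fails. At $x=0$ with $|\xi|$ just above $\sqrt3\,Ra_j$ one has $\chi_j\neq0$ but $\langle\xi\rangle\langle x\rangle<2Ra_j<R^2a_jb_j$ once $Rb_j>2$. So the obstacle you flagged is real, and you cannot fall back on that argument.

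\textbf{A repair that appears to work, at least in the Roumieu case, is to cut off along the product.} Take $\chi_j=0$ for $\langle\xi\rangle\langle x\rangle\le 2Ra_jb_j$ and $\chi_j=1$ for $\langle\xi\rangle\langle x\rangle\ge 3Ra_jb_j$, with $2R\ge B^2$ so that $\mathrm{supp}\,\chi_j\subset Q^c_{B(j)}$.
\begin{itemize}
\item On $\mathrm{supp}\,\chi_j$, Lemma \ref{lemma_mjj_Mj} gives $A_jB_j(\langle\xi\rangle\langle x\rangle)^{-j}\le (2R)^{-j}$. This settles Step 1 and the tail $j\ge N$.
\item The terms $(1-\chi_j)p_j$ with $j<N$ no longer vanish on $Q^c_{B'(N)}$. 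There, however, $\langle\xi\rangle\langle x\rangle<3Ra_jb_j$, and monotonicity of $\{a_j\}$, $\{b_j\}$ gives
\[
\frac{A_jB_j\,(\langle\xi\rangle\langle x\rangle)^{-j}}{A_NB_N\,(\langle\xi\rangle\langle x\rangle)^{-N}}\le\frac{(3Ra_jb_j)^{N-j}}{(a_{j+1}\cdots a_N)(b_{j+1}\cdots b_N)}\le (3R)^{N-j},
\]
which is absorbed by enlarging $h$.
\end{itemize}
Two things remain. First, you must construct such $\chi_j$ with split bounds $Ch^{|\gamma|+|\delta|}A_{|\gamma|}B_{|\delta|}\langle\xi\rangle^{-|\gamma|}\langle x\rangle^{-|\delta|}$, uniformly in $j$. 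One option is a dyadic partition in $\langle x\rangle\sim 2^k$ combined with $\xi$-cutoffs at $\langle\xi\rangle\sim Ra_jb_j2^{-k}$. Second, the Beurling case needs separate bookkeeping, since there $h$ cannot be enlarged.
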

\begin{proof}
	Let $\varphi_1\in\mathcal{D}_{[\A]}(\R^n)$ and $\varphi_2\in\mathcal{D}_{[\B]}(\R^n)$ be such that $0\leq\varphi_\ell\leq 1$, $\varphi_\ell(t)\equiv 1$ if $\langle t\rangle \leq 2$, and $\varphi_\ell(t)\equiv 0$ if $\langle t\rangle \geq 3$, for $\ell=1,2$. Set $\chi(x,\xi)=1-\varphi_1(\xi)\varphi_2(x)$. Then, for $j\in\N$ and $R>0$, we define
	\[\chi_j(x,\xi)=\chi\left(\frac{x}{Rb_j},\frac{\xi}{Ra_j}\right),\]
	and put $\chi_0(x,\xi)\equiv 1$. By definition, there exist $C_\chi,h_\chi>0$ (respectively, for every $h_\chi>0$, there exists $C_\chi>0$) such that
	\begin{equation}\label{der_chi_j}
		\sup_{(x,\xi)\in\R^{2n}}|D_\xi^\alpha D_x^\beta \chi_j(x,\xi)| \leq \frac{C_\chi h_\chi^{|\alpha|+|\beta|}A_{|\alpha|}B_{|\beta|}}{R^{|\alpha|+|\beta|}a_j^{|\alpha|}b_j^{|\beta|}},
	\end{equation}
	for all $\alpha,\beta\in\N_0^n$ and $j\in\N_0$.
	
	Let us show that
	\begin{equation}\label{p_chi_pj}
	p(x,\xi) = \sum_{j\in\N_0}\chi_j(x,\xi)p_j(x,\xi)
	\end{equation} 
	belongs to ${\SG}_{[\A,\B]}^{m,\mu}(\R^n)$ for a sufficiently big $R>0$, and $p\sim \sum_{j\in\N_0}p_j$ in $\FS_{[\A,\B]}^{m,\mu}(\R^n)$.
	
	Given $\alpha,\beta\in\N_0^n$, differentiating \eqref{p_chi_pj} yields
	\begin{equation*}
		D_\xi^\alpha D_\xi^\beta p(x,\xi) = \sum_{j\in\N_0} \sum_{\gamma\leq\alpha}\binom{\alpha}{\gamma} \sum_{\delta\leq\beta}\binom{\beta}{\delta} D_\xi^{\alpha-\gamma}D_x^{\beta-\delta}p_j(x,\xi) D_\xi^\gamma D_x^\delta \chi_j(x,\xi).
	\end{equation*}
	
	Let us consider the inductive case. Since $\sum_{j\in\N_0} p_j \in \FS_{\{\A,\B\}}^{m,\mu}(\R^n)$, there exist $C,h>0$ such that
	\begin{equation}
		|D_\xi^{\alpha-\gamma}D_x^{\beta-\delta}p_j(x,\xi)| \leq Ch^{|\alpha-\gamma|+|\beta-\delta|+2j}A_{|\alpha-\gamma|}B_{|\beta-\delta|}A_jB_j\langle\xi\rangle^{m_\xi-|\alpha-\gamma|-j}\langle x\rangle^{m-|\beta-\delta|-j},
	\end{equation}
	for $(x,\xi)\in Q_{B(j)}^c$. Then, taking $R>B$, we obtain
	\begin{align*}
		|D_\xi^\alpha D_x^\beta p(x,\xi)| & \leq  C\langle\xi\rangle^{\mu-|\alpha|}\langle x\rangle^{m-|\beta|}\sum_{j\in\N_0}\sum_{\gamma\leq\alpha}\binom{\alpha}{\gamma} \sum_{\delta\leq\beta}\binom{\beta}{\delta} h^{|\alpha-\gamma|+|\beta-\delta|+2j}\langle\xi\rangle^{|\gamma|-j}\langle x\rangle^{|\delta|-j}\\
		& \times A_{|\alpha-\gamma|}B_{|\beta-\delta|}A_jB_j |D_\xi^\gamma D_x^\delta\chi_j(x,\xi)|\\
		& \leq Ch^{|\alpha|+|\beta|} A_{|\alpha|}B_{|\beta|} \langle\xi\rangle^{\mu-|\alpha|}\langle x\rangle^{m-|\beta|}  \sum_{j\in\N_0} \frac{h^{2j}A_jB_j}{\langle\xi\rangle^{j}\langle x\rangle^{j}}|\chi_j(x,\xi)|\\
		& + C \langle\xi\rangle^{\mu-|\alpha|}\langle x\rangle^{m-|\beta|}\sum_{j\in\N_0}\sum_{(\gamma,\delta)\neq(0,0)}\binom{\alpha}{\gamma}\binom{\beta}{\delta}h^{|\alpha-\gamma|+|\beta-\delta|+2j}A_{|\alpha-\gamma|}B_{|\beta-\delta|}\\
		& \times \frac{\langle\xi\rangle^{|\gamma|}\langle x\rangle^{|\delta|} A_jB_j}{\langle\xi\rangle^j\langle x\rangle^j}|D_\xi^\gamma D_x^\delta\chi_j(x,\xi)|\\
		& = S_1 + S_2.
	\end{align*}
	
	To estimate $S_1$, notice that $\langle\xi\rangle\langle x\rangle \geq R^2a_jb_j$ on $\mathrm{supp}(\chi_j)$. Hence
	\begin{equation*}
		\sum_{j\in\N_0} \frac{h^{2j}A_jB_j}{\langle\xi\rangle^{j}\langle x\rangle^{j}}|\chi_j(x,\xi)| \leq \sum_{j\in\N_0} \frac{h^{2j}A_jB_j}{R^{2j}a_j^jb_j^j} \leq \sum_{j\in\N_0}\frac{h^{2j}}{R^{2j}} = C_{R} < +\infty,
	\end{equation*}
	for a large enough $R>0$, where we used Lemma \ref{lemma_mjj_Mj}. It follows
	\[S_1 \leq CC_Rh^{|\alpha|+|\beta|}A_{|\alpha|}B_{|\beta|}\langle\xi\rangle^{\mu-|\alpha|}\langle x\rangle^{m-|\beta|},\]
	for all $\alpha,\beta\in\N_0^n$.
	
	Now, for $S_2$, since $(\gamma,\delta)\neq (0,0)$, observe that $D_\xi^\delta D_x^\gamma\chi_j\equiv 0$ on $Q_{3R(j)}^c$. For $(x,\xi)\in Q_{3R(j)}$, we have $\langle\xi\rangle^{|\gamma|}\langle x\rangle^{|\delta|}\leq (3R)^{|\gamma|+|\delta|}a_j^{|\gamma|}b_j^{|\delta|}$. Hence, using \eqref{der_chi_j}, we obtain
	\begin{align*}
		& h^{|\alpha-\gamma|+|\beta-\delta|+2j}A_{|\alpha-\gamma|}B_{|\beta-\delta|} A_jB_j \frac{\langle\xi\rangle^{|\gamma|}\langle x\rangle^{|\delta|} }{\langle\xi\rangle^j\langle x\rangle^j}|D_\xi^\gamma D_x^\delta\chi_j(x,\xi)|\\
		\leq\ & h^{|\alpha-\gamma|+|\beta-\delta|+2j} A_{|\alpha-\gamma|} B_{|\beta-\delta|}A_jB_j\frac{ (3R)^{|\gamma|+|\delta|}a_j^{|\gamma|}b_j^{|\delta|}}{R^{2j}a_j^j b_j^j} \frac{C_\chi h_\chi^{|\gamma|+|\delta|} A_{|\gamma|}B_{|\delta|}}{R^{|\gamma|+|\delta|}a_j^{|\gamma|}b_j^{|\delta|}}\\
		\leq\ & C_\chi h_0^{|\alpha|+|\beta|}A_{|\alpha|}B_{|\beta|} \frac{h^{2j}}{R^{2j}},
	\end{align*}
	where $h_0 = \max\{h,h_\chi,3\}$. Hence, we have
	\begin{equation*}
		S_2 \leq CC_\chi C_R (2h_0)^{|\alpha|+|\beta|}A_{|\alpha|}B_{|\beta|}\langle\xi\rangle^{\mu-|\alpha|}\langle x\rangle^{m-|\beta|},
	\end{equation*}
	where the factor $2$ multiplicating $h_0$ appeared from the sum over binomial coefficients. 
	Combining the estimates for $S_1$ and $S_2$, we conclude that $p\in {\SG}_{\{\A,\B\}}^{m,\mu}(\R^n)$.
	
	Now, notice that, for every $N\in\N_0$,
	\[p(x,\xi)-\sum_{j<N}p_j(x,\xi) = \sum_{j\geq N}p_j(x,\xi)\chi_j(x,\xi),\]
	for $(x,\xi)\in Q_{3R(j)}^c$. Proceeding as above, we can show that $p(x,\xi)-\sum_{j<N}p_j(x,\xi)$ satisfies the estimates of Definition \ref{def_equiv_FS}, hence $p\sim \sum_{j\in\N_0}p_j$ in $\FS_{\{\A,\B\}}^{m,\mu}(\R^n)$.
	
	The projective case can be proved analogously by doing the suitable choices on the constants $h$ and $h_\chi$.	 
\end{proof}

\begin{definition}
	We say that continuous linear operator $P:\caS_{[\M]}(\R^n)\to \caS_{[\M]}(\R^n)$ is $[\M]$-regularizing if it extends to a continuous linear operator from $\caS_{[\M]}'(\R^n)$ to $\caS_{[\M]}(\R^n)$.
\end{definition}

\begin{remark} 
	If a continuous linear operator $P:\caS_{[\M]}(\R^n)\to \caS_{[\M]}(\R^n)$ has kernel $K_P$ in $\caS_{[\M]}(\R^{2n})$, then $P$ is $[\M]$-regularizing.
\end{remark}

\begin{lemma}\label{lemma_pran}
	Let $\ell\in (0,1)$ and $B>1$, and set
	\[\mathscr{M}(t) = \inf\left\{\frac{M_j}{\ell^j t^j}\,:\, j\in\N_0,\ t\geq Bm_j\right\}.\]
	
	Then, there exist $C=C(B,\ell,\Mj)>0$ and $\tilde{h}=\tilde{h}(B,\Mj)>0$ such that
	\[\mathscr{M}(t) \leq Ce^{-\omega_{\M}(\ell \tilde h t)},\]
	for all $t\geq B$.
\end{lemma}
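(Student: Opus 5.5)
The idea is to compare $\mathscr{M}(t)$ directly with the standard representation of $e^{-\omega_\M}$ as an infimum. By \eqref{weight_seq}, for every $s>0$ we have
\[
e^{-\omega_\M(s)}=\inf_{j\in\N_0}\frac{M_j}{s^j}.
\]
This infimum is attained at an index that can be read off from the associated sequence $\mj$ of Lemma \ref{lemma_mjj_Mj}. The plan is to choose $s$ so that this optimal index is admissible in the definition of $\mathscr{M}(t)$, and then compare the two quotients term by term. I will aim for $\tilde h = 1/B$ and $C=1$.

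\textbf{Step 1 (the optimal index).} Fix $s>0$ and set $J(s)=\max\{j\in\N_0: m_j\le s\}$. This set is nonempty because $m_0=0$. It is finite because $(M3)'$ forces $m_j\to+\infty$: indeed $\sum_j 1/m_j<\infty$ and $\mj$ is increasing by Lemma \ref{lemma_mjj_Mj}. Since
\[
\frac{M_j/s^j}{M_{j-1}/s^{j-1}}=\frac{m_j}{s},
\]
and $\mj$ is increasing, the map $j\mapsto M_j/s^j$ is nonincreasing for $j\le J(s)$ and increasing for $j>J(s)$. Hence
\[
e^{-\omega_\M(s)}=\frac{M_{J(s)}}{s^{J(s)}}.
\]

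\textbf{Step 2 (admissibility).} Take $s=\ell t/B$, with $t\ge B$ as in the statement (only $t>0$ is really needed). Put $J=J(s)$. Because $\ell\in(0,1)$, we get $Bm_J\le Bs=\ell t\le t$, so $j=J$ is an admissible index in the infimum defining $\mathscr{M}(t)$. If $s<1$ then $J=0$, and the condition $t\ge Bm_0=0$ holds trivially.

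\textbf{Step 3 (comparison).} Using $B>1$, we have $(\ell t)^J\ge(\ell t/B)^J$, and therefore
\[
\mathscr{M}(t)\le\frac{M_J}{(\ell t)^J}\le\frac{M_J}{(\ell t/B)^J}=e^{-\omega_\M(\ell t/B)}.
\]
This is the claim with $C=1$ and $\tilde h=1/B$, which depends only on $B$.

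The only point requiring care is Step 1, namely that the infimum in $e^{-\omega_\M}$ is achieved exactly at the last index $j$ with $m_j\le s$. This rests on the log-convexity $(M1)$, through the monotonicity of $\mj$, and on $m_j\to\infty$ from $(M3)'$, which guarantees that this last index exists. Everything else is a direct comparison of the two quotients at that single index.
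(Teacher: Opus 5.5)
Your proof is correct. The paper does not prove this lemma; it only cites \cite[Lemma 8]{Pran2013}, so your argument is a self-contained replacement for that citation rather than a variant of anything in the text.

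The one real idea is to take the index $J$ that is optimal for the reduced argument $s=\ell t/B$. Taking the index optimal for $\ell t$ instead would not work, since it may violate $t\ge Bm_j$ when $\ell B>1$. With your choice, admissibility follows from $\ell\le 1$, and the comparison $M_J/(\ell t)^J\le M_J/s^J=e^{-\omega_\M(s)}$ follows from $B\ge 1$.

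Compared with the citation, your route gives explicit constants $C=1$ and $\tilde h=1/B$, valid for every $t>0$. It uses only $(M1)$, through the monotonicity of $m_j$, together with $m_j\to\infty$ from $(M3)'$. In particular, the dependence of $C$ on $\ell,\M$ and of $\tilde h$ on $\M$ that the statement allows is not needed. The same reasoning with $s=\min\{\ell,1/B\}\,t$ gives the slightly sharper bound $\mathscr{M}(t)\le e^{-\omega_\M(\min\{\ell,1/B\}t)}$, and this holds for any $\ell,B>0$.
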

\begin{proof} 
	See \cite[Lemma 8]{Pran2013}.
\end{proof}

\begin{proposition}\label{exp_0_reg}
	Let $p\in {\SG}_{[\A,\B]}^{0,0}(\R^n)$. If $p\sim 0$ in $\FS_{[\A,\B]}^{0,0}$, then $\Op(p)$ is $[\M]$-regularizing.
\end{proposition}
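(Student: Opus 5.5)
The plan is to show that $p$ itself decays like $e^{-\omega_\M(\varepsilon\langle x\rangle)}e^{-\omega_\M(\varepsilon\langle\xi\rangle)}$ together with all its derivatives, with $[\M]$-type constants. Then the kernel $K_p(x,y)=\int e^{i\xi\cdot(x-y)}p(x,\xi)\,\dbar\xi$ lies in $\caS_{[\M]}(\R^{2n})$, and the Remark preceding Lemma \ref{lemma_pran} gives that $\Op(p)$ is $[\M]$-regularizing.

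\emph{Step 1: pointwise decay of $p$.} Since $p\sim 0$ (with $p_0=p$ and $p_j=0$ for $j\ge1$), Definition \ref{def_equiv_FS} gives, for every $N\in\N_0$ and every $(x,\xi)\in Q^c_{B(N)}$,
\[
|\partial_\xi^\alpha\partial_x^\beta p(x,\xi)|\le C h^{|\alpha|+|\beta|+2N}A_{|\alpha|}B_{|\beta|}A_NB_N\langle\xi\rangle^{-N-|\alpha|}\langle x\rangle^{-N-|\beta|}.
\]
Because $\A,\B\preceq\M$, the factor $A_NB_N/(\langle\xi\rangle^N\langle x\rangle^N)$ is bounded by $H^{2N}M_N^2/(\langle\xi\rangle\langle x\rangle)^N$. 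I would then take the infimum over all admissible $N$, namely those $N$ with $\langle\xi\rangle\ge Ba_N$ and $\langle x\rangle\ge Bb_N$, or with the analogous condition stated through $m_N$ after comparing the sequences.

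Lemma \ref{lemma_pran}, applied with $t=\langle x\rangle$ and $t=\langle\xi\rangle$ separately (and using $M_N^2\le M_{2N}$ or a splitting $N=N_1+N_2$), should turn this infimum into a bound of the form $Ce^{-\omega_\M(\varepsilon\langle x\rangle)}e^{-\omega_\M(\varepsilon\langle\xi\rangle)}$. This bound holds outside the compact region where no $N\ge1$ is admissible, that is on $Q^c_{B(1)}$. The complementary region $Q_{B(1)}$ is bounded, so there the plain $\SG^{0,0}_{[\A,\B]}$ estimates already give the same form of bound with a larger constant.

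I expect this step to be the main obstacle. The admissibility condition couples $x$ and $\xi$ through a single index $N$, while Lemma \ref{lemma_pran} handles one variable at a time. I would first restrict to the diagonal choice of $N$ governed by $\min(\langle x\rangle,\langle\xi\rangle)$ and use Proposition \ref{prop_KMR} to recover the product of the two exponentials. If that fails, I would settle for decay in $\langle x\rangle+\langle\xi\rangle$ instead, which suffices for the next step. In both cases the conclusion is
\[
|\partial_\xi^\alpha\partial_x^\beta p(x,\xi)|\le C h^{|\alpha|+|\beta|}A_{|\alpha|}B_{|\beta|}e^{-\omega_\M(\varepsilon\langle x\rangle)-\omega_\M(\varepsilon\langle\xi\rangle)}.
\]
Together with Proposition \ref{equiv_GS} this gives $p\in\caS_{[\M]}(\R^{2n})$.

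\emph{Step 2: the kernel.} Write $K_p(x,y)=(\mathcal{F}_{\xi\to z}^{-1}p)(x,x-y)$. The partial Fourier transform in $\xi$ preserves $\caS_{[\M]}(\R^{2n})$, by the Remark on Fourier invariance applied in the second group of variables. The linear change of variables $(x,y)\mapsto(x,x-y)$ also preserves this space, since the estimates are stable under linear maps: $\langle x-y\rangle$ and $\langle x\rangle$ control $\langle y\rangle$, and we can use Proposition \ref{prop_wf_E}(i). Hence $K_p\in\caS_{[\M]}(\R^{2n})$, and the Remark yields that $\Op(p)$ extends continuously from $\caS'_{[\M]}(\R^n)$ to $\caS_{[\M]}(\R^n)$. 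The projective case would be handled in the same way, quantifying over all $h$ and $\varepsilon$.
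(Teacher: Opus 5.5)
\textbf{Verdict: there is a genuine gap at Step 1, and under the stated hypotheses it cannot be closed.}

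You follow the paper's route. You extract decay of $p$ from the equivalence estimates via Lemma \ref{lemma_pran}, get $p\in\caS_{[\M]}(\R^{2n})$ from Proposition \ref{equiv_GS}, and then pass to the kernel. Your Step 2 is fine and more explicit than the paper.

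\textbf{Why Step 1 fails.} Definition \ref{def_equiv_FS} gives, for each single $N$, the bound $Ch^{2N}A_NB_N(\langle x\rangle\langle\xi\rangle)^{-N}$ on $Q^c_{B(N)}=\{\langle\xi\rangle\ge Ba_N\}\cup\{\langle x\rangle\ge Bb_N\}$. Note this is a union, not the intersection you wrote. Since $A_NB_N\lesssim H^{2N}M_N^2\le H^{2N}M_{2N}$, the infimum over $N$ is only of size $e^{-\omega_\M(c(\langle x\rangle\langle\xi\rangle)^{1/2})}$. When $x$ stays bounded this is just $e^{-\omega_\M(c\langle\xi\rangle^{1/2})}$. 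None of your three fixes helps:
\begin{enumerate}
\item A splitting $N=N_1+N_2$ is unavailable, because the same $N$ appears in both factors.
\item Choosing $N$ via $\min(\langle x\rangle,\langle\xi\rangle)$ gives decay only in the smaller variable.
\item Decay in $\langle x\rangle+\langle\xi\rangle$ is, by Proposition \ref{prop_KMR}, equivalent to the product bound, so it is not a weaker fallback.
\end{enumerate}

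\textbf{Counterexample.} Let $\A=\B=\M=\{j!^\theta\}_{j\in\N_0}$ with $\theta>1$, take $0\neq\phi\in\D_{\{\M\}}(\R^n)$ and $q(\xi)=e^{-\langle\xi\rangle^{1/(2\theta)}}$, and set $p(x,\xi)=\phi(x)q(\xi)$.
\begin{itemize}
\item Cauchy estimates on polydiscs of radius $c\langle\xi\rangle$ give $|\partial^\alpha q(\xi)|\le\alpha!\,(c\langle\xi\rangle)^{-|\alpha|}e^{-\langle\xi\rangle^{1/(2\theta)}/2}$.
\item One also has $t^Ne^{-t^{1/(2\theta)}/2}\le(4\theta)^{2\theta N}M_N^2$.
\item Since $\langle x\rangle$ is bounded on $\mathrm{supp}\,\phi$, this gives $p\in\SG^{0,0}_{\{\M,\M\}}(\R^n)$ and $p\sim 0$, for every $N$ and every $(x,\xi)$.
\item The kernel is $c\,\phi(x)(\mathcal{F}^{-1}q)(x-y)$. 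At any $x_0$ with $\phi(x_0)\neq 0$, its partial Fourier transform in $y$ equals $c'\phi(x_0)e^{-ix_0\cdot\eta}q(\eta)$.
\item This is not $O(e^{-\varepsilon|\eta|^{1/\theta}})$, so the kernel is not in $\caS_{\{\M\}}(\R^{2n})$, and $\Op(p)$ is not regularizing.
\end{itemize}

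\textbf{The paper's proof has the same gap.} It rewrites the remainder as $M_Nh^{-N}$ and applies Lemma \ref{lemma_pran} to $\langle x\rangle$ and $\langle\xi\rangle$ separately. Definition \ref{def_equiv_FS} does not provide that bound.

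\textbf{A hypothesis that makes the argument work.} Assume something like $A_NB_N\le CH^NM_N$. For Gevrey sequences this reads $\theta\ge\mu+\nu$, which I take to be the analogue of the condition $\theta\ge\mu+\nu-1$ in \cite{CapRod2006}, whose formal sums carry $j!^{\mu+\nu-1}$. Under it:
\begin{enumerate}
\item Choose $N$ from $\max(\langle x\rangle,\langle\xi\rangle)$. This is admissible because $Q^c_{B(N)}$ is a union and $a_Nb_N\ge\max(a_N,b_N)$.
\item Apply Lemma \ref{lemma_pran} to the sequence $\{A_NB_N\}$ to get $e^{-\omega_\M(\varepsilon\max(\langle x\rangle,\langle\xi\rangle))}$.
\item Split this with Proposition \ref{prop_KMR}.
\end{enumerate}
Your Step 2 then finishes the proof.
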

\begin{proof}
	Since $p\sim 0$, there exist $C,h,B>0$ (respectively, there exists $B>0$ and, for every $h>0$, there exists $C>0$) such that, for all $N\in\N_0$ and $(x,\xi)\in Q_{B(N)}^c$, we have
	\begin{equation*}
		|D_\xi^\alpha D_x^\beta p(x,\xi)| \leq Ch^{|\alpha|+|\beta|}M_{|\alpha|}M_{|\beta|}\langle\xi\rangle^{-|\alpha|}\langle x\rangle^{-|\beta|}\frac{M_N}{h^N}.
	\end{equation*}
	
	If $h>1$, let $\ell=h^{-1}$. For $\langle y\rangle\geq Bm_N$, we have
	\[\frac{h^NM_N}{\langle y\rangle^N} \leq \frac{M_N}{\ell^N|y|^N}.\]
	
	If $h\leq 1$, choose any $h_0>1$ and let $\ell=h_0^{-1}$. In this case, for $\langle y\rangle\geq Bm_N$, we have
	\[\frac{h^NM_N}{\langle y\rangle^N} \leq \frac{h_0^NM_N}{|y|^N} = \frac{M_N}{\ell^N|y|^N}.\]
	
	Hence, applying Lemma \ref{lemma_pran}, we obtain $C',\varepsilon>0$ such that
	\[ |D_\xi^\alpha D_x^\beta p(x,\xi)| \leq CC'h^{|\alpha|+|\beta|}M_{|\alpha|}M_{|\beta|} e^{-\omega_{\M}(\ell\varepsilon|x|)}e^{-\omega_{\M}(\ell\varepsilon|\xi|)},\]
	for all $(x,y)\in Q_B^c$ and $\alpha,\beta\in\N_0$. By Proposition \ref{equiv_GS}, we have $p\in\caS_{[\M]}(\R^{2n})$, which implies that the kernel $K_p$ of $\Op(p)$ also belongs to $\caS_{[\M]}(\R^{2n})$. We conclude that $\Op(p)$ is $[\M]$-regularizing.
\end{proof}

\begin{remark}
	If $\A$, $\B$, and $\M$ are non-quasianalytic weight sequences such that $\A,\B\preceq \M$, then any operator with symbol in $\bigcap_{m,\mu\in\R}{\SG}_{[\A,\B]}^{m,\mu}(\R^n)$ is $[\M]$-regularizing.
\end{remark}

We now introduce a useful subclass of $\SG_{[\A,\B]}(\R^n)$. Given $m_x,m_y,\mu\in\R$ and $h>0$, we denote by $\AG_{\A,\B}^{m_x,m_y,\mu}(\R^n;h)$ 
the space of all $p\in C^\infty(\R^{3n})$ such that
\[\|p\|_h = \sup_{\alpha,\beta,\gamma\in\N_0^n}\sup_{x,y,\xi\in\R^n} h^{|\alpha|+|\beta|+|\gamma|} \frac{\langle \xi \rangle^{-\mu+|\alpha|}\langle x\rangle^{-m_x+|\beta|}\langle y\rangle^{m_y-|\gamma|}}{A_{|\alpha|} B_{|\beta|}B_{|\gamma|}}|\partial_\xi^\alpha\partial_x^\beta\partial_t^\gamma p(x,y,\xi)|<+\infty,\]
which is a Banach space with the norm $\|\cdot\|_h$. Then, we define
\[\AG_{\{\A,\B\}}^{m_x,m_y,\mu}(\R^n) = \underset{h>0}{\mathrm{ind\,lim}}\ \AG_{\A,\B}^{m_x,m_y,\mu}(\R^n;h),
\quad
\AG_{(\A,\B)}^{m_x,m_y,\mu}(\R^n) = \underset{h>0}{\mathrm{proj\,lim}}\ \AG_{\A,\B}^{m_x,m_y,\mu}(\R^n;h).\]

An element $p\in \AG_{[\A,\B]}^{m_x,m_y,\mu}(\R^n)$ is said to be an amplitude. It follows by Theorem \ref{thm_ST} that it defines a continuous linear operator on 
$\caS_{[\M]}(\R^n)$ by the formula
\begin{equation}\label{ampl_formula}
	[\Op(p)u](x) = \int_{\R^n}\int_{\R^n} e^{i\xi\cdot (x-y)}p(x,y,\xi)u(y)\,\dbar y\dbar \xi,\quad u\in \caS_{[\M]}(\R^n),
\end{equation}
and it extends to a continuous operator from $\caS_{[\M]}'(\R^n)$ to itself. Moreover, $\Op(p)$ has ultradistributional kernel $K_p(x,y) \in \caS_{[\M]}'(\R^{2n})$ given by
\begin{equation*}
	K_p(x,y) = \int_{\R^n} e^{i\xi\cdot(x-y)} p(x,y,\xi)\,\mathrm{d}\xi,
\end{equation*}
where the integral above is understood as an oscillatory integral. Moreover, $K_p$ satisfies the same estimates of Proposition \ref{est_kernel}.

\begin{theorem}\label{symbol_ampl}
	Let $p\in \AG_{[\A,\B]}^{m_x,m_y,\mu}(\R^n)$. Then, there exists a symbol $\tilde{p}\in {\SG}_{[\A,\B]}^{m,\mu}(\R^n)$, with $m=m_x+m_y$, such that $\Op(p)=\Op(\tilde{p})+R$, where $R$ is a $[\M]$-regularizing operator. Moreover, $\tilde{p}$ has asymptotic expansion
	\[\tilde{p}(x,\xi) \sim \sum_{j\in\N_0}\sum_{|\alpha|=j}\frac{1}{\alpha!}\partial_\xi^\alpha D_y^\alpha p(x,y,\xi)|_{y=x}.\]
\end{theorem}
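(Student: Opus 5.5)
Let $p\in \AG_{[\A,\B]}^{m_x,m_y,\mu}(\R^n)$. The plan follows the classical route: build the formal sum, realize it through Proposition \ref{prop_p_pj}, and control the remainder by integration by parts. First I will show that the terms
\[
p_j(x,\xi)=\sum_{|\alpha|=j}\frac{1}{\alpha!}\partial_\xi^\alpha D_y^\alpha p(x,y,\xi)|_{y=x}
\]
define an element of $\FS_{[\A,\B]}^{m,\mu}(\R^n)$. To do so, I will differentiate $p_j$ in $x$ with the chain rule on the diagonal, that is, Leibniz in the $(x,y)$ variables. The amplitude estimates then give the bound
\[
h^{|\alpha|+|\beta|+2j}A_{|\alpha|+j}B_{|\beta|+j}\langle\xi\rangle^{\mu-|\alpha|-j}\langle x\rangle^{m_x+m_y-|\beta|-j},
\]
where I use $\langle y\rangle=\langle x\rangle$ on the diagonal and the reading of the $y$-weight in the definition that makes $m=m_x+m_y$. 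The factor counting multi-indices, $\sum_{|\alpha|=j}1/\alpha!\le n^j/j!$, is absorbed into $h^{j}$. Condition $(M2)$ splits $A_{|\alpha|+j}\le CH^{|\alpha|+j}A_{|\alpha|}A_j$, and similarly for $B$. This shows the formal sum belongs to $\FS$, even globally and not only on $Q_{B(j)}^c$. Proposition \ref{prop_p_pj} then yields $\tilde p\in\SG_{[\A,\B]}^{m,\mu}(\R^n)$ with $\tilde p\sim\sum_j p_j$.

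Second, I will show that $\Op(p)-\Op(\tilde p)$ is $[\M]$-regularizing. I split the amplitude with a cutoff $\chi(x,y)$ supported where $|x-y|\le k\langle x\rangle$, equal to $1$ where $|x-y|\le \tfrac k2\langle x\rangle$, and of class $\E_{[\B]}$ with SG-type estimates; this can be built from the functions of Lemma \ref{lemma_partition}. For the piece $(1-\chi)p$, the kernel lives in $\overline\Omega_{k/2}$, so the argument of Proposition \ref{est_kernel}, which the text notes applies to amplitudes, gives decay $e^{-\omega_\M(\varepsilon|x|)}e^{-\omega_\M(\varepsilon|y|)}$ together with $[\M]$ derivative bounds. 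That kernel therefore lies in $\caS_{[\M]}(\R^{2n})$ and is regularizing. For $\chi p$, I expand in Taylor form in $y$ around $x$ to order $N$. After the substitution $(y-x)^\alpha e^{i\xi\cdot(x-y)}=(-D_\xi)^\alpha e^{i\xi\cdot(x-y)}$ and integration by parts, this produces exactly $\sum_{j<N}p_j$ plus a remainder $r_N(x,\xi)$. The remainder is an oscillatory integral involving $\partial_\xi^\alpha D_y^\alpha p(x,x+t(y-x),\xi)$ with $|\alpha|=N$.

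Third, I set $q(x,\xi)=\tilde p$ minus the symbol of $\Op(\chi p)$, computed by the standard stationary-phase-free formula
\[
q'(x,\xi)=\iint e^{-i\eta\cdot z}(\chi p)(x,x+z,\xi+\eta)\,\dbar z\,\dbar\eta.
\]
I will prove that $q\sim 0$ in $\FS_{[\A,\B]}^{0,0}$ after a reduction of orders by composing with elliptic weights, or I will simply redo Proposition \ref{exp_0_reg} for order $(m,\mu)$. Proposition \ref{exp_0_reg} then gives the regularizing property.

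The main obstacle is the uniform remainder estimate. I need, for every $N$ and for $(x,\xi)\in Q_{B(N)}^c$, a bound on $r_N$ by $h^{2N}A_NB_N\langle\xi\rangle^{\mu-N}\langle x\rangle^{m-N}$ with $h$ independent of $N$. In the $\eta$-integral I will handle large $|\eta|\ge\langle\xi\rangle/2$ with the operator $\mathcal L_\rho$ from Proposition \ref{est_kernel}, which yields $e^{-\omega_\M}$ decay. On $|\eta|<\langle\xi\rangle/2$ I use $\langle\xi+\eta\rangle\asymp\langle\xi\rangle$, and on $\operatorname{supp}\chi$ I use $\langle x+tz\rangle\asymp\langle x\rangle$. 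The $\langle z\rangle$-integrability comes from integrating by parts in $\eta$ a fixed number of times, where Lemma \ref{lemma_mjj_Mj} and the restriction to $Q_{B(N)}^c$ absorb the resulting constants.
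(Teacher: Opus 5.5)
The overall route you take is the standard one the paper defers to via \cite[Theorem 6.3.14]{NicRod}. It runs: formal sum in $\FS$, realization by Proposition \ref{prop_p_pj}, cutoff $\chi$ near the diagonal, off-diagonal piece via the kernel estimates, Taylor expansion near the diagonal. The gap is in the step you yourself call the crux: the mechanism you propose for the remainder on $|\eta|\ge\langle\xi\rangle/2$ does not work.

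\textbf{Why the $\mathcal L_\rho$ step fails.} On $|\eta|\ge\langle\xi\rangle/2$, the integrand of $r_N$ is $\partial_\zeta^\alpha D_y^\alpha(\chi p)(x,x+tz,\xi+\eta)$ with $|\alpha|=N$. It already carries the factor $A_NB_N$. Its only decay in the fibre variable is $\langle\xi+\eta\rangle^{\mu-N}$, and this is $O(1)$ near $\eta=-\xi$, which lies in that region. So the whole factor $\langle\xi\rangle^{\mu-N}$ would have to come from the decay $e^{-\omega_\M(\rho\langle\eta\rangle)}\approx e^{-\omega_\M(\rho\langle\xi\rangle)}$ produced by $\mathcal L_\rho$. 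However, $\sup_{t>0}t^Ne^{-\omega_\M(\rho t)}=\rho^{-N}M_N$, attained at $t=m_N/\rho$. For $\A=\M$ this point is of the size of the threshold $Ba_N$ defining $Q_{B(N)}$. You therefore obtain a bound of type $A_NB_NM_N$ instead of $h^{2N}A_NB_N$.

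Lemma \ref{lemma_mjj_Mj} together with the restriction to $Q^c_{B(N)}$ only lets you spend a power $\langle\xi\rangle^{-N}$ to remove a factor $A_N$. Here you need to gain that power, so the extra $M_N$ survives. Replacing $\mathcal L_\rho$ by $N$ integrations by parts in $z$ on $r_N$ does not help either. The extra $y$-derivatives cost another $B_N$, and the accompanying gain $\langle x\rangle^{-N}$ compensates only where $\langle x\rangle\gtrsim b_N$. It does not compensate on the part of $Q^c_{B(N)}$ where $\langle x\rangle$ stays bounded and $\langle\xi\rangle\ge Ba_N$.

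\textbf{How to repair it.} Cut in $\eta$ before Taylor expanding, after moving the $(x,\xi)$-derivatives onto the amplitude; note that $\partial_\xi^{\alpha'}\partial_x^{\beta'}q'$ is the exact symbol of $\partial_\zeta^{\alpha'}(\partial_x+\partial_y)^{\beta'}(\chi p)$. Write $q'=q'_{\mathrm I}+q'_{\mathrm{II}}$, where $\psi_{\mathrm I}(\eta)=\psi(\eta/\langle\xi\rangle)$ with $\psi\in\mathcal D_{[\A]}(\R^n)$, $\psi\equiv1$ on $|\eta|\le 1/4$, $\psi\equiv 0$ on $|\eta|\ge1/2$, and $\psi_{\mathrm{II}}=1-\psi_{\mathrm I}$.

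Because $\psi_{\mathrm{II}}$ vanishes near $\eta=0$, the Taylor polynomial terms contribute nothing to $q'_{\mathrm{II}}$. Hence $q'_{\mathrm{II}}$ lies in the remainder for every $N$ and can be estimated directly, with no $\partial_\zeta$-derivatives at all:
\begin{itemize}
\item Apply $e^{-iz\cdot\eta}=-|\eta|^{-2}(\eta\cdot D_z)e^{-iz\cdot\eta}$ exactly $K=N+c(n,\mu)$ times.
\item Each $z$-derivative is a $y$-derivative of $(\chi p)(x,x+z,\xi+\eta)$. By the SG structure it gains both $|\eta|^{-1}\lesssim\langle\xi\rangle^{-1}$ and $\langle x+z\rangle^{-1}\asymp\langle x\rangle^{-1}$, at the price $hB$.
\item Combined with the $z$-support $|z|\le k\langle x\rangle$, this gives $|q'_{\mathrm{II}}|\le Ch^NB_N\langle\xi\rangle^{\mu-N}\langle x\rangle^{m-N}$.
\end{itemize}
This fits the $\FS$ bound, since $A_N\ge1$.

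On $q'_{\mathrm I}$ your argument works. The Taylor terms reproduce $\sum_{j<N}p_j$ because $\psi_{\mathrm I}\equiv1$ near $0$. You also need a fixed number of integrations by parts in $z$ to make the $\eta$-integral over $|\eta|\le\langle\xi\rangle/2$ converge; the resulting fixed index shifts are absorbed by $(M2)'$. These bounds hold on all of $\R^{2n}$, so $q'$ itself can serve as $\tilde p$.

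\textbf{Two smaller points.}
\begin{itemize}
\item Reducing orders by composing with elliptic weights would be circular, because Theorem \ref{asympt_exp_comp} is derived from this very theorem. Your alternative of redoing Proposition \ref{exp_0_reg} at order $(m,\mu)$ is fine.
\item The splitting $A_{|\alpha|+j}\le CH^{|\alpha|+j}A_{|\alpha|}A_j$ requires $(M2)$. This is not among the standing assumptions $(M0)$, $(M1)$, $(M2)'$, although the paper tacitly uses it as well.
\end{itemize}
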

\begin{proof}
	The proof follows the same lines of \cite[Theorem 6.3.14]{NicRod}, . 
\end{proof}

Also the next Propositions \ref{asympt_exp_tr} and \ref{asympt_exp_adj} follow by standard arguments.

\begin{proposition}\label{asympt_exp_tr}
	Given $p\in {\SG}_{[\A,\B]}^{m,\mu}(\mathbb{R}^n)$, the transposed ${}^t\mathrm{Op}(p)$ of $\mathrm{Op}(p)$, defined by
	\begin{equation*}
		\langle \mathrm{Op}(p)u, v\rangle = \langle u,{}^t\mathrm{Op}(p)v\rangle,\ u,v\in\mathcal{S}_{[\M]}(\mathbb{R}^n),
	\end{equation*}
	is a pseudodifferential operator with symbol ${}^tp\in {\SG}_{[\A,\B]}^{m,\mu}(\mathbb{R}^n)$ satisfying
	\begin{equation*}
		{}^tp(x,\xi)\sim \sum_{j=0}^\infty\sum_{|\alpha|=j}\frac{1}{\alpha!}\partial_\xi^\alpha D_x^\alpha p(x,-\xi).
	\end{equation*}
\end{proposition}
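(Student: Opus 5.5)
The plan is to reduce the transposition to the amplitude result, Theorem \ref{symbol_ampl}. For $u,v\in\caS_{[\M]}(\R^n)$, Fubini and Theorem \ref{cont_SG} give
\[
\langle \Op(p)u,v\rangle=\iint\!\!\int e^{i\xi\cdot(x-y)}p(x,\xi)u(y)v(x)\,\dbar\xi\,\dy\,\dx .
\]
We swap the roles of $x$ and $y$ and change variables $\xi\mapsto-\xi$. This shows that ${}^t\Op(p)v(y)$ is the amplitude operator \eqref{ampl_formula} with amplitude $q(y,x,\xi):=p(x,-\xi)$, i.e.
\[
{}^t\Op(p)v(x)=\iint e^{i\xi\cdot(x-y)}p(y,-\xi)v(y)\,\dbar y\,\dbar\xi .
\]
To justify these manipulations we first work with $u,v\in\caS_{[\M]}$. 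We also insert a cutoff $\chi(\varepsilon\xi)$, with $\chi\in\D_{[\M]}$ (possible by $(M3)'$), so that all integrals converge absolutely, and then let $\varepsilon\to0$ using the continuity of amplitude operators stated after the definition of $\AG$.

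Next we check that $q(x,y,\xi)=p(y,-\xi)$ lies in $\AG_{[\A,\B]}^{0,-m,\mu}(\R^n)$. The amplitude has no $x$-dependence, so the $x$-weight has order $0$. In $y$ we need $\langle y\rangle^{-m+|\gamma|}|\partial_y^\gamma\partial_\xi^\alpha q|\le C h^{|\alpha|+|\gamma|}A_{|\alpha|}B_{|\gamma|}\langle\xi\rangle^{\mu-|\alpha|}$. This is exactly the $\SG^{m,\mu}_{[\A,\B]}$ estimate for $p$, since the sign change in $\xi$ costs only factors $(-1)^{|\alpha|}$ and leaves $\langle\xi\rangle$ unchanged. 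Any $x$-derivative vanishes, so those seminorms are trivial; the only bookkeeping is the sign convention $\langle y\rangle^{m_y}$ in the norm, which fixes $m_y=-m$, so that $m_x+m_y$ carries total $x$-order $m$. The Roumieu and Beurling cases are identical, with the constants quantified as in the definitions.

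Theorem \ref{symbol_ampl} then yields ${}^tp\in\SG^{m,\mu}_{[\A,\B]}(\R^n)$ and an $[\M]$-regularizing $R$ with ${}^t\Op(p)=\Op({}^tp)+R$, together with the expansion
\[
{}^tp(x,\xi)\sim\sum_j\sum_{|\alpha|=j}\frac1{\alpha!}\partial_\xi^\alpha D_y^\alpha\big[p(y,-\xi)\big]\big|_{y=x}.
\]
Computing the derivatives, $\partial_\xi^\alpha[p(y,-\xi)]=(-1)^{|\alpha|}(\partial_\xi^\alpha p)(y,-\xi)$. The statement's formula $\partial_\xi^\alpha D_x^\alpha p(x,-\xi)$, read as differentiation of the composite function $\xi\mapsto p(x,-\xi)$, matches this, so the sign convention has to be tracked carefully here.

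It remains to absorb $R$ into the symbol, since the statement asserts that ${}^t\Op(p)$ is exactly a pseudodifferential operator. The kernel of $R$ lies in $\caS_{[\M]}(\R^{2n})$. Writing $R=\Op(r)$ with $r(x,\xi)=\int e^{-iz\cdot\xi}K_R(x,x-z)\,dz$, up to normalisation, gives $r\in\caS_{[\M]}(\R^{2n})$, because a linear change of variables followed by a partial Fourier transform preserves $\caS_{[\M]}$ (by the tensor-product and Fourier-invariance properties of these spaces). Such an $r$ lies in every $\SG^{m',\mu'}_{[\A,\B]}$ provided $\M\preceq\A,\B$ in the appropriate sense, and it is $\sim0$. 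Replacing ${}^tp$ by ${}^tp+r$ therefore leaves both the class and the asymptotic expansion unchanged.

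The main obstacle is this last absorption step: it needs the weight sequences $\A,\B,\M$ to be compatible, since $\caS_{[\M]}$-decay must dominate the $\A,\B$ derivative bounds. If that compatibility is not available, the statement must be understood modulo $[\M]$-regularizing operators, as in Theorem \ref{symbol_ampl}.
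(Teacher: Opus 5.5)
Your argument is the standard one the paper has in mind when it says that Propositions \ref{asympt_exp_tr} and \ref{asympt_exp_adj} ``follow by standard arguments''. You realise ${}^t\Op(p)$ as the amplitude operator with amplitude $p(y,-\xi)$, apply Theorem \ref{symbol_ampl}, and read $\partial_\xi^\alpha D_x^\alpha p(x,-\xi)$ as derivatives of the composite function; all of this is right.

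Two corrections are needed, neither affecting the main line.

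First, your order bookkeeping contradicts itself. With $m_x=0$ and $m_y=-m$ you get $m_x+m_y=-m$, and Theorem \ref{symbol_ampl} would then return a symbol of order $(-m,\mu)$. The weight $\langle y\rangle^{m_y-|\gamma|}$ in the definition of $\AG$ is a sign slip in the paper. Two places show this: Theorem \ref{symbol_ampl} gives total order $m_x+m_y$, and the orders of the amplitude $p(x,\xi)q_2(y,\xi)$ in the proof of Theorem \ref{asympt_exp_comp} are added. With the intended weight $\langle y\rangle^{-m_y+|\gamma|}$ you simply have $p(y,-\xi)\in\AG^{0,m,\mu}_{[\A,\B]}(\R^n)$.

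Second, drop the absorption of $R$. The paper only ever uses this proposition in the form ${}^t\Op(q)=\Op(q_1)+R_1$ with $R_1$ $[\M]$-regularizing; see the first sentence of the proof of Theorem \ref{asympt_exp_comp}. That is exactly what your argument already yields. Under the standing hypothesis $\A,\B\preceq\M$, an $[\M]$-regularizing operator need not have a symbol in $\SG_{[\A,\B]}$. Your proposed compatibility $\M\preceq\A,\B$ would not suffice either. Trading the weights $\langle\xi\rangle^{|\alpha|}\langle x\rangle^{|\beta|}$ for Gelfand--Shilov decay costs a second factor comparable to $M_{|\alpha|}M_{|\beta|}$, on top of the one coming from the derivatives. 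So one would need something like $\{M_j^2\}\preceq\A,\B$.
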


\begin{proposition}\label{asympt_exp_adj}
	Given $p\in {\SG}_{[\A,\B]}^{m,\mu}(\R^n)$, the formal adjoint $\mathrm{Op}(p)^*$ of $\mathrm{Op}(p)$, defined by
	\begin{equation*}
		\langle \mathrm{Op}(p)u,\overline{v}\rangle = \langle u,\overline{\mathrm{Op}(p)^*v}\rangle,\ u,v\in\caS_{[\M]}(\mathbb{R}^n),
	\end{equation*}
	is a pseudodifferential operator with symbol $p^*\in {\SG}_{[\A,\B]}^{m,\mu}(\R^n)$ satisfying
	\begin{equation*}
		p^*(x,\xi)\sim \sum_{j=0}^\infty\sum_{|\alpha|=j}\frac{1}{\alpha!}\partial_\xi^\alpha D_x^\alpha \overline{p(x,\xi)}.
	\end{equation*}
\end{proposition}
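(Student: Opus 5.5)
The plan is to reduce the statement to Theorem \ref{symbol_ampl}, in the same way as Proposition \ref{asympt_exp_tr}. For $u,v\in\caS_{[\M]}(\R^n)$ we write
\[
\langle \Op(p)u,\overline v\rangle=\iint e^{i\xi\cdot(x-y)}p(x,\xi)u(y)\overline{v(x)}\,\dbar y\,\dbar\xi\,\dx ,
\]
understood as an oscillatory integral (Fubini is legitimate after the regularisation used in Theorem \ref{cont_SG}). Taking complex conjugates and exchanging the roles of $x$ and $y$ shows that $\Op(p)^*v=\Op(q)v$. Here $q$ is the amplitude $q(x,y,\xi)=\overline{p(y,\xi)}$, quantised as in \eqref{ampl_formula}. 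The amplitude $q$ does not depend on $x$, and it has the same derivative bounds in $(y,\xi)$ as $p$. So for each $h$ we have $q\in\AG_{\A,\B}^{0,-m,\mu}(\R^n;h)$ with norm equal to that of $p$ in $\SG_{\A,\B}^{m,\mu}(\R^n;h)$. The sign convention in the $\AG$ norm, with the factor $\langle y\rangle^{m_y-|\gamma|}$, has to be read so that the $y$-growth $\langle y\rangle^{m}$ is allowed, which gives $m_x+m_y=m$. This holds in both the Roumieu and the Beurling case.

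Next we apply Theorem \ref{symbol_ampl} to $q$. It gives $\tilde q\in\SG_{[\A,\B]}^{m,\mu}(\R^n)$ and an $[\M]$-regularizing operator $R$ with $\Op(q)=\Op(\tilde q)+R$, and the asymptotic expansion
\[
\tilde q(x,\xi)\sim\sum_{j}\sum_{|\alpha|=j}\frac{1}{\alpha!}\partial_\xi^\alpha D_y^\alpha \overline{p(y,\xi)}\big|_{y=x}
=\sum_j\sum_{|\alpha|=j}\frac1{\alpha!}\partial_\xi^\alpha D_x^\alpha\overline{p(x,\xi)} .
\]
This is exactly the claimed expansion.

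It remains to remove $R$. The kernel of $R$ lies in $\caS_{[\M]}(\R^{2n})$, as in Proposition \ref{exp_0_reg}. Its symbol $r(x,\xi)=e^{-ix\cdot\xi}\int K_R(x,y)e^{iy\cdot\xi}\,dy$ therefore satisfies Gelfand--Shilov estimates in $(x,\xi)$. We then use Proposition \ref{incl_quasi_factorial} and the decay $e^{-\omega_\M}$, together with the assumption $\A,\B\succeq$ (or comparable to) the sequences governing $\M$ implicitly assumed throughout the section. With these we absorb $r$ into $\SG_{[\A,\B]}^{m,\mu}$ and set $p^*=\tilde q+r$, which keeps the same expansion, since $r\sim0$. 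If the paper instead intends ``pseudodifferential operator'' up to regularizing operators, this step is unnecessary.

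The main obstacle is hidden inside Theorem \ref{symbol_ampl}. It is showing that the remainders of the Taylor expansion in $y-x$ obey the formal-sum estimates with factors $A_jB_j$ on $Q^c_{B(j)}$. The difficulty is in controlling the constants uniformly in $j$, using $(M1)$ and Lemma \ref{lemma_mjj_Mj}. Since we take that theorem as given, the only genuinely new point is checking the amplitude class and order bookkeeping for $q$.
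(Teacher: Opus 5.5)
Your steps 1 and 2 are the standard argument the paper has in mind when it says the proposition ``follows by standard arguments''. It is the same reduction the paper carries out for compositions in Theorem \ref{asympt_exp_comp}. $\Op(p)^*$ is the amplitude operator with $q(x,y,\xi)=\overline{p(y,\xi)}$, and Theorem \ref{symbol_ampl} then gives $p^*:=\tilde q\in\SG^{m,\mu}_{[\A,\B]}(\R^n)$ with exactly the stated expansion, modulo an $[\M]$-regularizing operator.

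That ``modulo regularizing'' reading is the intended meaning of ``pseudodifferential operator with symbol $p^*$'' here. The paper itself uses the companion Proposition \ref{asympt_exp_tr} in the form ${}^t\Op(q)=\Op(q_1)+R_1$ inside the proof of Theorem \ref{asympt_exp_comp}. So your proof is complete once step 2 is done.

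There is one bookkeeping slip, which is harmless. The sign of $m_y$ in the paper's $\AG$-norm is evidently a misprint; the intended convention is $|\partial_\xi^\alpha\partial_x^\beta\partial_y^\gamma q|\lesssim\langle\xi\rangle^{\mu-|\alpha|}\langle x\rangle^{m_x-|\beta|}\langle y\rangle^{m_y-|\gamma|}$. With it you simply have $q\in\AG^{0,m,\mu}_{[\A,\B]}(\R^n)$. Writing $\AG^{0,-m,\mu}$ and then claiming $m_x+m_y=m$ is internally inconsistent.

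You should drop step 3, because it does not work as argued.
\begin{itemize}
\item The standing hypothesis is $\A,\B\preceq\M$, so $[\M]$-regularity is the \emph{weaker} one. A symbol coming from a kernel in $\caS_{[\M]}(\R^{2n})$ therefore has only $\M$-type derivative bounds. Nothing in the paper provides the reverse comparison you invoke, so you cannot place it in $\SG^{m,\mu}_{[\A,\B]}(\R^n)$.
\item Even for $\A=\B=\M$ the absorption fails. To trade the decay $e^{-\omega_\M(\varepsilon\langle\xi\rangle)}$ for the factor $\langle\xi\rangle^{\mu-|\alpha|}$ you pay $\sup_{t>0}t^{k}e^{-\omega_\M(\varepsilon t)}=\varepsilon^{-k}M_k$. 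The same happens in $x$.
\item As a result you only get $M_{|\alpha|}^2M_{|\beta|}^2$ where an $\SG$ bound requires $M_{|\alpha|}M_{|\beta|}$, and this loss cannot be removed in general.
\end{itemize}
This is exactly why the calculus is formulated modulo $[\M]$-regularizing operators rather than absorbing them into the symbol classes.
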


\begin{theorem}\label{asympt_exp_comp}
	Given two symbols $p\in {\SG}_{[\A,\B]}^{m,\mu}(\mathbb{R}^n)$ and $q\in {\SG}_{[\A,\B]}^{\mu',m'}(\mathbb{R}^n)$, there exists a symbol $s\in {\SG}_{[\A,\B]}^{\mu+\mu',m+m'}(\mathbb{R}^n)$ and a $[\M]$-regularizing operator $R$ such that $\mathrm{Op}(p)\mathrm{Op}(q)=\Op(s)+R$. Moreover, $s$ has asymptotic expansion
	\begin{equation*}
		s(x,\xi)\sim \sum_{j=0}^\infty\sum_{|\alpha|=j}\frac{1}{\alpha!}\partial_\xi^\alpha p(x,\xi)D_x^\alpha q(x,\xi).
	\end{equation*}
\end{theorem}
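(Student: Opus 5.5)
The plan is to reduce the composition to the amplitude case, where Theorem \ref{symbol_ampl} applies. Write $\Op(q)$ in terms of its transposed symbol. By Proposition \ref{asympt_exp_tr} applied twice (transposition is an involution up to regularizing operators), there is $\tilde q\in\SG^{m',\mu'}_{[\A,\B]}(\R^n)$ and an $[\M]$-regularizing $R_1$ such that
\[
[\Op(q)u](x)=\int\!\!\int e^{i\xi\cdot(x-y)}\tilde q(y,\xi)u(y)\,\dbar y\,\dbar\xi + R_1u ,
\]
so that $\Op(q)$ is a right-quantized operator up to a regularizing term. Then
\[
\Op(p)\Op(q)u(x)=\int\!\!\int e^{i\xi\cdot(x-y)}p(x,\xi)\tilde q(y,\xi)u(y)\,\dbar y\,\dbar\xi+\Op(p)R_1u,
\]
which follows from Fourier inversion and is justified on $\caS_{[\M]}(\R^n)$ by Theorem \ref{cont_SG}. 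The amplitude $a(x,y,\xi)=p(x,\xi)\tilde q(y,\xi)$ lies in $\AG^{m,-m',\mu+\mu'}_{[\A,\B]}(\R^n)$, with the sign convention for $m_y$ as in the definition. This is checked by Leibniz in $\xi$: the factors $A_{|\alpha-\gamma|}A_{|\gamma|}\le A_{|\alpha|}$ by Proposition \ref{MjMj-k}, and the binomial sum costs $2^{|\alpha|}$, which is absorbed into $h$. The $x$- and $y$-derivatives fall on separate factors, so no interaction arises there.

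Theorem \ref{symbol_ampl} then gives $s\in\SG^{m+m',\mu+\mu'}_{[\A,\B]}$ with $\Op(a)=\Op(s)+R_2$. The term $\Op(p)R_1$ is regularizing: $R_1$ maps $\caS'_{[\M]}$ continuously into $\caS_{[\M]}$, and $\Op(p)$ preserves $\caS_{[\M]}$ by Theorem \ref{cont_SG}. Hence $R=R_2+\Op(p)R_1$ is $[\M]$-regularizing.

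The remaining task is to identify the asymptotic expansion. Theorem \ref{symbol_ampl} gives
\[
s\sim\sum_\alpha\frac{1}{\alpha!}\partial_\xi^\alpha\bigl(p(x,\xi)D_x^\alpha\tilde q(x,\xi)\bigr),
\]
and one must substitute the expansion of $\tilde q$ from Proposition \ref{asympt_exp_tr}, namely
\[
\tilde q\sim\sum_\beta\frac{(-1)^{|\beta|}}{\beta!}\partial_\xi^\beta D_x^\beta q .
\]
The combinatorial identity $\sum_{\alpha\le\gamma}\binom{\gamma}{\alpha}(-1)^{|\gamma-\alpha|}(\cdot)$, used exactly as in the classical proof (cf. \cite{NicRod}), collapses the double sum to $\sum_\gamma\frac{1}{\gamma!}\partial_\xi^\gamma p\,D_x^\gamma q$.

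I expect the main obstacle to be making this rearrangement legitimate within the ultradifferential equivalence of Definition \ref{def_equiv_FS}. One must show that composing and regrouping formal sums preserves membership in $\FS_{[\A,\B]}$, with the $j$-th term controlled by $h^{2j}A_jB_j$ on $Q^c_{B(j)}$. The idea is to apply Proposition \ref{MjMj-k} to the index splits $A_{|\alpha|}A_{|\beta|}\le A_{|\alpha|+|\beta|}$, and similarly for $B$, and to count multi-indices by $2^{n+j}$, which is absorbed into $h$. A change of the cut-off constant $B$ handles the nested domains $Q^c_{B(j)}$, since $a_j$ and $b_j$ are increasing by Lemma \ref{lemma_mjj_Mj}. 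Once this equivalence is established, Proposition \ref{prop_p_pj} together with Proposition \ref{exp_0_reg} shows that any two realizations differ by an $[\M]$-regularizing operator. This gives the stated expansion for $s$.
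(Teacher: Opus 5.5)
Your proposal is correct and follows essentially the paper's route: rewrite $\Op(q)$, up to an $[\M]$-regularizing term, as the transpose of $\Op(q_1)$ (a right quantization with symbol $q_1(y,-\xi)$), apply Theorem~\ref{symbol_ampl} to the amplitude $p(x,\xi)\tilde q(y,\xi)$, and collapse the double expansion via $\sum_{\beta+\delta=\nu}(-1)^{|\beta|}/(\beta!\,\delta!)=0$ for $\nu\neq0$. You also flag the regrouping of formal sums in $\FS_{[\A,\B]}$, which the paper carries out only formally, and the one bookkeeping slip is taking $y$-order $-m'$: with $m=m_x+m_y$ that gives $m-m'$, so the intended $y$-order is $m'$ (the $\langle y\rangle$ weight in the paper's $\AG$ norm has a sign typo).
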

\begin{proof}
	We have ${}^t({}^t\Op(q))=\Op(q)$ and ${}^t\Op(q)=\Op(q_1)+R_1$, where $R_1$ is $[\M]$-regularizing and $q_1\in {\SG}_{[\A,\B]}^{\mu',m'}(\R^n)$ has asymptotic expansion
	\begin{equation}\label{exp_q1}
		q_1(x,\xi) \sim \sum_{j\in\N_0}\sum_{|\alpha|=j}\frac{1}{\alpha!}\partial_\xi^\alpha D_x^\alpha q(x,-\xi),
	\end{equation}
	by Proposition \ref{asympt_exp_tr}.
	Moreover, $\Op(p)\Op(q) = \Op(p)\,{}^t\Op(q_1)+R_2$, where $R_2$ is $[\M]$-regularizing. Notice that
	\[{}^t\Op(q)u(x) = \int_{\R^n} e^{i\xi\cdot x}\left(\int_{\R^n}e^{-i\xi\cdot y}q_1(y,-\xi)u(y)\,\dbar y\right)\dbar\xi,\]
	which implies
	\[\widehat{({}^t\Op(q_1)u)}(\xi) = \int_{\R^n}e^{-i\xi\cdot y}q_1(y,-\xi)u(y)\,\dbar y,\]
	and
	\[\Op(p)\,{}^t\Op(q_1) = \int_{\R^n}\int_{\R^n} e^{i\xi\cdot(x-y)}p(x,\xi)q_2(y,\xi)u(y)\,\dbar y\dbar\xi,\]
	where $q_2(x,\xi)=q_1(x,-\xi)$. By \eqref{exp_q1}, we obtain
	\begin{equation}\label{exp_q2}
		q_2(x,\xi) \sim \sum_{j\in\N_0}\sum_{|\alpha|=j}\frac{(-1)^{|\alpha|}}{\alpha!}\partial_\xi^\alpha D_x^\alpha q(x,\xi).
	\end{equation}
	
	In this case, $\Op(p)\,{}^t\Op(q_1)$ is an operator with amplitude $a(x,y,\xi) = p(x,\xi)q_2(y,\xi)\in \AG_{[\A,\B]}^{\mu+\mu',m,m'}(\R^n)$. By Theorem \ref{symbol_ampl}, we have $\Op(p)\,{}^t\Op(q_1) = \Op(s)+R$, where $R$ is $[\M]$-regularizing and $s\in {\SG}_{[\A,\B]}^{\mu+\mu',m+m'}(\R^n)$ satisfies
	\[s(x,\xi) \sim \sum_{j\in\N_0}\sum_{|\alpha|=j}\frac{1}{\alpha!}\partial_\xi^\alpha\left( p(x,\xi) D_x^\alpha q(x,\xi)\right).\]
	
	It only remains to show that
	\[\sum_{j\in\N_0}\sum_{|\alpha|=j}\frac{1}{\alpha!}\partial_\xi^\alpha\left( p(x,\xi) D_x^\alpha q_2(x,\xi)\right) \sim \sum_{j=0}^\infty\sum_{|\alpha|=j}\frac{1}{\alpha!}\partial_\xi^\alpha p(x,\xi)D_x^\alpha q(x,\xi).\]
	
	By the Leibniz rule and \eqref{exp_q2}, we have
	\begin{align*}
		\sum_{\alpha}\frac{1}{\alpha!}\partial_\xi^\alpha\left( p(x,\xi) D_x^\alpha q_2(x,\xi)\right) & \sim \sum_{\alpha,\beta}\frac{1}{\alpha!\beta!}\partial_\xi^\alpha\left( p(x,\xi) \partial_\xi^\beta D_x^{\alpha+\beta}q(x,\xi)\right)\\
		& \sim \sum_{\alpha,\beta}\frac{(-1)^{|\beta|}}{\alpha!\beta!}\sum_{\gamma+\delta=\alpha}\frac{\alpha!}{\gamma!\delta!}\partial_\xi^{\gamma}p(x,\xi)\partial_\xi^{\beta+\delta}D_x^{\beta+\gamma+\delta}q(x,\xi)\\
		& \sim \sum_{\gamma,\nu}\left(\sum_{\beta+\delta=\nu}\frac{(-1)^{|\beta|}}{\beta!\delta!}\right) \frac{1}{\gamma!}\partial_\xi^{\gamma}p(x,\xi)\partial_\xi^{\beta+\delta}D_x^{\beta+\gamma+\delta}q(x,\xi).
	\end{align*}
%
	
	By the binomial formula, with $\boldsymbol{e}=(1,\dots,1)\in\R^n$, we have that
	\begin{equation*}
		\sum_{\beta+\delta=\nu}\frac{(-1)^{|\beta|}}{\beta!\delta!} = \frac{1}{\nu!}\sum_{\beta+\delta=\nu}\frac{\nu!}{\beta!\delta!}(-1)^{|\beta|}\boldsymbol{e}^\beta \boldsymbol{e}^\delta
		=
		\frac{1}{\nu!} (\boldsymbol{e}-\boldsymbol{e})^{\nu}
	\end{equation*}
	vanishes for $\nu\not=0$.
	We conclude that the only non-vanishing terms in the asymptotic expansion are those with $\nu=0$. The proof is complete.	
\end{proof}

Theorem \ref{asympt_exp_comp} implies the usual expected result about localization on left and right of an ultradifferentiable operator by compatible cutoffs.

\begin{proposition}\label{disj_supp}
	Let $p\in {\SG}_{[\A,\B]}^{m,\mu}(\R^n)$, and $\varphi,\psi\in\mathcal{E}_{[\B]}(\R^n)$ be two functions such that $\mathrm{dist}(\mathrm{supp}(\varphi),\mathrm{supp}(\psi))>0$. Assume also that there exist $C,h>0$ (respectively, for every $h>0$, there exists $C>0$) such that
	\[|\partial^{\alpha}\varphi(x)|,|\partial^\alpha\psi(x)|\leq Ch^{|\alpha|}M_{|\alpha|},\quad \forall x\in\R^n,\ \forall \alpha\in\N_0^n.\]
	
	Then, the operator given by
	\[u\mapsto  \varphi\,\mathrm{Op}(p)(\psi u),\quad u\in\mathcal{S}_{[\M]}(\mathbb{R}^n),\]
	is $[\M]$-regularizing.
\end{proposition}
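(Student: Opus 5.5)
The plan is to regard $\varphi$ and $\psi$ as SG-symbols and apply Theorem \ref{asympt_exp_comp} twice. The hypothesis gives uniform bounds $|\partial^\alpha\varphi|\leq Ch^{|\alpha|}M_{|\alpha|}$, but no decay gain $\langle x\rangle^{-|\alpha|}$. So $\varphi$ and $\psi$ are not literally in ${\SG}^{0,0}_{[\A,\B]}$. Instead I would treat the multiplications as operators with amplitudes. Concretely, $\varphi\,\Op(p)(\psi\,\cdot)$ is the operator \eqref{ampl_formula} with amplitude
\[
a(x,y,\xi)=\varphi(x)p(x,\xi)\psi(y).
\]
Its kernel is $K(x,y)=\varphi(x)K_p(x,y)\psi(y)$. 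The goal is to show $K\in\caS_{[\M]}(\R^{2n})$, which by the Remark preceding Lemma \ref{lemma_pran} gives $[\M]$-regularity.

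Let $d=\mathrm{dist}(\mathrm{supp}\,\varphi,\mathrm{supp}\,\psi)>0$. On the support of $K$ we have $|x-y|\geq d$. This is not the conic condition $|x-y|>k\langle x\rangle$ defining $\Omega_k$, so I would split $\R^{2n}$ into two regions.

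\textbf{Conic region.} On $\overline\Omega_k$, Proposition \ref{est_kernel} gives directly
\[
|D_x^\beta D_y^\gamma K_p|\leq Ch^{|\beta|+|\gamma|}B_{|\beta|}B_{|\gamma|}e^{-\omega_\M(\varepsilon|x|)}e^{-\omega_\M(\varepsilon|y|)}.
\]
Multiplying by $\varphi,\psi$ and using Leibniz, $\B\preceq\M$, and Proposition \ref{MjMj-k} to merge factors $M_{|\beta'|}M_{|\beta-\beta'|}\leq M_{|\beta|}$, the product keeps Gelfand--Shilov bounds of the form in Proposition \ref{equiv_GS} on $\R^{2n}$.

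\textbf{Near-diagonal region.} Here $d\leq|x-y|\leq k\langle x\rangle$, so $\langle x\rangle\sim\langle y\rangle$. I would redo the argument of Proposition \ref{est_kernel}, integrating by parts with $(x_\ell-y_\ell)^{-N}D_{\xi_\ell}^N$ and using the dyadic decomposition $\psi_N$ of Lemma \ref{lemma_partition}. Now $|x_\ell-y_\ell|\geq d/n$ replaces the bound $k/(n\langle x\rangle)$. Each $\xi$-derivative of $p$ gains $\langle\xi\rangle^{-1}$, and on the support of $\psi_N$ this is bounded by $(Rm_N)^{-1}$. Summing over $N$ with Lemma \ref{lemma_mjj_Mj}, exactly as before, gives ultradifferentiable bounds in $(x,y)$ together with decay $e^{-\omega_\M(\rho\langle\xi\rangle)}$, which is what makes the $\xi$-integral converge.

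The remaining issue is spatial decay. In this region there is no $e^{-\omega_\M(\rho\langle x-y\rangle)}$ that converts into decay in both $x$ and $y$. I would get decay in $x$ from the SG structure: each $\xi$-integration by parts combined with $x$-derivatives gains $\langle x\rangle^{-1}$ in the symbol estimates. An alternative is to use the factor $\lambda_\rho$ of the operator $\mathcal L_\rho$ with $\langle x\rangle\sim\langle y\rangle$, trading $\langle\xi\rangle$-decay for powers $M_j/\langle x\rangle^j$ and then applying Lemma \ref{lemma_pran}, as in the proof of Proposition \ref{exp_0_reg}, to obtain $e^{-\omega_\M(\varepsilon|x|)}$. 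Since $\langle x\rangle\sim\langle y\rangle$, Proposition \ref{prop_KMR} then splits this into $e^{-\omega_\M(\varepsilon'|x|)}e^{-\omega_\M(\varepsilon'|y|)}$.

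I expect this last step, extracting exponential spatial decay in the non-conic strip, to be the main obstacle. If it fails with uniform separation $d$ alone, the fallback is the formal route. Write the operator's symbol via Theorem \ref{symbol_ampl}; its expansion terms $\frac1{\alpha!}\partial_\xi^\alpha p\,\varphi(x)D_y^\alpha\psi(y)|_{y=x}$ vanish identically because $\varphi\,\partial^\alpha\psi\equiv0$. So the symbol is $\sim0$, and Proposition \ref{exp_0_reg} (with orders shifted) gives $[\M]$-regularity, provided the amplitude lies in a suitable $\AG$ class. Checking that membership again reduces to controlling the $\langle y\rangle$-weights.

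In the Beurling case the same argument runs with the constants chosen for every $h$, in the standard way.
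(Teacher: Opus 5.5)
The step you flag as the main obstacle is where the argument breaks, and it cannot be repaired. Your suspicion about the missing factor $\langle x\rangle^{-|\alpha|}$ is justified: with the hypotheses as stated, the proposition is false.

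Neither of your mechanisms gives decay in $x$ on the strip $d\le|x-y|\le k\langle x\rangle$. Integrating by parts in $\xi$ only produces powers of $(x_\ell-y_\ell)^{-1}$ and $\langle\xi\rangle^{-1}$. Since $K_p(x,y)=\int e^{i(x-y)\cdot\xi}p(x,\xi)\,\dbar\xi$ involves no integration in $x$, the gain $\langle x\rangle^{-1}$ from $\partial_x p$ never reduces the size of $K_p$. Likewise, $\lambda_\rho(\langle x-y\rangle)$ gives decay only in $|x-y|$, which is bounded on the strip.

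Here is a counterexample. Take $p(x,\xi)=e^{-|\xi|^2}$. It lies in $\SG^{0,0}_{[\A,\B]}(\R^n)$, since $|\partial_\xi^\alpha e^{-|\xi|^2}|\le C^{|\alpha|+1}\alpha!\,\langle\xi\rangle^{-|\alpha|}$ and $\{j!\}\prec\A$ by Proposition \ref{incl_quasi_factorial}. Take $\chi\in\D_{[\B]}(\R)$ with $\mathrm{supp}\,\chi\subset[-\frac12,\frac12]$ and $\chi(0)=1$. Put $\theta(s)=\sum_{k\in\Z}\chi(s-4k)$, $\varphi(x)=\theta(x_1)$ and $\psi(x)=\theta(x_1-2)$. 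All assumptions hold, with $\mathrm{dist}(\mathrm{supp}\,\varphi,\mathrm{supp}\,\psi)=1$. The kernel of $\varphi\,\Op(p)\psi$ is $c\,\varphi(x)e^{-|x-y|^2/4}\psi(y)$ with $c>0$. It equals $c\,e^{-1}$ at $x=4ke_1$, $y=(4k+2)e_1$ for every $k\in\Z$, where $e_1=(1,0,\dots,0)$. So the kernel is not in $\caS_{[\M]}(\R^{2n})$, and by the kernel theorem the operator is not $[\M]$-regularizing.

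What is missing is therefore a hypothesis, not a technique. Without decay of the derivatives of $\varphi$, nothing forces $\varphi$ to be small near $\mathrm{supp}\,\psi$ at large $|x|$.

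The hypothesis that makes the statement work is $|\partial^\alpha\varphi(x)|,|\partial^\alpha\psi(x)|\le Ch^{|\alpha|}B_{|\alpha|}\langle x\rangle^{-|\alpha|}$, i.e. $\varphi,\psi\in\SG^{0,0}_{[\A,\B]}(\R^n)$. The paper uses it tacitly when it derives the proposition in one line from Theorem \ref{asympt_exp_comp}. It also holds for the cut-offs $\Theta_k,\Phi_k$ of Theorem \ref{thm_part} and Corollary \ref{coro_Theta}, to which the proposition is applied later.

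With this hypothesis, your fallback is exactly the paper's argument, and no kernel splitting is needed:
\begin{enumerate}
\item The amplitude $\varphi(x)p(x,\xi)\psi(y)$ lies in $\AG^{m,0,\mu}_{[\A,\B]}(\R^n)$; equivalently, compose $\Op(\varphi)\Op(p)\Op(\psi)$.
\item Every term $\frac{1}{\alpha!}\varphi\,\partial_\xi^\alpha p\,D_x^\alpha\psi$ vanishes identically, so the symbol is $\sim0$.
\item Proposition \ref{exp_0_reg} then concludes.
\end{enumerate}
This is precisely what the counterexample lacks. Under SG-decay, $\varphi\equiv0$ on $B(y,d)$ for $y\in\mathrm{supp}\,\psi$, and Taylor's formula gives $|\varphi(x)|\le C(2\sqrt n\,h)^N\frac{B_N}{N!}\bigl(|x-y|/\langle x\rangle\bigr)^N$ for $|x-y|\le\langle x\rangle/2$.

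One caution if you write this out. Proposition \ref{exp_0_reg} is invoked with only $\A,\B\preceq\M$, but $p\sim0$ controls $p$ only through $\inf_N h^{2N}A_NB_N(\langle x\rangle\langle\xi\rangle)^{-N}$. For example, let $A_j=B_j=M_j=j!^s$ with $s>1$, and take $0\le\chi\in\D_{\{\M\}}(\R^n)$ with $\chi\not\equiv0$. The symbol $e^{-\langle x\rangle^{1/(2s)}}\chi(\xi)$ is $\sim0$ in $\FS^{0,0}_{\{\A,\B\}}(\R^n)$. Yet its kernel $c\,e^{-\langle x\rangle^{1/(2s)}}\check\chi(x-y)$ fails the $\caS_{\{\M\}}(\R^{2n})$ bounds on the diagonal $y=x$. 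So a complete argument needs $\M$ large relative to $\A,\B$, for instance $\{A_jB_j\}_{j\in\N_0}\preceq\M$.
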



\subsection{Ultradifferentiable ST-classes}\label{subs:STclasses}
We now introduce a more general class of symbols that induce continuous linear operators on Gelfand-Shilov spaces, inspired by those considered by H. O. Cordes in \cite[Ch. 1, \S 1.1]{cordes}. Operators with symbol in these classes will be especially useful in the proof of the coordinate invariance of symbols in $\SG_{[\M]}^{m,\mu}(\R^n)$ on Section \ref{sec_coord_inv}.

\begin{definition}\label{ST_class}
	Let $\Aj,\Bj$ be two weight sequences. We define $\ST_{[\A,\B]}(\mathbb{R}^n)$ as the space of all $p(x,y,\xi)\in C^\infty(\mathbb{R}^{3n})$ such that there exist two non-decreasing functions $\kappa,\lambda$ satisfying
	\[\lim_{j\to\infty}[\kappa(j)-j]=-\infty\quad\text{and}\quad \lim_{j\to\infty}[\lambda(j)-j]=-\infty,\]
	and $C,h>0$ (respectively, for every $h>0$, there exists $C>0$)
	so that
	\[ |\partial_\xi^\alpha\partial_x^\beta\partial_y^\gamma p(x,y,\xi)| \leq Ch^{|\alpha|+|\beta|+|\gamma|} A_{|\alpha|}B_{|\beta|}B_{|\gamma|} \langle \xi\rangle^{\kappa(|\beta|+|\gamma|)}\langle (x,y)\rangle^{\lambda(|\alpha|)},\]
	for all $(x,y,\xi)\in\mathbb{R}^{3n}$ and $\alpha,\beta,\gamma\in\mathbb{N}_0$.
\end{definition}

\begin{proposition}\label{der_ST}
	If $p\in \ST_{[\A,\B]}(\mathbb{R}^n)$, then $\partial_\xi^\alpha\partial_x^\beta\partial_y^\gamma p\in \ST_{[\A,\B]}(\mathbb{R}^n)$, for every fixed $\alpha,\beta,\gamma\in\mathbb{N}_0^n$.
\end{proposition}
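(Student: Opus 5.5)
By induction on $|\alpha|+|\beta|+|\gamma|$ it suffices to treat a single first-order derivative $\partial_{\xi_k}$, $\partial_{x_k}$ or $\partial_{y_k}$. Let $p\in\ST_{[\A,\B]}(\R^n)$ with functions $\kappa,\lambda$ and constants $C,h$ as in Definition \ref{ST_class}. For $q=\partial_{x_k}p$ the estimate on $\partial_\xi^{\alpha'}\partial_x^{\beta'}\partial_y^{\gamma'}q$ is the estimate on $p$ with $\beta'$ replaced by $\beta'+e_k$. This gives
\[
|\partial_\xi^{\alpha'}\partial_x^{\beta'}\partial_y^{\gamma'}q|\leq Ch^{|\alpha'|+|\beta'|+|\gamma'|+1}A_{|\alpha'|}B_{|\beta'|+1}B_{|\gamma'|}\langle\xi\rangle^{\kappa(|\beta'|+|\gamma'|+1)}\langle(x,y)\rangle^{\lambda(|\alpha'|)}.
\]

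To absorb the shifted sequence I use $(M2)'$: $B_{j+1}\leq A_0H^jB_j$, with $A_0,H\geq1$ the constants from $(M2)'$. Then $h^{j+1}B_{j+1}\leq (A_0h)(hH)^{j}B_j$, so the factor $B_{|\beta'|+1}$ is controlled by $B_{|\beta'|}$ at the cost of replacing $h$ by $hH$ and $C$ by $CA_0h$.

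In the Roumieu case this is immediate. In the Beurling case, given a target $h'>0$, I apply the hypothesis on $p$ with $h=h'/H$, which yields a constant $C'$ and hence the required bound with $h'$.

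For the growth exponents I set $\tilde\kappa(j)=\kappa(j+1)$ and keep $\tilde\lambda=\lambda$. Then $\tilde\kappa$ is non-decreasing, and $\tilde\kappa(j)-j=\kappa(j+1)-(j+1)+1\to-\infty$, as required in Definition \ref{ST_class}. The $y$-derivative is handled identically, with $\tilde\kappa(j)=\kappa(j+1)$. The $\xi$-derivative uses the $\A$ sequence and $\tilde\lambda(j)=\lambda(j+1)$, which again satisfies the limit condition.

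The only point requiring care is this shift of the index in $\kappa$ and $\lambda$ together with the shift in the weight sequence. Both are harmless: the limit conditions are invariant under a bounded shift, and $(M2)'$ absorbs a single-step increase of the index geometrically. Iterating the first-order step $|\alpha|+|\beta|+|\gamma|$ times, or equivalently shifting directly by the fixed multi-indices and using $M_{j+s}\leq A_0^sH^{sj+s^2}M_j$, completes the proof.
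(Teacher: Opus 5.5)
Your proof is correct and follows essentially the same route as the paper. The paper also reindexes the growth functions, with $\kappa'(j)=\kappa(|\beta|+|\gamma|+j)$ and $\lambda'(j)=\lambda(|\alpha|+j)$, and absorbs the fixed extra indices of the weight sequences into $C$ and $h$, but does so in one step rather than by induction on first-order derivatives. Your explicit use of $(M2)'$, via $M_{j+s}\le A_0^sH^{sj+s^2}M_j$, supplies the justification the paper leaves implicit when it passes from $A_{|\alpha|+|\alpha'|}$ to $A_{|\alpha|}A_{|\alpha'|}$ with only a geometric loss (Proposition \ref{MjMj-k} gives just the reverse inequality), and your choice $h=h'/H$ handles the Beurling quantifiers cleanly.
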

\begin{proof}
	Let $\alpha',\beta',\gamma'\in\mathbb{N}_0^n$. Then, there exist $C_0,h_0>0$ such that
	\begin{align*}
		|\partial_\xi^{\alpha'}\partial_x^{\beta'}\partial_y^{\gamma'} \partial_\xi^{\alpha}\partial_x^{\beta}\partial_y^{\gamma} p(x,y,\xi)| \leq\ & C_0h_0^{|\alpha|+|\beta|+|\gamma|}h^{|\alpha'|+|\beta'|+|\gamma'|} A_{|\alpha|+|\alpha'|}B_{|\beta|+|\beta'|}B_{|\gamma|+|\gamma'|} \\
		& \times \langle\xi\rangle^{\kappa(|\beta|+|\gamma|+|\beta'|+|\gamma'|)} \langle (x,y)\rangle^{\lambda(|\alpha|+|\alpha'|)} \\
		\leq &\ \left( C_0 (2h_0)^{|\alpha|+|\beta|+|\gamma|} A_{|\alpha|} B_{|\beta|} B_{|\gamma|} \right) (2h_0)^{|\alpha'|+|\beta'|+|\gamma'|}A_{|\alpha|'} B_{|\beta'|} B_{|\gamma'|}\\
		& \times \langle\xi\rangle^{\kappa(|\beta|+|\gamma|+|\beta'|+|\gamma'|)} \langle (x,y)\rangle^{\lambda(|\alpha|+|\alpha'|)}.
	\end{align*}
	Then, to prove the desired estimates, it is enough to take $C = C_0(2h_0)^{|\alpha|+|\beta|+|\gamma|}A_{|\alpha|} B_{|\beta|} B_{|\gamma|}$, $h=2h_0$, $\kappa'(j)=\kappa(|\beta|+|\gamma|+j)$, and $\lambda'(j)=\lambda(|\alpha|+j)$.
\end{proof}

\begin{proposition}\label{poly_ST}
	If $p\in \ST_{[\A,\B]}(\mathbb{R}^n)$ and $q$ is a polynomial in $(x,y,\xi)$, then $pq\in \ST_{[\A,\B]}(\mathbb{R}^n)$.
\end{proposition}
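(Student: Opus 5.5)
By linearity it suffices to treat a single monomial $q(x,y,\xi)=x^{a}y^{b}\xi^{c}$. Iterating, it is enough to handle multiplication by one coordinate function: $x_k$, $y_k$, or $\xi_k$. So the plan is to show that if $p\in\ST_{[\A,\B]}(\R^n)$ with data $(C,h,\kappa,\lambda)$, then $x_kp$, $y_kp$ and $\xi_kp$ lie in $\ST_{[\A,\B]}(\R^n)$ with modified data.

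\textbf{Multiplication by $\xi_k$.} By Leibniz,
\[
\partial_\xi^\alpha\partial_x^\beta\partial_y^\gamma(\xi_kp)=\xi_k\,\partial_\xi^\alpha\partial_x^\beta\partial_y^\gamma p+\alpha_k\,\partial_\xi^{\alpha-e_k}\partial_x^\beta\partial_y^\gamma p .
\]
The first term is bounded by $Ch^{|\alpha|+|\beta|+|\gamma|}A_{|\alpha|}B_{|\beta|}B_{|\gamma|}\langle\xi\rangle^{\kappa(|\beta|+|\gamma|)+1}\langle(x,y)\rangle^{\lambda(|\alpha|)}$.

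For the second term, $A_{|\alpha|-1}\le A_{|\alpha|}$: this follows from Proposition \ref{MjMj-k} with $M_1=1$, or from $m_j\ge1$ in Lemma \ref{lemma_mjj_Mj}. Also $\alpha_k\le|\alpha|\le 2^{|\alpha|}$, which is absorbed into the geometric factor by replacing $h$ with $2h$. In the Beurling case we start from $h/2$ instead, so the bound still holds for every $h$. Finally $\lambda(|\alpha|-1)\le\lambda(|\alpha|)$ since $\lambda$ is non-decreasing.

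Hence $\xi_kp$ satisfies the estimate with $\kappa'=\kappa+1$ and $\lambda'=\lambda$. Clearly $\kappa'(j)-j\to-\infty$ still holds, and $\kappa'$ remains non-decreasing.

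\textbf{Multiplication by $x_k$ or $y_k$.} The same argument applies. Leibniz now produces the term $\beta_k\partial_x^{\beta-e_k}$, or its analogue in $y$. We use $|x_k|\le\langle(x,y)\rangle$, the bound $B_{|\beta|-1}\le B_{|\beta|}$, and the monotonicity of $\kappa$. This gives data $\kappa'=\kappa$ and $\lambda'=\lambda+1$.

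\textbf{Conclusion and the delicate point.} A general polynomial of degree $d$ is a finite sum of monomials. Each monomial changes $(\kappa,\lambda)$ by adding constants. For the sum, take $\kappa'=\max_i\kappa_i$ and $\lambda'=\max_i\lambda_i$; a finite maximum of non-decreasing functions with $f(j)-j\to-\infty$ has the same properties.

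The only point needing care is that constants must stay uniform in $\alpha,\beta,\gamma$. In the Roumieu case this is handled by enlarging $h$ by a fixed factor $2^{d}$. In the Beurling case, given a target $h$, we apply the hypothesis with $h/2^{d}$. I do not expect any genuine obstacle here.
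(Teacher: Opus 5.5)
Your proof is correct and is essentially the paper's argument: a Leibniz-rule estimate that uses the monotonicity of the weight sequences and of $\kappa,\lambda$. Both end with $\kappa'=\kappa+d_1$ and $\lambda'=\lambda+d_2$, where $d_1,d_2$ are the $\xi$- and $(x,y)$-degrees of $q$. The difference is only organizational: the paper expands $\partial_\xi^\alpha\partial_x^\beta\partial_y^\gamma(pq)$ once, bounding every derivative of $q$ by $C_q\langle\xi\rangle^{d_1}\langle(x,y)\rangle^{d_2}$, whereas you induct one coordinate factor at a time, which makes explicit the inequality $A_{j-1}\le A_j$ that the paper uses tacitly.
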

\begin{proof}
	First, since $q$ is a polynomial in $(x,y,\xi)$, we can find a constant $C_q>0$ such that
	\[|\partial_\xi^{\alpha'}\partial_x^{\beta'}\partial_y^{\gamma'} q(x,y,\xi)|\leq C_q\langle\xi\rangle^{d_1}\langle(x,y)\rangle^{d_2},\]
	for every $\alpha',\beta',\gamma'\in\mathbb{N}_0^n$ and $x,y,\xi\in\mathbb{R}^n$, where $d_1,d_2\in\mathbb{N}_0$ are the $\xi$-degree and the $(x,y)$-degree of $q$, respectively.
	
	Now, let $\alpha,\beta,\gamma\in\mathbb{N}_0^n$. Then,
	\begin{align*}
		|\partial_\xi^{\alpha} \partial_x^{\beta} \partial_y^{\gamma} &(pq)(x,y,\xi)| \leq
		\\ &\leq \sum_{\alpha'\leq\alpha}\binom{\alpha}{\alpha'} \sum_{\beta'\leq\beta}\binom{\beta}{\beta'} \sum_{\gamma'\leq\gamma}\binom{\gamma}{\gamma'} |\partial_\xi^{\alpha'}\partial_x^{\beta'}\partial_y^{\gamma'} q(x,y,\xi)||\partial_\xi^{\alpha-\alpha'}\partial_x^{\beta-\beta'}\partial_y^{\gamma-\gamma'} p(x,y,\xi)|\\
		& \leq \sum_{\alpha'\leq\alpha}\binom{\alpha}{\alpha'} \sum_{\beta'\leq\beta}\binom{\beta}{\beta'} \sum_{\gamma'\leq\gamma}\binom{\gamma}{\gamma'} C_q\langle\xi\rangle^{d_1}\langle(x,y)\rangle^{d_2} Ch^{|\alpha|+|\beta|+|\gamma|} A_{|\alpha|}B_{|\beta|}B_{|\gamma|} \\
		& \times \langle \xi\rangle^{\kappa(|\beta|+|\gamma|)}\langle(x,y)\rangle^{\lambda(|\alpha|)}\\
		& \leq CC_q(2h)^{|\alpha|+|\beta|+|\gamma|} A_{|\alpha|}B_{|\beta|}B_{|\gamma|} \langle \xi\rangle^{d_1+\kappa(|\beta|+|\gamma|)}\langle(x,y)\rangle^{d_2+\lambda(|\alpha|)},
	\end{align*}
	where the term $2^{|\alpha|+|\beta|+|\gamma|}$ is a consequence of the sums over binomial coefficients. Taking $\kappa'(j)=N+\kappa(j)$ and $\lambda'(j)=M+\lambda(j)$, we conclude the proof.    
\end{proof}

\begin{theorem}\label{thm_ST}
	Let $\A=\Aj,\B=\Bj\preceq\M=\Mj$ be three non-quasianalytic weight sequences and $p\in \ST_{[\A,\B]}(\mathbb{R}^n)$. Then, the map $\mathrm{Op}(p):\caS_{[\M]}(\R^n)\to\caS_{[\M]}(\R^n)$ given by the formula
	\begin{equation}\label{pseudo_def}
		\mathrm{Op}(p)u(x) = \int_{\mathbb{R}^n}\int_{\mathbb{R}^n}e^{i\xi\cdot (x-y)}p(x,y,\xi){u}(y)\,\dbar y\,\dbar\xi,\quad u\in\caS_{[\M]}(\R^n),
	\end{equation}
	is a continuous linear operator. Moreover, $\mathrm{Op}(p)$ extends to a continuous linear operator $\mathcal{S}_{[\M]}'(\R^n) \to \mathcal{S}_{[\M]}'(\R^n)$.
\end{theorem}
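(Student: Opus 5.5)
I will follow the scheme of the proof of Theorem \ref{cont_SG}, adapted to amplitudes, as in Cordes' treatment of ST-classes \cite[Ch. 1]{cordes}. For $u\in\caS_{[\M]}(\R^n)$, the double integral \eqref{pseudo_def} is first regularised as an oscillatory integral. The factor $\langle\xi\rangle^{\kappa(|\beta|+|\gamma|)}$ may grow in $\xi$, and $\langle(x,y)\rangle^{\lambda(|\alpha|)}$ may grow in $(x,y)$. I therefore insert
\[
\langle x-y\rangle^{-2N}(1-\Delta_\xi)^N e^{i\xi\cdot(x-y)}=e^{i\xi\cdot(x-y)}
\quad\text{and}\quad
\langle\xi\rangle^{-2K}(1-\Delta_y)^K e^{i\xi\cdot(x-y)}=e^{i\xi\cdot(x-y)},
\]
and integrate by parts. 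Each $\xi$-derivative falling on $p$ lowers the $(x,y)$-growth order, since $\lambda(j)-j\to-\infty$. Each $y$-derivative falling on $p$ or on $u$ costs at most $\langle\xi\rangle^{\kappa(\cdot)}$, which is eventually beaten by $\langle\xi\rangle^{-2K}$ because $\kappa(j)-j\to-\infty$; derivatives falling on $u$ cost nothing in $\xi$. Using $\langle (x,y)\rangle\le \langle x-y\rangle\langle y\rangle$ (up to constants), the growth in $x$ is transferred to $\langle y\rangle$ and then absorbed by the decay of $u$, through Lemma \ref{est_GS_bracket}. For fixed $N,K$ large, depending on $\kappa(0),\lambda(0),n$, this gives an absolutely convergent integral. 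The result is independent of the choices, and it equals the usual one when $p$ has compact support in $\xi$.

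Next I estimate $x^\beta\partial_x^\alpha\Op(p)u$. Differentiation in $x$ produces terms $\xi^{\alpha_1}\partial_x^{\alpha_2}p$. The powers $\xi^{\alpha_1}$ are turned into $y$-derivatives $(-D_y)^{\alpha_1}$ through the exponential, then moved onto $p(x,y,\xi)u(y)$ by integration by parts. The monomial $x^\beta$ is written as $(x-y+y)^\beta$. The factor $(x-y)^{\beta_1}$ becomes $\xi$-derivatives $D_\xi^{\beta_1}$ via the exponential and lands on $p$. The factor $y^{\beta_2}$ stays on $u$, handled by Lemma \ref{lemma_poly_GS}.

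After expanding with Leibniz, every term involves $\partial_\xi^{\beta_1}\partial_x^{\alpha_2}\partial_y^{\alpha_1'}p$ times $\partial_y^{\alpha_1''}(y^{\beta_2}u)$. These are bounded using the ST estimate together with $\A,\B\preceq\M$. Then Proposition \ref{MjMj-k} recombines $M_{|\alpha_2|}M_{|\alpha_1'|}\le M_{|\alpha|}$, and similarly for $\beta$. The binomial sums contribute at most $2^{|\alpha|+|\beta|}$-type factors, as in \eqref{bound_sum_binom}.

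The main obstacle is uniformity in the order of derivatives. The exponents $\kappa(|\alpha_1'|+|\alpha_2|)$ and $\lambda(|\beta_1|)$ depend on $\alpha,\beta$, so a fixed number $N,K$ of integrations by parts cannot absorb them. Here I would use the conditions $\kappa(j)\le j-c$ and $\lambda(j)\le j-c$ for large $j$. The excess growth $\langle\xi\rangle^{\kappa(j)}$ is compensated by lowering the number of $y$-derivatives that would otherwise land on $u$: I trade them for powers of $\langle\xi\rangle^{-1}$, equivalently by expanding $\xi^{\alpha_1}$ directly and noting that $\xi^{\alpha_1}\hat{\ }$-type factors are controlled by $M_{|\alpha_1|}$. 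The excess growth $\langle(x,y)\rangle^{\lambda(|\beta_1|)}\lesssim\langle x-y\rangle^{|\beta_1|}\langle y\rangle^{|\beta_1|}$ is compensated by taking the $x$-weight $x^{\beta_1}$ only partially to $\xi$-derivatives, and moving the rest onto $y$-powers on $u$, paying $M_{|\beta_1|}$ via Lemma \ref{lemma_poly_GS}. Only finitely many indices $j$ violate the inequalities, and those contribute a harmless constant. This yields
\[
|x^\beta\partial^\alpha_x\Op(p)u(x)|\le C_0h_0^{|\alpha|+|\beta|}M_{|\alpha|}M_{|\beta|}
\]
uniformly for $u$ in bounded sets. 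This gives continuity in the Roumieu case, and the Beurling case follows by the same choice of constants.

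For the extension to $\caS_{[\M]}'(\R^n)$, I observe that the formal transpose of $\Op(p)$ is $\Op(q)$ with $q(x,y,\xi)=p(y,x,-\xi)$. This $q$ again belongs to $\ST_{[\A,\B]}(\R^n)$, since the definition is symmetric in $(x,y)$ and even in $\xi$ up to sign. Hence ${}^t\Op(p)$ is continuous on $\caS_{[\M]}(\R^n)$, and defining $\langle\Op(p)u,v\rangle=\langle u,{}^t\Op(p)v\rangle$ gives the continuous extension to $\caS_{[\M]}'(\R^n)$.
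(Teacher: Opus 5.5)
The gap is the step you yourself call the main obstacle, and the remedy you sketch does not close it.

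In the Leibniz expansion of $x^\beta\partial_x^\alpha\Op(p)u$ there is a term in which all $|\alpha|$ derivatives fall on $p$. The $\ST$ estimate gives it size $h^{|\alpha|}B_{|\alpha|}\langle\xi\rangle^{\kappa(|\alpha|)}$. The only thing that can absorb $\langle\xi\rangle^{\kappa(|\alpha|)}$ is the decay of $\widehat u$, i.e.\ about $\kappa(|\alpha|)$ further $y$-derivatives on $u$, at a cost of order $M_{\lceil\kappa(|\alpha|)\rceil}$. You therefore get $B_{|\alpha|}M_{\lceil\kappa(|\alpha|)\rceil}$ where $h^{|\alpha|}M_{|\alpha|}$ is needed.

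Redistributing $y$-derivatives does not help. Each one that lands on $p$ raises the argument of $\kappa$. The number of integrations by parts needed for net decay is set by how slowly $\kappa(j)-j\to-\infty$, so the factors $B_{2K}$, $M_{2K}$ it produces are not bounded by $h^{|\alpha|}M_{|\alpha|}$. Nor is this about finitely many bad indices: $\kappa(j)=\delta j$ with $0<\delta<1$ satisfies the hypothesis.

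The same happens for $\lambda$ and the weight $x^\beta$. The binomial splitting $x^\beta=\sum(x-y)^{\beta_1}y^{\beta_2}$ contains the term $\beta_1=\beta$, so you cannot send the weight only partially to $\xi$-derivatives.

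The paper's proof does not supply this step either. It asserts the bound for the terms \eqref{terms_ST} by quoting Propositions~\ref{der_ST}, \ref{poly_ST} and \eqref{est_I}. Those estimates are for a fixed number of integrations by parts, with constants depending on that number.

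In fact the estimate, and the statement as formulated, fail under the stated hypotheses. Take $n=1$, $\theta>1$, $\A=\B=\M=\{(j!)^\theta\}_{j\in\N_0}$, $1-\frac{1}{2\theta}<\delta<1$, and $p(x,y,\xi)=e^{-\langle\xi\rangle^{2\delta}x^2}$.

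First, $p$ is an admissible symbol. We have $\partial_x^\beta p=\langle\xi\rangle^{\delta\beta}f^{(\beta)}(\langle\xi\rangle^{\delta}x)$ with $f(z)=e^{-z^2}$, and $|z^lf^{(k)}(z)|\le C^{1+l+k}(l!\,k!)^{1/2}$. Since $\log\langle\xi\rangle$ is an analytic symbol, Fa\`a di Bruno gives $|\partial_\xi^\alpha\partial_x^\beta p|\le Ch^{\alpha+\beta}\alpha!\,(\beta!)^{1/2}\langle\xi\rangle^{\delta\beta}$. Hence $p\in\ST_{\{\A,\B\}}(\R)$ with $\kappa(j)=\delta j$ and $\lambda\equiv0$.

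Now let $\widehat u(\xi)=e^{-\langle\xi\rangle^{1/\theta}}$, so $u\in\caS_{\{\M\}}(\R)$. Here $\widehat u$ is even and positive, $f^{(2l)}(0)=(-1)^l(2l)!/l!$ and $f^{(2l+1)}(0)=0$. So every term of the Leibniz expansion of $(\Op(p)u)^{(2m)}(0)$ has sign $(-1)^m$, and keeping only the term with all derivatives on $p$ gives
\[
|(\Op(p)u)^{(2m)}(0)|\ \ge\ \frac{(2m)!}{m!}\int_{\R}\langle\xi\rangle^{2\delta m}e^{-\langle\xi\rangle^{1/\theta}}\,\dbar\xi\ \ge\ c^{m}\,m^{(1+2\theta\delta)m}.
\]
This is not $O(h^{2m}((2m)!)^\theta)$ for any $h>0$, since $1+2\theta\delta>2\theta$. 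Thus $\Op(p)u\notin\caS_{\{\M\}}(\R)$.

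Your scheme is sound when $\kappa$ and $\lambda$ are bounded. That is the case actually invoked for $q$ in Section~\ref{sec_coord_inv}, with $\kappa\equiv\mu$ and $\lambda\equiv m$. There fixed $N,K$ suffice, and your bookkeeping with $\A,\B\preceq\M$, Proposition~\ref{MjMj-k} and Lemma~\ref{lemma_poly_GS} does give the uniform bound.

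Your transpose argument for the extension to $\caS'_{[\M]}(\R^n)$ is correct: $q(x,y,\xi)=p(y,x,-\xi)$ lies in $\ST_{[\A,\B]}(\R^n)$. It is more explicit than the paper's appeal to Corollary~\ref{cont_SG_ext}, but it presupposes the first part.
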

\begin{proof}
	Let $u \in \caS_{[\M]}(\R^n)$ and $N_1,N_2\in\mathbb{N}_0$. Integrating by parts, we obtain that $[\mathrm{Op}(p)u](x)$ equals
	\begin{equation}\label{I_x_xi}
		\langle\xi\rangle^{-2N_1} \langle x-y\rangle^{-2N_2} \int_{\mathbb{R}^n}\int_{\mathbb{R}^n} e^{i\xi\cdot(x-y)} (1-\Delta_\xi)^{N_1} (1-\Delta_y)^{N_2} (p(x,y,\xi)u(y))\,\dbar y\,\dbar \xi.
	\end{equation}
	
	By the Leibniz rule, $(1-\Delta_\xi)^{N_1}(1-\Delta_y)^{N_2}(p(x,y,\xi)u(y))$ is a linear combination of terms of the form
	\[(1-\Delta_\xi)^{N_1}\partial_y^{\gamma_1}p(x,y,\xi)\partial_y^{\gamma_2}u(y),\quad |\gamma_1+\gamma_2|\leq 2N_2.\]
	For such terms, given $N_3\in\mathbb{N}_0$, we have
	\begin{align*}
		|(1-\Delta_\xi)^{N_1}\partial_y^{\gamma_1}p(x,y,\xi)\partial_y^{\gamma_2}u(y)| & = \langle y\rangle^{-2N_3}|(1-\Delta_\xi)^{N_1}\partial_y^{\gamma_1}p(x,y,\xi)\langle y\rangle^{2N_3}\partial_y^{\gamma_2}u(y)|\\
		& \leq Ch^{2N_1+2N_2+2N_3} A_{2N_1}B_{2N_2} B_{2N_3}\\
		& \times \langle\xi\rangle^{\kappa(2N_2)}\langle (x,y)\rangle^{\lambda(2N_1)}\langle y\rangle^{-N_3},
	\end{align*}
	where we used Lemma \ref{est_GS_bracket}, the fact that $|\gamma_1|\leq 2N_2$ and that the function $\kappa$ in non-decreasing. Then, for each $N_3\in\mathbb{N}_0$, there exist $C,h>0$ (respectively, for every $h>0$, there exists $C>0$) such that
	\begin{align*}|(1-\Delta_\xi)^{N_1}(1-\Delta_y)^{N_2}(p(x,y,\xi)u(y))| & \leq 2^{2N_2}Ch^{2N_1+2N_2+N_3} A_{2N_1}B_{2N_2} B_{2N_3}\\
		& \times \langle\xi\rangle^{\kappa(2N_2)}\langle (x,y)\rangle^{\lambda(2N_1)}\langle y\rangle^{-2N_3},
	\end{align*}
	where the factor $2^{2N_2}$ can be incorporated into $h$. Also, notice that
	\begin{align*}
		\int_{\mathbb{R}^n}\langle (x,y)\rangle^{\lambda(2N_1)}\langle y\rangle^{-2N_3}\,\dbar y & = \langle x\rangle^{\lambda(2N_1)+n}\int_{\mathbb{R}^n}\langle y\rangle^{\lambda(2N_1)}(1+\langle x\rangle^2|y|^2)^{-N_3}\,\dbar y\\
		& \leq C'\langle x\rangle^{\lambda(2N_1)+n},
	\end{align*}
	where we used the substitution $y=y'\langle x\rangle$, $\dbar y = \langle x\rangle^n\dbar y'$, and %
	\[C'=\int_{\mathbb{R}^n}\langle y\rangle^{\lambda(2N_1)-2N_3}\,\dbar y,\]
	since we can choose $N_3$ depending on $N_1$ to be sufficiently large such that the integral converges.
	
	Now, since we have $\lim_{j\to\infty}\lambda(j)-j=-\infty$, there are just finitely many values of $j\in\mathbb{N}_0$ such that $\lambda(j)-j$ is positive. Hence, we can obtain $\tilde N\in\mathbb{N}$ such that $\tilde N>\lambda(j)-j$ for every $j\in\mathbb{N}_0$. In this case, for each $N_1\in\mathbb{N}_0$, we can choose $N_3=N_1+\tilde N+n$. Then we have
	\[\int_{\mathbb{R}^n}\langle y\rangle^{\lambda(2N_1)-2N_3}\,\dbar y = \int_{\mathbb{R}^n}\langle y\rangle^{\lambda(2N_1)-2N_1-2\tilde N}\langle y\rangle^{-2n}\,\dbar y,\]
	where $\langle y\rangle^{\lambda(2N_1)-2N_1-2\tilde N}\leq 1$ for all $y\in\mathbb{R}^n$ and for all $N_1\in\mathbb{N}_0$, and the integral
	\[\int_{\mathbb{R}^n}\langle y\rangle^{-2n}\,\dbar y\]
	is finite. Hence, we obtain a constant $C'>0$, independent from $N_1$, such that
	\[\int_{\mathbb{R}^n}\langle (x,y)\rangle^{\lambda(2N_1)}\langle y\rangle^{-2N_3}\,\dbar y \leq C'\langle x\rangle^{\lambda(2N_1)+n}\]
	for all $N_1\in\mathbb{N}_0$, where $N_3$ is chosen as before. In this case, \eqref{I_x_xi} is dominated by
	\begin{equation}\label{est_I}
		C_0\langle\xi\rangle^{\kappa(2N_2)-2N_2}\langle x\rangle^{n+\lambda(2N_1)-2N_1},
	\end{equation}
	where
	\[C_0 = CC'h^{2N_1+2N_2+2N_3} A_{2N_1}B_{2N_2} B_{2N_3}.\]
	
	Then, for a sufficiently large $N$, the integral
	\[\int_{\mathbb{R}^n}\int_{\mathbb{R}^n}e^{i\xi\cdot(x-y)}p(x,y,\xi)u(y)\,\dbar y\,\dbar\xi\]
	is finite. In particular, the previous estimates allows us to differentiate under the integral sign. Hence, for each $u\in\caS_{[\M]}(\R^n)$, $\Op(p)u$ defines a function in $\caS_{[\M]}(\R^n)$.
	
	We now show the continuity of $\Op(p)$. Let $\mathscr{B}\subset\caS_{[\M]}(\R^n)$ be a bounded subset. Then, there exist $C,h>0$ (respectively, for every $h>0$, there exists $C>0$) such that
	\[\sup_{x\in\R^n}|x^\beta\partial_x^\alpha u(x)|\leq Ch^{|\alpha|+|\beta|}M_{|\alpha|}M_{|\beta|},\]
	for all $\alpha,\beta\in\mathbb{N}_0^n$ and $u\in\mathscr{B}$. Let us show that $\mathrm{Op}(p)(\mathscr{B})$ is bounded in $\mathcal{S}_{[\M]}(\mathbb{R}^n)$. Indeed, for every $\alpha,\beta\in\mathbb{N}_0^n$, we have
	\begin{align*}
		x^\beta D_x^\alpha\mathrm{Op}(p)u(x) & = x^\beta D_x^\alpha\int_{\mathbb{R}^n}\int_{\mathbb{R}^n}e^{i\xi\cdot (x-y)}p(x,y,\xi) u(y)\,\dbar y\,\dbar\xi\\
		& = x^\beta\sum_{\gamma\leq\alpha}\binom{\alpha}{\gamma} \int_{\mathbb{R}^n} \int_{\mathbb{R}^n} e^{i\xi\cdot (x-y)}\xi^\gamma D_x^{\alpha-\gamma}p(x,y,\xi) u(y)\,\dbar y\,\dbar\xi.
	\end{align*}
	
	By the second part of the proof of \cite[Theorem 1.1]{cordes}, integration by parts and Leibniz rule give that $x^\beta D_x^\alpha[\mathrm{Op}(p)u](x)$ is a linear combination of terms of the form
	\begin{equation}\label{terms_ST}
		\int_{\mathbb{R}^n} \int_{\mathbb{R}^n} e^{i\xi\cdot(x-y)} (x-y)^{\alpha_1-\gamma_1}
		\,D^{\gamma_2}_y(u(y)y^{\alpha_2})
		\,D_x^{\beta_2}D_y^{\gamma_3}p(x,y,\xi)\,\dbar y\,\dbar\xi,
	\end{equation}
	where $\alpha_1+\alpha_2=\alpha$, $\beta_1+\beta_2=\beta$, and $\gamma_1+\gamma_2+\gamma_3=\beta_1$. Then, the estimates obtained in the proofs of Propositions \ref{der_ST} and \ref{poly_ST}, as well as the bound \eqref{est_I}, give us constants $C,h>0$ such that the terms of the form \eqref{terms_ST} are bounded by
	\[Ch^{|\alpha|+|\beta|} M_{|\alpha|}M_{|\beta|}\langle x \rangle^{\lambda(|\beta|)-N},\]
	for an arbitrary, sufficiently large $N\in\mathbb{N}_0$. Then, we obtain
	\begin{equation*}
		\sup_{x\in\R^n}|x^\beta D_x^\alpha\mathrm{Op}(p)u(x)|\leq Ch_0^{|\alpha|+|\beta|}M_{|\alpha|}M_{|\beta|}\langle x\rangle^{n+\lambda(|\beta|)},
	\end{equation*}
	for a new constant $h_0>0$, that comprises the factor of the form $h_1^{|\alpha|+|\beta|}$ arising from the sum of the binomial coefficients of Leibniz rule. It follows
	that $\mathrm{Op}(p)$ is bounded from $\caS_{[\M]}(\R^n)$ to itself.
	
	Finally, the proof that $\Op(p)$ extends to a continuous operator from $\mathcal{S}'_{[\M]}(\R^n)$ to itself is analogous to the one of Proposition \ref{cont_SG_ext}. 
	The proof is complete.
\end{proof}

\begin{remark}\label{remark_ST}
	A simple adaptation of the proof of Theorem \ref{thm_ST} gives us that if we replace $\lambda(|\alpha|)$ with $\lambda(|\alpha|)+|\beta|+|\gamma|$ in the definition of the symbol classes, we still obtain continuous operators from $\mathcal{S}_{[\M]}(\mathbb{R}^n)$ to $\mathcal{S}_{[\M]}(\mathbb{R}^n)$.
\end{remark}


\subsection{Hypoellipticity and parametrices}\label{sec:SGABHypoell}
The definitions of ellipticity and hypoellipticity for ultradifferential SG-calculus are the same of the standard one.

\begin{definition}\label{def_elliptic}
	We say that a symbol $p\in {\SG}_{[\A,\B]}^{m,\mu}$ is elliptic if there exist $B,C>0$ such that
	\begin{equation}\label{elliptic_cond}
		|p(x,\xi)|\geq C\langle\xi\rangle^{\mu}\langle x\rangle^{m},
	\end{equation}
	for all $(x,\xi)\in Q_B^c$.
\end{definition}

\begin{definition}\label{def_hypo}
	We say that a symbol $p\in {\SG}_{[\A,\B]}^{m,\mu}$ is hypoelliptic if there exist $m'\leq m$, $\mu'\leq \mu$, and $B,A>0$ such that
	\begin{equation}\label{hypo_cond1}
		|p(x,\xi)| \geq A\langle\xi\rangle^{\mu'}\langle x\rangle^{m'},
	\end{equation}
	for all $(x,\xi)\in Q_B^c$, and there exist $C,h>0$ (respectively, for every $h>0$, there exists $C>0$) such that
	\begin{equation}\label{hypo_cond2}
		|\partial_\xi^\alpha\partial_x^\beta p(x,\xi)| \leq Ch^{|\alpha|+|\beta|}A_{|\alpha|}B_{|\beta|}|p(x,\xi)|\langle\xi\rangle^{-|\alpha|}\langle x\rangle^{-|\beta|},
	\end{equation}
	for all $\alpha,\beta\in\N_0^n$ and $(x,\xi)\in Q_B^c$.
\end{definition}

\begin{remark}
	Notice that every elliptic symbol is hypoelliptic. Indeed, condition \eqref{hypo_cond1} is satisfied for $m'=m$ and $\mu'=\mu$. 
	Also, if $p$ belongs to ${\SG}_{[\A,\B]}^{m,\mu}$ and satisfies \eqref{elliptic_cond}, there exist $C,h>0$ 
	(respectively, for every $h>0$, there exists $C>0$) such that
	\begin{align*}
		|\partial_\xi^\alpha\partial_x^\alpha p(x,\xi)| & \leq Ch^{|\alpha|+|\beta|}A_{|\alpha|}B_{|\beta|}\langle\xi\rangle^{\mu-|\alpha|}\langle x\rangle^{m-|\beta|}\\
		& = Ch^{|\alpha|+|\beta|}A_{|\alpha|}B_{|\beta|}\langle\xi\rangle^{\mu}\langle x\rangle^{m}\langle\xi\rangle^{-|\alpha|}\langle x\rangle^{-|\beta|}\\
		& \leq Ch^{|\alpha|+|\beta|}A_{|\alpha|}B_{|\beta|}|p(x,\xi)|\langle\xi\rangle^{-|\alpha|}\langle x\rangle^{-|\beta|},
	\end{align*}
	for all $\alpha,\beta\in\N_0^n$ and $Q_B^c$. Hence, $p$ satisfies \eqref{hypo_cond2}.
\end{remark}

\begin{lemma}\label{lemma_1.3.5}
	Let $p\in {\SG}_{[\A,\B]}^{m,\mu}(\R^n)$ be a hypoelliptic symbol with $m',\mu'$ as in Definition \ref{def_hypo}, and suppose that there exists $B>0$ such that $p(x,\xi)>0$ for $(x,\xi)\in Q_B^c$. Then, for every $N\in\Z$, there exist $C,h>0$ (respectively, for every $h>0$, there exists $C>0$) such that
	\begin{equation*}
		|\partial_\xi^\alpha\partial_x^\beta p(x,\xi)^N|\leq Ch^{|\alpha|+|\beta|}A_{|\alpha|}B_{|\beta|}|p(x,\xi)^N|\langle\xi\rangle^{-|\alpha|}\langle x\rangle^{-|\beta|},
	\end{equation*}
	for all $\alpha,\beta\in\N_0$ and $(x,\xi)\in Q_B^c$. Moreover, $p^N\in {\SG}_{[\A,\B]}^{Nm, N\mu}(\R^n)$ if $N\geq 0$, and 
	$p^N\in {\SG}_{[\A,\B]}^{Nm',N\mu'}(\R^n)$ if $N<0$.
\end{lemma}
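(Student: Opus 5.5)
The plan is to write $p^N=\exp(N\log p)$ on $Q_B^c$ (where $p>0$) and to estimate derivatives of $p^N$ by an induction on $|\alpha|+|\beta|$. The induction is carried out on the quotient $\partial_\xi^\alpha\partial_x^\beta p^N/p^N$ rather than on $p^N$ itself. Concretely, set $q_{\alpha\beta}=p^{-N}\partial_\xi^\alpha\partial_x^\beta p^N$ and aim for
\[
|q_{\alpha\beta}|\le C h^{|\alpha|+|\beta|}A_{|\alpha|}B_{|\beta|}\langle\xi\rangle^{-|\alpha|}\langle x\rangle^{-|\beta|}
\]
on $Q_B^c$. For $N=0$ there is nothing to prove. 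For $N\ge 1$ the Leibniz rule applied to $p\cdot p^{N-1}$ would suffice, but a uniform argument covering all $N\in\Z$ is preferable.

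The core of the argument is a recursion. From $\partial(p^N)=Np^{N-1}\partial p$ we get the identity $p\,\partial_{\xi_k}(p^N)=N\,p^N\,\partial_{\xi_k}p$, and similarly in $x_k$. Applying $\partial_\xi^{\alpha'}\partial_x^{\beta}$ with $\alpha=\alpha'+e_k$ and using Leibniz on both sides yields
\[
p\,\partial^{\alpha}_\xi\partial^\beta_x p^N=\sum_{\gamma\le\alpha',\,\delta\le\beta}\binom{\alpha'}{\gamma}\binom{\beta}{\delta}\Big(N\,\partial^{\gamma+e_k}_\xi\partial^\delta_x p\;\partial^{\alpha'-\gamma}_\xi\partial^{\beta-\delta}_x p^N-[(\gamma,\delta)\ne 0]\,\partial^\gamma_\xi\partial^\delta_x p\;\partial^{\alpha-\gamma}_\xi\partial^{\beta-\delta}_x p^N\Big).
\]
Dividing by $p\cdot p^N$ expresses $q_{\alpha\beta}$ through lower-order $q$'s times ratios $\partial p/p$. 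By \eqref{hypo_cond2} these ratios are bounded by $Ch^{|\gamma|+|\delta|}A_{|\gamma|}B_{|\delta|}\langle\xi\rangle^{-|\gamma|}\langle x\rangle^{-|\delta|}$. Inserting the inductive hypothesis and using $A_jA_{k}\le A_{j+k}$ from Proposition \ref{MjMj-k} (and likewise for $\B$), the powers of $\langle\xi\rangle,\langle x\rangle$ combine exactly. What remains is a sum of binomial coefficients times $h_0^{\cdot}h^{\cdot}$.

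The main obstacle is closing the induction with a constant $h$ independent of the order. A naive binomial sum produces factors $2^{|\alpha|+|\beta|}$ compounded at each step. I would handle this in the standard way, as in Komatsu's proof that $1/f$ is ultradifferentiable, using the normalised quantities
\[
\tilde q_{\alpha\beta}=\frac{q_{\alpha\beta}\langle\xi\rangle^{|\alpha|}\langle x\rangle^{|\beta|}}{A_{|\alpha|}B_{|\beta|}}.
\]
The key tool is $(M1)$, in the form $\binom{j}{i}A_iA_{j-i}\le A_j\binom{j}{i}$ together with $A_{i+1}A_{j-i}\le A_1A_{j+1}\cdots$, i.e.\ the log-convexity inequality $A_{i}A_{j-i+1}\le A_{j+1}$ for $1\le i\le j$. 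With it one proves $|\tilde q_{\alpha\beta}|\le C_1(Lh_0)^{|\alpha|+|\beta|}$ for $L$ large depending on $|N|$ and $C$. Alternatively, I would apply the Faà di Bruno formula (Lemma \ref{faa}) to $\exp(N\log p)$ along lines, combined with Lemmas \ref{lemma_sum} and \ref{lemma_prod_M} to bound $\prod \partial^\ell\log p$. In either form the choice of $h$ follows from summing the geometric series. In the projective case the same computation works, since $h$ can be taken arbitrarily small by shrinking $h_0$, at the cost of $C$.

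Finally, the membership statements follow by combining the estimate just proved with bounds on $p^N$ itself. For $N\ge 0$ we have $|p^N|\le C\langle\xi\rangle^{N\mu}\langle x\rangle^{Nm}$ from $p\in\SG^{m,\mu}_{[\A,\B]}$. For $N<0$, \eqref{hypo_cond1} gives $|p^N|\le A^{N}\langle\xi\rangle^{N\mu'}\langle x\rangle^{Nm'}$ on $Q_B^c$. Strictly speaking $p^N$ is defined only on $Q_B^c$, so as is customary I would replace $p^N$ by $\chi p^N$, where $\chi$ is an $[\A,\B]$ cutoff vanishing near $Q_B$, as in the proof of Proposition \ref{prop_p_pj}. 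The derivatives falling on $\chi$ are compactly supported in $(x,\xi)$ and are harmless.
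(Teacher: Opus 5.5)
Your recursion is the natural ultradifferentiable form of the Leibniz-rule induction of Nicola--Rodino, which is all the paper's one-line proof refers to. Several parts are fine:
\begin{itemize}
\item the identity itself and the use of \eqref{hypo_cond2};
\item the matching of the powers of $\langle\xi\rangle,\langle x\rangle$;
\item the case $N\ge 1$;
\item the final cut-off.
\end{itemize}

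The gap is the step you flag yourself, and it is not bookkeeping. The inequalities you invoke, $A_iA_{j-i}\le A_j$ and $A_iA_{j-i+1}\le A_{j+1}$, are just Proposition \ref{MjMj-k}, and they never touch the binomial coefficients. Insert $|q_{\alpha''\beta''}|\le L^{|\alpha''|+|\beta''|}A_{|\alpha''|}B_{|\beta''|}\langle\xi\rangle^{-|\alpha''|}\langle x\rangle^{-|\beta''|}$ into your recursion and use Vandermonde. You get back the same bound times the factor $C_0\bigl[(1+|N|h_0/L)(1+h_0/L)^{|\alpha|+|\beta|-1}-1\bigr]$, which exceeds $1$ for large orders whatever $L$ is. So the induction does not close.

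For $N<0$ this cannot be repaired under $(M0)$, $(M1)$, $(M2)'$, $(M3)'$ alone, because the claim then contains inverse-closedness of Denjoy--Carleman classes. Take $n=1$ and $p(x,\xi)=G(\xi)$ with $G\ge1$ and $G-1\in\D_{\{\A\}}(\R)$. Then $p$ is a positive elliptic element of $\SG^{0,0}_{\{\A,\B\}}$. Since $Q_B^c\supset\{\langle x\rangle\ge B\}$, the case $N=-1$ would give $1/G\in\E_{\{\A\}}(\R)$. By the theorem of Rudin and Bruna, for log-convex $\A$ the class $\E_{\{\A\}}$ is inverse-closed if and only if $(A_j/j!)^{1/j}$ is almost increasing. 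This fails for some non-quasianalytic sequences satisfying $(M2)'$, for instance $A_k/A_{k-1}$ constant on very long blocks. Localising a non-invertible example with a $\D_{\{\A\}}$ cut-off produces such a $G$ with $1/G\notin\E_{\{\A\}}$.

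What is missing is an extra hypothesis on $\A,\B$ that absorbs the binomials, such as $(M4)$, i.e.\ $\binom{j}{i}A_iA_{j-i}\le A_j$, and likewise for $\B$. With it your recursion closes at once. Since $\sum_{|\gamma|=i}\binom{\alpha'}{\gamma}=\binom{|\alpha|-1}{i}\le\binom{|\alpha|}{i+1}$, both sums are controlled by the common factor $L^{|\alpha|+|\beta|}A_{|\alpha|}B_{|\beta|}\langle\xi\rangle^{-|\alpha|}\langle x\rangle^{-|\beta|}$:
\begin{itemize}
\item the first sum is bounded by that factor times $|N|C_0\sum_{i,j\ge0}(h_0/L)^{i+j+1}$;
\item the second is bounded by that factor times $C_0\sum_{(i,j)\neq(0,0)}(h_0/L)^{i+j}$.
\end{itemize}
Both factors are $O(h_0/L)$, so the induction closes once $L$ is large.

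Your Fa\`a di Bruno alternative needs the same input, namely Lemma \ref{lemma_prod_M} for the sequence $\{A_j/j!\}$, which is an $(M4)$-type condition. This is exactly how the paper uses $(M4)$ in its proof of $\caS_{[\M]}(X)=\dot{\E}_{[\M]}(X)$. The paper's own proof here is only a pointer to the smooth case, where the constants are unrestricted, so it does not address this point either.

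Finally, in the projective case ``shrink $h_0$ at the cost of $C$'' does not work as written. Closing requires $C_0(h_0)h_0/L$ to be small for arbitrarily small $L$. But $C_0(h_0)h_0\ge\sup_{Q_B^c}\langle\xi\rangle|\partial_{\xi_k}p|/p$ stays bounded below. You need an additional argument, for example a reduction to an auxiliary sequence $\A'\prec\A$ with the same structural property, for which Roumieu-type estimates hold.
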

\begin{proof}
	The claim follows by a straightforward variant of the argument in \cite[Lemma 1.3.5]{NicRod}.
\end{proof}

\begin{theorem}\label{thm_para}
	Let $p\in {\SG}_{[\A,\B]}^{m,\mu}(\R^n)$ be a hypoelliptic symbol and consider $m',\mu'$ as in condition \eqref{hypo_cond1}. 
	Then, there exists $q\in {\SG}_{[\A,\B]}^{-m', -\mu'}(\R^n)$ such that
	\begin{equation*}
		\Op(q)\Op(p)=I+R_1\quad\text{and}\quad \Op(p)\Op(q)=I+R_2,
	\end{equation*}
	where $I$ is the identity operator and $R_1,R_2$ are $[\M]$-regularizing operators.
\end{theorem}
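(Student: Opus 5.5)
The plan follows the classical Neumann-series construction of a parametrix, as in \cite[Theorem 1.3.6]{NicRod}, adapted to the ultradifferential estimates. We first construct the left parametrix. Let $\chi\in\SG^{0,0}_{[\A,\B]}(\R^n)$ vanish on $Q_{2B}$ and be identically $1$ on $Q_{3B}^c$; we build it from cutoffs in $\D_{[\A]}$ and $\D_{[\B]}$ as in the proof of Proposition \ref{prop_p_pj}. We then set $q_0=\chi/p$. Hypoellipticity \eqref{hypo_cond1} makes $p$ nonvanishing on $Q_B^c$. Applying Lemma \ref{lemma_1.3.5} to $|p|^2=p\bar p$ with $N=-1$ gives the estimates for $1/p=\bar p/|p|^2$, and \eqref{hypo_cond2} for $\bar p$ controls the numerator. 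Together these yield $q_0\in\SG^{-m',-\mu'}_{[\A,\B]}(\R^n)$, together with the sharper bounds
\[
|\partial_\xi^\alpha\partial_x^\beta q_0|\le Ch^{|\alpha|+|\beta|}A_{|\alpha|}B_{|\beta|}|p|^{-1}\langle\xi\rangle^{-|\alpha|}\langle x\rangle^{-|\beta|}.
\]

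Next we define the terms of the formal inverse recursively, following the composition formula of Theorem \ref{asympt_exp_comp}:
\[
q_j=-q_0\sum_{k<j}\sum_{|\gamma|=j-k}\frac{1}{\gamma!}\partial_\xi^\gamma q_k\,D_x^\gamma p,\qquad j\ge1 .
\]
By induction we prove that
\[
|\partial_\xi^\alpha\partial_x^\beta q_j|\le C h^{|\alpha|+|\beta|+2j}A_{|\alpha|+j}B_{|\beta|+j}\,|p|^{-1}\langle\xi\rangle^{-|\alpha|-j}\langle x\rangle^{-|\beta|-j}.
\]
Using $|p|^{-1}\le A^{-1}\langle\xi\rangle^{-\mu'}\langle x\rangle^{-m'}$ on $Q_B^c$, together with Proposition \ref{MjMj-k} or $(M2)$ to pass from $A_{|\alpha|+j}$ to $A_{|\alpha|}A_j$, this shows that $\sum_j q_j\in\FS^{-m',-\mu'}_{[\A,\B]}(\R^n)$. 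Proposition \ref{prop_p_pj} then gives a symbol $q_L\sim\sum_jq_j$. By Theorem \ref{asympt_exp_comp} and the recursion, $\Op(q_L)\Op(p)=\Op(r)+R$, where $r\sim 1-\chi\sim0$, because $1-\chi$ is supported in a compact set. Proposition \ref{exp_0_reg} then shows that $\Op(r)$ is $[\M]$-regularizing. This gives the left parametrix.

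The right parametrix $q_R$ is built in the same way, using the recursion with the order of the factors exchanged. Then
\[
q_L\equiv q_L(pq_R)\equiv(q_Lp)q_R\equiv q_R
\]
modulo regularizing operators. Since compositions of regularizing operators with operators of the calculus are again regularizing, by Theorem \ref{thm_ST} and Corollary \ref{cont_SG_ext}, a single $q=q_L$ works on both sides.

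The main obstacle is the inductive estimate for $q_j$ with sharp factorial-type constants. A naive Leibniz expansion of the recursion produces combinatorial sums whose growth might exceed $h^{2j}A_jB_j$. To handle this, we will use the Cauchy-type estimate of \cite[Lemma 1.3.5]{NicRod} with careful bookkeeping. Each step costs a factor $h^{2|\gamma|}A_{|\gamma|}B_{|\gamma|}$, and the sum over $k<j$ and $|\gamma|=j-k$ is absorbed using $A_{|\gamma|}A_{k+|\alpha'|}\le A_{j+|\alpha'|}$ (Proposition \ref{MjMj-k}). The binomial counts are absorbed into enlarging $h$ by a fixed geometric factor. We will also have to check that this factor is independent of $j$, which is the delicate point; in the Beurling case it must be made arbitrarily small.
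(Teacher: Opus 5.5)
Your route differs from the paper's. The paper never sets up a recursion for the terms of the inverse. It takes $q_0=p^{-1}$ off $Q_B$ and shows $\Op(q_0)\Op(p)=I+\Op(r_1)$ with $r_1\in\SG^{-1,-1}_{[\A,\B]}(\R^n)$. Each term $\partial_\xi^\alpha[p^{-1}]D_x^\alpha p$ there needs only one two-factor Leibniz step, so doubling $h$ once suffices. The paper then inverts $I+\Op(r_1)$ by a Neumann series $r\sim\sum_j r_j$, with $r_j$ the symbol of $\Op(r_1)^j$, and takes $q$ to be the symbol of $\Op(r)\Op(q_0)$. You instead solve directly for the terms $q_j$ of the formal inverse.

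Both routes hinge on the $j$-th term satisfying $\FS$-type bounds with constants $Ch^{2j}A_jB_j$ uniform in $j$. The paper asserts this for the $r_j$ and leaves the details to the reader, so it does not supply this argument either. The rest of your argument is fine: the cut-off $q_0=\chi/p$, estimating $1/p$ through Lemma \ref{lemma_1.3.5} applied to $|p|^2$ (which sidesteps its positivity hypothesis), and the left/right comparison. The gap is in the step you yourself flag, and the mechanism you propose for it does not work.

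To pass from $q_k$, $k<j$, to $q_j$ you differentiate the triple product $q_0\,\partial_\xi^\gamma q_k\,D_x^\gamma p$. Taking $h$ large gains $(h_0/h)^{|\alpha_1|+|\beta_1|}(h_p/h)^{|\alpha_3|+|\beta_3|+j-k}$. Even so, with only $A_aA_bA_c\le A_{a+b+c}$ (Proposition \ref{MjMj-k}), the multinomial sum leaves a factor $\bigl(1+(h_0+h_p)/h\bigr)^{|\alpha|+|\beta|}$, which is unbounded in $|\alpha|+|\beta|$. So the inductive hypothesis is not reproduced with the same $(C,h)$. Enlarging $h$ at each level compounds to roughly $c^{j(|\alpha|+|\beta|)}$, which is not of the form $H^{|\alpha|+|\beta|+2j}$.

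This is not an artifact of crude bookkeeping. $q_j$ is a sum of products of $O(j)$ derivatives of $p$ and powers of $1/p$, and bounding high derivatives of such products uniformly in $j$ requires absorbing multinomial coefficients into the weights. What closes your induction is $\binom{a+b}{a}A_aA_b\le A_{a+b}$, and likewise for $\B$, that is, $(M4)$. Then, by Vandermonde, the sums collapse to geometric series in $h_0/h$ and $h_p/h$. The step constant becomes $C_0C_p\bigl(e^{nh_p/h}-1\bigr)(1-h_0/h)^{-2}(1-h_p/h)^{-2}$, which is $\le 1$ for $h$ large.

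Neither $(M4)$ nor $(M2)$ is among the hypotheses, so you must add such a regularity assumption. Converting your bound with $A_{|\alpha|+j}B_{|\beta|+j}$ into the $\FS$ form with $A_{|\alpha|}A_jB_{|\beta|}B_j$ also needs $(M2)$; Proposition \ref{MjMj-k} gives the opposite inequality. In the Beurling case the device ``take $h$ large'' is unavailable, since $C_0,C_p$ blow up as $h_0,h_p\to0$. An extra step is needed there, for instance proving Roumieu-type bounds for auxiliary weight sequences strictly smaller than $\A$, $\B$ that have the same regularity.
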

\begin{proof}
	Since $p$ is hypoelliptic, let $q_0\in {\SG}_{[\A,\B]}^{-m',-\mu'}(\R^n)$ be such that $q_0(x,\xi)=p(x,\xi)^{-1}$ for $(x,\xi)\in Q_B^c$. 
	We claim that $\Op(q_0)\Op(p)=I+\Op(r_1)$, where $r_1\in {\SG}_{[\A,\B]}^{-1,-1}(\R^n)$. 
	The proof is an adaptation of the usual parametrix construction argument. For the sake of completeness, we provide some detail.
	
	By Theorem \ref{asympt_exp_comp}, $\Op(q_0)\Op(p)$ has symbol with asymptotic expansion
	\[1+\sum_{\alpha\neq 0}\frac{1}{\alpha!}\partial_\xi^\alpha [p(x,\xi)^{-1}] D_x^\alpha p(x,\xi).\] 
	
	Notice that, for each $\alpha\neq 0$, the symbol $\partial_\xi^\alpha [p(x,\xi)^{-1}] D_x^\alpha p(x,\xi)$ belongs to 
	${\SG}_{[\A,\B]}^{-|\alpha|,-|\alpha|}(\R^n)$. Indeed, since $p$ is hypoelliptic, by \eqref{hypo_cond2}, Lemma \ref{lemma_1.3.5}, 
	and the Leibniz rule, we obtain
	\begin{align*}
		|\partial_\xi^{\alpha'}\partial_x^\beta(\partial_\xi^\alpha &[p(x,\xi)^{-1}] D_x^\alpha p(x,\xi))|\\
		&\leq\   \sum_{\gamma\leq\alpha'}\binom{\alpha'}{\gamma} \sum_{\delta\leq\beta}\binom{\beta}{\delta} 
		|\partial_\xi^{\gamma+\alpha}\partial_x^{\delta} [p(x,\xi)^{-1}]| |\partial_\xi^{\alpha'-\gamma+\alpha}\partial_x^{\beta-\delta+\alpha} p(x,\xi)|\\
		&\leq\ \sum_{\gamma\leq\alpha'}\binom{\alpha'}{\gamma} \sum_{\delta\leq\beta}\binom{\beta}{\delta} 
		Ch^{|\alpha|+|\gamma|+|\delta|}\frac{A_{\alpha}A_{\gamma}B_{|\delta|}}{|p(x,\xi)|}\langle\xi\rangle^{-|\alpha+\gamma|}\langle x\rangle^{-|\delta|}\\
		& \times h^{|\alpha'-\gamma|+|\beta-\delta|+|\alpha|}
		A_{|\alpha'-\gamma|}B_{|\beta-\delta|}B_{|\alpha|}|p(x,\xi)|\langle\xi\rangle^{-|\alpha'-\gamma|}\langle x\rangle^{-|\beta-\delta|-|\alpha|}\\
		&\leq\ \sum_{\gamma\leq\alpha'}\binom{\alpha'}{\gamma} \sum_{\delta\leq\beta}\binom{\beta}{\delta} 
		Ch^{2|\alpha|+|\alpha'|+|\beta|}A_{|\alpha'|}B_{|\beta|}A_{|\alpha|}B_{|\alpha|}\langle\xi\rangle^{-|\alpha|-|\alpha'|}\langle x\rangle^{-|\alpha|-|\beta|}\\
		&\leq\ C_0h_0^{|\alpha'|+|\beta|}A_{|\alpha'|}B_{|\beta|}\langle\xi\rangle^{-|\alpha|-|\alpha'|}\langle x\rangle^{-|\alpha|-|\beta|},
	\end{align*}
	where $C_0=Ch^{2|\alpha|}A_{|\alpha|}B_{|\alpha|}$ and $h_0=2h$, for the inductive case. 
	For the projective case, for each $h>0$, we can choose $h/2$ in the symbol estimates.
	
	In this case, by Proposition \ref{prop_p_pj}, we take $r_1\in {\SG}_{[\A,\B]}^{-1,-1}(\R^n)$ such that
	\[r_1(x,\xi) \sim \sum_{\alpha\neq 0}\frac{1}{\alpha!}\partial_\xi^\alpha [p(x,\xi)^{-1}] D_x^\alpha p(x,\xi),\]
	which proves our claim. In particular, $\Op(q_0)\Op(p)$ is an operator with symbol in ${\SG}_{[\A,\B]}^{0,0}(\R^n)$.
	
	Now, we set $r_0(x,\xi)=1$ and, for every $j\in\N$, let $r_j(x,\xi)\in {\SG}_{[\A,\B]}^{-j,-j}(\R^n)$ be the symbol of $\Op(r_1)^j$. 
	Again by Proposition \ref{prop_p_pj}, let $r\in {\SG}_{[\A,\B]}^{0,0}(\R^n)$ be such that
	\[r(x,\xi) \sim \sum_{j\in\N_0}r_j(x,\xi).\]
	
	Then, by a standard argument,
	if we take $q\in {\SG}_{[\A,\B]}^{-m',-\mu'}(\R^n)$ to be the symbol of $\Op(r)\Op(q_0)$, we obtain $\Op(q)\Op(p)=I+R_1$, with $R_1$ $[\M]$-regularizing. 
	Analogously, we obtain $q'\in {\SG}_{[\A,\B]}^{-m',-\mu'}(\R^n)$ and $R_2$ $[\M]$-regularizing such that $\Op(p)\Op(q')=I+R_2$,
	and $\Op(q)-\Op(q')$ is $[\M]$-regularizing. The details are left for the reader.
\end{proof}

The (hypo)elliptic regularity result follows, as usual, by the previous parametrix construction.

\begin{corollary}
	If a symbol $p\in {\SG}_{[\A,\B]}^{m,\mu}(\R^n)$ is hypoelliptic, then $\Op(p)$ satisfies
	\begin{equation}\label{GH}
		u\in\caS_{[\M]}'(\R^n) \text{ and } \Op(p)u\in \caS_{[\M]}(\R^n)\ \Rightarrow\ u\in\caS_{[\M]}(\R^n).
	\end{equation}
	
	An operator satisfying \eqref{GH} is said to be $[\M]$-globally hypoelliptic.
\end{corollary}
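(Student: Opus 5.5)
The plan is to use the left parametrix from Theorem \ref{thm_para}. Since $p$ is hypoelliptic, that theorem gives $q\in{\SG}_{[\A,\B]}^{-m',-\mu'}(\R^n)$ with $\Op(q)\Op(p)=I+R_1$, where $R_1$ is $[\M]$-regularizing. Here all operators are understood through their continuous extensions to $\caS_{[\M]}'(\R^n)$, which exist by Corollary \ref{cont_SG_ext}. The identity $\Op(q)\Op(p)=I+R_1$ holds on $\caS_{[\M]}(\R^n)$. By the definition of $[\M]$-regularizing, $R_1$ extends continuously from $\caS_{[\M]}'(\R^n)$ to $\caS_{[\M]}(\R^n)$, and hence also into $\caS_{[\M]}'(\R^n)$.

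First I will show that the identity persists on $\caS_{[\M]}'(\R^n)$. Both sides are continuous maps $\caS_{[\M]}'(\R^n)\to\caS_{[\M]}'(\R^n)$ and they coincide on $\caS_{[\M]}(\R^n)$. So it suffices to know that $\caS_{[\M]}(\R^n)$ is dense in $\caS_{[\M]}'(\R^n)$, in a topology for which the extensions are continuous. These extensions are obtained by duality, as transposes of operators with the same structure: by Proposition \ref{asympt_exp_tr} the transposes are again SG-operators of the same class, up to regularizing remainders. Because of this, the cleanest route avoids density altogether. Write
\[
\langle \Op(q)\Op(p)u,\varphi\rangle=\langle u,{}^t\Op(p)\,{}^t\Op(q)\varphi\rangle
\]
for $\varphi\in\caS_{[\M]}(\R^n)$. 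Transposing the identity on test functions gives ${}^t\Op(p)\,{}^t\Op(q)=I+{}^tR_1$ on $\caS_{[\M]}(\R^n)$. It follows that $\Op(q)\Op(p)u=u+R_1u$, where $R_1u$ is defined as $\varphi\mapsto\langle u,{}^tR_1\varphi\rangle$. This is the step where some care is needed. We must check that the extension of $R_1$ to $\caS_{[\M]}'$ used in the definition of regularizing agrees with this duality extension. It does, because both come from the kernel $K_{R_1}\in\caS_{[\M]}(\R^{2n})$: $R_1u(x)=\langle u,K_{R_1}(x,\cdot)\rangle$. Differentiating under the pairing, using the Gelfand–Shilov estimates of $K_{R_1}$, shows that this function lies in $\caS_{[\M]}(\R^n)$. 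This handles the case where regularizing comes from a kernel in $\caS_{[\M]}(\R^{2n})$, which is how the regularizing remainders arise in Proposition \ref{exp_0_reg} and Theorem \ref{asympt_exp_comp}.

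Then, for $u\in\caS_{[\M]}'(\R^n)$ with $f=\Op(p)u\in\caS_{[\M]}(\R^n)$, we get
\[
u=\Op(q)f-R_1u .
\]
By Theorem \ref{cont_SG} applied to $q$, we have $\Op(q)f\in\caS_{[\M]}(\R^n)$, and $R_1u\in\caS_{[\M]}(\R^n)$ by the previous step. Hence $u\in\caS_{[\M]}(\R^n)$. The main obstacle is the compatibility of the extensions on $\caS_{[\M]}'(\R^n)$. The rest is immediate from Theorem \ref{thm_para}.
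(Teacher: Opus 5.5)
Your proof is correct and follows the paper's route. The paper only says the result follows ``as usual'' from the left parametrix $\Op(q)\Op(p)=I+R_1$ of Theorem \ref{thm_para}, i.e.\ $u=\Op(q)\Op(p)u-R_1u\in\caS_{[\M]}(\R^n)$, and your check that this identity holds on $\caS_{[\M]}'(\R^n)$ (via transposes and the kernel $K_{R_1}\in\caS_{[\M]}(\R^{2n})$) fills in a detail it leaves implicit, which could also be settled directly by density of $\caS_{[\M]}(\R^n)$ in $\caS_{[\M]}'(\R^n)$.
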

%


\subsection{SG-classical ultradifferential symbols}\label{sec:sgucl}
We now provide definitions and main results concerning classical symbols in the $SG_{[\A,\B]}^{m,\mu}(\R^n)$ classes, as well as the definition, characterization and main properties of global ultradifferentiable wave front sets for temperate ultradistributions. The proofs of the claims in this section are omitted, since they are analogous to those found in Sections 3 and 4 of \cite{CapRod2006} for the standard SG-classes
(see also the characterization of classical SG-symbols
recalled, e.g., in \cite{EgSchu97}).

Let $\A=\Aj$ and $\B=\Bj$ be two non-quasianalytic weight sequences and consider the associated sequences $\{a_j\}_{j\in\N_0}$ and $\{b_j\}_{j\in\N_0}$ given by $a_0=b_0=0$ and $a_j=A_j/A_{j-1}$, $b_j=B_j/B_{j-1}$, for $j\geq 1$. The next definitions and propositions
provide adapted versions of the standard notion of asymptotic summation with respect to the $x$ and $\xi$ variables, in general and in terms of homogeneous
summands.

\begin{definition}\label{def:SGcl_xi}
	We define $\FS_{[\A,\B],\xi}^{m,\mu}(\R^n)$ as the space of formal sums $\sum_{j\in\N_0}p_j$, where $p_j\in C^\infty(\R^{2n})$ and
	\begin{equation*}
		\sup_{j\in\N_0}\sup_{\alpha,\beta\in\N_0^n}\sup_{\langle\xi\rangle \geq Ba_j} \frac{\langle\xi\rangle^{-\mu+j+|\alpha|}\langle x\rangle^{-m+|\beta|}|\partial_\xi^\alpha\partial_x^\beta p_j(x,\xi)|}{h^{|\alpha|+|\beta|+j} A_{|\alpha|}A_jB_{|\beta|}B_j} < +\infty.
	\end{equation*}
	for some $h>0$ (respectively, for every $h>0$).
\end{definition}
\begin{definition}\label{def:SGcl_xieq}
	We say that two formal sums $\sum_{j\in\N_0}p_j, \sum_{j\in\N_0}\tilde{p}_j \in \FS_{[\A,\B],\xi}^{m,\mu}(\R^n)$ are equivalent if there exist $B,h>0$ such that (respectively, there exists $B>0$ such that, for every $h>0$, we have)
	\begin{equation*}
		\sup_{N\in\N_0}\sup_{\alpha,\beta\in\N_0^n}\sup_{\langle\xi\rangle\geq Ba_j} \frac{\langle\xi\rangle^{-\mu+N+|\alpha|}\langle x\rangle^{-m+|\beta|} \left| \sum_{j<N}\partial_\xi^\alpha\partial_x^\beta(p_j(x,\xi)-\tilde{p}_j(x,\xi))\right|}{h^{|\alpha|+|\beta|+N} A_{|\alpha|}A_NB_{|\beta|}B_N} < +\infty.
	\end{equation*}
	In this case, we write $\sum_{j\in\N_0}p_j\sim_\xi \sum_{j\in\N_0}\tilde{p}_j$.
\end{definition}

\begin{definition}\label{def:SGcl_x}
	We define $\FS_{[\A,\B],x}^{m,\mu}(\R^n)$ as the space of formal sums $\sum_{j\in\N_0}p_j$, where each $p_j(x,\xi)$ belongs to $C^\infty(\R^{2n})$ and
	\begin{equation*}
		\sup_{j\in\N_0}\sup_{\alpha,\beta\in\N_0^n}\sup_{\langle\xi\rangle \geq Bb_j} \frac{\langle\xi\rangle^{-\mu+|\alpha|}\langle x\rangle^{-m+j+|\beta|}|\partial_\xi^\alpha\partial_x^\beta p_j(x,\xi)|}{h^{|\alpha|+|\beta|+j} A_{|\alpha|}A_jB_{|\beta|}B_j} < +\infty.
	\end{equation*}
	for some $h>0$ (respectively, for every $h>0$).
\end{definition}

\begin{definition}\label{def:SGcl_xeq}
	We say that two formal sums $\sum_{j\in\N_0}p_j, \sum_{j\in\N_0}\tilde{p}_j \in \FS_{[\A,\B],x}^{m,\mu}(\R^n)$ are equivalent if there exist $B,h>0$ such that (respectively, there exists $B>0$ such that, for every $h>0$, we have)
	\begin{equation*}
		\sup_{N\in\N_0}\sup_{\alpha,\beta\in\N_0^n}\sup_{\langle x\rangle\geq Bb_j} \frac{\langle\xi\rangle^{-\mu+|\alpha|}\langle x\rangle^{-m+N+|\beta|} \left| \sum_{j<N}\partial_\xi^\alpha\partial_x^\beta(p_j(x,\xi)-\tilde{p}_j(x,\xi))\right|}{h^{|\alpha|+|\beta|+N} A_{|\alpha|}A_NB_{|\beta|}B_N} < +\infty.
	\end{equation*}
	
	In this case, we write $\sum_{j\in\N_0}p_j \sim_x \sum_{j\in\N_0}\tilde{p}_j$.
\end{definition}

Using the same arguments of Proposition \ref{prop_p_pj}, one can prove the following.

\begin{proposition}
	Let $\sum_{j\in\N_0}p_j\in \FS_{[\A,\B],\xi}^{m,\mu}(\R^n)$ and $\sum_{j\in\N_0}q_j\in \FS_{[\A,\B],x}^{m,\mu}(\R^n)$ be two formal sums. Then, there exist symbols $p,q\in {\SG}_{[\A,\B]}^{m,\mu}(\R^n)$ such that
	\[p\sim_\xi \sum_{j\in\N_0}p_j\ \text{in}\ \FS_{[\A,\B],\xi}^{m,\mu}(\R^n) \quad\text{and}\quad q\sim_x\sum_{j\in\N_0}q_j\ \text{in}\ \FS_{[\A,\B],x}^{m,\mu}(\R^n).\]
\end{proposition}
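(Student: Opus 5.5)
We follow the proof of Proposition \ref{prop_p_pj}, but with cutoffs in one variable only. For the $\xi$-expansion we take $\varphi_1\in\D_{[\A]}(\R^n)$ with $0\le\varphi_1\le1$, $\varphi_1(\xi)\equiv1$ for $\langle\xi\rangle\le 2$ and $\varphi_1(\xi)\equiv0$ for $\langle\xi\rangle\ge3$. For $R>0$ we set $\chi_0\equiv1$ and $\chi_j(\xi)=1-\varphi_1(\xi/(Ra_j))$ for $j\ge1$, and define
\[
p(x,\xi)=\sum_{j\in\N_0}\chi_j(\xi)p_j(x,\xi).
\]
The analogue of \eqref{der_chi_j} reads $|D_\xi^\gamma\chi_j(\xi)|\le C_\chi h_\chi^{|\gamma|}A_{|\gamma|}R^{-|\gamma|}a_j^{-|\gamma|}$. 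The $q$-case is symmetric: use $\varphi_2\in\D_{[\B]}(\R^n)$, cutoffs $\chi_j(x)=1-\varphi_2(x/(Rb_j))$, and interchange the roles of $x,\xi$ and of $\A,\B$. We read the region $\langle\xi\rangle\ge Bb_j$ in Definition \ref{def:SGcl_x} as $\langle x\rangle\ge Bb_j$, as in Definition \ref{def:SGcl_xeq}. The sum is locally finite, since $a_j\to\infty$ by Proposition \ref{incl_quasi_factorial} and Lemma \ref{lemma_mjj_Mj}, so $p$ is smooth.

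To estimate $D_\xi^\alpha D_x^\beta p$, we apply Leibniz only in $\xi$, because $\chi_j$ does not depend on $x$, and split into $S_1$ (no derivative on $\chi_j$) and $S_2$ (at least one derivative on $\chi_j$). Take $R>B$, so that $\mathrm{supp}\,\chi_j\subset\{\langle\xi\rangle\ge Ba_j\}$ and the estimate of Definition \ref{def:SGcl_xi} applies there. Each term then carries the factor $h^jA_jB_j\langle\xi\rangle^{-j}$, with no decay in $x$. On $\mathrm{supp}\,\chi_j$ we have $\langle\xi\rangle\ge 2Ra_j$, and $a_j^j\ge A_j$ by Lemma \ref{lemma_mjj_Mj}. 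Hence
\[
\frac{h^jA_jB_j}{\langle\xi\rangle^{j}}\le \Big(\frac{h}{2R}\Big)^jB_j .
\]

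The main obstacle is the leftover $B_j$. In Proposition \ref{prop_p_pj} it was absorbed by $\langle x\rangle^{-j}\le (Rb_j)^{-j}$, which is unavailable here. Taken literally, the normalization $A_jB_j$ in Definition \ref{def:SGcl_xi} cannot be absorbed by $\xi$-decay alone. I would therefore first check whether the intended normalization is $A_j$ only, which is the natural ``$\xi$-only'' analogue of \eqref{eq:SGGS} and is consistent with equivalence being measured only in $\langle\xi\rangle^{-N}$. If so, the series $\sum_j(h/2R)^j$ converges for $R$ large. If the $B_j$ factor must be kept, the fallback is to use dilated cutoffs $\chi_j(\xi)=1-\varphi_1(\xi/(R\,a_jb_j))$, so that $\langle\xi\rangle^j\ge (Ra_jb_j)^j\ge R^jA_jB_j$, again by Lemma \ref{lemma_mjj_Mj}. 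The equivalence estimate of Definition \ref{def:SGcl_xieq} is then needed on $\langle\xi\rangle\ge B'a_Nb_N$ rather than $\langle\xi\rangle\ge Ba_N$. This holds with a larger constant on the region $Ba_N\le\langle\xi\rangle\le B'a_Nb_N$ once the finite sum $\sum_{j<N}p_j$ is bounded there, and this is where the real work lies.

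For $S_2$, the derivatives of $\chi_j$ are supported where $\langle\xi\rangle\le 3Ra_j$. There $\langle\xi\rangle^{|\gamma|}|D^\gamma_\xi\chi_j|\le C_\chi(3h_\chi)^{|\gamma|}A_{|\gamma|}$, and Proposition \ref{MjMj-k} combines $A_{|\gamma|}A_{|\alpha-\gamma|}\le A_{|\alpha|}$. The binomial sums cost a factor $2^{|\alpha|}$, which gives the bound $C h_0^{|\alpha|+|\beta|}A_{|\alpha|}B_{|\beta|}\langle\xi\rangle^{\mu-|\alpha|}\langle x\rangle^{m-|\beta|}$. This proves $p\in\SG^{m,\mu}_{[\A,\B]}(\R^n)$.

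For the equivalence, note that $\chi_j\equiv1$ when $\langle\xi\rangle\ge 3Ra_j$, and that $a_j$ is increasing. On $\langle\xi\rangle\ge 3Ra_N$ we therefore have $p-\sum_{j<N}p_j=\sum_{j\ge N}\chi_jp_j$. The same estimate as above, now with a factor $\langle\xi\rangle^{-N}$ extracted and the surplus $h^{j-N}$ summed, yields the bound required in Definition \ref{def:SGcl_xieq}. In the projective case the constants are chosen as at the end of the proof of Proposition \ref{prop_p_pj}.
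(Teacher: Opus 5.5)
Your diagnosis is correct, and it is a real defect of the route the paper indicates. The paper's proof is only the remark that the argument of Proposition~\ref{prop_p_pj} carries over, i.e.\ cut-offs $\chi_j(\xi)=1-\varphi_1(\xi/(Ra_j))$. With the normalization $h^{j}A_jB_j$ of Definition~\ref{def:SGcl_xi} and decay $\langle\xi\rangle^{-j}$ only, that construction genuinely fails. Take $A_j=j!^s$, $B_j=j!^r$ with $s,r>1$, and $p_j=A_jB_j\langle\xi\rangle^{\mu-j}$. By Cauchy estimates, $\sum_j p_j\in\FS^{0,\mu}_{\{\A,\B\},\xi}(\R^n)$. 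All terms $\chi_jp_j$ are nonnegative, and at $|\xi|=\sqrt8\,Ra_N$ you get $\langle\xi\rangle^{-\mu}\sum_j\chi_jp_j\geq A_NB_N(3Ra_N)^{-N}\geq N!^{r}(3Re^{s})^{-N}\to\infty$.

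Nevertheless, your proposal does not prove the proposition as stated. Your main line proves it for a different definition (normalization $A_j$). The fallback is the right repair, but it stops at the step that carries the content. With $\chi_j(\xi)=1-\varphi_1(\xi/(Ra_jb_j))$ you only get $p-\sum_{j<N}p_j=\sum_{j\geq N}\chi_jp_j$ for $\langle\xi\rangle\geq 3Ra_Nb_N$. Definition~\ref{def:SGcl_xieq}, reading its region as $\langle\xi\rangle\geq Ba_N$ with $B$ independent of $N$, also requires the estimate on $Ba_N\le\langle\xi\rangle\le 3Ra_Nb_N$. There the terms $(\chi_j-1)p_j$, $j<N$, are present. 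Knowing that ``the finite sum $\sum_{j<N}p_j$ is bounded there'' does not help: each term must obey the order-$N$ bound $C h'^{|\alpha|+|\beta|+N}A_{|\alpha|}A_NB_{|\beta|}B_N\langle\xi\rangle^{\mu-N-|\alpha|}\langle x\rangle^{m-|\beta|}$ with $C,h'$ independent of $N$.

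The missing idea is the reverse of the inequality you use for the tail. Since $a_j$ and $b_j$ are nondecreasing, for $1\le j<N$
\[
(a_jb_j)^{N-j}\le a_{j+1}\cdots a_N\,b_{j+1}\cdots b_N=\frac{A_NB_N}{A_jB_j}.
\]
Hence on $\mathrm{supp}(1-\chi_j)\subset\{\langle\xi\rangle\le 3Ra_jb_j\}$ you get
\[
h^jA_jB_j\langle\xi\rangle^{-j}\le h^j(3R)^{N-j}A_NB_N\langle\xi\rangle^{-N}\le \max\{h,3R\}^{N}A_NB_N\langle\xi\rangle^{-N}.
\]
The remaining pieces are then routine:
\begin{itemize}
\item Because $a_j\le a_N$, the estimates for $p_j$ are available on $\langle\xi\rangle\ge Ba_N$.
\item Derivatives falling on $\chi_j$ live where $\langle\xi\rangle\simeq Ra_jb_j$ and are handled as in your $S_2$.
\item The sum over $j<N$ costs only a factor $N\le 2^N$.
\item For $j\ge N$, the tail bound comes from $(a_jb_j)^{j-N}\ge A_jB_j/(A_NB_N)$.
\end{itemize}
Together these give $p\sim_\xi\sum_jp_j$ in the Roumieu case, taking $R\ge B$ and $R>h$. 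The $x$-expansion is symmetric, with cut-offs $1-\varphi_2(x/(Ra_kb_k))$.

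The projective case is not covered by ``choosing the constants as in Proposition~\ref{prop_p_pj}''. The bound above contains $(3R)^{N-j}$ with $R$ fixed once and for all, and this is not dominated by $h^{N}$ for arbitrarily small $h$. An additional device is needed there, for instance a $j$-dependent dilation that exploits the fact that in the projective case the coefficients are $O(h^j)$ for every $h>0$.
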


Let $\chi,\phi\in\E_{[\M]}(\R^n)$ be two excision functions, that is $\chi,\phi\equiv 0$ in a neighbourhood of the origin, and $\chi,\phi\equiv 1$ outside a larger 
neighbourhood the origin.

\begin{definition}
	We denote by ${\SG}_{[\A,\B]}^{m,[\mu]}(\R^n)$ the space of all $p\in C^\infty(\R^n\times(\R^n\setminus\{0\}))$ satisfying the following properties:
	\begin{itemize}
		\item $\chi(\xi)p(x,\xi) \in {\SG}_{[\A,\B]}^{m,\mu}(\R^n)$;
		\item $p(x,\lambda\xi)=\lambda^{\mu}p(x,\xi)$, for all $\lambda>0$ and $(x,\xi)\in\R^{2n}$ with $\xi\neq 0$.
	\end{itemize}
	
	  Analogously, we define ${\SG}_{[\A,\B]}^{[m],\mu}(\R^n)$ by interchanging the roles of $x$ and $\xi$ and using $\phi(x)$ in place of $\chi(\xi)$. Finally, we set
	  \[{\SG}_{[\A,\B]}^{[m,\mu]}(\R^n) = {\SG}_{[\A,\B]}^{m,[\mu]}(\R^n)\cap {\SG}_{[\A,\B]}^{[m],\mu}(\R^n).\]
\end{definition}

As in the proof of Proposition \ref{prop_p_pj}, given $B>0$ and $j,k\in\N_0$, we set $\chi_0\equiv \phi_0\equiv 1$ and
\begin{equation*}
	\chi_j(\xi)=\chi\left(\frac{\xi}{Ba_j}\right),\quad \phi_k(x)=\phi\left(\frac{x}{Bb_k}\right), \quad j,k\geq 1.
\end{equation*}

Now, we can define various SG-classical ultradifferentiable symbols classes.

\begin{definition}
	We denote by ${\SG}_{[\A,\B],\mathrm{cl}(\xi)}^{m,\mu}(\R^n)$ the space of all $p\in {\SG}_{[\A,\B]}^{m,\mu}(\R^n)$ satisfying the following properties:
	\begin{itemize}
		\item there exists a formal sum $\sum_{j\in\N_0} \chi_j(\xi)p_j(x,\xi)\in \FS_{[\A,\B],\xi}^{m,\mu}(\R^n)$ such that, for each $j\in\N_0$, 
		$p_j$ belongs to ${\SG}_{[\A,\B]}^{m,[\mu-j]}(\R^n)$;
		\item $p(x,\xi)\sim_\xi \sum_{j\in\N_0}\chi_j(\xi)p_j(x,\xi)$.
	\end{itemize} 
	
	Analogously, we define ${\SG}_{[\A,\B],\mathrm{cl}(x)}^{m,\mu}(\R^n)$ by interchanging the role of $x$ and $\xi$ and using $\phi_k(x)$ in place of $\chi_j(\xi)$.
\end{definition}

\begin{definition}\label{def:classical_symbols}
	We denote by ${\SG}_{[\A,\B],\mathrm{cl}}^{m,\mu}(\R^n)$ the space of all $p\in {\SG}_{[\A,\B]}^{m,\mu}(\R^n)$ satisfying the following conditions:
	\begin{enumerate}
		\item[$(I)$] There exists $\sum_{j\in\N_0}\chi_j(\xi)p_j(x,\xi)\in \FS_{[\A,\B],\xi}^{m,\mu}(\R^n)$ such that:
		\begin{itemize}
			\item $p_j(x,\xi)\in {\SG}_{[\A,\B],\mathrm{cl}(x)}^{m,[\mu-j]}(\R^n)$ for all $j\in\N_0$,
			
			\item $p(x,\xi)\sim_\xi \sum_{j\in\N_0}\chi_j(\xi)p_j(x,\xi)$ in $\FS_{[\A,\B],\xi}^{m,\mu}(\R^n)$, and
			
			\item $p(x,\xi)-\sum_{j<N}\chi(\xi)p_j(x,\xi)\in {\SG}_{[\A,\B],\mathrm{cl}(x)}^{m,\mu-N}(\R^n)$, for all $N\in\N_0$.
		\end{itemize}
		
		\medskip
		
		\item[$(II)$] There exists $\sum_{k\in\N_0}\phi_k(x)q_k(x,\xi)\in \FS_{[\A,\B],x}^{m,\mu}(\R^n)$ such that:
		\begin{itemize}
			\item $q_k(x,\xi)\in {\SG}_{[\A,\B],\mathrm{cl}(\xi)}^{[m-k],\mu}(\R^n)$ for all $k\in\N_0$,
			
			\item $p(x,\xi)\sim_x \sum_{k\in\N_0}\phi_k(x)q_k(x,\xi)$ in $\FS_{[\A,\B],x}^{m,\mu}(\R^n)$, and
			
			\item $p(x,\xi)-\sum_{k<N}\chi_k(x)q_k(x,\xi)\in {\SG}_{[\A,\B],\mathrm{cl}(\xi)}^{m-N,\mu}(\R^n)$, for all $N\in\N_0$.
		\end{itemize}   
	\end{enumerate}
	
	An element $p\in {\SG}_{[\A,\B],\mathrm{cl}}^{m,\mu}(\R^n)$ is said to be a SG-classical ultradifferentiable symbol.
\end{definition}

We have the following straightforward inclusions:
\begin{equation}\label{incl_cl_SG}
	{\SG}_{[\A,\B],\mathrm{cl}(\xi)}^{[m],\mu}, {\SG}_{[\A,\B],\mathrm{cl}(x)}^{m,[\mu]} \subset {\SG}_{[\A,\B],\mathrm{cl}}^{m,\mu}.
\end{equation}

For any symbol $p$ admitting expansions in homogeneous terms in $x$, respectively, in $\xi$, as above, with arbitrary $j,k\in\N_0$,
we set $\sigma_\psi^{\mu-j}(p) = p_j$, respectively, $\sigma_e^{m-k}(p) = q_k$. By \eqref{incl_cl_SG}, for each $j,k\in\N_0$, we can also define
\[\sigma_e^{m-k}(\sigma_\psi^{\mu-j}(p))\quad\text{and}\quad \sigma_\psi^{\mu-j}(\sigma_e^{m-k}(p)).\]

One can show, as in the standard SG-symbols theory, that 
$\sigma_e^{m-k}(\sigma_\psi^{\mu-j}(p)) = \sigma_\psi^{\mu-j}(\sigma_e^{m-k}(p))$, and we denote this, as usual, by $\sigma_{\psi e}^{\mu-j,m-k}(p)$. 

\begin{definition}
	Let $p\in {\SG}_{[\A,\B],\mathrm{cl}}^{m,\mu}(\R^n)$. The triple $(p_\psi,p_e,p_{\psi e})=(\sigma_\psi^{\mu}(p),\sigma_e^{m}(p),\sigma_{\psi e}^{m,\mu}(p))$ 
	is called the principal (homogeneous) symbol of $p$. We also say that:
	\begin{enumerate}
		\item $p_\psi=\sigma_\psi^{\mu}(p)$ is the interior principal symbol of $p$;
		
		\item $p_e=\sigma_e^{m}(p)$, is the homogeneous exit principal symbol of $p$;
		
		\item $p_{\psi e}=\sigma_{\psi e}^{m,\mu}(p)$ is the bi-homogeneous principal symbol of $p$.
	\end{enumerate}
\end{definition}

By the above definitions, we see that
\begin{equation*}
	p-\chi_\xi\sigma_\psi^{\mu}(p)\in {\SG}_{[\A,\B],\cl}^{m,\mu-1}(\R^n),\ \  
	p-\phi_x\sigma_\psi^{\mu}(p)\in {\SG}_{[\A,\B],\cl}^{m-1,\mu}(\R^n),\ \text{and}
\end{equation*}
\begin{equation*}
	p-\chi_\xi\sigma_\psi^{\mu}(p) - \phi_x\sigma_e^{m}(p) + \chi_\xi\phi_x\sigma_{\psi e}^{m,\mu}(p) \in {\SG}_{[\A,\B],\cl}^{\mu-1,m-1}(\R^n).
\end{equation*}

Analogously to \cite[Theorem 3.2.9]{NicRod}, in the next Proposition \ref{prop:sgclell} we characterize elliptic SG-classical ultradifferential symbols.

\begin{proposition}\label{prop:sgclell}
	A symbol $p\in {\SG}_{[\A,\B],\cl}^{m,\mu}(\R^n)$ is elliptic if and only if the following conditions hold:
	\begin{enumerate}
		\item $\sigma_\psi^{\mu}(p)(x,\xi)\neq 0$, for all $x,\xi\in\R^n$ with $\xi\neq 0$;
		
		\item $\sigma_e^{m}(p)(x,\xi)\neq 0$, for all $x,\xi\in\R^n$ with $x\neq 0$;
		
		\item $\sigma_{\psi e}^{m,\mu}(p)(x,\xi)\neq 0$, for all $x,\xi\in\R^n$ with $x\neq 0$ and $\xi\neq 0$.
	\end{enumerate}
\end{proposition}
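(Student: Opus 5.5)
The plan is to follow the classical argument of \cite[Theorem 3.2.9]{NicRod}. Only the size of $p$ matters, not the ultradifferential estimates on derivatives. Ellipticity in Definition \ref{def_elliptic} is the lower bound \eqref{elliptic_cond}, and the Gevrey-type constants $h^{|\alpha|+|\beta|}A_{|\alpha|}B_{|\beta|}$ play no role. We will use only the order-zero consequences of the three expansions recorded after the definition of the principal symbol. First, $p-\chi(\xi)p_\psi\in\SG^{m,\mu-1}_{[\A,\B],\cl}$ gives $|p-\chi p_\psi|\le C\langle x\rangle^m\langle\xi\rangle^{\mu-1}$. Second, $p-\phi(x)p_e\in\SG^{m-1,\mu}_{[\A,\B],\cl}$ gives $|p-\phi p_e|\le C\langle x\rangle^{m-1}\langle\xi\rangle^{\mu}$. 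Third, the corner identity gives
\[
|p-\chi p_\psi-\phi p_e+\chi\phi p_{\psi e}|\le C\langle x\rangle^{m-1}\langle\xi\rangle^{\mu-1}.
\]
We also use the homogeneity relations $p_\psi(x,\lambda\xi)=\lambda^\mu p_\psi(x,\xi)$, $p_e(\lambda x,\xi)=\lambda^m p_e(x,\xi)$, and bi-homogeneity of $p_{\psi e}$. Finally, $p_\psi\in\SG^{m,[\mu]}$ has exit behaviour governed by $p_{\psi e}$, namely $|p_\psi(x,\xi)-\phi(x)p_{\psi e}(x,\xi)|\le C\langle x\rangle^{m-1}|\xi|^\mu$. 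Symmetrically, $|p_e(x,\xi)-\chi(\xi)p_{\psi e}(x,\xi)|\le C|x|^m\langle\xi\rangle^{\mu-1}$.

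\emph{Necessity.} Assume \eqref{elliptic_cond} holds on $Q_B^c$.

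For (3), fix $x_0,\xi_0$ on the unit spheres and set $x=sx_0$, $\xi=t\xi_0$ with $s,t\to\infty$. The corner estimate and bi-homogeneity give
\[
s^{-m}t^{-\mu}p(sx_0,t\xi_0)=p_\psi(sx_0,\xi_0)s^{-m}+p_e(x_0,t\xi_0)t^{-\mu}-p_{\psi e}(x_0,\xi_0)+o(1).
\]
Each of the first two terms tends to $p_{\psi e}(x_0,\xi_0)$ by the exit/interior behaviour just recalled. Hence the limit is $p_{\psi e}(x_0,\xi_0)$, and its modulus is at least $C>0$.

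For (1), fix $x$ and $\xi_0\neq0$. Then $t^{-\mu}p(x,t\xi_0)\to p_\psi(x,\xi_0)$ as $t\to\infty$ by the first expansion, and the limit has modulus at least $C\langle x\rangle^m|\xi_0|^\mu>0$.

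For (2), argue symmetrically with $s^{-m}p(sx_0,\xi)\to p_e(x_0,\xi)$.

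\emph{Sufficiency.} Assume (1)--(3). First, by compactness of $S^{n-1}\times S^{n-1}$ and continuity, $|p_{\psi e}|\ge c_0|x|^m|\xi|^\mu$.

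Next we show $|p_\psi(x,\xi)|\ge c\langle x\rangle^m|\xi|^\mu$. By homogeneity it suffices to take $|\xi|=1$. On any compact set in $x$ this follows from (1) and continuity. For $|x|\ge R$ large it follows from $p_\psi=p_{\psi e}+O(\langle x\rangle^{m-1})$ together with the bound on $p_{\psi e}$. In the same way, (2) and the bound on $p_{\psi e}$ give $|p_e|\ge c\langle\xi\rangle^\mu|x|^m$.

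Now split $Q_B^c$ into three regions and choose $B$ large.
\begin{itemize}
\item Region $\langle\xi\rangle\ge B$, $\langle x\rangle\le R$: use $p=\chi p_\psi+O(\langle x\rangle^m\langle\xi\rangle^{\mu-1})$ and the lower bound on $p_\psi$, so the error is absorbed once $B\gg1$.
\item Region $\langle x\rangle\ge B$, $\langle\xi\rangle\le R$: argue symmetrically with $p_e$.
\item Region where both $\langle x\rangle,\langle\xi\rangle\ge R$: use the corner identity to write $p=p_{\psi e}+(p_\psi-p_{\psi e})+(p_e-p_{\psi e})+O(\langle x\rangle^{m-1}\langle\xi\rangle^{\mu-1})$. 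The two middle differences are $O(\langle x\rangle^{m-1}\langle\xi\rangle^\mu)$ and $O(\langle x\rangle^m\langle\xi\rangle^{\mu-1})$, both small relative to $c_0\langle x\rangle^m\langle\xi\rangle^\mu$ once $R$ is large.
\end{itemize}
The regions must be chosen consistently. Fix $R$ first so that the third region works. Then choose $B$ depending on $R$, so that the first two regions cover the rest of $Q_B^c$ and their error terms are absorbed there.

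The main obstacle is justifying the mixed estimates $p_\psi-\phi p_{\psi e}=O(\langle x\rangle^{m-1}|\xi|^\mu)$, and its symmetric counterpart for $p_e$, uniformly in $\xi$. This comes from $p_\psi\in\SG^{m,[\mu-0]}_{\cl(x)}$ in Definition \ref{def:classical_symbols}$(I)$, combined with homogeneity to reduce to $|\xi|=1$, and from the equality $\sigma_e(\sigma_\psi(p))=\sigma_\psi(\sigma_e(p))$. Once these are in place, the rest is the standard compactness-and-absorption argument.
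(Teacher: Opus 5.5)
Your proposal is correct and takes essentially the paper's route. The paper writes out no proof and simply defers to the classical argument of \cite[Theorem 3.2.9]{NicRod}. Your reduction explains why that argument transfers verbatim: only the lower bound on $|p|$ and the zeroth-order remainder estimates of the three expansions enter, including the mixed estimates for $p_\psi-\phi p_{\psi e}$ and $p_e-\chi p_{\psi e}$. You correctly trace these mixed estimates to Definition \ref{def:classical_symbols} and the commutation $\sigma_e\sigma_\psi=\sigma_\psi\sigma_e$.

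A minor simplification is available. Once you have the uniform bounds $|p_\psi|\ge c\langle x\rangle^m|\xi|^\mu$ and $|p_e|\ge c|x|^m\langle\xi\rangle^\mu$, the first two expansions already handle $\{\langle\xi\rangle\ge B\}$ for all $x$ and $\{\langle x\rangle\ge B\}$ for all $\xi$. Your separate corner region is therefore not needed.
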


\subsection{Global wave-front sets for ultradifferentiable distributions}\label{sec:globwfs}

Now we assume $\A=\B=\M$ and denote ${\SG}_{[\A,\B],\cl}^{m,\mu}(\R^n)$ simply by ${\SG}_{[\M],\cl}^{m,\mu}(\R^n)$. We denote by $\Op {\SG}_{[\M],\cl}^{m,\mu}(\R^n)$ the space of operators with symbol in ${\SG}_{[\M],\cl}^{m,\mu}(\R^n)$ and set
\[\Op {\SG}_{\cl}^{[\M]}(\R^n) = \bigcup_{m,\mu\in\R}\Op {\SG}_{[\M],\cl}^{m,\mu}(\R^n).\]

\begin{definition}
	Given $x_0\in\R^n$, we denote by $\RR_{x_0}^{[\M]}(\R^n)$ the set of the functions $\phi\in\D_{[\M]}(\R^n)$ such that $0\leq\phi\leq 1$ and $\phi\equiv 1$ in a neighbourhood of $x_0$.
\end{definition}

\begin{definition}
	Given $\xi_0\in\R^n\setminus\{0\}$, we denote by $\ZZ_{\xi_0}^{[\M]}(\R^n)$ the set of the functions $\psi\in\E_{[\M]}(\R^n)$ satisfying the following properties:
	\begin{itemize}
		\item $\psi(\lambda\xi)=\psi(\xi)$ for all $\lambda\geq 1$ and $\xi\in\R^n$ away from the origin;
		
		\item $0\leq\psi\leq 1$ and $\psi\equiv 1$ in a conic neighbourhood $\mathcal{C}_{\xi_0}$ of $\xi_0$ and $\psi\equiv 0$ outside a conic neighbourhood $\mathcal{C}_{\xi_0}'\supset \mathcal{C}_{\xi_0}$ of $\xi_0$;
		
		\item There exists $C,h>0$ (respectively, for every $h>0$, there exists $C>0$) such that
		\begin{equation*}
			|\partial_\xi^\alpha\psi(\xi)|\leq Ch^{|\alpha|}M_{|\alpha|}\langle\xi\rangle^{-|\alpha|},
		\end{equation*}
		for all $\xi\in\R^n$ and $\alpha\in\N_0^n$. 
	\end{itemize}
\end{definition}

\begin{definition}
	For a temperate ultradistribution $u\in\caS_{[\M]}'(\R^n)$, we define the ultradifferentiable global wave front set $\mathrm{WF}_{[\M]}(u)$ as the disjoint union
	\begin{equation*}
		\mathrm{WF}_{[\M]}(u) = \mathrm{WF}_{[\M]}^{\psi}(u) \sqcup \mathrm{WF}_{[\M]}^e(u) \sqcup \mathrm{WF}_{[\M]}^{\psi e}(u),
	\end{equation*}
	where the three components are defined as follows:
	\begin{itemize}
		\item We say that $(x_0,\xi_0)\in\R^{n}\times(\R^n\setminus\{0\})$ does not belong to $\mathrm{WF}_{[\M]}^{\psi}(u)$ if there exist $\phi_{x_0}\in\RR_{x_0}^{[\M]}$ and $\psi_{\xi_0}\in \ZZ_{\xi_0}^{[\M]}$ such that $\Op(\psi_{\xi_0})(\phi_{x_0}u)\in\caS_{[\M]}(\R^n)$;
		
		\item We say that $(x_0,\xi_0)\in(\R^n\setminus\{0\})\times \R^{n}$ does not belong to $\mathrm{WF}_{[\M]}^{e}(u)$ if there exist $\psi_{x_0}\in\ZZ_{x_0}^{[\M]}$ and $\phi_{\xi_0}\in \RR_{\xi_0}^{[\M]}$ such that $\Op(\phi_{\xi_0})(\psi_{x_0}u)\in\caS_{[\M]}(\R^n)$;
		
		\item We say that $(x_0,\xi_0)\in(\R^n\setminus\{0\})\times(\R^n\setminus\{0\})$ does not belong to $\mathrm{WF}_{[\M]}^{\psi e}$ if there exist $\psi_{x_0}\in\ZZ_{x_0}^{[\M]}$ and $\psi_{\xi_0}\in\ZZ_{\xi_0}^{[\M]}$ such that $\Op(\psi_{\xi_0})(\psi_{x_0}u)\in\caS_{[\M]}(\R^n)$.
	\end{itemize}
\end{definition}

\begin{definition}
	Given $P= \Op(p)$, with $p\in\SG_{[\M],\mathrm{cl}}^{m,\mu}(\R^n)$, we define
	\begin{itemize}
		\item $\mathrm{Char}_\psi(P) = \{(x,\xi)\in \R^n\times (\R^n\setminus\{0\}) \,:\, [\sigma_\psi^{\mu}(p)](x,\xi)=0\}$;
		
		\item $\mathrm{Char}_e(P) = \{ (x,\xi)\in (\R^n\setminus\{0\})\times\R^n \,:\, [\sigma_e^{m}(p)](x,\xi)=0 \}$;
		
		\item $\mathrm{Char}_{\psi e}(P) = \{(x,\xi)\in (\R^n\setminus\{0\})\times (\R^n\setminus\{0\}) \,:\, [\sigma_{\psi e}^{m,\mu}(p)](x,\xi)=0\}$.
	\end{itemize}
\end{definition}

The following results are analogous to those obtained in \cite{CapRod2006} for Gelfand-Shilov spaces of Gevrey type (see also \cite{Capp2006}), and we omit the proofs.

\begin{theorem}
	Let $u\in\caS_{[\M]}'(\R^n)$. Then
	\begin{itemize}
		\item $\mathrm{WF}_{[\M]}^{\psi}(u) = \bigcap\mathrm{Char}_\psi(P)$,
		
		\item $\mathrm{WF}_{[\M]}^{e}(u) = \bigcap\mathrm{Char}_e(P)$,
		
		\item $\mathrm{WF}_{[\M]}^{\psi e}(u) = \bigcap\mathrm{Char}_{\psi e}(P)$,
	\end{itemize}
	where the intersection is taken over the $P\in\Op\SG_{[\M],\cl}^{0,0}(\R^n)$ such that $Pu\in\caS_{[\M]}(\R^n)$.
\end{theorem}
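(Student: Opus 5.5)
We prove each of the three equalities by a double inclusion, treating the $\psi$-component in detail and the other two by the symmetry $x\leftrightarrow\xi$ of the calculus (Fourier conjugation interchanges $\RR$- and $\ZZ$-type cutoffs). For the inclusion $\mathrm{WF}^{\psi}_{[\M]}(u)\subset\bigcap\mathrm{Char}_\psi(P)$ we argue by contraposition. Fix $P=\Op(p)\in\Op\SG^{0,0}_{[\M],\cl}(\R^n)$ with $Pu\in\caS_{[\M]}(\R^n)$, and a point $(x_0,\xi_0)$ with $\sigma^0_\psi(p)(x_0,\xi_0)\neq0$. By continuity and homogeneity, $\sigma_\psi^0(p)$ does not vanish on $U\times\Gamma$, where $U$ is a neighbourhood of $x_0$ and $\Gamma$ a closed conic neighbourhood of $\xi_0$.

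Choose $\phi\in\RR^{[\M]}_{x_0}$ supported in $U$ and $\psi\in\ZZ^{[\M]}_{\xi_0}$ supported in $\Gamma$. Next build a symbol $e\in\SG^{0,0}_{[\M],\cl}$ which agrees with $\sigma^0_\psi(p)$ on $\mathrm{supp}\,\phi\times\Gamma$ for large $|\xi|$ and has nonvanishing principal symbols everywhere. For instance, take $e=p+(1-\tilde\phi\tilde\psi)\langle\xi\rangle^0$-type corrections, or simply $e=\tilde\phi\tilde\psi\,p+(1-\tilde\phi\tilde\psi)$ with slightly larger cutoffs, so that Proposition \ref{prop:sgclell} gives ellipticity. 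Theorem \ref{thm_para} then yields a parametrix $E'$. Writing $\Op(\psi)\Op(\phi)=\Op(\psi)\Op(\phi)E'\Op(e)+R$ and noting that $\Op(\psi)\Op(\phi)E'(\Op(e)-P)$ has symbol vanishing to infinite order on the relevant region, the composition formula (Theorem \ref{asympt_exp_comp}), Proposition \ref{exp_0_reg} and Proposition \ref{disj_supp} show that this operator is $[\M]$-regularizing modulo terms applied to $Pu$. Since these operators map $\caS_{[\M]}$ to itself (Theorem \ref{cont_SG}), we get $\Op(\psi)(\phi u)\in\caS_{[\M]}$, so $(x_0,\xi_0)\notin\mathrm{WF}^\psi_{[\M]}(u)$.

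For the converse inclusion, let $(x_0,\xi_0)\notin\mathrm{WF}^\psi_{[\M]}(u)$, so $\Op(\psi)(\phi u)\in\caS_{[\M]}$ for suitable cutoffs. Set $P=\Op(\psi)\circ\phi$. Since $\psi$ is homogeneous of degree $0$ for large $\xi$ and $\phi\in\D_{[\M]}$, $P$ has a symbol in $\SG^{0,0}_{[\M],\cl}$. Its interior principal symbol is $\phi(x)\psi(\xi)$, which equals $1$ at $(x_0,\xi_0)$, and $Pu\in\caS_{[\M]}$, so $(x_0,\xi_0)\notin\bigcap\mathrm{Char}_\psi$.

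The only delicate point is verifying that $\psi\phi$ (composed) is genuinely classical with the correct expansion in the ultradifferentiable sense. This uses the exactness of the composition expansion when one factor depends only on $x$, with compact support, together with the uniform $M_{|\alpha|}$-bounds of $\ZZ^{[\M]}$. For the $e$ and $\psi e$ components the same scheme applies, with $\psi_{x_0}$ homogeneous in $x$ and principal symbols $\sigma_e$, $\sigma_{\psi e}$.

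The main obstacle is the microlocal parametrix step in the first inclusion. There one must make sure the error terms are $[\M]$-regularizing rather than merely smoothing, which requires summing the formal expansions with the uniform $h^jM_j$ control of Proposition \ref{prop_p_pj} and Lemma \ref{lemma_pran}.
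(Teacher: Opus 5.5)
Your strategy is the standard one: the cutoff operator for one inclusion, and for the other an elliptic modification combined with the parametrix of Theorem \ref{thm_para}. The paper gives no proof and defers to \cite{CapRod2006}. Replacing a microlocal parametrix by a globally elliptic $e$ is a legitimate variant, but the auxiliary symbol you actually write down is not elliptic in general.

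\textbf{The failing step.} Set $t=\tilde\phi(x)\tilde\psi(\xi)\in[0,1]$. For $e=\tilde\phi\tilde\psi\,p+(1-\tilde\phi\tilde\psi)$ you get $\sigma^0_\psi(e)=t\,\sigma^0_\psi(p)+1-t$, which vanishes wherever $\sigma^0_\psi(p)=-(1-t)/t$.
\begin{itemize}
\item Already for $P=-I$ you get $e=1-2\tilde\phi\tilde\psi$.
\item If $\tilde\phi(x_1)=\tfrac12$, then $e(x_1,\lambda\xi_0)=0$ for all large $\lambda$.
\item Hence the lower bound of Definition \ref{def_elliptic} fails on $Q_B^c$ for every $B$, and Proposition \ref{prop:sgclell} cannot be invoked.
\end{itemize}

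\textbf{The repair.} Put $c_0=\sigma^0_\psi(p)(x_0,\xi_0)$. Shrink $U\times\Gamma$ so that $|\sigma^0_\psi(p)-c_0|<|c_0|/2$ there, and take $e=\tilde\phi\tilde\psi\,p+(1-\tilde\phi\tilde\psi)\,c_0$. Then:
\begin{itemize}
\item $|\sigma^0_\psi(e)-c_0|\le t\,|\sigma^0_\psi(p)-c_0|<|c_0|/2$;
\item $\sigma^0_e(e)=\sigma^{0,0}_{\psi e}(e)=c_0$, because $\tilde\phi$ has compact support;
\item $e-p=(1-\tilde\phi\tilde\psi)(c_0-p)$ still vanishes where $\tilde\phi\tilde\psi=1$.
\end{itemize}
Note that $e$ must coincide there with the full symbol $p$, not merely with $\sigma^0_\psi(p)$ as your first description says. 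Otherwise $\Op(e)-P$ is only of lower order near $(x_0,\xi_0)$, and the remainder is not $[\M]$-regularizing.

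\textbf{The remainder term.} Proposition \ref{disj_supp} only separates supports in $x$, so it does not handle $\Op(\psi)\phi E'(\Op(e)-P)$. What you need is the following microlocal statement.
\begin{itemize}
\item The symbol of $\Op(\psi)\phi E'$ is equivalent, in the sense of Definition \ref{def_equiv_FS}, to a formal sum whose terms are supported in $\mathrm{supp}\,\phi\times\mathrm{supp}\,\psi$.
\item Hence the composition expansion with $e-p$ vanishes identically outside a compact set of phase space.
\item Proposition \ref{exp_0_reg} then gives that the remainder is $[\M]$-regularizing.
\end{itemize}

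\textbf{The $\psi e$-component.} Here "the same scheme" needs one extra precaution. The cutoffs used to build $e$, and also $\psi_{x_0},\psi_{\xi_0}$ themselves, must vanish for $|x|\le R$ and $|\xi|\le R$. The reason is that $\sigma^{0,0}_{\psi e}(p)(x_0,\xi_0)\neq0$ controls $p$ only where both $|x|$ and $|\xi|$ are large. Otherwise $e$ may vanish at bounded $x$ with large $\xi$ in the cone, and $e-p$ fails to vanish on a non-compact part of the microsupport of $\Op(\psi_{\xi_0})\psi_{x_0}$. This is possible only if the definition of $\ZZ^{[\M]}$ is read as imposing homogeneity and $\psi\equiv1$ for large arguments only.

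\textbf{The easy inclusion.} The expansion of $\Op(\psi)\circ\phi$ is not exact; only $\phi\circ\Op(\psi)$ has exact symbol $\phi(x)\psi(\xi)$. Classicality still follows, for either of two reasons.
\begin{itemize}
\item Each term $\frac{1}{\alpha!}\partial_\xi^\alpha\psi\,D_x^\alpha\phi$ is homogeneous of degree $-|\alpha|$ in $\xi$ for large $\xi$ and compactly supported in $x$.
\item More simply, take $P=\tilde\phi\,\Op(\psi)$ with $\tilde\phi\in\RR^{[\M]}_{x_0}$ supported where $\phi\equiv1$. Then $\tilde\phi\,\Op(\psi)(1-\phi)$ is $[\M]$-regularizing by Proposition \ref{disj_supp}, so $Pu\in\caS_{[\M]}(\R^n)$ and $\sigma^0_\psi(P)=\tilde\phi\psi$.
\end{itemize}
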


\begin{theorem}
	Let $u\in\caS_{[\M]}'(\R^n)$ and $P=\Op(p)$, with $p\in\SG_{[\A,\B],\mathrm{cl}}^{m,\mu}(\R^n)$. Then, we have the following inclusions:
	\begin{enumerate}
		\item $\mathrm{WF}_{[\M]}^{\psi}(Pu) \subset \mathrm{WF}_{[\M]}^{\psi}(u) \subset \mathrm{WF}_{[\M]}^{\psi}(Pu)\cup \mathrm{Char}_\psi(P)$;
		
		\item $\mathrm{WF}_{[\M]}^{e}(Pu) \subset \mathrm{WF}_{[\M]}^{e}(u) \subset \mathrm{WF}_{[\M]}^{e}(Pu)\cup \mathrm{Char}_e(P)$;
		
		\item $\mathrm{WF}_{[\M]}^{\psi e}(Pu) \subset \mathrm{WF}_{[\M]}^{\psi e}(u) \subset \mathrm{WF}_{[\M]}^{\psi e}(Pu)\cup \mathrm{Char}_{\psi e}(P)$.
	\end{enumerate}
\end{theorem}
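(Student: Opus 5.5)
The plan follows the standard microlocal scheme: prove the three pairs of inclusions separately, in each case using a cutoff operator with symbol localised at the point, the composition formula of Theorem \ref{asympt_exp_comp}, and a microlocal parametrix. I describe the interior component $\mathrm{WF}^\psi_{[\M]}$ in detail; the exit and mixed components are identical after exchanging the roles of $x$ and $\xi$, or using two conic cutoffs.

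\emph{First inclusion,} $\mathrm{WF}^\psi_{[\M]}(Pu)\subset\mathrm{WF}^\psi_{[\M]}(u)$. Fix $(x_0,\xi_0)\notin\mathrm{WF}^\psi_{[\M]}(u)$, so there are $\phi\in\RR^{[\M]}_{x_0}$ and $\psi\in\ZZ^{[\M]}_{\xi_0}$ with $\Op(\psi)(\phi u)\in\caS_{[\M]}(\R^n)$.
\begin{enumerate}
\item Choose smaller cutoffs $\phi_1\in\RR^{[\M]}_{x_0}$ and $\psi_1\in\ZZ^{[\M]}_{\xi_0}$ with $\phi\equiv1$ on $\mathrm{supp}\,\phi_1$ and $\psi\equiv1$ on a conic neighbourhood of $\mathrm{supp}\,\psi_1$ (for large $|\xi|$).
\item Write
\[
\Op(\psi_1)\phi_1 P u=\Op(\psi_1)\phi_1 P\phi u+\Op(\psi_1)\phi_1P(1-\phi)u .
\]
The second term is $[\M]$-regularizing applied to $u$ by Proposition \ref{disj_supp}, since $\phi_1$ and $1-\phi$ have supports at positive distance. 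Hence it lies in $\caS_{[\M]}$.
\item For the first term, Theorem \ref{asympt_exp_comp} shows that $\Op(\psi_1)\phi_1P$ has a symbol whose full asymptotic expansion vanishes wherever $\psi\neq1$.
\item Therefore $\Op(\psi_1)\phi_1P=\Op(\psi_1)\phi_1P\,\Op(\psi)+R$, with $R$ an operator whose symbol is $\sim0$. By Proposition \ref{exp_0_reg} (rescaled to order $(m,\mu)$ by composing with elliptic order-reducers), $R$ is $[\M]$-regularizing.
\item Combining the pieces, $\Op(\psi_1)\phi_1P\phi u\in\caS_{[\M]}$ modulo commutator terms of the form $[\phi,\Op(\psi)]$. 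These have symbols supported where $\nabla\phi\neq0$, and are handled by repeating the same support argument with a nested family of cutoffs.
\end{enumerate}

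\emph{Second inclusion,} $\mathrm{WF}^\psi_{[\M]}(u)\subset\mathrm{WF}^\psi_{[\M]}(Pu)\cup\mathrm{Char}_\psi(P)$. Take $(x_0,\xi_0)$ outside the right-hand side, so that $\sigma^\mu_\psi(p)(x_0,\xi_0)\neq0$.
\begin{enumerate}
\item By homogeneity and continuity, $\sigma^\mu_\psi(p)\neq0$ on $\overline{U}\times\overline{\Gamma}$, where $U$ is a neighbourhood of $x_0$ and $\Gamma$ is a closed cone around $\xi_0$.
\item Build a microlocal parametrix. With cutoffs $\phi\in\RR_{x_0}^{[\M]}$ supported in $U$ and $\psi\in\ZZ^{[\M]}_{\xi_0}$ supported in $\Gamma$, set
\[
q_0=\phi(x)\psi(\xi)\chi(\xi)/\sigma_\psi^\mu(p).
\]
Lemma \ref{lemma_1.3.5}-type estimates place $q_0$ in $\SG^{0,-\mu}_{[\M]}$.
\item Iterate as in Theorem \ref{thm_para}, summing with Proposition \ref{prop_p_pj}, to obtain $q$ with $\Op(q)P=\Op(\phi\psi)+\Op(r)$, where the expansion of $r$ vanishes on a smaller microlocal neighbourhood of $(x_0,\xi_0)$.
\item Then
\[
\Op(\psi_1)\phi_1u=\Op(\psi_1)\phi_1\Op(q)Pu-\Op(\psi_1)\phi_1\Op(r)u+\text{regularizing terms}.
\]
The first term lies in $\caS_{[\M]}$ by the first inclusion, applied to $\Op(q)$ and the assumption on $Pu$. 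The second is regularizing because the symbol expansion of $\psi_1\phi_1\# r$ vanishes identically.
\end{enumerate}

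The main obstacle will be the regularity bookkeeping in the parametrix iteration. I must show that the formal sum $\sum_j r_j$ (the Neumann series of the microlocal error) lies in $\FS^{0,0}_{[\M]}$ with the factorial-type bounds $h^{2j}M_j^2$ rather than merely $C_j$. This forces control of the derivatives of $1/\sigma_\psi^\mu(p)$ on the cone with constants $Ch^{|\alpha|+|\beta|}M_{|\alpha|}M_{|\beta|}$, via Faà di Bruno (Lemma \ref{faa}) together with Lemmas \ref{lemma_sum} and \ref{lemma_prod_M}. Iterated compositions must not accumulate worse than geometric growth in $h$. I would first try the standard Boutet de Monvel--Krée style estimate, as in \cite{CapRod2006}, replacing $(j!)^\theta$ by $M_j$ using $(M1)$ and Proposition \ref{MjMj-k}. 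For the exit and mixed components, the same bound is needed for $1/\sigma_e^m(p)$ and $1/\sigma^{m,\mu}_{\psi e}(p)$ in $x$.
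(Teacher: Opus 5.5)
Your overall scheme (localising cutoffs, Theorem~\ref{asympt_exp_comp}, regularity of symbols $\sim 0$, and a microlocal parametrix for the second inclusion) is the standard argument the paper has in mind when it defers to \cite{CapRod2006}. The first inclusion works as you set it up, and step 5 there is superfluous. Applying $\Op(\psi_1)\phi_1P=\Op(\psi_1)\phi_1P\Op(\psi)+R$ to $\phi u$ produces $\Op(\psi)(\phi u)$, which is exactly your hypothesis, so no commutator $[\phi,\Op(\psi)]$ arises. In the second inclusion, choose $\phi_1,\psi_1$ only after fixing cutoffs that certify $(x_0,\xi_0)\notin\mathrm{WF}^\psi_{[\M]}(Pu)$. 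Then the first-inclusion argument gives $\Op(\psi_1)\phi_1\Op(q)Pu\in\caS_{[\M]}(\R^n)$ for your specific cutoffs.

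The genuine gap is at the point you flag, and the remedy you propose does not work. Bounding $\partial_\xi^\alpha\partial_x^\beta\bigl(1/\sigma^\mu_\psi(p)\bigr)$ through Lemma~\ref{faa} leads to $\sum_{\gamma\in\Delta(k)}\frac{k!\,|\gamma|!}{\gamma!}C^{|\gamma|}\prod_{\ell}(M_\ell/\ell!)^{\gamma_\ell}$. This is $\le C'H^kM_k$ only if Lemma~\ref{lemma_prod_M} is applied to the sequence $\{M_j/j!\}$, that is, under $(M4)$; this is exactly how the paper argues when proving $\caS_{[\M]}(X)=\dot{\E}_{[\M]}(X)$. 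Likewise, the Boutet de Monvel--Kr\'ee recursion closes only with $\binom{k}{j}M_jM_{k-j}\le M_k$. $(M1)$ and Proposition~\ref{MjMj-k} give only $M_jM_{k-j}\le M_k$, so a binomial factor is lost at every step and the induction does not close. Quoting Lemma~\ref{lemma_1.3.5} does not help, since its case $N<0$ is this same estimate.

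This is not a technicality. For log-convex sequences, inverse-closedness of $\E_{\{\M\}}$ is known to be equivalent to $(M_j/j!)^{1/j}$ being almost increasing, and the standing assumptions do not force this. For such $\M$, take $f>0$ in $\E_{\{\M\}}(\R^n)$ with $f\equiv 1$ off a compact set and $1/f\notin\E_{\{\M\}}$ near some $x_0$. Then $P=f(x)$ is SG-classical with empty characteristic sets. Let $u=\phi/f$ with $\phi\in\D_{\{\M\}}(\R^n)$ and $\phi\equiv1$ near $x_0$. Then $Pu=\phi\in\caS_{\{\M\}}(\R^n)$, while $x_0\in\{\M\}\text{-}\mathrm{sing\,supp}(u)=\Pi_x(\mathrm{WF}^\psi_{\{\M\}}(u))$, contradicting the second inclusion.

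So you must add $(M4)$, which the Gevrey sequences of \cite{CapRod2006} satisfy. With it, Lemmas~\ref{lemma_sum} and \ref{lemma_prod_M} together with Proposition~\ref{incl_quasi_factorial} give the required bounds, and the rest of your plan goes through.
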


\begin{proposition}
	Let $u\in\caS_{[\M]}'(\R^n)$. Then, $u\in\caS_{[\M]}(\R^n)$ if and only if
	\[\mathrm{WF}_{[\M]}^{\psi}(u) = \mathrm{WF}_{[\M]}^{e}(u) = \mathrm{WF}_{[\M]}^{\psi e}(u) = \varnothing.\]
\end{proposition}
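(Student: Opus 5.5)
The direction ``$u\in\caS_{[\M]}(\R^n)$ implies all three components are empty'' is immediate. If $u\in\caS_{[\M]}(\R^n)$, then for any admissible cutoffs $\phi\in\RR^{[\M]}$ and $\psi\in\ZZ^{[\M]}$ the products $\phi u$ and $\psi u$ again lie in $\caS_{[\M]}(\R^n)$. For $\phi u$ this follows from Proposition \ref{prop:gsspcs_b}, since $\D_{[\M]}\subset\caS_{[\M]}$. For $\psi u$ one uses the Leibniz rule together with the bounds $|\partial^\alpha\psi|\le Ch^{|\alpha|}M_{|\alpha|}$. The cutoffs $\psi_{\xi_0}(\xi)$ and $\phi_{\xi_0}(\xi)$ belong to $\SG^{0,0}_{[\M]}(\R^n)$ as symbols depending only on $\xi$, because $\ZZ$-functions satisfy $|\partial^\alpha\psi|\le Ch^{|\alpha|}M_{|\alpha|}\langle\xi\rangle^{-|\alpha|}$. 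Theorem \ref{cont_SG} then gives $\Op(\psi_{\xi_0})(\phi_{x_0}u)\in\caS_{[\M]}$, and the same for the other two combinations. So no point lies in any of the three wave front sets.

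For the converse, assume all three sets are empty. I will build a finite microlocal partition of unity that is compatible with the SG-compactification of phase space. First, by compactness of $\{|x|\le R\}\times\mathbb{S}^{n-1}$, and since $\mathrm{WF}^\psi_{[\M]}(u)=\varnothing$, I get finitely many pairs $(\phi_i,\psi_i)$ with $\Op(\psi_i)(\phi_iu)\in\caS_{[\M]}$ covering this set. By compactness of $\mathbb{S}^{n-1}\times\{|\xi|\le R\}$, and since $\mathrm{WF}^e_{[\M]}(u)=\varnothing$, I get finitely many pairs $(\psi_k,\phi_k)$ covering that set. By compactness of $\mathbb{S}^{n-1}\times\mathbb{S}^{n-1}$, and since $\mathrm{WF}^{\psi e}_{[\M]}(u)=\varnothing$, I get finitely many pairs $(\psi_l,\psi_l')$ covering the corner. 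From these I form the operator
\[
P=\sum_i \Op(\psi_i)\phi_i+\sum_k\Op(\phi_k)\psi_k+\sum_l\Op(\psi_l)\psi'_l .
\]
By Theorem \ref{asympt_exp_comp}, $P$ has a symbol in $\SG^{0,0}_{[\M],\cl}$ whose principal part, modulo lower order terms, is $\sum_i\psi_i(\xi)\phi_i(x)+\cdots$. By construction $Pu\in\caS_{[\M]}(\R^n)$.

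The key step is to show that $P$ is elliptic. I will do this via Proposition \ref{prop:sgclell}. The three principal symbols $\sigma_\psi$, $\sigma_e$ and $\sigma_{\psi e}$ of $P$ are nonnegative sums of cutoffs. I will arrange the open covers so that in each of the three regions at least one cutoff equals $1$. Nonnegativity then rules out cancellation, so the principal symbols stay bounded below. The main obstacle is this compatibility at the corner: the local pieces are defined using $x$-compact or $\xi$-compact cutoffs, which are not themselves classical. One has to check that near $|x|,|\xi|\to\infty$ simultaneously the three families glue into a symbol that is bounded below on $Q_B^c$. To handle it, I will first enlarge $R$ so that the $\psi$-cover and the $e$-cover both extend into the region where the corner cover already provides $\psi_l\psi'_l\equiv1$ on conic sets. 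Then I will verify \eqref{elliptic_cond} directly on $Q_B^c$, splitting into the cases $|x|\le R$, $|\xi|\le R$, and both large, instead of going through the homogeneous expansions.

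Once ellipticity is established, Theorem \ref{thm_para} gives $q$ with $\Op(q)P=I+R_1$, where $R_1$ is $[\M]$-regularizing. Therefore $u=\Op(q)Pu-R_1u$. The first term lies in $\caS_{[\M]}$ by Theorem \ref{cont_SG}. The second lies in $\caS_{[\M]}$ because $R_1$ maps $\caS'_{[\M]}$ into $\caS_{[\M]}$. Hence $u\in\caS_{[\M]}(\R^n)$.
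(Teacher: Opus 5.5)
Your proof is correct and follows the standard route the paper has in mind; the paper omits the proof and defers to the Gevrey case in \cite{CapRod2006}. That route is: compactness of the three pieces $\{|x|\le R\}\times\mathbb{S}^{n-1}$, $\mathbb{S}^{n-1}\times\{|\xi|\le R'\}$, $\mathbb{S}^{n-1}\times\mathbb{S}^{n-1}$ gives finitely many localized operators with $Pu\in\caS_{[\M]}(\R^n)$, these are assembled into an operator elliptic in the sense of Definition \ref{def_elliptic}, and then the parametrix of Theorem \ref{thm_para} concludes. When writing it up, choose the corner cover first and take $R,R'$ at least its thresholds, then absorb the $\SG^{-1,-1}$ remainder of the composed symbol on $Q_B^c$ for $B$ large; classicality is not needed, and your remark that the compactly supported cutoffs are not classical is inaccurate, since they are classical of order $-\infty$.
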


Let $\Pi_1(x,\xi)=x$ be the projection on the first factor of $\R^{2n}=\R^n_x\times\R^n_\xi$. Then, we have
\[\Pi_x\left(\mathrm{WF}_{[\M]}^{\psi}(u)\right) = [\M]-\mathrm{sing\,supp}(u),\]
so the first component of $\mathrm{WF}_{[\M]}(u)$ comprises the ultradifferentiable regularity of $u$. The other two components encodes the directions 
on which $u$ does not decay exponentially. More precisely, we have the following result, with which we conclude this section.

\begin{proposition}
	If $x_0\notin \Pi_x\left(\mathrm{WF}_{[\M]}^{e}\cup \mathrm{WF}_{[\M]}^{\psi e} \right)$, then there exists $\psi\in\ZZ_{x_0}^{[\M]}(\R^n)$ 
	such that $\psi u\in\caS_{[\M]}(\R^n)$.
\end{proposition}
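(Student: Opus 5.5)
The plan is a compactness argument on the unit sphere in $x$, combined with the characterisation of $\mathrm{WF}_{[\M]}$ through characteristic sets and the parametrix of Theorem \ref{thm_para}. Since $x_0\neq 0$ (the exit components live over $x\neq0$), set $\omega_0=x_0/|x_0|$. By hypothesis, for every $\xi\in\R^n$ we have $(x_0,\xi)\notin \mathrm{WF}^e_{[\M]}(u)$. By conic invariance in $x$ of these sets, for $\xi\neq0$ also $(x_0,\xi/|\xi|)\notin\mathrm{WF}^{\psi e}_{[\M]}(u)$.

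\emph{Step 1 (local cutoffs).} For each $\xi_1$ in the closed ball $\{|\xi|\le 2\}$ I use the definition of $\mathrm{WF}^e_{[\M]}$ to get $\psi^{(\xi_1)}\in\ZZ^{[\M]}_{x_0}$ and $\phi_{\xi_1}\in\RR^{[\M]}_{\xi_1}$ with $\Op(\phi_{\xi_1})(\psi^{(\xi_1)}u)\in\caS_{[\M]}$. For each $\eta_1\in S^{n-1}$ I use the definition of $\mathrm{WF}^{\psi e}_{[\M]}$ to get $\psi^{(\eta_1)}\in\ZZ^{[\M]}_{x_0}$ and $\tilde\psi_{\eta_1}\in\ZZ^{[\M]}_{\eta_1}$ with $\Op(\tilde\psi_{\eta_1})(\psi^{(\eta_1)}u)\in\caS_{[\M]}$. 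The neighbourhoods where $\phi_{\xi_1}\equiv1$ cover the compact ball, and the cones where $\tilde\psi_{\eta_1}\equiv1$ cover the sphere. I extract finite subfamilies $\phi_1,\dots,\phi_K$ and $\tilde\psi_1,\dots,\tilde\psi_L$, with corresponding $x$-cutoffs $\psi^1,\dots,\psi^{K+L}$.

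\emph{Step 2 (choice of $\psi$).} I choose $\psi\in\ZZ^{[\M]}_{x_0}$ supported in a cone where all $\psi^i\equiv 1$, and equal to $1$ near $x_0$ conically. Then $\psi u=\psi\psi^i u$ for each $i$. Next I build the $\xi$-symbol
\[
q(\xi)=\sum_{k}\phi_k(\xi)^2+\sum_{l}\chi(\xi)\tilde\psi_l(\xi)^2 ,
\]
with $\chi$ an excision function. It belongs to $\SG^{0,0}_{[\M],\cl}$ (homogeneous of degree $0$ at infinity) and is bounded below by a positive constant on all of $\R^n$, so it is elliptic. For each term I rewrite
\[
\Op(\phi_k^2)(\psi u)=\Op(\phi_k)\,\Op(\phi_k)(\psi\psi^{k}u).
\]
The commutator $[\Op(\phi_k),\psi]$ lies in $\Op\SG^{-1,-1}$ by Theorem \ref{asympt_exp_comp}. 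The iteration I actually plan is microlocal: $\Op(\phi_k)\psi$ equals $\psi\,\Op(\phi_k)$ plus an operator whose symbol, by the asymptotic expansion, is supported (modulo $[\M]$-regularizing terms) where $\psi^k\equiv1$ in $x$ and $\phi_k$ is supported in $\xi$. Each term of the expansion contains derivatives of $\psi$, whose $x$-support lies where $\psi^k\equiv 1$. Concretely, I write $\psi=\psi\psi^k$, and $\Op(\phi_k)(\psi^k u)\in\caS_{[\M]}$ by Step 1. Applying the operator $\psi\Op(\phi_k)$, which maps $\caS_{[\M]}$ into itself by Theorem \ref{cont_SG}, keeps it in $\caS_{[\M]}$. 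So it remains to control $[\Op(\phi_k),\psi]\psi^k u$. Its symbol's expansion terms $\partial_\xi^\alpha\phi_k D_x^\alpha\psi$ are again of the form (cutoff in $\xi$ supported where $\phi_k$-type regularity holds) $\times$ (cutoff in $x$ inside the good cone). An induction on order, using Proposition \ref{prop_p_pj} to sum the pieces and Proposition \ref{exp_0_reg} to discard the tail, then gives $\Op(q)(\psi u)\in\caS_{[\M]}$.

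\emph{Step 3 (conclusion).} Since $q$ is elliptic, Theorem \ref{thm_para} provides $r$ with $\Op(r)\Op(q)=I+R$, $R$ being $[\M]$-regularizing. Hence $\psi u=\Op(r)\Op(q)(\psi u)-R(\psi u)\in\caS_{[\M]}(\R^n)$.

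The main obstacle is the commutator bookkeeping in Step 2: the definition only gives regularity for the specific pair of cutoffs, so one must show that shrinking the $x$-cutoff preserves the $\caS_{[\M]}$ property with ultradifferential control of constants. To avoid an infinite induction, I would first try to prove once a lemma: if $\Op(a)(\psi^k u)\in\caS_{[\M]}$ and $b$ is an $\SG^{0,0}_{[\M]}$ symbol whose support lies in the region where $a$ is elliptic, then $\Op(b)(\psi\psi^k u)\in\caS_{[\M]}$. The proof would construct a microlocal parametrix of $a$ on $\operatorname{supp} b$ via Theorem \ref{thm_para} applied to $a+(1-\text{cutoff})$ and then use Proposition \ref{disj_supp} for the off-support remainders.
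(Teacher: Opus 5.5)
Step 2 does not work as written, and it carries the whole argument. You keep the frequency cutoff $\phi_k$ supplied by the definition and claim that the expansion terms $\partial_\xi^\alpha\phi_k\,D_x^\alpha\psi$ of $[\Op(\phi_k),\psi]$ are supported ``where $\phi_k$-type regularity holds''. They are not.

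\begin{itemize}
\item $\partial_\xi^\alpha\phi_k$ lives on all of $\mathrm{supp}\,\phi_k$, including the collar near $\partial\,\mathrm{supp}\,\phi_k$ where $\phi_k\to0$. There the hypothesis $\Op(\phi_k)(\psi^k u)\in\caS_{[\M]}(\R^n)$ gives no information, because $\partial_\xi^\alpha\phi_k$ cannot be written as $t\,\phi_k$ with $t$ a symbol.
\item Each commutator gains only one factor $\langle x\rangle^{-1}$ (its order is $(-1,-\infty)$). An induction on the order therefore cannot produce the $e^{-\omega_\M(\varepsilon|x|)}$-type decay that membership in $\caS_{[\M]}$ requires.
\item Propositions \ref{prop_p_pj} and \ref{exp_0_reg} do not close this, since the individual pieces are not known to map $\psi^k u$ into $\caS_{[\M]}$.
\item Your fallback lemma has the right shape, but its hypothesis ($\mathrm{supp}\,b$ inside the ellipticity region of $a$) fails for $b=\phi_k^2$, $a=\phi_k$. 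So it does not apply to the $q$ you built.
\item Proposition \ref{disj_supp} separates supports in $x$, not in $\xi$, so it is not the tool for the remainders.
\end{itemize}

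The missing idea is to shrink the frequency cutoffs before assembling $q$; after that, no commutator iteration or parametrix is needed. Choose $\tilde\phi_k\in\D_{[\M]}(\R^n)$ with $\mathrm{supp}\,\tilde\phi_k\subset\{\phi_k=1\}$. Since $\psi u=\psi\psi^k u$,
\[
\Op(\tilde\phi_k)(\psi u)=\Op(\tilde\phi_k)\,\psi\,\Op(\phi_k)(\psi^k u)+\Op(\tilde\phi_k)\,\psi\,\Op(1-\phi_k)(\psi^k u).
\]
The first term lies in $\caS_{[\M]}(\R^n)$ by Theorem \ref{cont_SG}.

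For the second term, Theorem \ref{asympt_exp_comp} gives $\Op(\tilde\phi_k)\psi=\Op(s)+R'$ with $s\sim\sum_\alpha\frac{1}{\alpha!}\partial_\xi^\alpha\tilde\phi_k\,D_x^\alpha\psi$. Right composition with the Fourier multiplier $1-\phi_k$ is exact, so the operator has symbol $s\,(1-\phi_k)$ modulo $[\M]$-regularizing terms. Every term of this expansion vanishes identically, hence $s\,(1-\phi_k)\sim0$, and Proposition \ref{exp_0_reg} makes the operator $[\M]$-regularizing.

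The corner pieces are handled the same way, replacing $\tilde\psi_l$ by cutoffs in $\ZZ^{[\M]}_{\eta_l}$, multiplied by an excision function, and supported where $\tilde\psi_l\equiv1$.

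The finite covers must then be arranged so that the shrunk cutoffs still give $q\ge c>0$. An element of $\ZZ^{[\M]}_{\eta}$ equals $1$ only on a cone intersected with $\{|\xi|\ge R\}$. So fix the conic cutoffs first, then cover a ball of radius larger than $R$; your ball $\{|\xi|\le 2\}$ can leave an uncovered annulus.

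With these changes your Step 3 goes through; in fact $\Op(1/q)\Op(q)=I$ exactly.
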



\section{Coordinate invariance}\label{sec_coord_inv}

In this and in the subsequent section, for $\varphi:V\to U$ diffeomorphism between two open subsets of $\mathbb{R}^n$ and any $u\in C^\infty(U)$, for brevity of notation we will write $\tilde u\in C^\infty(V)$ in place of $u\circ\varphi$. We begin with an extension to the ultradifferentiable setting of the well-known result, due to Schrohe \cite{Schrohe86}, of invariance of the class of SG-operators, under the action of suitable (admissible) diffeomorphisms. Such diffeomorphisms will then represent
change of variables between local charts of analytical versions of SG-manifolds or manifolds with ends (see the subsequent Section \ref{sec:compmfs}).

\begin{theorem}\label{coord_inv}
	Let $\varphi:V^\sharp\to U^\sharp$ be an analytic diffeomorphism between open subsets $U^\sharp,V^\sharp\subset\mathbb{R}^n$ and suppose that the two following conditions hold:
	\begin{itemize}
		\item[$(I1)$] there exist $C,h>0$ (respectively, for every $h>0$, there exists $C>0$) such that
		\begin{equation*}
			|\partial_x^\alpha\varphi(x)|\leq Ch^{|\alpha|}\alpha!\langle x\rangle^{1-|\alpha|},\ x\in V^\sharp,\quad\text{and}\quad |\partial_y^\alpha\varphi^{-1}(y)|\leq Ch^{|\alpha|}\alpha!\langle y\rangle^{1-|\alpha|},\ y\in U^\sharp,
		\end{equation*}
		for all $\alpha\in\mathbb{N}_0^n$.
		
		\item[$(I2)$] there exist open subsets $U,V$ of $U^\sharp,V^\sharp$, respectively, such that 
		the restriction $\varphi|_V$ is a diffeomorphism between $V$ and $U$ and there exists a constant $\varepsilon>0$ such that
		\[B(x;\varepsilon\langle x\rangle)\subset V^\sharp,\ \forall x\in V,\quad\text{and}\quad B(y;\varepsilon\langle y\rangle)\subset U^\sharp,\ \forall y\in U.\]
	\end{itemize}
	
	Let $\A=\Aj,\B=\Bj\preceq\M=\Mj$ be three non-quasianalytic weight sequences and let $p\in {\SG}^{m,\mu}_{[\A,\B]}(\mathbb{R}^n)$ and 
	$m,\mu\in\R$, be such that $\mathrm{supp}(p)\subset U\times\mathbb{R}^n$. Consider the operator
	\begin{equation*}
		\tilde P\tilde u(x) = [\mathrm{Op}(p)u](\varphi(x)),\quad \tilde u\in\caS_{[\M]}(V),\ \mathrm{supp}(\tilde{u})\subset V.
	\end{equation*}
	
	Then, there exists a symbol $q\in {\SG}^{m,\mu}_{[\A,\B]}(\mathbb{R}^n)$ with $\mathrm{supp}(p)\subset V\times\mathbb{R}^n$ such that $\tilde P=\mathrm{Op}(q)+R$, where $R$ is a $[\M]$-regularizing operator.
\end{theorem}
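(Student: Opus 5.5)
We follow the classical Kuranishi-trick argument of Schrohe \cite{Schrohe86}, adapted to the weight-sequence estimates. Writing $\tilde P\tilde u(x)=\iint e^{i(\varphi(x)-\varphi(y))\cdot\eta}p(\varphi(x),\eta)\tilde u(y)\,|\det\varphi'(y)|\,\dbar y\,\dbar\eta$, we split the kernel with a cutoff $\chi(x,y)=\theta\bigl((x-y)/(\varepsilon'\langle x\rangle)\bigr)$. Here $\theta\in\D_{[\M]}(\R^n)$ equals $1$ near $0$ and is supported in $B(0;1)$, and $\varepsilon'<\varepsilon$ is small. By $(I2)$, on $\mathrm{supp}\,\chi$ with $x\in V$ the whole segment $[x,y]$ lies in $V^\sharp$.

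\textbf{Off-diagonal part.} The part with $1-\chi$ is supported in a region of type $\overline{\Omega}_k$ after pulling back by $\varphi$. Indeed, $(I1)$ gives $|\varphi(x)-\varphi(y)|\gtrsim|x-y|$ and $\langle\varphi(x)\rangle\asymp\langle x\rangle$ (SG-compatibility). So Proposition \ref{est_kernel} applies, and composing with the analytic map $\varphi$ (Theorem \ref{inv_analytic}, with the uniform version of the Fa\`a di Bruno estimates from Lemmas \ref{faa}, \ref{lemma_sum}, \ref{lemma_prod_M} and $(I1)$) shows that this part has kernel in $\caS_{[\M]}(\R^{2n})$. It is therefore $[\M]$-regularizing.

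\textbf{Near-diagonal part.} Write $\varphi(x)-\varphi(y)=\Phi(x,y)(x-y)$ with $\Phi(x,y)=\int_0^1\varphi'(y+t(x-y))\,dt$. By $(I1)$ and the smallness of $\varepsilon'$, the matrix $\Phi$ is invertible on $\mathrm{supp}\,\chi$ with $\|\Phi\|,\|\Phi^{-1}\|\le C$. Substituting $\xi={}^t\Phi(x,y)\eta$ gives an amplitude
\[a(x,y,\xi)=\chi(x,y)\,p\bigl(\varphi(x),{}^t\Phi(x,y)^{-1}\xi\bigr)\,|\det\varphi'(y)|\,|\det\Phi(x,y)^{-1}|.\]
We then show $a\in\AG^{m,0,\mu}_{[\A,\B]}(\R^n)$. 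The key estimates are the following.
\begin{enumerate}
\item[(i)] Derivatives of $\Phi$, $\Phi^{-1}$ and the determinants satisfy $|\partial_x^\beta\partial_y^\gamma F|\le Ch^{|\beta|+|\gamma|}\beta!\gamma!\langle x\rangle^{-|\beta|-|\gamma|}$ on $\mathrm{supp}\,\chi$, using $(I1)$ and $\langle y\rangle\asymp\langle x\rangle$ there. For inverses and determinants this follows from analytic composition.
\item[(ii)] Derivatives of the composed symbol $p(\varphi(x),{}^t\Phi^{-1}\xi)$ are handled by Fa\`a di Bruno. Each $\xi$-derivative produces a factor bounded by $\langle\xi\rangle^{-1}$, and each $x,y$-derivative produces $\langle x\rangle^{-1}$. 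The factorials $\alpha!$ coming from the analytic inner functions are absorbed via Lemma \ref{lemma_prod_M} and Proposition \ref{incl_quasi_factorial} (since $j!\prec\B$), so the combinatorial sums are controlled by Lemma \ref{lemma_sum} at the cost of enlarging $h$.
\end{enumerate}
In the projective case one uses $j!\prec\B$ to make $h$ arbitrarily small.

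Theorem \ref{symbol_ampl} then yields $q\in\SG^{m,\mu}_{[\A,\B]}$ with $\Op(a)=\Op(q)+R$ and $q\sim\sum_\alpha\frac1{\alpha!}\partial_\xi^\alpha D_y^\alpha a|_{y=x}$. Support of $q$ in $V\times\R^n$ is arranged by multiplying by an $\E_{[\B]}$ cutoff and using Proposition \ref{disj_supp}.

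\textbf{Main obstacle.} We expect step (ii) to be the hardest. The uniform Gevrey/weight-sequence bounds on derivatives of $p(\varphi(x),{}^t\Phi(x,y)^{-1}\xi)$ must carry the correct SG decay, and the nested Fa\`a di Bruno sums must be reduced to $Ch^{|\alpha|+|\beta|+|\gamma|}A_{|\alpha|}B_{|\beta|}B_{|\gamma|}$. Our approach is to write the composition as $p\circ G$ with $G(x,y,\xi)=(\varphi(x),{}^t\Phi^{-1}\xi)$, whose components are analytic SG-maps of orders $(1,0)$ and $(0,1)$. We then prove a general lemma: composition of an $\SG_{[\A,\B]}$ symbol with such analytic SG-maps stays in the class. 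Its proof is an induction on $|\alpha|+|\beta|$ via Lemma \ref{faa}, combined with Lemma \ref{lemma_prod_M} and $M_kM_{j-k}\le M_j$.
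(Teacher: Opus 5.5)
Your outline is sound, and it reaches the result by a leaner route than the paper.

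\textbf{How the paper argues.} It places the cutoff $\chi$ in the $U$-coordinates. It disposes of the off-diagonal part by applying Theorem \ref{symbol_ampl} to the amplitude $(1-\chi)p$, whose expansion vanishes identically, and then pulls the resulting $\caS_{[\M]}$ kernel back by $\varphi$. For the near-diagonal part it proves only Peetre-type bounds with losses $\langle x-y\rangle^{|\beta|+|\gamma|}$ (Lemma \ref{est_FGH}). From these it deduces $q\in\ST_{[\A,\B]}$, so $\Op(q)$ is at least continuous (Theorem \ref{thm_ST}). It then Taylor-expands $q$ in $y$ at $y=x$ by hand, controlling the remainder through Remark \ref{remark_ST} and Lemma \ref{lemma_cd}.

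\textbf{How you argue.} You cut off in the $V$-coordinates and handle the off-diagonal part directly with Proposition \ref{est_kernel}. You then use $\langle x\rangle\asymp\langle y\rangle\asymp\langle y+t(x-y)\rangle$ on $\mathrm{supp}\,\chi$ to get the sharp bound $a\in\AG^{m,0,\mu}_{[\A,\B]}$. After that, Theorem \ref{symbol_ampl} yields the symbol, its expansion and the regularizing remainder at once. This avoids the $\ST$ machinery and hands to Theorem \ref{symbol_ampl} the delicate claim that the remainder kernel lies in $\caS_{[\M]}(\R^{2n})$. The paper infers that claim from bounds at each fixed $N$, whereas control of the constants in $N$ is what is really needed to get Gelfand--Shilov rather than Schwartz decay. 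The paper's route, following Schrohe and Cordes, gets by with cruder amplitude bounds, at the cost of redoing the amplitude-to-symbol reduction.

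\textbf{Points that need repair.}

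\emph{Cutoff class.} Take $\theta\in\D_{[\B]}(\R^n)$, as the paper does with $g\in\D_{[\B]}(\R)$. Since only $\B\preceq\M$ is assumed, an $[\M]$-cutoff puts $a$ merely in $\AG_{[\A,\M]}$.

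\emph{Off-diagonal lower bound.} The bound $|\varphi(x)-\varphi(y)|\gtrsim|x-y|$ is not a global consequence of $(I1)$ on non-convex domains. What you actually need is that $x\in V$ and $|x-y|\geq c\langle x\rangle$ force $|\varphi(x)-\varphi(y)|\geq k\langle\varphi(x)\rangle$. This follows by contradiction: use $(I2)$ to place the segment $[\varphi(x),\varphi(y)]$ inside $U^\sharp$, then apply $(I1)$ for $\varphi^{-1}$.

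\emph{The weight sequences in step (ii).} This is the serious one, and it is equally present in the paper's Lemma \ref{est_FGH}(c). The $x$- and $y$-derivatives that fall on the second argument ${}^t\Phi(x,y)^{-1}\xi$ of $p$ become $\xi$-derivatives of $p$. The powers of $\langle\xi\rangle$ compensate, but the weights do not. The term in which all $|\beta|$ $x$-derivatives hit the second argument is of size $h^{|\beta|}A_{|\beta|}\langle\xi\rangle^{\mu}\langle x\rangle^{m-|\beta|}$. So your composition lemma closes with $B_{|\beta|}B_{|\gamma|}$ only under $\A\preceq\B$; then $A_kB_j\leq CH^kB_{k+j}$ by Proposition \ref{MjMj-k}. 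One cannot expect better in general. Take $U=V=\R^n$ and $x$-independent $p$: the leading term $p({}^t\varphi'(x)^{-1}\xi)$ of the transformed symbol carries the $\A$-regularity of $p$ into the $x$-variable. The extra hypothesis (for instance $\A=\B$) should therefore be stated explicitly.
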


Before proving Theorem \ref{coord_inv}, we need some technical lemmas. From now on, we will always suppose that $\varphi$ is a diffeomorphism
satisfiying conditions $(I1)$ and $(I2)$ in the statement of Theorem \ref{coord_inv}.

\begin{lemma}\label{lemma_M}
	Given $x,y\in U$, with $|x-y|\leq\varepsilon\langle x\rangle$, $\varepsilon\in(0,1)$,
	define 
	\[
		\nabla\varphi(x,y)=\int_0^1\varphi'(y+t(x-y))\,\mathrm{d}t,
	\]
	where $\varphi'$ is the Jacobian matrix of $\varphi$. Then:
	\begin{enumerate}
		\item[(a)] $\varphi(x)-\varphi(y)=\nabla\varphi(x,y)\cdot(x-y)$;
		
		\item[(b)] there is a constant $0<k\leq\varepsilon$ such that $\nabla\varphi(x,y)$ is invertible for $|x-y|\leq k\langle x\rangle$;
		
		\item[(c)] there is a function $\chi(x,y)\in \AG_{[\A,\B]}^{0,0,0}(\mathbb{R}^n)$ such that
		\[\chi(x,y)\equiv 1,\text{ if } |x-y|\leq \frac{k}{2}\langle x\rangle \quad \text{and} \quad  \chi(x,y)\equiv 0,\text{ if } |x-y|> k\langle x\rangle.\]
	\end{enumerate}
\end{lemma}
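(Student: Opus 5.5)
Part (a) follows from the fundamental theorem of calculus. For $t\in[0,1]$ put $g(t)=\varphi(y+t(x-y))$; this is well defined because, by $(I2)$, the whole segment $\{y+t(x-y)\}$ lies in $B(x;\varepsilon\langle x\rangle)\subset V^\sharp$ when $|x-y|\leq\varepsilon\langle x\rangle$ (with $x$ in the relevant domain). Then $\varphi(x)-\varphi(y)=g(1)-g(0)=\int_0^1\varphi'(y+t(x-y))\,\mathrm{d}t\,(x-y)$, which is exactly $\nabla\varphi(x,y)\cdot(x-y)$.

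For (b) the plan is a perturbation of the identity around the diagonal. First, the Jacobian $\varphi'(x)$ is uniformly invertible. Condition $(I1)$ with $|\alpha|=1$ gives $|\varphi'|\leq C$, and the same bound for $\varphi^{-1}$ gives $|(\varphi^{-1})'|\leq C$. Since $(\varphi'(x))^{-1}=(\varphi^{-1})'(\varphi(x))$, we get $\|\varphi'(x)^{-1}\|\leq C$ uniformly. Next, write $\nabla\varphi(x,y)=\varphi'(x)+\int_0^1(\varphi'(z_t)-\varphi'(x))\,\mathrm{d}t$ with $z_t=y+t(x-y)$. Using $(I1)$ with $|\alpha|=2$, i.e. $|\varphi''(z)|\leq C\langle z\rangle^{-1}$, and the equivalence $\langle z_t\rangle\asymp\langle x\rangle$ (valid since $|z_t-x|\leq\varepsilon\langle x\rangle$ with $\varepsilon<1$), we obtain $|\varphi'(z_t)-\varphi'(x)|\leq C'|x-y|\langle x\rangle^{-1}\leq C'k$. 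Choosing $k\leq\varepsilon$ with $CC'k<1/2$ makes $\varphi'(x)^{-1}\nabla\varphi(x,y)=I+E$ with $\|E\|<1/2$. A Neumann series then gives invertibility, with $\|\nabla\varphi(x,y)^{-1}\|\leq 2C$ uniformly.

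For (c) take $\chi(x,y)=\theta\big(|x-y|^2/\langle x\rangle^2\big)$, where $\theta\in\D_{[\A]}(\R)\cap\D_{[\B]}(\R)$, $0\leq\theta\leq1$, $\theta\equiv1$ on $[0,k^2/4]$ and $\theta\equiv0$ on $[k^2,\infty)$. Such a $\theta$ exists because the sequences are non-quasianalytic: one can use a cutoff for $\min$-type control, or simply $\theta\in\D_{[\{j!^s\}]}$ dominated by both. The function $g(x,y)=|x-y|^2\langle x\rangle^{-2}$ is real-analytic, and on the support of $\chi$ we have $\langle y\rangle\asymp\langle x\rangle$, so it satisfies the SG-analytic bounds
\[
|\partial_x^\beta\partial_y^\gamma g|\leq Ch^{|\beta|+|\gamma|}\beta!\gamma!\langle x\rangle^{-|\beta|}\langle y\rangle^{-|\gamma|}.
\]
Composing $\theta$ with $g$ via the Faà di Bruno formula (Lemma \ref{faa} in its multivariate form), and then applying Lemmas \ref{lemma_sum} and \ref{lemma_prod_M} together with $j!\preceq B_j$ (Proposition \ref{incl_quasi_factorial}), yields
\[
|\partial_x^\beta\partial_y^\gamma\chi|\leq Ch^{|\beta|+|\gamma|}B_{|\beta|}B_{|\gamma|}\langle x\rangle^{-|\beta|}\langle y\rangle^{-|\gamma|}.
\]
Since $\chi$ does not depend on $\xi$, this is precisely membership in $\AG^{0,0,0}_{[\A,\B]}$. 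The support properties are immediate from the choice of $\theta$.

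The main obstacle is this composition estimate: the Faà di Bruno bookkeeping must combine the weights correctly. Each factor contributes $\langle x\rangle^{-\ell}$, and the exponents add to $-|\beta|$ because $g$ is homogeneous of degree $0$ in the scaled variables. In the projective case one has to choose $\theta$ with arbitrarily small $h$, which Proposition \ref{incl_quasi_factorial} (via the relation $\prec$) allows.
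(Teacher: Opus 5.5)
Your argument is correct and is essentially the paper's proof, which defers to Schrohe's Lemma 2.4: (a) via the integral form of the mean value theorem, (b) by perturbing off $\nabla\varphi(x,x)=\varphi'(x)$ using $(I1)$, and (c) by composing a non-quasianalytic cutoff with a scaled distance (the paper takes $g\bigl(|x-y|/(k\langle x\rangle)\bigr)$ with $g\in\D_{[\B]}(\R)$, you take the everywhere-analytic $|x-y|^2/\langle x\rangle^2$ and supply the Fa\`a di Bruno details the paper omits). Two small corrections: since $\chi$ does not depend on $\xi$, only $\theta\in\D_{[\B]}(\R)$ is needed, and your Gevrey shortcut is not available in general because a non-quasianalytic $\B$ need not dominate any $\{j!^s\}$ with $s>1$; also, Fa\`a di Bruno for mixed derivatives naturally yields $B_{|\beta|+|\gamma|}$, so reaching $B_{|\beta|}B_{|\gamma|}$ uses $(M2)$ for $\B$ (which the paper also uses tacitly), or, without $(M2)$, a cutoff $\theta$ in the class with weights $\min_{j\leq N}B_jB_{N-j}$, which is still non-quasianalytic.
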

\begin{proof}
	The proof is identical to that of \cite[Lemma 2.4]{Schrohe86}. Point (a) follows from the mean value theorem, while point (b) follows from the fact that 
	$\nabla\varphi(x,x)=\varphi'(x)$ and $(I1)$ holds. Finally, for point (c), we choose a function $g\in\mathcal{D}_{[\B]}(\mathbb{R})$ such that $g(t)=1$ 
	if $t\leq 1/2$ and $g(t)=0$ if $t>1$. Then, the function 
	\[
		\chi(x,y) = g\left(\frac{|x-y|}{k\langle x\rangle}\right)
	\]
	satisfies the desired properties.
\end{proof}
With $\chi(x,y)$ as in Lemma \ref{lemma_M}, write
\[
	p(x,\xi)=\chi(x,y)p(x,\xi)+(1-\chi(x,y))p(x,\xi)=p_1(x,y,\xi)+p_2(x,y,\xi).
\]
It is clear that $p_1\in \AG_{[\A,\B]}^{m,0,\mu}(\mathbb{R}^n)$. Also, we have that $\mathrm{Op}(p_2)=\tilde{p}(x,D)+R$, 
where $R$ is $[M_k]$-regularizing and $\tilde{p}\in {\SG}_{[\A,\B]}^{m,\mu}(\R^n)$ is such that
\[\tilde{p}\sim \sum_{j=0}^\infty \tilde{p}_j,\]
where
\[\tilde{p}_j(x,\xi) = \sum_{|\alpha|=j}\frac{1}{\alpha!}\partial_\xi^\alpha D_y^\alpha p_2(x,y,\xi)|_{y=x}.\]

Observing that
\[\partial_\xi^\alpha D_y^\alpha p_2(x,y,\xi)|_{y=x} = \partial_\xi^\alpha p(x,\xi) D_y^\alpha (1-\chi(x,y))|_{y=x} = 0,\]
in view of the fact that, by cnstruction, $1-\chi(x,y)$ equals zero in a neighbourhood of the diagonal $x=y$. Hence, $\tilde{p}\sim 0$, which gives us that $\mathrm{Op}(p_2)$ is $[\M]$-regularizing and has kernel $K\in\caS_{[\M]}(\mathbb{R}^{2n})$. Moreover, the operator $\tilde{P}_2\tilde{u}(x)=[\mathrm{Op}(p_2)u](\varphi(x))$ has kernel $\tilde{K}(x,y)=K(\varphi(x),\varphi(y))$, which also belongs to $\caS_{[\M]}(\mathbb{R}^{2n})$ by $(I1)$ and the chain rule. Then, we only need to study the behaviour of $\tilde{P}_1\tilde{u}(x)=[\mathrm{Op}(p_1)u](\varphi(x))$.

For $u\in\caS_{[\M]}(\R^n)$, $\mathrm{supp}(u)\subset U$, employing the well-known \textit{Kuranishi's trick}, we find
\begin{align*}
	& [\mathrm{Op}(p_1)u](\varphi(x)) =\\
	& = \int_{\mathbb{R}^n}\int_{\mathbb{R}^n} e^{i\eta\cdot(\varphi(x)-y')}p_1(\varphi(x),y',\eta)u(y')\,\mathrm{d} y'\dbar\eta\\
	& = \int_{\mathbb{R}^n}\int_{\mathbb{R}^n} e^{i\eta\cdot(\varphi(x)-\varphi(y))}p_1(\varphi(x),\varphi(y),\eta)u(\varphi(y))|\det\varphi'(y)|\,\mathrm{d} y\dbar\eta\\
	& = \int_{\mathbb{R}^n}\int_{\mathbb{R}^n} e^{i\eta\cdot(x-y)}p_1(\varphi(x),\varphi(y), \nabla\varphi(x,y)^{-T}\xi)|\det\varphi'(y)||\det \nabla\varphi(x,y)^{-T}| u(\varphi(y)) \,\mathrm{d} y\dbar\xi,
\end{align*}
where we used the substitutions $y'=\varphi(y)$ and $\xi= \nabla\varphi(x,y)^T\eta$, since $\nabla\varphi(x,y)$, given in Lemma \ref{lemma_M},
is invertible in the support of $p_1$, for a sufficiently small $k\in(0,\varepsilon]$. It remains to show that
\[q(x,y,\xi) = p_1(\varphi(x),\varphi(y), \nabla\varphi(x,y)^{-T}\xi) u(\varphi(y)) |\det\varphi'(y)| |\det \nabla\varphi(x,y)^{-T}|\]
is a symbol in $\ST_{[\A,\B]}(\mathbb{R}^n)$ and induces an operator with symbol in ${\SG}_{[\A,\B]}^{m,\mu}(\mathbb{R}^n)$.

\begin{definition}
	Given $t\in[0,1]$, we set:
	\begin{itemize}
		\item $F_t(x,y,\xi) = p_1(\varphi(x),\varphi(x+t(y-x)), \nabla\varphi(x,x+t(y-x))^{-T}\xi)$;
		\item $G_t(x,y) = |\det\varphi'(x+t(y-x))|$;
		\item $H_t(x,y) = |\det \nabla\varphi(x,x+t(y-x))^{-T}|$
	\end{itemize}
\end{definition}

\begin{lemma}\label{est_FGH}
	For every fixed $t\in[0,1]$, there exist $C,h>0$ 
	(respectively, for every $h>0$, there exists $C>0$) such that, for all $\alpha,\beta,\gamma\in\mathbb{N}_0^n$ and $x,y,\xi\in\mathbb{R}^n$, we have:
	\begin{enumerate}
		\item[(a)] $|\partial_x^\beta\partial_y^\gamma G_t(x,y)|\leq Ch^{|\beta|+|\gamma|}B_{|\beta|}B_{|\gamma|}\langle x\rangle^{-|\beta|-|\gamma|}\langle x-y\rangle^{|\beta|+|\gamma|}$;
		
		\item[(b)] $|\partial_x^\beta\partial_y^\gamma H_t(x,y)|\leq Ch^{|\beta|+|\gamma|}B_{|\beta|}B_{|\gamma|}\langle x\rangle^{-|\beta|-|\gamma|}\langle x-y\rangle^{|\beta|+|\gamma|}$;
		
		\item[(c)] $|\partial_\xi^\alpha\partial_x^\beta\partial_y^\gamma F_t(x,y,\xi)|\leq Ch^{|\alpha|+|\beta|+|\gamma|}A_{|\alpha|}B_{|\beta|}B_{|\gamma|} \langle\xi\rangle^{m_1-|\alpha|} \langle x\rangle^{m_2-|\beta|-|\gamma|}\langle x-y\rangle^{|\beta|+|\gamma|}$.
	\end{enumerate}
\end{lemma}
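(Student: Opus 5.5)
The plan is to reduce all three estimates to composition of ultradifferentiable maps, handled by the Fa\`a di Bruno formula (Lemma \ref{faa}) together with Lemmas \ref{lemma_sum} and \ref{lemma_prod_M}. Throughout, $(x,y)$ is restricted to the support of $p_1$, i.e.\ $|x-y|\le k\langle x\rangle$. On this set $\langle x+t(y-x)\rangle\asymp\langle x\rangle$ uniformly in $t\in[0,1]$, since $k\le\varepsilon<1$. The factors $\langle x-y\rangle^{|\beta|+|\gamma|}$ come from the chain rule applied to $z=x+t(y-x)$: we have $\partial_x z=(1-t)I$ and $\partial_y z=tI$. These factors are harmless, and in fact only make the bounds weaker. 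The real content is to obtain the decay $\langle x\rangle^{-|\beta|-|\gamma|}$ with constants of the form $h^{j}B_j$. Since $\{j!\}\preceq\B$ by Proposition \ref{incl_quasi_factorial}, it suffices to prove analytic-type bounds $Ch^j j!$ for the purely geometric factors $G_t$, $H_t$ and the matrix $\nabla\varphi^{-T}$.

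For (a), write $G_t(x,y)=|\det\varphi'(z)|$. By $(I1)$, each entry of $\varphi'$ satisfies $|\partial^\alpha\varphi'(z)|\le Ch^{|\alpha|}\alpha!\langle z\rangle^{-|\alpha|}$. The determinant is a polynomial in these entries, so the Leibniz rule preserves bounds of this form, with the $n$-fold product absorbed into $h$ and the total weight $\langle z\rangle^0$. To pass from $\det\varphi'$ to $|\det\varphi'|$, note that $\det\varphi'$ has constant sign on connected components and is bounded away from $0$ by $(I1)$ applied to $\varphi^{-1}$. Hence $|\det\varphi'|=\pm\det\varphi'$ locally, and the bounds are unchanged. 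Composing with $z(x,y)$ then gives (a).

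For (b), $\nabla\varphi(x,y)=\int_0^1\varphi'(y+s(x-y))\,ds$, and differentiating under the integral gives the same analytic bounds for its entries. Its determinant is bounded below on the region $|x-y|\le k\langle x\rangle$ by Lemma \ref{lemma_M}(b). Then $1/\det$ is handled by Fa\`a di Bruno applied to $f\mapsto 1/f$, using Lemma \ref{lemma_sum} to sum over $\Delta(k)$. The same argument controls the entries of $\nabla\varphi^{-T}$ through the adjugate formula.

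For (c), set $\Phi(x,y,\xi)=\big(\varphi(x),\varphi(z),\nabla\varphi(x,z)^{-T}\xi\big)$. The third component is linear in $\xi$, and its $(x,y)$-derivatives are bounded by $Ch^j j!\langle\xi\rangle\langle x\rangle^{-j}$ (up to the $\langle x-y\rangle$ factors). Moreover, $\langle\nabla\varphi^{-T}\xi\rangle\asymp\langle\xi\rangle$ and $\langle\varphi(x)\rangle\asymp\langle x\rangle$, by $(I1)$ for $\varphi$ and $\varphi^{-1}$. Apply the multivariate Fa\`a di Bruno formula to $p_1\circ\Phi$. A derivative of order $|\alpha'|$ in the $\eta$-slot and $|\beta'|$ in the spatial slots gives the factor $A_{|\alpha'|}B_{|\beta'|}\langle\xi\rangle^{m_1-|\alpha'|}\langle x\rangle^{m_2-|\beta'|}$. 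This is multiplied by products of inner derivatives, each contributing $\langle\xi\rangle$ per $\eta$-slot and $\langle x\rangle^{-\ell}$ per derivative of order $\ell$. The powers of $\langle\xi\rangle$ cancel exactly, the powers of $\langle x\rangle$ add up to $-|\beta|-|\gamma|$, and $\xi$-derivatives fall only on the linear factor. Lemma \ref{lemma_prod_M} gives $B_{|\gamma|}\prod B_{\ell}^{\gamma_\ell}\le B_k$, and Lemma \ref{lemma_sum} bounds the combinatorial sum by a geometric factor.

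The main obstacle is this last bookkeeping step. The outer derivative mixes $A$ and $B$ indices, so one must use $(M1)$ and $\{j!\}\preceq\B$ to absorb all analytic inner factors into $B_{|\beta|}B_{|\gamma|}$. The inductive case is straightforward. In the projective case, for each $h$ the inner analytic constants must be shrunk by rescaling: $j!\le C_\delta\,\delta^j B_j$ holds for every $\delta>0$ by Proposition \ref{incl_quasi_factorial}.
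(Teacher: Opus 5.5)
Your treatment of (a) and (b) is essentially the paper's. You use analytic bounds from $(I1)$ for the entries of $\varphi'$ and $\nabla\varphi$, write the determinant as a polynomial in them, and invoke $\{j!\}\prec\B$ (Proposition \ref{incl_quasi_factorial}) to pass to $B_{|\beta|}B_{|\gamma|}$, also in the projective case. Two corrections there, both repairable:
\begin{itemize}
\item The factor $\langle x-y\rangle^{|\beta|+|\gamma|}$ does not come from the chain rule, since $\partial_x z=(1-t)I$ and $\partial_y z=tI$ produce nothing of the kind. It comes from Peetre's inequality $\langle x+t(y-x)\rangle^{-1}\le\sqrt{2}\,\langle x\rangle^{-1}\langle x-y\rangle$, which is how the paper gets (a) also off the cone $|x-y|\le k\langle x\rangle$.
\item Lemma \ref{lemma_M}(b) only asserts invertibility, but you need a uniform lower bound on $|\det\nabla\varphi|$. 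This follows from $|\nabla\varphi(x,y)-\varphi'(x)|\le Ck$ on the cone together with $(I1)$ for $\varphi^{-1}$. For $1/\det$ in several variables, the Leibniz recursion from $\partial^\alpha(f\cdot f^{-1})=0$ is more convenient than Lemma \ref{faa}, which is one-dimensional and specific to $e^{f}$.
\end{itemize}

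The genuine gap is in (c), exactly at the step you call the main obstacle, and your proposed remedy does not reach it.
\begin{itemize}
\item Since $\Xi=\nabla\varphi(x,z)^{-T}\xi$ depends on $(x,y)$, the derivative $\partial_\xi^\alpha\partial_x^\beta\partial_y^\gamma F_t$ contains terms $(\partial_\Xi^{\alpha'}p_1)\prod_i\partial_{x,y}^{\ell_i}\Xi$ with $|\alpha'|=|\alpha|+r$ and $r$ up to $|\beta|+|\gamma|$.
\item The powers of $\langle\xi\rangle$ cancel and the inner factors are analytic, so $(M1)$ and $\{j!\}\prec\B$ handle those. But the outer factor is $A_{|\alpha|+r}$.
\item The hypotheses are only $(M0)$, $(M1)$, $(M2)'$, $(M3)'$ for each sequence and $\A,\B\preceq\M$. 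They give no way to bound $A_{|\alpha|+r}$ by $h^{|\alpha|+r}A_{|\alpha|}B_{|\beta|}B_{|\gamma|}$. That would need $\A\preceq\B$ plus $(M2)$ for $\A$, and also $(M2)$ for $\B$ to split the $B_{|\beta|+|\gamma|}$ coming from the spatial slots.
\end{itemize}

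This is not a bookkeeping artefact; here is a counterexample.
\begin{itemize}
\item Take $n=1$, $t=0$, $\varphi(x)=x+\tfrac12\langle x\rangle$ and $p(x,\xi)=a(\xi)$. Then $F_0(x,y,\xi)=a(\xi\psi(x))$ with $\psi=1/\varphi'$, $\psi(0)=1$, $\psi'(0)=-\tfrac12$, and $F_0$ is independent of $y$.
\item So (c) at $y=x$, with $m_1=m_2=0$, would give $|\partial_x^\beta F_0|\le Ch^{\beta}B_{\beta}$ near $x=0$, uniformly in $\xi$.
\item Composing with the local analytic inverse of $\psi$ (a quantitative form of Theorem \ref{inv_analytic}) forces $|a^{(k)}(\xi)|\le \tilde C\tilde h^{k}B_k\langle\xi\rangle^{-k}$ for large $\xi$.
\item This fails, for instance, for $A_j=(j!)^3$, $B_j=(j!)^2$ and $a(\xi)=g(\log\langle\xi\rangle)$ with $g$ periodic of Gevrey class $3$ but not $2$. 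Yet $a\in\SG^{0,0}_{\{\A,\B\}}(\R)$.
\end{itemize}

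So (c) can only be proved under an extra assumption such as $\A\preceq\B$ together with $(M2)$ (for example $\A=\B$ satisfying $(M2)$). Under that assumption your Fa\`a di Bruno bookkeeping can be completed. The paper's inductive argument does not settle this point either. Its first-order bounds are harmless only because $A_1=B_1=1$, and in \eqref{est_1st} the roles of $A$ and $B$ are even interchanged.
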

\begin{proof}
	First, by Peetre's inequality, we have that
	\begin{equation}\label{peetre}
		\langle x-t(y-x)\rangle^{-j} \leq 2^j\langle x\rangle^{-j}\langle x-y\rangle^{j},
	\end{equation}
	for all $t\in[0,1]$, $x,y\in\mathbb{R}^n$, and $j\geq 0$.
	
	\medskip\noindent
	(a) Write
	\[\varphi' = \begin{bmatrix} \partial_{x_1}\varphi_1 & \cdots & \partial_{x_n}\varphi_1 \\ \vdots & \ddots & \vdots \\ \partial_{x_1}\varphi_n & \cdots & \partial_{x_n}\varphi_n \end{bmatrix} = \begin{bmatrix} \varphi_1^{(1)} & \cdots & \varphi_1^{(n)} \\ \vdots & \ddots & \vdots \\ \varphi_n^{(1)} & \cdots & \varphi_n^{(n)} \end{bmatrix}.\]
	
	Then, we have
	\[\det\varphi' = \sum_{\sigma\in S_n}\mathrm{sgn}(\sigma)\varphi_1^{(\sigma(1))}\cdots\varphi_n^{(\sigma(n))},\]
	where $S_n$ is the symmetric group of order $n$, which has $n!$ elements. By $(I1)$, there exist $C,h>0$ such that
	\[|\partial_x^\alpha\varphi_\ell^{(\sigma(\ell))}(x)|\leq Ch^{|\alpha|}\alpha!\langle x\rangle^{-|\alpha|},\]
	for all $\alpha\in\mathbb{N}_0^n$, $x\in\mathbb{R}^n$, $\ell=1,\dots,n$, and $\sigma\in S_n$. Let $\alpha_1,\cdots,\alpha_n\in\mathbb{N}_0^n$ be such that $\alpha_1+\cdots+\alpha_n=\alpha$. Then, we have
	\begin{equation*}
		|\partial_x^{\alpha_1}\varphi_1^{(\sigma(1))}\cdots \partial_x^{\alpha_n}\varphi_n^{(\sigma(n))}| \leq (Ch^{|\alpha_1|}\alpha_1!\langle x\rangle^{-|\alpha_1|})\cdots (Ch^{|\alpha_n|}\alpha_n!\langle x\rangle^{-|\alpha_n|}) \leq C^nh^{|\alpha|}\alpha!\langle x\rangle^{-|\alpha|}.
	\end{equation*}
	
	Given $j,k\in\{1,\dots,n\}$, we also see that
	\[\partial_{x_j}\det\varphi'(x) = 
	\sum_{\sigma\in S_n}\mathrm{sgn}(\sigma)\sum_{\ell=1}^n \varphi_1^{(\sigma(1))}\cdots \partial_{x_j}\varphi_{\ell}^{(\sigma(\ell))}\cdots \varphi_n^{(\sigma(n))},\]
	and
	\[\partial_{x_k}\partial_{x_j}\det\varphi'(x) = \sum_{\sigma\in S_n}\mathrm{sgn}(\sigma)\sum_{\nu=1}^n \sum_{\ell=1}^n \varphi_1^{(\sigma(1))}\cdots \partial_{x_k}\varphi_{\nu}^{(\sigma(\nu))} \cdots \partial_{x_j}\varphi_{\ell}^{(\sigma(\ell))}\cdots \varphi_n^{(\sigma(n))}.\]
	
	By induction, it follows that $\partial_x^\alpha\det\varphi'(x)$ is a linear combination of $n!n^{|\alpha|}$ terms of the form
	\[\mathrm{sgn}(\sigma)\partial_x^{\alpha_1}\varphi_1^{(\sigma(1))}\cdots \partial_x^{\alpha_n}\varphi_n^{(\sigma(n))},\]
	with $\alpha_1+\cdots+\alpha_n=\alpha$. Hence, we conclude
	\[|\partial_x^\alpha\det\varphi'(x)|\leq C_0h_0^{|\alpha|}\alpha!\langle x\rangle^{-|\alpha|},\]
	for all $\alpha\in\mathbb{N}_0^n$ and $x\in\mathbb{R}^n$, where $C_0=C^nn!$ and $h_0=nh$.
	From this estimate and \eqref{peetre}, we obtain $C,h>0$ such that
	\[|\partial_x^\beta\partial_y^\gamma G_t(x,y)|\leq Ch^{|\beta|+|\gamma|}(\beta!\gamma!)\langle x\rangle^{-|\beta|-|\gamma|}\langle x-y\rangle^{|\beta|+|\gamma|},\]
	for all $\beta,\gamma\in\mathbb{N}_0^n$, $x,y\in\mathbb{R}^n$, and $t\in [0,1]$. Notice that the absolute value in the definition of $G_t(x,y)$ does not affect the smoothness of the map since $\varphi$ is a diffeomorphism. Finally, we apply Proposition \ref{incl_quasi_factorial} to obtain the desired estimate.
	
	\medskip\noindent
	(b) Using the notation $\varphi_{\ell}^{(k)}=\partial_{x_k}\varphi_\ell$ from the previous point, by $(I1)$ and \eqref{peetre} it follows
	\begin{align*}
		\left|\partial_x^\beta\int_0^1\varphi_{\ell}^{(\sigma(\ell))}(x+t(y-x))\,\mathrm{d}t\right| & 
		\leq \int_0^1\left|\partial_x^\beta \varphi_{\ell}^{(\sigma(\ell))}(x+t(y-x))\right|\,\mathrm{d}t \\
		& \leq C(2h)^{|\beta|}\beta!\langle x\rangle^{-|\beta|}\langle x-y\rangle^{|\beta|}.
	\end{align*}
	
	For the $y$-derivatives, we obtain analogous estimates. Then, the estimates for $H_t(x,y)$ follow from those obtained in (a) 
	and Proposition \ref{incl_quasi_factorial}.
	
	\medskip\noindent
	(c) Fix $t\in [0,1]$ and write $X=(X_1,\dots,X_n)$, $Y=(Y_1,\dots,Y_n)$, $\Xi=(\Xi_1,\dots,\Xi_n)$, where
	\[X_j=\varphi_j(x),\quad Y_j=\varphi_j(x+t(y-x)),\quad\text{and}\quad \Xi_j=\sum_{k=1}^n \nabla\varphi(x,y)_{jk}^{-T}\xi_j,\quad j=1,\dots,n.\]
	
	In the sequel, we write
	\[
		p_1^\sharp(x,y,\xi)=p_1(\varphi(x),\varphi(x+t(y-x)),\nabla\varphi(x,y)^{-T}\xi)=p_1(X,Y,\Xi).
	\]
	
	By the chain rule, 
	\begin{equation}\label{dxp}
	\begin{aligned}
		\partial_{x_j}p_1^\sharp &= \sum_{k=1}^n \frac{\partial p_1}{\partial X_k}\frac{\partial X_k}{\partial x_j} + 
		\sum_{k=1}^{n}\frac{\partial p_1}{\partial Y_k}\frac{\partial Y_k}{\partial x_j} + 
		\sum_{k=1}^{n}\frac{\partial p_1}{\partial \Xi_k}\frac{\partial \Xi_k}{\partial x_j},
	\\
		\partial_{y_j}p_1^\sharp &= \sum_{k=1}^{n}\frac{\partial p_1}{\partial Y_k}\frac{\partial Y_k}{\partial y_j} + \sum_{k=1}^{n}\frac{\partial p_1}{\partial \Xi_k}\frac{\partial \Xi_k}{\partial y_j},
	\\
		\partial_{\xi_j}p_1^\sharp &= \sum_{k=1}^n \frac{\partial p_1}{\partial\Xi_k}\nabla\varphi(x,y)_{kj}^{-T}.
	\end{aligned}
	\end{equation}
	
	Observe that, by $(I1)$, it holds
	\[
		\langle\varphi(x)\rangle \asymp \langle x\rangle\quad\text{and}\quad \langle \nabla\varphi(x,y)^{-T}\xi\rangle \asymp \langle\xi\rangle.
	\]
	
	Then, the fact that $p_1\in \AG_{[\A,\B]}^{m,0,\mu}(\mathbb{R}^n)$ and \eqref{peetre}, together with $(I1)$, imply
	\begin{equation*}
		\left|\frac{\partial p_1}{\partial X_j}\right| \leq Ch B_1\langle x\rangle^{m-1}  \langle \xi\rangle^{\mu},\quad 
		\left|\frac{\partial p_1}{\partial Y_j}\right| \leq Ch B_1 \langle x\rangle^{m-1} \langle x-y\rangle \langle \xi\rangle^{\mu},
	\end{equation*}
	and
	\begin{equation*}
		\left|\frac{\partial p_1}{\partial \Xi_j}\right| \leq Ch A_1 \langle x\rangle^{m} \langle\xi\rangle^{\mu-1}.
	\end{equation*}
	
	We also have, again by $(I1)$,
	\begin{equation*}
		\left|\frac{\partial X_j}{\partial x_j}\right| \leq Ch B_1 ,\quad 
		\left|\frac{\partial Y_j}{\partial x_j}\right| \leq Ch B_1, 
	\end{equation*}
	\begin{equation*}
		\left| \frac{\nabla\varphi(x,y)_{k\ell}^{-T}}{\partial x_j}\right| \leq Ch B_1\langle x\rangle^{-1}\langle x-y\rangle,
		\quad \left|\frac{\partial Y_j}{\partial y_j}\right| \leq Ch B_1
	\end{equation*}
	\begin{equation*}
		\left|\frac{\nabla\varphi(x,y)_{k\ell}^{-T}}{\partial y_j}\right| \leq Ch B_1\langle x\rangle^{-1}\langle x-y\rangle,\quad |\nabla\varphi(x,y)_{kj}^{-T}|\leq C.
	\end{equation*}
	
	Notice that the terms $\xi_\ell$ that arise when we differentiate $p_1^\sharp$ with respect to $x_j$ or $y_j$ are compensated by the decay given by 
	$\partial p_1^\sharp/\partial\Xi_\ell$. Then, combining the previous estimates, we have proven
	\begin{equation}\label{est_1st}
		\left| \partial^\alpha_x\partial^\beta_y\partial^\gamma_\xi p_1^\sharp(x,y,\xi) \right|\leq (C^23nh)h^1 A_{|\alpha|}B_{|\beta|}B_{|\gamma|} 
		 \langle x\rangle^{m-|\alpha|-|\beta|}\langle x-y\rangle^{|\alpha|+|\beta|} \langle\xi\rangle^{\mu-|\gamma|},
	\end{equation}
	for $|\alpha|+|\beta|+|\gamma|\leq 1$.
	
	If we differentiate \eqref{dxp} with respect to $x_\ell$, the Leibniz rule yields
	\begin{align*}\label{dxxp}
		\partial_{x_\ell}\partial_{x_j}p_1^\sharp = & \sum_{k=1}^n \left( \left(\partial_{x_\ell}\frac{\partial p_1}{\partial X_k}\right)\frac{\partial X_k}{\partial x_j}+\frac{\partial p_1}{\partial X_k}\frac{\partial^2 X_k}{\partial x_\ell\partial x_j}\right) + \sum_{k=1}^{n}\left( \left(\partial_{x_\ell}\frac{\partial p_1}{\partial Y_k}\right)\frac{\partial Y_k}{\partial x_j}+\frac{\partial p_1}{\partial Y_k}\frac{\partial^2 Y_k}{\partial x_\ell\partial x_j}\right)\\
		 + &\sum_{k=1}^{n}\left( \left(\partial_{x_\ell}\frac{\partial p_1}{\partial \Xi_k}\right)\frac{\partial \Xi_k}{\partial x_j}+\frac{\partial p_1}{\partial \Xi_k}\frac{\partial^2 \Xi_k}{\partial x_\ell\partial x_j}\right). 
	\end{align*}
	
	Since $p_1\in \AG_{[\A,\B]}^{m,0,\mu}$, it follows
	\[
	\frac{\partial p_1}{\partial X_k}(X,Y,\Xi)\in \AG_{[\A,\B]}^{m-1,0,\mu},\ 
	\frac{\partial p_1}{\partial Y_k}(X,Y,\Xi)\in \AG_{[\A,\B]}^{m,-1,\mu},\ 
	\frac{\partial p_1}{\partial \Xi_k}(X,Y,\Xi)\in \AG_{[\A,\B]}^{m,0,\mu-1}.
	\]
	
	Moreover, these symbols satisfy
	\begin{equation*}
		\left|\partial^\alpha_X\partial^\beta_Y\partial^\gamma_\Xi
		\frac{\partial p_1}{\partial X_k}\right|
		\leq Ch^{|\alpha|+|\beta|+|\gamma|+1} A_{|\alpha|}B_{|\beta|+1}B_{|\gamma|} 
		\langle x\rangle^{m-|\alpha|-1}\langle y\rangle^{-|\beta|} \langle \xi\rangle^{\mu-|\gamma|},
	\end{equation*}
	\begin{equation*}
		\left|\partial^\alpha_X\partial^\beta_Y\partial^\gamma_\Xi
		\frac{\partial p_1}{\partial Y_k}\right|
		\leq Ch^{|\alpha|+|\beta|+|\gamma|+1}A_{|\alpha|} B_{|\beta|}B_{|\gamma|+1} 
		\langle x\rangle^{m-|\alpha|}\langle y\rangle^{-|\beta|-1}\langle \xi\rangle^{\mu-|\gamma|},
	\end{equation*}
	\begin{equation*}
		\left|
		\partial^\alpha_X\partial^\beta_Y\partial^\gamma_\Xi
		\frac{\partial p_1}{\partial \Xi_k}\right|\leq Ch^{|\alpha|+|\beta|+|\gamma|+1}A_{|\alpha|+1} B_{|\beta|}B_{|\gamma|} 
		\langle x\rangle^{m-|\alpha|}\langle y\rangle^{-|\beta|}\langle \xi\rangle^{\mu-|\gamma|-1},
	\end{equation*}
	for $\alpha,\beta,\gamma\in\mathbb{N}_0^n$, where the constants $C,h>0$ are the same appearing in the previous estimates above.
	If we now apply \eqref{est_1st} to those symbols in place of $p_1^\sharp$, we obtain
	\begin{align*}
		\left| \partial_{x_\ell}\frac{\partial p_1}{\partial X_k} \right| &\leq (C^23nh)h^2  B_2 \langle x\rangle^{m-2}\langle \xi\rangle^{\mu},
	\\		
		\left| \partial_{x_\ell}\frac{\partial p_1}{\partial Y_k} \right| &\leq (C^23nh)h^2 B_2\langle x\rangle^{m-2}\langle x-y\rangle^2 \langle \xi\rangle^{\mu},
	\\
		\left| \partial_{x_\ell}\frac{\partial p_1}{\partial \Xi_k} \right| &\leq (C^23nh)h^2 A_1 B_1 \langle x\rangle^{m-1}\langle x-y\rangle^2.
	\end{align*}
	
	By $(I1)$ for the derivatives of $X_j$, $Y_j$, and $\Xi_j$, $\partial_{x_\ell}\partial_{x_j}p_1^\sharp$ is a sum of $2(3n)$ terms satisfying the desired estimates. 
	By an induction argument, a derivative of order $N$ is a sum of $2^{N-1}3n$ terms. Then, incorporating the factor $1/2$ within the constant $C_0=C^23nh$ and 
	$2^N$ within the constant $h$, we obtain the desired estimate for the $x$-derivatives. Similar arguments give the estimates for the derivatives with respect to 
	$y$ and $\xi$. This proves the estimates in the inductive case.
	
	The estimates for the projective case can be proved in a similar fashion, by choosing the constants 
	$h>0$ in an appropriate way, to absorb the extra constants that appear during the computations. The proof is complete.
\end{proof}

Observing that $q(x,y,\xi)=F_1(x,y,\xi)G_1(x,y)H_1(x,y)$, the Leibniz rule implies that there exist $C,h>0$ (respectively, for every $h>0$, there exists $C>0$) such that
\begin{equation*}
	|\partial_\xi^\alpha\partial_x^\beta\partial_y^\gamma q(x,y,\xi)| \leq Ch^{|\alpha|+|\beta|+|\gamma|} A_{|\alpha|}B_{|\beta|}B_{|\gamma|} \langle\xi\rangle^{\mu-|\alpha|} \langle x\rangle^{m-|\beta|-|\gamma|}\langle x-y\rangle^{|\beta|+|\gamma|},
\end{equation*}
for all $\alpha,\beta,\gamma\in\mathbb{N}_0^n$ and $(x,y,\xi)\in\mathbb{R}^{3n}$.

As discussed in \cite[Remark 2.11]{Schrohe86}, condition $(I1)$ implies that either $q(x,y,\xi)$ vanishes or $\langle x-y\rangle\lesssim \langle x\rangle$. 
Then, we conclude
\begin{equation*}
	|\partial_\xi^\alpha\partial_x^\beta\partial_y^\gamma q(x,y,\xi)| 
	\leq 
	Ch^{|\alpha|+|\beta|+|\gamma|} A_{|\alpha|} B_{|\beta|} B_{|\gamma|} \langle \xi \rangle^{\mu-|\alpha|} \langle x \rangle^{m}
	\leq 
	Ch^{|\alpha|+|\beta|+|\gamma|} A_{|\alpha|} B_{|\beta|} B_{|\gamma|} \langle \xi \rangle^{\mu} \langle x \rangle^{m},
\end{equation*}
for all $\alpha,\beta,\gamma\in\mathbb{N}_0^n$ and $(x,y,\xi)\in\mathbb{R}^{3n}$. This proves $q\in \ST_{[\A,\B]}(\mathbb{R}^n)$, by choosing 
$\kappa(j)\equiv \mu$ and $\lambda(j)\equiv m$, according to Definition \ref{ST_class}. Hence, $q$ is a symbol and defines a continuous linear 
operator from $\mathcal{S}_{[\M]}(\mathbb{R}^n)$ to itself.

Consider the Taylor expansion of $q(x,y,\xi)$ with base point $y=x$, namely,
\begin{equation}\label{taylor}
	q(x,y,\xi) = \sum_{|\alpha|\leq N}\frac{i^{|\alpha|}}{\alpha!}D_y^\alpha q(x,y,\xi)|_{y=x}(x-y)^\alpha + r_N(x,y,\xi),
\end{equation}
where
\begin{equation*}
	r_N(x,y,\xi) = i^{N+1}\frac{N+1}{\alpha!}\sum_{|\alpha|=N+1}(x-y)^\alpha\int_0^1 D_y^\alpha q(x,x+t(y-x),\xi)(1-t)^N\,\mathrm{d}t.
\end{equation*}

For each $\delta\in\mathbb{N}_0^n$, write
\begin{equation*}
	q_\delta(x,y,\xi) = \frac{i^{|\delta|}}{\delta!}D_\xi^\delta D_y^\delta q(x,y,\xi),
\end{equation*}
and
\begin{equation*}
	q_N(x,y,\xi)=i^{N+1}\frac{N+1}{\delta!}\sum_{|\delta|=N+1}\int_0^1 D_\xi^\delta D_y^\delta q(x,x+t(y-x),\xi)(1-t)^N\,\mathrm{d}t.
\end{equation*}

We will show that $q$ defines an operator with symbol $\tilde{q}(x,\xi)\sim \sum_{\delta}\tilde{q}_\delta(x,\xi)$, where
\[\tilde{q}_\delta(x,\xi) = q_\delta(x,y,\xi)|_{y=x}.\]

Since $q_\delta=i^{|\delta|}(\delta!)^{-1}D_\xi^\delta D_y^\delta (F_1G_1H_1)$, Lemma \ref{est_FGH} yields 
$\tilde{q}_\delta\in {\SG}_{[\A,\B]}^{m-|\delta|,\mu-|\delta|,}(\mathbb{R}^n)$. In particular, by Proposition 
\ref{prop_p_pj} there is a symbol $\tilde{q}\in {\SG}_{[\A,\B]}^{m,\mu}(\mathbb{R}^n)$ such that 
$\tilde{q}\sim \sum_{\delta}\tilde{q}_\delta$. Also, by Lemma \ref{est_FGH}, there exist $C,h>0$ (respectively, for every $h>0$, there exists $C>0$) such that
\begin{equation}\label{est_qN}
	|\partial_\xi^\alpha\partial_x^\beta\partial_y^\gamma q_N(x,y,\xi)|\leq C h^{|\alpha|+|\beta|+|\gamma|} A_{|\alpha|}B_{|\beta|}B_{|\gamma|} \langle\xi\rangle^{\mu-|\alpha|-N-1} \langle x\rangle^{m-|\beta|-|\gamma|-N-1} \langle x-y\rangle^{|\beta|+|\gamma|+N+1},
\end{equation}
for all $\alpha,\beta,\gamma\in\mathbb{N}_0^n$ and $(x,y,\xi)\in\mathbb{R}^{3n}$. These estimates imply estimates of the form considered in Remark \ref{remark_ST}, hence we can use $q_N$ as a symbol.

\begin{lemma}\label{lemma_cd}
	Let $\tilde{q}$ be a symbol with $\tilde{q}\sim \sum_{\delta}\tilde{q}_\delta$, and set $\tilde{q}_N = \sum_{|\delta|\leq N} \tilde{q}_\delta$. 
	Then $\mathrm{Op}(q)-\mathrm{Op}(\tilde{q})=\mathrm{Op}(r)$, where $r=(\tilde{q}-\tilde{q}_N)+q_N$, and there is $C,h>0$ such that
	\begin{equation}\label{est_qq_N}
		|\partial_\xi^\alpha\partial_x^\beta\partial_y^\gamma r(x,y,\xi)|\leq 
		C h^{|\alpha|+|\beta|+|\gamma|} A_{|\alpha|}B_{|\beta|}B_{|\gamma|} 
		\langle \xi\rangle^{\mu-|\alpha|-N} \langle x\rangle^{m-|\beta|-|\gamma|-N} \langle x-y\rangle^{|\beta|+|\gamma|+N},
	\end{equation}
	for all $\alpha,\beta,\gamma\in\mathbb{N}_0^n$ and $(x,y,\xi)\in\mathbb{R}^{3n}$.
\end{lemma}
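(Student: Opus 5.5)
The plan follows the classical argument of Kumano-go/Schrohe for reducing an amplitude to a symbol, adapted to keep track of the weight-sequence constants. We start from the Taylor expansion \eqref{taylor} of $q(x,y,\xi)$ in $y$ around $y=x$, truncated at order $N$. In the oscillatory integral defining $\Op(q)u$ we substitute each term $(x-y)^\alpha D_y^\alpha q(x,y,\xi)|_{y=x}$ and integrate by parts in $\xi$. The identity $(x-y)^\alpha e^{i\xi\cdot(x-y)}=(-D_\xi)^\alpha e^{i\xi\cdot(x-y)}$ turns the term into $\frac{1}{\alpha!}\partial_\xi^\alpha D_y^\alpha q(x,y,\xi)|_{y=x}$, up to the sign conventions fixed by $i^{|\alpha|}$. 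In the same way the remainder $r_N$ becomes the amplitude $q_N$, with the $(1-t)^N$-averaged $y$-derivatives evaluated at $x+t(y-x)$. This gives the identity $\Op(q)=\Op(\tilde q_N)+\Op(q_N)$ as operators on $\caS_{[\M]}(\R^n)$. The integrations by parts are legitimate because $q$ and $q_N$ lie in the classes of Definition \ref{ST_class} and Remark \ref{remark_ST}, so Theorem \ref{thm_ST} guarantees absolute convergence after the regularizing factors $\langle\xi\rangle^{-2N_1}\langle x-y\rangle^{-2N_2}$ are inserted. Subtracting $\Op(\tilde q)$ from both sides gives $\Op(q)-\Op(\tilde q)=\Op(q_N)-\Op(\tilde q-\tilde q_N)$, so $r=q_N+(\tilde q_N-\tilde q)$, matching the statement up to the sign convention for the symbol difference.

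The estimate \eqref{est_qq_N} is checked separately for the two pieces.

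\textbf{The term $q_N$.} Estimate \eqref{est_qN} already gives the bound with exponents $\mu-|\alpha|-N-1$, $m-|\beta|-|\gamma|-N-1$ and $\langle x-y\rangle^{|\beta|+|\gamma|+N+1}$. Since $\langle\xi\rangle^{-1}\le1$ and, on the support, $\langle x-y\rangle\lesssim\langle x\rangle$ (Schrohe's observation, recalled above), the surplus factor $\langle x\rangle^{-1}\langle x-y\rangle$ is bounded. So the $N+1$ version implies the $N$ version.

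\textbf{The term $\tilde q-\tilde q_N$.} Here we use $\tilde q\sim\sum_\delta\tilde q_\delta$ in the sense of Definition \ref{def_equiv_FS}. On $Q^c_{B(N)}$ this gives bounds with $\langle\xi\rangle^{\mu-N-|\alpha|}\langle x\rangle^{m-N-|\beta|}$ and constant $h^{2N}A_NB_N$. The constant depends on $N$, but $N$ is fixed in the lemma, so this is harmless. On the bounded-type region $Q_{B(N)}$, at least one of $\langle\xi\rangle$ or $\langle x\rangle$ is bounded by a constant depending on $N$. There each $\tilde q_\delta$ with $|\delta|\le N$, and $\tilde q$ itself, satisfy the plain $\SG^{m,\mu}$ estimates, and we pay the missing powers $\langle\xi\rangle^{-N}$ or $\langle x\rangle^{-N}$ with an $N$-dependent constant.

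The bounded-region step is the main obstacle. On $Q_{B(N)}$ only one of the two variables is bounded. In the mixed region where, say, $\langle\xi\rangle$ is small but $\langle x\rangle$ is large, we still need the decay $\langle x\rangle^{-N}$. I would obtain it from the $x$-side of the equivalence: the sets $Q^c_{Ba_N,Bb_N}$ only require $\langle\xi\rangle\ge Ba_N$ \emph{or} $\langle x\rangle\ge Bb_N$. If that reading of the definition is insufficient, I would fall back on redefining $\tilde q$ via the cutoffs $\chi_j$ of Proposition \ref{prop_p_pj}. In the mixed region the difference $\tilde q-\tilde q_N$ is then $\sum_{j\le N}(\chi_j-1)\tilde q_j$ plus a tail, and the tail carries the full decay in both variables. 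The factors $\chi_j-1$ are compactly supported in $\xi$ for $j\le N$, and the $x$-decay comes directly from $\tilde q_j\in\SG^{m-j,\mu-j}$.

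Finally, $\langle x-y\rangle^{|\beta|+|\gamma|+N}\ge1$ can be inserted freely, since $\tilde q-\tilde q_N$ does not depend on $y$. The derivative constants combine via Leibniz, and the binomial factors are absorbed into $h$.
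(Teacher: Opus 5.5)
Your route is the paper's: the Taylor expansion \eqref{taylor} with integral remainder, integration by parts in $\xi$ to absorb $(x-y)^\alpha$, and the split of $r$ into the $y$-independent tail $\tilde q-\tilde q_N$ and the amplitude $q_N$. The paper passes from \eqref{est_qN} to \eqref{est_qq_N} for $q_N$ without comment, so you were right that something must be said there. However, the argument you give for that step fails.

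The support observation recalled before \eqref{taylor} concerns $q(x,z,\xi)$: it vanishes unless $\langle x-z\rangle\lesssim\langle x\rangle$. But $q_N$ evaluates $D_\xi^\delta D_y^\delta q$ at $z=x+t(y-x)$ and integrates over $t\in[0,1]$. For small $t$ the point $z$ is close to $x$ however large $|x-y|$ is. So $q_N(x,\cdot,\xi)$ is in general \emph{not} supported in $\{\langle x-y\rangle\lesssim\langle x\rangle\}$, and on its support the surplus factor $\langle\xi\rangle^{-1}\langle x\rangle^{-1}\langle x-y\rangle$ is unbounded. In fact \eqref{est_qN}, as stated, does not imply the exponent-$N$ bound: fix $x,\xi$ and let $|y|\to\infty$.

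The repair is to use the support information inside the $t$-integral, before coarsening $\langle x-z\rangle$ to $\langle x-y\rangle$:
\begin{itemize}
\item Estimate the integrand through the bounds for $q$ obtained from Lemma \ref{est_FGH}, taken at the point $(x,z,\xi)$. This gives the factor $\langle x-z\rangle=\langle t(x-y)\rangle$ in place of $\langle x-y\rangle$.
\item The integrand vanishes unless $\langle t(x-y)\rangle\lesssim\langle x\rangle$.
\item Always $\langle t(x-y)\rangle\le\langle x-y\rangle$.
\end{itemize}
Hence the ratio of the integrand's bound to the right-hand side of \eqref{est_qq_N} is
\[
\langle\xi\rangle^{-1}\,\frac{\langle t(x-y)\rangle}{\langle x\rangle}\Bigl(\frac{\langle t(x-y)\rangle}{\langle x-y\rangle}\Bigr)^{|\beta|+|\gamma|+N}\lesssim 1 .
\]
Alternatively, state and prove the lemma with $N+1$ in place of $N$. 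That is what both pieces naturally give, and it is all the following lemma uses.

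The ``main obstacle'' you describe for $\tilde q-\tilde q_N$ comes from a misreading. $Q_{B(N)}=\{\langle\xi\rangle<Ba_N,\ \langle x\rangle<Bb_N\}$ is a bounded box on which \emph{both} variables are bounded. Every mixed region, e.g.\ $\langle\xi\rangle$ small and $\langle x\rangle$ large, lies in $Q^c_{B(N)}$. There the equivalence of Definition \ref{def_equiv_FS} already gives the full gain $\langle\xi\rangle^{-N}\langle x\rangle^{-N}$.

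On the box, both missing powers are paid by an $N$-dependent constant, using the $\SG^{m,\mu}$ bounds of $\tilde q$ and of the finitely many $\tilde q_\delta$. So your first reading, that $Q^c$ only requires one variable to be large, is the correct one. The cutoff fallback is unnecessary, and it would also replace the given $\tilde q$, which the lemma does not allow.

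One small indexing point: $\tilde q_N$ contains all terms with $|\delta|\le N$. So the equivalence should be applied with $N+1$ in place of $N$, giving a gain of $N+1$ on $Q^c_{B(N+1)}$.
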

\begin{proof}
	Write $\mathrm{Op}(q)-\mathrm{Op}(\tilde q) = \mathrm{Op}(q-\tilde{q}_N) - \mathrm{Op}(\tilde q-\tilde{q}_N)$. 
	By definition, we have $\tilde{q}-\tilde{q}_N \in {\SG}_{[\A,\B]}^{m-N,\mu-N}$, hence $\tilde{q}-\tilde{q}_N$ satisfies \eqref{est_qq_N}. 
	Moreover, for every $u\in\caS_{[\M]}(\mathbb{R}^n)$, we have
	\begin{equation*}
		\int_{\mathbb{R}^n}\int_{\mathbb{R}^n} e^{i\xi\cdot(x-y)}(q(x,y,\xi)-\tilde{q}_N(x,\xi))u(y)\,\mathrm{d}y\, \mathrm{d}\xi = 
		\int_{\mathbb{R}^n}\int_{\mathbb{R}^n} e^{i\xi\cdot(x-y)}q_N(x,y,\xi)u(y)\,\mathrm{d}y\, \mathrm{d}\xi,
	\end{equation*}
	by \eqref{taylor} and the definition of $q_N$. Finally, \eqref{est_qN} gives us that $q-\tilde{q}_N$ also satisfies \eqref{est_qq_N}.
\end{proof}

\begin{lemma}
	Let $r$ be the symbol defined in Lemma \ref{lemma_cd}. Then, as an operator on $\caS_{[\M]}(\mathbb{R}^n)$, $\mathrm{Op}(r)$ has kernel
	\begin{equation}\label{kernel_r}
		K(x,y)=\int_{\mathbb{R}^n} e^{i\xi\cdot (x-y)}r(x,y,\xi)\,\mathrm{d}\xi \in \caS_{[\M]}(\mathbb{R}^{2n}).
	\end{equation}
	
	That is, $\mathrm{Op}(r)$ is $[\M]$-regularizing.
\end{lemma}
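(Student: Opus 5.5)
The proof will establish directly that $K$ satisfies the Gelfand--Shilov estimates of Proposition \ref{equiv_GS} on $\R^{2n}$. The key point is that $N$ in Lemma \ref{lemma_cd} is arbitrary: $r$ depends on $N$, but $\Op(r)=\Op(q)-\Op(\tilde q)$ does not. Hence $K$ is independent of $N$, and we may pick $N$ adapted to each derivative order and to the size of the point considered. This is the same mechanism used in Propositions \ref{exp_0_reg} and \ref{est_kernel}.

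\emph{First step: reduction to $|x-y|\lesssim\langle x\rangle$.} By construction, $q$ (hence $q_N$) vanishes unless $|x-y|\le k\langle x\rangle$, with the $\tilde q-\tilde q_N$ part treated directly. On this region $\langle x-y\rangle\lesssim\langle x\rangle$ and $\langle y\rangle\asymp\langle x\rangle$. The factor $\langle x-y\rangle^{|\beta|+|\gamma|+N}$ in \eqref{est_qq_N} is then absorbed by $\langle x\rangle^{-|\beta|-|\gamma|-N}$ up to $C^{|\beta|+|\gamma|+N}$. We get
\[
|\partial_\xi^\alpha\partial_x^\beta\partial_y^\gamma r|\le C h^{|\alpha|+|\beta|+|\gamma|+N}A_{|\alpha|}B_{|\beta|}B_{|\gamma|}A_NB_N\langle\xi\rangle^{\mu-|\alpha|-N}\langle x\rangle^{m},
\]
where I track the $N$-dependence of constants as $h^{2N}A_NB_N$. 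This is the content of the $\FS$-type remainder estimates (Proposition \ref{prop_p_pj}, Definition \ref{def_equiv_FS}), valid for $\langle\xi\rangle,\langle x\rangle\ge B a_N, Bb_N$. In the remaining region I would use the decay in $\langle x\rangle$ from the terms $\langle x\rangle^{m-N}$ before absorbing, keeping both $\langle\xi\rangle^{-N}$ and $\langle x\rangle^{-N}$ decays available separately.

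\emph{Second step: derivatives and decay of $K$.} I apply $x^{\beta'}y^{\gamma'}\partial_x^\beta\partial_y^\gamma$ to $K$. Derivatives falling on $e^{i\xi\cdot(x-y)}$ produce powers $\xi^{\delta}$, which are compensated by choosing $N\ge |\delta|+\mu+n+1$ so that the $\xi$-integral converges. Powers of $x,y$ are controlled by $\langle x\rangle^{-N}$ together with $\langle y\rangle\asymp\langle x\rangle$, and by integration by parts via $(x-y)^\theta e^{i\xi\cdot(x-y)}=D_\xi^\theta e^{i\xi\cdot(x-y)}$ when needed. The result is a bound of the form
\[
C h^{j}\frac{A_NB_N}{\langle x\rangle^{N}\langle\xi\rangle^N}\cdots,
\]
from which, as in Proposition \ref{exp_0_reg}, minimising over admissible $N$ with $\langle x\rangle\ge Bb_N$ via Lemma \ref{lemma_pran} yields $e^{-\omega_\M(\varepsilon|x|)}$. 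Since $\langle y\rangle\asymp\langle x\rangle$, this gives $e^{-\omega_\M(\varepsilon|x|)}e^{-\omega_\M(\varepsilon|y|)}$ via Proposition \ref{prop_KMR}. Derivative growth is bounded by $h^{|\beta|+|\gamma|}M_{|\beta|}M_{|\gamma|}$, using $\A,\B\preceq\M$ and Proposition \ref{MjMj-k} to combine factors $M_{|\delta|}$ from $\xi^\delta$ with the rest. This places $K$ in $\caS_{[\M]}(\R^{2n})$ by Proposition \ref{equiv_GS}, so $\Op(r)$ is $[\M]$-regularizing by the Remark following the definition of regularizing operators.

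\emph{Main obstacle.} The delicate step is the uniformity in $N$. Every constant arising in Lemmas \ref{est_FGH} and \ref{lemma_cd} must be of the form $C h^{N}A_NB_N$, with $C$ independent of $N$, including the factorials from the Taylor remainder and the $\langle x-y\rangle$ absorption. Only then does the optimisation over $N$ produce the exponential weight instead of mere rapid polynomial decay. I would make this explicit by rerunning the Leibniz bounds of Lemma \ref{est_FGH} with $\alpha!\le C h^{|\alpha|}A_{|\alpha|}$ (Proposition \ref{incl_quasi_factorial}). In the projective case, $h$ is taken arbitrarily small at each stage.
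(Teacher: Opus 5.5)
Your overall strategy (use that $\Op(r)=\Op(q)-\Op(\tilde q)$ does not depend on $N$, and optimise over $N$ as in Proposition \ref{exp_0_reg}) is a reasonable way to flesh out the paper's three-line argument. That argument simply takes $N$ large, differentiates under the integral, and reads off membership in $\caS_{[\M]}(\R^{2n})$ from \eqref{est_qq_N}. However, the mechanism you propose for the decay of $K$ does not work, for two reasons.

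First, $q_N$ is \emph{not} supported in $\{|x-y|\le k\langle x\rangle\}$. It is an average over $t\in[0,1]$ of $D_\xi^\delta D_y^\delta q$ evaluated at $(x,x+t(y-x),\xi)$. For small $t$ (of size $\langle x\rangle/|x-y|$) the point $x+t(y-x)$ is close to $x$, where $q$ does not vanish, however large $|x-y|$ is. The part $\tilde q-\tilde q_N$ does not depend on $y$ at all. So $\langle y\rangle\asymp\langle x\rangle$ is not available on the support of $r$, and your final passage from $e^{-\omega_\M(\varepsilon|x|)}$ to $e^{-\omega_\M(\varepsilon|x|)}e^{-\omega_\M(\varepsilon|y|)}$ has no basis.

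Second, even where $|x-y|\le k\langle x\rangle$, absorbing $\langle x-y\rangle^{|\beta|+|\gamma|+N}$ into $\langle x\rangle^{-|\beta|-|\gamma|-N}$ leaves you with only $\langle x\rangle^{m}$. You have thrown away exactly the $x$-decay that the optimisation over $N$ is supposed to turn into $e^{-\omega_\M(\varepsilon|x|)}$. So the bound $A_NB_N\langle x\rangle^{-N}\langle\xi\rangle^{-N}$ of your second step does not follow from your first step. Nor can you recover it "before absorbing", since $\langle x\rangle^{-N}\langle x-y\rangle^{N}$ does not decay when $|x-y|\gtrsim\langle x\rangle$.

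The missing idea is that in \eqref{est_qq_N} the $\xi$-derivatives cost no power of $\langle x-y\rangle$. So instead of absorbing $\langle x-y\rangle^{|\beta|+|\gamma|+N}$, remove it by integrating by parts in $\xi$. You can write $e^{i\xi\cdot(x-y)}=\langle x-y\rangle^{-2M}(1-\Delta_\xi)^M e^{i\xi\cdot(x-y)}$ with $2M$ large, or better use the operator $\mathcal{L}_\rho$ from the proof of Proposition \ref{est_kernel}. This keeps the factor $\langle x\rangle^{m-N-|\beta|-|\gamma|}$ and produces a factor decaying like $e^{-\omega_\M(\rho\langle x-y\rangle)}$. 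Decay in $y$ then comes from $|y|\le |x|+|x-y|$ and item (1) of Proposition \ref{prop_KMR}, with no comparison of $\langle y\rangle$ and $\langle x\rangle$ needed.

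Separately, your derivative bound is not justified as written. Integrability of $\xi^\delta$ forces $N\gtrsim|\delta|$, and the constant you carry then contains $A_NB_N$, a product of two sequences at the \emph{same} index. Proposition \ref{MjMj-k} only controls $M_kM_{j-k}$ along a splitting $j=k+(j-k)$, so it does not reduce this factor to $M_{|\delta|}$. You must say explicitly where the extra factor is absorbed; the paper's own sketch is silent on this point as well.
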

\begin{proof}
	If we choose $N\in\mathbb{N}$ sufficiently large, the integral in \eqref{kernel_r} exists for every $x,y\in\mathbb{R}^n$. 
	Again, since we can choose $N$ arbitrarily large, we can differentiate $K$ under the integral sign. 
	By \eqref{est_qq_N}, we find $K\in\caS_{[\M]}(\mathbb{R}^{2n})$, which implies that $\mathrm{Op}(r)$ is $[\M]$-regularizing. 
\end{proof}

Finally, we have that $\mathrm{Op}(q)-\mathrm{Op}(\tilde q)=0$ modulo $[\M]$-regularizing operators. Since $\tilde q\in {\SG}_{[\A,\B]}^{m,\mu}(\mathbb{R}^n)$, 
we conclude that $\tilde{P}_1$ is an operator with symbol in ${\SG}_{[\A,\B]}^{m,\mu}(\mathbb{R}^n)$, and Theorem \ref{coord_inv} is proved.


\section{Admissible analytic manifolds}\label{sec:compmfs}

In this section we introduce the class of analytic manifolds which admit atlases compatible with the ultradifferential extensions
of the $\SG$-calculus constructed and studied in the previous sections, adapting the approach in \cite{Schrohe86}. 

Let $X$ be a $n$-dimensional manifold. Let $\{\Omega_k^\sharp\}_{k=1}^N$ be a finite open cover of $X$, and let 
$\varphi_k^\sharp:\Omega_k^\sharp\to U_k^\sharp\subset\mathbb{R}^n$ be homeomorphisms. 
We say that $\{(\Omega_k,\varphi_k)\}_{k=1}^N$ \textit{has a good shrinking} if there is another finite open cover $\{\Omega_k\}_{k=1}^N$ of $X$ 
with $\Omega_k\subset\Omega_k^\sharp$, $k=1,\dots,N$, with homeomorphisms
$\varphi_k=\varphi_k^\sharp|_{\Omega_k}:\Omega_k\to U_k\subset U_k^\sharp$, and a constant $\delta>0$ such that
\[B(y,\delta\langle y\rangle)\subset U_k^\sharp,\quad\forall y\in U_k,\ k=1,\dots,N.\]

We say that an atlas $\mathscr{A}$ for $X$ is admissible, or \textit{$\mathrm{SGA}$-compatible}, if the three following conditions hold:
\begin{enumerate}
	\item[$(SGA1)$] $\mathscr{A}$ is finite and analytic;
	\item[$(SGA2)$] $\mathscr{A}$ has a good shrinking;
	\item[$(SGA3)$] the changes of coordinates among the local charts of $\mathscr{A}$ satisfy $(I1)$ from Theorem \ref{coord_inv}.
\end{enumerate}

If conditions $(SGA1)-(SGA3)$ hold, then the changes of coordinates will satisfy condition $(I2)$ from Theorem \ref{coord_inv}, too, 
see \cite[Lemma 3.3]{Schrohe86}. For simplicity, we will write $\varphi_k$ instead of $\varphi_k^\sharp$.

We say that $X$ is an SGA-manifold if it admits an $\mathrm{SGA}$-compatible atlas. Trivially, $\R^n$ is an SGA-manifold. 
In the next subsection we present a relevant subclass of SGA-manifolds. In the next section, we show that also on the so-called
\textit{scattering (or asymptotically Euclidean) manifolds} (see \cite{Melrose_GST}) the same construction is possible, of course
under the hypotheses that the underlying manifold us analytic.

\subsection{Manifolds with ends}\label{subs:mwe}
Manifolds with ends are a well-known class of non-compact manifolds (see, e.g., \cite{Melrose_APS,Melrose_GST,MP2002,Schrohe86}), admitting 
$\SG$-admissible atlases. We here show that they also admit SGA-compatible atlases.

\begin{definition}\label{def_mfwe}
	Let $X$ be an $n$-dimensional analytic manifold that can be decomposed into
	\begin{equation}\label{eq:mwe}
		X=X_0\cup E,
	\end{equation}
	where:
	\begin{enumerate}
		\item $X_0$ is a $n$-dimensional, compact, submanifold of $X$ with boundary $\partial X_0$;
		
		\item the boundary $\partial X_0$ is a ($(n-1)$-dimensional, analytically embedded) connected submanifold of $X$;
		
		\item $E$ is a $n$-dimensional submanifold with boundary that is analytically diffeomorphic to $[1,+\infty)\times \partial X_0$,
		its boundary being identified with $\{1\}\times\partial X_0$;
		
		\item $\cup$ in \eqref{eq:mwe} means gluing by identification along the boundaries.
	\end{enumerate}
	
	Then, $X$ is called a manifold with one cylindrical end.
\end{definition}

\begin{center}
	\includegraphics[scale=0.3]{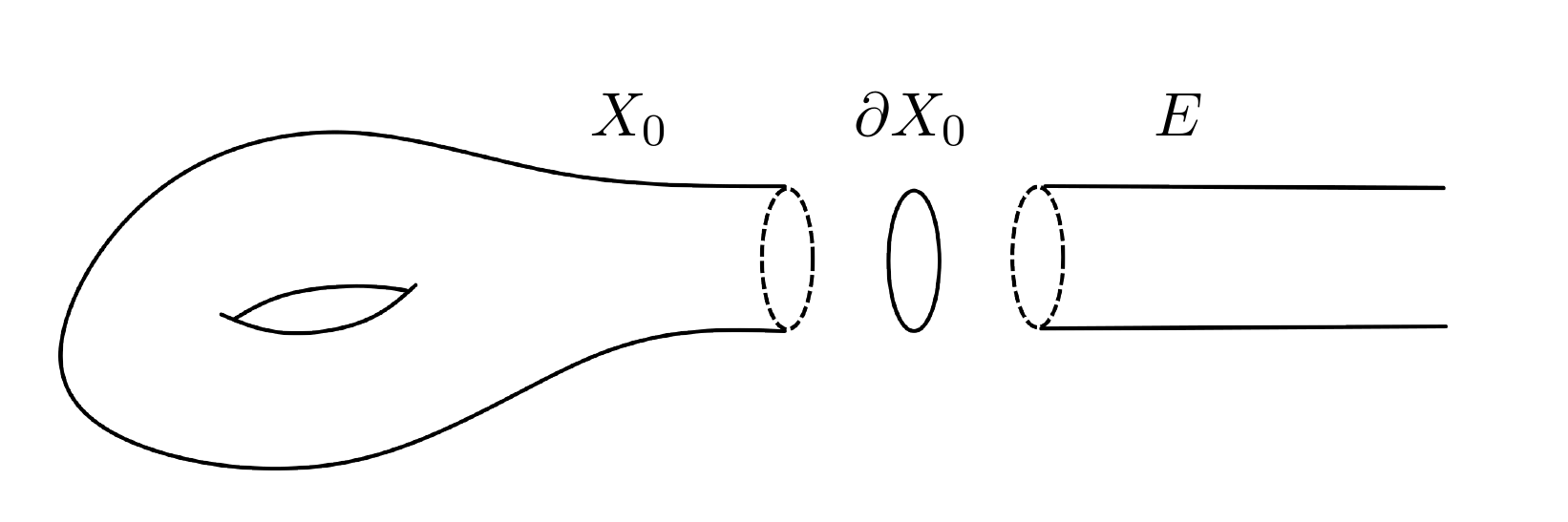}
	
	\textbf{Figure 1:} A manifold with one cylindrical end.
\end{center}

Let $\mathscr{A}=\{(\Omega_k^\sharp,\varphi_k)\}_{k\in K}$ be an analytic atlas on $X$ and $\mathscr{A}_0=\{(\Omega_k^\sharp,\varphi_k)\}_{k=1}^{N}$ be a finite subset of $\mathscr{A}$ consisting of local charts that cover the compact submanifold with boundary $X_0\cup\{\partial X_0\times [1,3]\}$. By shrinking the domains if necessary, we can assume that the $\Omega_k^\sharp$ are relatively compact.

Since $X_0$ is embedded, for each $p\in\partial X_0$, we can find $k\in\{1,\dots, N\}$ such that $p\in\Omega_k^\sharp$, and a subset $\Omega_k^0\subset \Omega_k^\sharp$ such that the restriction $\varphi_k|_{\Omega_k^0\cap\partial X_0}$ is a chart for $\partial X_0$. In this way, we can construct an atlas for $\partial X_0$, which is finite and analytic, in view of the fact that $\partial X_0$ is compact and the embedding in $X$ is analytic. We denote this atlas by $\tilde{\mathscr{A}}=\{(\tilde{\Omega}^\sharp_k,\tilde{\varphi}_k)\}_{k=1}^{N_0}$, for a suitable $N_0\in\mathbb{N}$.

Employing the atlas $\tilde{\mathscr{A}}$ on $\partial X_0$, we can introduce coordinates on the interior of $E$, identified with $(1,+\infty)\times \partial X_0$, as follows:
\[
	\psi_k: (1,+\infty)\times\tilde{\Omega}_k^\sharp \to \mathbb{R}^n,\quad \psi_k(t,y)=(t,t\varphi_j(y)),k=1,\dots,N_0.
\]

By compactness, for each $k=1,\dots,N_0$, we can obtain an open subset $\tilde{\Omega}_k\subset\tilde{\Omega}_k^\sharp$ such that $\{\tilde{\Omega}_k\}_{k=1}^{N_0}$ still covers $\partial X_0$ and each $\tilde{U}_k^\sharp=\tilde{\varphi}_k(\tilde{\Omega}_k^\sharp)$ contains an $\varepsilon$-neighbourhood of $\tilde{U}_k=\tilde{\varphi}_k(\tilde{\Omega}_k)$, for some fixed $\varepsilon>0$. Then, $(2,+\infty)\times \tilde{\Omega}_k$ is a \textit{good shrinking} of $(1,+\infty)\times\tilde{\Omega}_k^\sharp$. For the remaining charts, we obtain the \textit{good shrinking} by using compactness arguments. Hence, we have proved that 
the constructed atlas fulfills properties (SGA1) and (SGA2).

Arguing as in \cite[Example 3.4]{Schrohe86}, if $y=(\tilde{\varphi}_j\circ\tilde{\varphi}_k^{-1})(x)$ is a change of coordinates on $\partial X_0$, the corresponding coordinate changes on $E$ are of the form
\[(t,ty)\mapsto (t,x(y)t),\]
and the corresponding derivatives satisfy (SGA3), where the $\langle x\rangle$-estimates are given by the derivatives on $t$ and the 
constants $C,h>0$ comes from the analyticity of $x(y)$. The unbounded charts on $E$ are called \textit{exit charts}.

Finally, if $\varphi_k$ is a bounded analytic chart and $\psi_\ell$ is a coordinate map on the end $E$, in the intersection of the domains, which is contained in $E$, we can write $\varphi_k=(\varphi_{k,1},\varphi_{k,2})$, where $\varphi_{k,1}$ and $\varphi_{k,2}$ are analytic in an open subset of $\mathbb{R}$ and $\partial X_0$, respectively. Hence, it is clear that $\varphi_k\circ\psi_\ell^{-1}$ and $\psi_\ell\circ\varphi_k^{-1}$ are analytic. Since the intersection of the domains in this case is bounded, the requested estimates trivially hold true in this case as well. Then, (SGA3) also holds, and the constructed atlas is SGA-compatible.

The construction above extends to $n$-dimensional analytic manifolds $X$ that can be decomposed as a union
\begin{equation}\label{eq:mwne}
	X=X_0\cup E_1\cup\cdots\cup E_m,
\end{equation}
where $X_0$ is a $n$-dimensional, compact submanifold with boundary such that
\[\partial X_0 = \partial E_1\cup\cdots\cup\partial E_m,\]
where the boundary components $\partial E_1,\cdots,\partial E_m$ are disjoint, analytically embedded, compact, connected, $(n-1)$-dimensional submanifolds, each $E_\ell$ is 
analytically diffeomorphic to $ [1,+\infty)\times\partial E_\ell$, its boundary identified with $\{1\}\times \partial E_\ell$, $\ell=1,\dots,m$, and $\cup$
in \eqref{eq:mwne} means gluing along the corresponding boundary components. In this case, we say that $m$ is the number of ends of $X$.

\subsection{Partition of unity} 
We show how to construct a suitable partition of unity for an admissible atlas $\mathscr{A} = \{(\Omega_k^\sharp,\varphi_k^\sharp)\}_{k=1}^N$,
compatible with the analytic structure and the ultradifferential extension of the SG-calculus. 

With $\B=\Bj$ a non-quasianalytic weight sequence, choose a function $\Phi \in \mathcal{D}_{[\B]} (\mathbb{R}^n)$ such that $\int\Phi=1$.
For each $\varepsilon>0$, we set
\[\Phi_\varepsilon(x) = \varepsilon^{-n}\Phi(x/\varepsilon).\]

Then, each $\Phi_\varepsilon$ also belongs to $\mathcal{D}_{[\B]}(\R^n)$ and satisfies $\int\Phi_\varepsilon=1$. Given an open set $U\subset\mathbb{R}^n$ and a constant $\delta>0$, denote by $U_\delta$ the set $\bigcup_{x\in U}B(x,\delta\langle x\rangle)$, where $B(x,\delta\langle x\rangle)$ is the open ball centered in $x\in U$ with radius $\delta\langle x\rangle$.

\begin{lemma}\label{lem:anpartun}
	Let $U\subset\mathbb{R}^n$ be an open subset and $\chi_U$ be its characteristic function. Define $\Phi_U:(0,+\infty)\times\mathbb{R}^n\to\mathbb{R}$ by
	\[\Phi_U(\varepsilon,x) = \chi_U * \Phi_\varepsilon(x) = \int_{\mathbb{R}^n}\chi_U(y)\Phi_\varepsilon(x-y)\,\mathrm{d}y.\]
	Then, the following properties hold true:
	\begin{enumerate}[(a)]
		\item for every fixed $\varepsilon>0$, $\Phi_U(\varepsilon,\cdot)\in\mathcal{E}_{[\B]}(\mathbb{R}^n)$ and $0\leq\Phi_U(\varepsilon,x)\leq 1$ for all $x\in\mathbb{R}^n$;
		\item $\Phi_U(\varepsilon,0)=0$, if $\mathrm{dist}(x,U)\geq\varepsilon\langle x\rangle$;
		\item $\Phi_U(\varepsilon,x)=1$, if $B(x,\varepsilon\langle x\rangle)\subset U$;
		\item for every fixed $\varepsilon>0$, there exist $C,h>0$ (respectively, for every $h>0$, there exists $C>0$) such that
		\begin{equation}\label{est_phiU}
			|\partial_x^\beta\Phi_U(x)|\leq Ch^{|\alpha|}B_{|\beta|} \langle x \rangle^{-|\beta|},
		\end{equation}
		for all $\beta\in\mathbb{N}_0^n$. 
	\end{enumerate}
\end{lemma}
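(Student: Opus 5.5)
Throughout, I fix $\Phi\in\mathcal{D}_{[\B]}(\R^n)$ with $\Phi\ge 0$, $\mathrm{supp}(\Phi)\subset \overline{B(0,1)}$ and $\int\Phi=1$. Such a $\Phi$ exists since $\B$ is non-quasianalytic: take the square of a real test function and normalise, using Proposition \ref{E_cl_prod_diff}. Then $\Phi_\varepsilon\ge0$, $\mathrm{supp}(\Phi_\varepsilon)\subset\overline{B(0,\varepsilon)}$ and $\int\Phi_\varepsilon=1$.

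\textbf{Properties (a)--(c).} Since $0\le\chi_U\le1$ and $\Phi_\varepsilon\ge0$ has unit integral, $0\le\Phi_U(\varepsilon,x)\le 1$. Smoothness, and the fact that $\Phi_U(\varepsilon,\cdot)\in\mathcal{E}_{[\B]}(\R^n)$, follow from moving derivatives onto the mollifier:
\[
\partial_x^\beta\Phi_U(\varepsilon,x)=\int\chi_U(y)\,(\partial^\beta\Phi_\varepsilon)(x-y)\,\mathrm{d}y,\qquad (\partial^\beta\Phi_\varepsilon)(z)=\varepsilon^{-n-|\beta|}(\partial^\beta\Phi)(z/\varepsilon).
\]
Together with $|\partial^\beta\Phi|\le C_\Phi h_\Phi^{|\beta|}B_{|\beta|}$ (for some $h_\Phi$, respectively for every $h_\Phi$), this gives the global bound
\[
|\partial_x^\beta\Phi_U(\varepsilon,x)|\le C_\Phi\,\mathrm{vol}(B(0,1))\,(h_\Phi/\varepsilon)^{|\beta|}B_{|\beta|},\qquad x\in\R^n,
\]
which in particular yields (a).

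For (b), which I read as $\Phi_U(\varepsilon,x)=0$: if $\mathrm{dist}(x,U)\ge\varepsilon\langle x\rangle\ge\varepsilon$, then $B(x,\varepsilon)\cap U=\emptyset$. Hence the integrand $\chi_U(y)\Phi_\varepsilon(x-y)$ vanishes identically and $\Phi_U(\varepsilon,x)=0$.

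For (c): if $B(x,\varepsilon\langle x\rangle)\subset U$, then $B(x,\varepsilon)\subset U$, so $\chi_U\equiv1$ on the support of $\Phi_\varepsilon(x-\cdot)$, and $\Phi_U(\varepsilon,x)=\int\Phi_\varepsilon=1$.

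\textbf{Property (d).} Here I would use (b) and (c) to localise the estimate. For $|\beta|\ge1$, $\partial^\beta_x\Phi_U(\varepsilon,x)$ vanishes wherever $\Phi_U(\varepsilon,\cdot)$ is locally constant. By the argument above, this happens at every $x$ with $B(x,\varepsilon)\subset U$ or $B(x,\varepsilon)\cap U=\emptyset$. So the support of $\partial^\beta\Phi_U(\varepsilon,\cdot)$ is contained in the $\varepsilon$-neighbourhood $N_\varepsilon(\partial U)$ of $\partial U$. On that set I would simply write
\[
(h/\varepsilon)^{|\beta|}=(h/\varepsilon)^{|\beta|}\langle x\rangle^{|\beta|}\langle x\rangle^{-|\beta|}.
\]
This gives \eqref{est_phiU} with a new $h$, provided $\langle x\rangle$ stays bounded on $N_\varepsilon(\partial U)$. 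That is the case for the sets to which the lemma is applied when $\partial U$ is bounded, e.g.\ for chart domains $U_k$ relatively compact in $\R^n$.

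\textbf{Main obstacle.} I expect the main obstacle to be exactly this weight $\langle x\rangle^{-|\beta|}$ in (d) when $\partial U$ is unbounded. The fixed-scale convolution only produces derivative bounds of size $\varepsilon^{-|\beta|}$, uniformly in $x$. For instance, for a half-space $U$ and $x$ on $\partial U$, one has $|\nabla\Phi_U(\varepsilon,x)|\sim\varepsilon^{-1}$ for arbitrarily large $|x|$. So the decay factor cannot come from the convolution structure itself.

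In the write-up I would therefore prove (a)--(c) in general. I would prove (d) in the form above, with constants depending on $\varepsilon$ and on a bound for $\langle x\rangle$ over $N_\varepsilon(\partial U)$. I would state explicitly that for unbounded $\partial U$ only the uniform bound $C(h/\varepsilon)^{|\beta|}B_{|\beta|}$ is available from $\chi_U*\Phi_\varepsilon$. In that case the SG-type decay needed later, for the exit charts, has to be obtained by rescaling the mollifier radius proportionally to $\langle x\rangle$; this is consistent with the $\varepsilon\langle x\rangle$ appearing in (b) and (c).
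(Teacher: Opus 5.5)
Your proof of (a)--(c) is the paper's. The paper simply invokes the smoothing property of convolution and calls (b) and (c) straightforward. You are right to make explicit the normalisations $\Phi\ge 0$ and $\mathrm{supp}(\Phi)\subset\overline{B(0,1)}$: the paper uses them tacitly, and they are needed for $0\le\Phi_U\le 1$ and for (b), (c). Your reading of (b) as $\Phi_U(\varepsilon,x)=0$ is the intended one, and $h^{|\alpha|}$ in \eqref{est_phiU} should be $h^{|\beta|}$.

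On (d) your objection is correct, and the paper's argument does not survive it. The paper obtains $|\partial^\beta\Phi_U(x)|\le C_\varepsilon\sup_z|\partial^\beta\Phi_\varepsilon(z)|$, which is the same $x$-uniform bound you derive. It then declares it ``enough to show that $\Phi_\varepsilon$ satisfies \eqref{est_phiU}'', appealing to the decay of $\Phi_\varepsilon$ in its own variable. That is a non sequitur: in the convolution the argument $x-y$ of $\Phi_\varepsilon$ stays in a fixed ball, and after the supremum over $z$ no dependence on $x$ remains.

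Your half-space example settles the matter, and it does not depend on the choice of $\Phi$. For $U=\{y_1>0\}$ one has $\Phi_U(\varepsilon,x)=\int_{z_1<x_1}\Phi_\varepsilon(z)\,dz$, so $\partial_{x_1}\Phi_U(\varepsilon,x)=g(x_1)$ with $g(t)=\int_{\R^{n-1}}\Phi_\varepsilon(t,z')\,dz'$. Since $\int g=1$, there is $s_0$ with $g(s_0)\neq 0$. Then $|\partial_{x_1}\Phi_U|=|g(s_0)|$ on the whole hyperplane $\{x_1=s_0\}$, which contradicts \eqref{est_phiU} for $|\beta|=1$. Hence, with the definition as written, your bounded-$\partial U$ version is the true content of (d).

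This does not cover the use made of the lemma in Theorem \ref{thm_part}. There it is applied to $(U_k)_\varepsilon$ for exit charts, whose boundaries are unbounded. So Corollary \ref{coro_Phi}(a) and \eqref{est_Phij} are not justified by the paper's proof either.

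The repair you indicate is the right one: mollify at scale $\varepsilon\langle x\rangle$, that is, use $\int\chi_U(y)\,\Phi_{\varepsilon\langle x\rangle}(x-y)\,dy$. The radius $\varepsilon\langle x\rangle$ in (b) and (c) suggests this was intended, and with it (b) and (c) hold exactly as stated. Each $x$-derivative then falls either on $(x-y)/(\varepsilon\langle x\rangle)$ or on the prefactor $(\varepsilon\langle x\rangle)^{-n}$. On the support $|x-y|\le\varepsilon\langle x\rangle$ both cost a factor $\langle x\rangle^{-1}$, and the support's volume, of order $(\varepsilon\langle x\rangle)^{n}$, compensates the prefactor. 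A Fa\`a di Bruno argument combined with $\{j!\}_{j}\prec\B$ (Proposition \ref{incl_quasi_factorial}) should then give \eqref{est_phiU}. To have (d) in the generality the paper needs, you must carry out that computation rather than only mention it.
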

\begin{proof}
	Point (a) is consequence of the regularizing property of the convolution, while Points (b) and (c) are straightforward. 
	Concerning Point (d), since $\Phi_\varepsilon$ is compactly supported, we have
	\begin{align*}
		|\partial_x^\beta\Phi_U(x)| & \leq \int_{\mathbb{R}^n}\chi_U(y)|\partial_x^\alpha \Phi_\varepsilon(x-y)|\,\mathrm{d}y\\
		& \leq \sup_{y\in\mathbb{R}^n}|\partial_x^\alpha\Phi_\varepsilon(x-y)|\int_{\text{supp}(\Phi_\varepsilon(x-y))}\,\mathrm{d}y\\
		& \leq C_\varepsilon\sup_{x\in\mathbb{R}^n}|\partial_x^\alpha\Phi_\varepsilon(x)|,
	\end{align*}
	for all $\beta\in\N_0^n$, where $C_\varepsilon>0$ is the volume of the support of $\Phi_\varepsilon$. 
	Hence, it is enough to show that $\Phi_\varepsilon$ satisfies \eqref{est_phiU}. 
	Recall that $\Phi_\varepsilon$ decays exponentially. Then, for every $\beta\in\mathbb{N}_0^n$, we have that 
	$|\partial_x^{\beta}\Phi_\varepsilon|\leq \langle x\rangle^{-|\beta|}$ outside a fixed compact set independent of $\beta$, while inside such compact set we have
	\[|\partial_x^{\beta}\Phi_\varepsilon|\leq Ch^{|\beta|}B_{|\beta|},\]
	since $\Phi_\varepsilon\in\mathcal{E}_{[\B]}(\R^n)$. Combining both estimates, we obtain \eqref{est_phiU}.
\end{proof}

Given an open subset $U\subset\mathbb{R}^n$ and a constant $\varepsilon>0$, set $U_\varepsilon=\bigcup_{x\in U}B(x,\varepsilon\langle x\rangle)$. 
The next Corollary \ref{coro_Phi} is a straightforward consequence of Lemma \ref{lem:anpartun}.

\begin{corollary}\label{coro_Phi}
	Let $\Phi(x)=\Phi_{U_\varepsilon}(\varepsilon,x)$, for some open subset $U\subset\mathbb{R}^n$ and a constant $\varepsilon>0$. 
	Then, the following properties hold true:
	\begin{enumerate}[(a)]
		\item $\Phi\in {\SG}_{[\A,\B]}^{0,0}(\mathbb{R}^n)$ and $0\leq\Phi\leq 1$;
		\item $\Phi\equiv 1$ on $U$ and $\Phi\equiv 0$ outside $U_{2\varepsilon}$.
	\end{enumerate}
\end{corollary}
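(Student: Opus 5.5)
The plan is to apply Lemma \ref{lem:anpartun} with $U$ replaced by $U_\varepsilon$, keeping the same fixed $\varepsilon>0$ in the mollifier, so that $\Phi=\chi_{U_\varepsilon}*\Phi_\varepsilon$. Along the way I would use one feature of the construction more precisely than the lemma records it. We may choose $\Phi\in\mathcal{D}_{[\B]}(\R^n)$ with $\Phi\geq 0$, $\int\Phi=1$ and $\mathrm{supp}(\Phi)\subset\overline{B(0,1)}$. Then $\mathrm{supp}(\Phi_\varepsilon)\subset\overline{B(0,\varepsilon)}$, a ball of \emph{fixed} radius $\varepsilon\leq\varepsilon\langle x\rangle$. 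From the convolution formula this gives two facts: $\Phi(x)=1$ whenever $B(x,\varepsilon)\subset U_\varepsilon$, and $\Phi(x)=0$ whenever $\mathrm{dist}(x,U_\varepsilon)\geq\varepsilon$. For the second, the integrand is then supported in $\overline{B(x,\varepsilon)}\cap U_\varepsilon$, which is contained in the sphere $\partial B(x,\varepsilon)$ and hence has measure zero. These facts refine points (b) and (c) of Lemma \ref{lem:anpartun}, since $\langle x\rangle\geq 1$.

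For part (a), $0\leq\Phi\leq 1$ is Lemma \ref{lem:anpartun}(a), because $\Phi_\varepsilon\geq0$, $\int\Phi_\varepsilon=1$ and $0\leq\chi_{U_\varepsilon}\leq1$. Since $\Phi$ does not depend on $\xi$, all $\xi$-derivatives of positive order vanish. For $\alpha=0$, the quantity defining $\|\Phi\|_h$ in $\SG^{0,0}_{\A,\B}(\R^n;h)$ therefore reduces to $\sup_{\beta,x} h^{|\beta|}B_{|\beta|}^{-1}\langle x\rangle^{|\beta|}|\partial_x^\beta\Phi(x)|$. Estimate \eqref{est_phiU} of Lemma \ref{lem:anpartun}(d), applied to $U_\varepsilon$, bounds this by $C$ for a suitable $h$ in the Roumieu case, and for every $h$ in the Beurling case. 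This gives $\Phi\in\SG^{0,0}_{[\A,\B]}(\R^n)$. The estimate has to be taken from Lemma \ref{lem:anpartun}(d) rather than from the fixed-radius support: the support argument alone only gives uniform Gevrey-type bounds without the $\langle x\rangle^{-|\beta|}$ factor.

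For part (b), I treat the two claims separately.

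\emph{$\Phi\equiv1$ on $U$.} If $x\in U$, then $B(x,\varepsilon)\subset B(x,\varepsilon\langle x\rangle)\subset U_\varepsilon$ by the definition of $U_\varepsilon$, so $\Phi(x)=1$.

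\emph{$\Phi\equiv0$ outside $U_{2\varepsilon}$.} This is the step where the constant matters, and using the fixed radius $\varepsilon$ (rather than $\varepsilon\langle x\rangle$) is exactly what gives the factor $2$. Argue by contraposition: suppose $\Phi(x)\neq 0$. Then $\mathrm{dist}(x,U_\varepsilon)<\varepsilon$, so there is $z\in U_\varepsilon$ with $|x-z|<\varepsilon$. By definition of $U_\varepsilon$, there is $y\in U$ with $|z-y|<\varepsilon\langle y\rangle$. Hence
\[
|x-y|<\varepsilon+\varepsilon\langle y\rangle\leq 2\varepsilon\langle y\rangle,
\]
using $\langle y\rangle\geq1$. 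Thus $x\in B(y,2\varepsilon\langle y\rangle)\subset U_{2\varepsilon}$. Consequently $\Phi(x)=0$ for every $x\notin U_{2\varepsilon}$, which completes the proof.
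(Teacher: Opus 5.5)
Your arguments for $0\le\Phi\le 1$ and for both claims in (b) are correct for the fixed-width function $\Phi=\chi_{U_\varepsilon}*\Phi_\varepsilon$. Your route is also the one the paper intends, since it calls the corollary a straightforward consequence of Lemma~\ref{lem:anpartun}.

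The gap is the ${\SG}^{0,0}_{[\A,\B]}(\R^n)$ estimate in (a). You rightly note that a mollifier of fixed radius produces no factor $\langle x\rangle^{-|\beta|}$, but you then import that factor from Lemma~\ref{lem:anpartun}(d). For fixed width, (d) is false: its printed proof only yields the uniform bound $C_\varepsilon\sup|\partial^\beta\Phi_\varepsilon|$. Part (a) is false too. Take $n=1$, $U=\bigcup_{k\ge1}(10^k,10^k+1)$ and $0<\varepsilon<1/2$. Then $U_\varepsilon$ is the union of the open intervals $I_k$ with endpoints $10^k-\varepsilon\langle 10^k\rangle$ and $10^k+1+\varepsilon\langle 10^k+1\rangle$. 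These have length about $2\varepsilon\,10^k$ and are separated by gaps of length about $(9-11\varepsilon)10^k$. If $a_k$ is the left endpoint of $I_k$, then $\Phi(a_k-\varepsilon)=0$ and $\Phi(a_k+\varepsilon)=1$ for $k$ large. The mean value theorem therefore gives $t_k\to+\infty$ with $\Phi'(t_k)=1/(2\varepsilon)$, which is incompatible with $|\Phi'(x)|\le C\langle x\rangle^{-1}$. The same happens for the conic exit-chart domains of Section~\ref{subs:mwe}, which is exactly where Theorem~\ref{thm_part} needs this corollary.

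The missing idea is that the mollification width has to grow like $\langle x\rangle$. Put $\Phi(x)=\Phi_{U_\varepsilon}(\varepsilon\langle x\rangle,x)=\int\chi_{U_\varepsilon}(y)\,\Phi_{\varepsilon\langle x\rangle}(x-y)\,\mathrm{d}y$. This is presumably why $\Phi_U$ is defined on $(0,+\infty)\times\R^n$ and why (b), (c) of the lemma use the radius $\varepsilon\langle x\rangle$. Scaling gives $|\partial_s^k\partial_x^\beta\Phi_{U_\varepsilon}(s,x)|\le Ch^{k+|\beta|}B_{k+|\beta|}s^{-k-|\beta|}$, where the factorial-type factors generated by $\partial_s$ are absorbed via Proposition~\ref{incl_quasi_factorial}. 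Fa\`a di Bruno with the analytic function $s=\varepsilon\langle x\rangle$ then gives \eqref{est_phiU}.

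Your proof that $\Phi\equiv 1$ on $U$ goes through unchanged, but your support argument does not. Now $|x-y|<\varepsilon\langle x\rangle+\varepsilon\langle y\rangle\le 2\varepsilon\langle y\rangle+\varepsilon|x-y|$, so you only get $\Phi\equiv 0$ outside $U_{2\varepsilon/(1-\varepsilon)}$. The factor $2$ is lost in general: test a small ball $U$ far from the origin, with a mollifier positive on the open unit ball. This weaker statement is still enough for Theorem~\ref{thm_part} (take $\varepsilon=\delta/3$ with $\delta<1$). But it shows that the constant $2$ you extracted comes from exactly the fixed width that makes (a) fail.
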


\begin{theorem}\label{thm_part}
	Let $X$ be a manifold with an SGA-admissible atlas $\mathscr{A}=\{(\Omega_k^\sharp,\varphi_k)\}_{k=1}^N$. 
	Then, there exists a partition of unity $\{\Phi_k\}_{k=1}^N$ of $X$, subordinate to the cover $\{\Omega_k^\sharp\}_{k=1}^N$, 
	such that, in local coordinates, it holds
	\begin{equation}\label{est_Phij}
		|\partial_x^\beta (\Phi_k)_*(x)|\leq Ch^{|\beta|}B_{|\beta|}\langle x\rangle^{-|\beta|},
	\end{equation}
	that is, $(\Phi_k)_*\in {\SG}_{[\A,\B]}^{0,0}(\R^n)$, where
	\begin{equation*}
		(\Phi_k)_*(x) = \begin{cases}
			\Phi_k(\varphi_k^{-1}(x)),& x\in U_k^\sharp\\
			0,& x\notin U_k^\sharp,
		\end{cases}
	\end{equation*}
\end{theorem}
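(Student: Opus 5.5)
The plan follows Schrohe's construction of partitions of unity on SG-manifolds, with the smooth cutoffs replaced by the ultradifferentiable ones of Corollary \ref{coro_Phi}.

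\textbf{Step 1: local cutoffs.} By (SGA2) the atlas has a good shrinking $\{\Omega_k\}$ with constant $\delta>0$. Set $U_k=\varphi_k(\Omega_k)$ and fix $\varepsilon=\delta/4$. For each $k$ define
\[
  \psi_k=\Phi_{(U_k)_\varepsilon}(\varepsilon,\cdot).
\]
By Corollary \ref{coro_Phi}, $\psi_k\in\SG^{0,0}_{[\A,\B]}(\R^n)$, $0\leq\psi_k\leq1$, $\psi_k\equiv1$ on $U_k$, and $\psi_k\equiv0$ outside $(U_k)_{2\varepsilon}\subset U_k^\sharp$. The last inclusion uses $B(y,\delta\langle y\rangle)\subset U_k^\sharp$ together with $\langle x\rangle\asymp\langle y\rangle$ for $|x-y|\leq 2\varepsilon\langle y\rangle$, which holds for small $\varepsilon$. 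Pull back to $X$ by setting $\theta_k=\psi_k\circ\varphi_k$ on $\Omega_k^\sharp$ and $\theta_k=0$ elsewhere. Then $\theta_k$ is smooth, because its support in the chart is at positive relative distance from the boundary of $U_k^\sharp$. It is also ultradifferentiable on $X$, by Theorem \ref{inv_analytic}. Finally $\theta_k\equiv1$ on $\Omega_k$, so $\Theta=\sum_k\theta_k\geq1$ on $X$.

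\textbf{Step 2: invariance under coordinate changes.} In each chart $U_j^\sharp$ we need every $(\theta_k)_*=\psi_k\circ(\varphi_k\circ\varphi_j^{-1})$ to satisfy the $\SG^{0,0}$-type estimate \eqref{est_Phij} on the overlap. This is the main obstacle. By (SGA3) the transition map $\kappa=\varphi_k\circ\varphi_j^{-1}$ satisfies (I1), that is, $|\partial^\alpha\kappa(x)|\leq Ch^{|\alpha|}\alpha!\langle x\rangle^{1-|\alpha|}$. We also have $\langle\kappa(x)\rangle\asymp\langle x\rangle$, and $|\partial^\gamma\psi_k(z)|\leq Ch^{|\gamma|}B_{|\gamma|}\langle z\rangle^{-|\gamma|}$.

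I would apply the multivariate Fa\`a di Bruno formula (the several-variable analogue of Lemma \ref{faa}). Each term is a product of a derivative $\partial^\gamma\psi_k(\kappa(x))$ of order $|\gamma|=s$ with $s$ derivatives of $\kappa$ of orders $\ell_1,\dots,\ell_s$, where $\sum\ell_i=|\beta|$. Each such term is bounded by
\[
  B_s\,\langle x\rangle^{-s}\prod_i \ell_i!\,\langle x\rangle^{1-\ell_i}
  = B_s\prod_i\ell_i!\;\langle x\rangle^{-|\beta|},
\]
so the powers of $\langle x\rangle$ combine exactly to $\langle x\rangle^{-|\beta|}$. To control the combinatorial factor, note that Proposition \ref{incl_quasi_factorial} gives $\ell!\leq C'h'^{\ell}B_\ell$. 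Then Lemma \ref{lemma_prod_M}, applied to $\B$, bounds $B_{s}\prod B_{\ell_i}$ by $B_{|\beta|}$. The remaining coefficients are summed with Lemma \ref{lemma_sum}, which produces a factor of the form $h''^{|\beta|}$. The result is
\[
  |\partial^\beta(\theta_k)_*(x)|\leq Ch^{|\beta|}B_{|\beta|}\langle x\rangle^{-|\beta|}.
\]
In the projective case the constants $h$ are chosen small at each stage.

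\textbf{Step 3: normalization.} Set $\Phi_k=\theta_k/\Theta$. In any chart, $\Theta_*$ is a finite sum of functions of $\SG^{0,0}$-type with $1\leq\Theta_*\leq N$. Hence $1/\Theta_*$ satisfies the same estimates. To see this, write $1/\Theta_*=g\circ\Theta_*$, where $g(t)=1/t$ is analytic on $[1,N]$ with $|g^{(\ell)}|\leq \ell!$, and repeat the Fa\`a di Bruno argument of Step 2 (alternatively, adapt Lemma \ref{lemma_1.3.5}). The Leibniz rule, with binomial sums absorbed into $2^{|\beta|}$ and the inequality $B_{|\gamma|}B_{|\beta-\gamma|}\leq B_{|\beta|}$ from Proposition \ref{MjMj-k}, then gives \eqref{est_Phij} for $(\Phi_k)_*$. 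Moreover $\mathrm{supp}\,\Phi_k\subset\Omega_k^\sharp$ and $\sum_k\Phi_k=1$, so $\{\Phi_k\}$ is the required partition of unity.
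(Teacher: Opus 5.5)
Your construction is the paper's: the cutoffs $\Phi_{(U_k)_\varepsilon}(\varepsilon,\cdot)$ of Corollary \ref{coro_Phi}, pulled back and normalized by their sum, with \eqref{est_Phij} obtained from (I1). The paper gives no more detail than that. However, the bookkeeping you supply for Step 2, the step you rightly single out, does not close. Work in one variable for simplicity, with $k=|\beta|$, $\gamma\in\Delta(k)$ and $s=|\gamma|$. The Fa\`a di Bruno coefficient attached to the raw product $\psi_k^{(s)}(\kappa)\prod_i\kappa^{(\ell_i)}$ is $\frac{k!}{\gamma!\prod_\ell(\ell!)^{\gamma_\ell}}$. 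Its denominator $\prod_\ell(\ell!)^{\gamma_\ell}=\prod_i\ell_i!$ is exactly what should cancel the factorials coming from (I1). You instead turn each $\ell_i!$ into $B_{\ell_i}$ and absorb $B_s\prod_iB_{\ell_i}$ into $B_k$ by Lemma \ref{lemma_prod_M}. What is left is $\sum_{\gamma}\frac{k!}{\gamma!\prod_\ell(\ell!)^{\gamma_\ell}}R^{|\gamma|}=\sum_s S(k,s)R^s$, with $S(k,s)$ the Stirling numbers of the second kind. For even $k$ this sum contains the term $\frac{k!}{(k/2)!\,2^{k/2}}R^{k/2}$, so it is not $O(C^k)$ for any fixed $R>0$. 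Lemma \ref{lemma_sum} bounds a different sum, $\sum_\gamma\frac{|\gamma|!}{\gamma!}R^{|\gamma|}$. Absorbing the surplus $\frac{k!}{s!\prod_i\ell_i!}$ into $B_k/(B_s\prod_iB_{\ell_i})$ would need an $(M4)$-type property of $\B$, which is not assumed. The repair is to let the factorials cancel. With $|\kappa^{(\ell)}/\ell!|\le Ch^\ell\langle x\rangle^{1-\ell}$, each term is controlled by
\[
\frac{k!}{\gamma!}B_s=\frac{s!}{\gamma!}\binom{k}{s}(k-s)!\,B_s\le C'\,\frac{s!}{\gamma!}\,2^k h'^{\,k-s}B_{k-s}B_s\le C'\,\frac{s!}{\gamma!}\,2^k h'^{\,k-s}B_k ,
\]
using Proposition \ref{incl_quasi_factorial} once and Proposition \ref{MjMj-k}. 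Now Lemma \ref{lemma_sum} applies to the $\gamma$-sum and gives a geometric factor, in the Roumieu and the Beurling case alike.

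Step 3 has a related problem. ``Repeating Step 2'' with $g(t)=1/t$ fails because the roles are swapped: the analytic function is now outside and the $\B$-function inside. The same computation then needs $\prod_iB_{\ell_i}/\ell_i!\le C^kB_k/k!$, again an $(M4)$-type condition. The estimate you want is in effect inverse-closedness of the $\B$-class. For log-convex $\B$ this holds precisely when $(B_j/j!)^{1/j}$ is almost increasing (Rudin, Bruna), which is not implied by $(M1)$, $(M2)'$, $(M3)'$. Citing Lemma \ref{lemma_1.3.5} only puts you on the same footing as the paper, whose quotient $\Psi_k^*/\sum_\ell\Psi_\ell^*$ relies on this silently. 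You can avoid division entirely: set $\Phi_1=\theta_1$ and $\Phi_k=\theta_k\prod_{j<k}(1-\theta_j)$ for $k\ge2$. Then $0\le\Phi_k\le1$ and $\mathrm{supp}\,\Phi_k\subset\mathrm{supp}\,\theta_k\subset\Omega_k^\sharp$. By telescoping, $\sum_k\Phi_k=1-\prod_{j=1}^N(1-\theta_j)=1$, since every point lies in some $\Omega_j$, where $\theta_j=1$. Only products occur, so the corrected Step 2, the Leibniz rule and Proposition \ref{MjMj-k} give \eqref{est_Phij}.
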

\begin{proof}
	For each $k=1,\dots,N$, define on $U_k^\sharp$ the function $\Psi_k(x)=\Phi_{(U_k)_\varepsilon}(\varepsilon,x)$, 
	where $\varepsilon=\delta/3$. Here, $\delta>0$ is the constant from condition $(SGA2)$. Then, using the local charts, set
	\[\Psi_k^*(x) =
	\begin{cases}
		\Psi_k(\varphi_k(x)),& x\in\Omega_k^\sharp, \\
		0,& x\notin\Omega_k^\sharp.
	\end{cases}\]
	and, for each $k=1,\dots,N$, define
	\begin{equation*}
		\Phi_k:X\to \mathbb{R}\colon x\mapsto \Phi_k(x) = \frac{\Psi_k^*(x)}{\sum_{\ell=1}^N\Psi_\ell^*(x)}.
	\end{equation*}
	
	Notice that each $\Phi_k$ is well-defined, since the denominator is always bigger than $1$. Moreover, 
	it is clear, by the definition of $\Phi_k$, that $\sum_{k=1}^N\Phi_k=1$. Moreover, $0\leq \Phi_k\leq 1$, by Corollary \ref{coro_Phi}, (b). 
	Then, $\{\Phi_k\}_{k=1}^N$ is a partition of unity subordinated to $\{\Omega_k^\sharp\}_{k=1}^N$. Finally, \eqref{est_Phij} 
	is proved employing condition $(I1)$ and Corollary \ref{coro_Phi}, (a).
\end{proof}

\begin{corollary}\label{coro_Theta}
	Let $\{\Phi_k\}_{k=1}^N$ the partition of unity constructed in Theorem \ref{thm_part}. Then, there exist functions $\{\Theta_k\}_{k=1}^N$ such that:
	\begin{enumerate}[(a)]
		\item $0\leq \Theta_k\leq 1$, $\Theta_k\equiv 1$ on $\mathrm{supp}(\Phi_k)$, and $\Theta_k\equiv 0$ outside $\Omega_k^\sharp$;
		\item In local coordinates, $\Theta_k$ belongs to $ {\SG}_{[\A,\B]}^{0,0}(\mathbb{R}^n)$, that is, satisfies estimates of the form \eqref{est_Phij}.
	\end{enumerate}
	In particular, $\Theta_k\Phi_k=\Phi_k$, $k=1,\dots,N$.
\end{corollary}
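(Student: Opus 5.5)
The plan is to build $\Theta_k$ in the same way as the $\Psi_k$ in the proof of Theorem \ref{thm_part}, but at an intermediate scale: large enough to cover the support of $\Phi_k$, yet small enough to stay inside $U_k^\sharp$. Recall that there, with $\varepsilon=\delta/3$, we set $\Psi_k=\Phi_{(U_k)_\varepsilon}(\varepsilon,\cdot)$. By Corollary \ref{coro_Phi}(b), $\Psi_k$ is supported in $(U_k)_{3\varepsilon}$, more precisely in $\{x:\mathrm{dist}(x,(U_k)_\varepsilon)<\varepsilon\langle x\rangle\}$. Hence $\mathrm{supp}(\Phi_k)\subset\varphi_k^{-1}(\overline{(U_k)_{2\varepsilon'}})$ for some $\varepsilon'$ slightly larger than $\varepsilon$, where comparability of $\langle x\rangle$ and $\langle y\rangle$ for $|x-y|\le c\langle x\rangle$ absorbs the small distortion of radii.

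The construction will take $\varepsilon_1=\delta/6$ or a similar value and define
\[
\tilde\Theta_k(x)=\Phi_{W_k}(\varepsilon_1,x),\qquad W_k=\bigl((U_k)_{\varepsilon}\bigr)_{2\varepsilon_1},
\]
that is, the mollified characteristic function of a slightly fattened set. By Lemma \ref{lem:anpartun}(c), $\tilde\Theta_k\equiv1$ wherever $B(x,\varepsilon_1\langle x\rangle)\subset W_k$. This holds on $\mathrm{supp}\,\Psi_k$, because points there lie within $\varepsilon\langle x\rangle$ of $(U_k)_\varepsilon$, and we choose the enlargement $2\varepsilon_1$ large enough relative to $\varepsilon$. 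At the same time, Lemma \ref{lem:anpartun}(b) gives $\tilde\Theta_k\equiv0$ outside a neighbourhood of $W_k$ of width $\varepsilon_1\langle x\rangle$. The radii have to be tuned so that the total enlargement of $U_k$ stays below $\delta$, with constants adjusted by Peetre-type comparisons $\langle x\rangle\asymp\langle y\rangle$ when $|x-y|\le\delta\langle x\rangle$, $\delta<1$. Condition $(SGA2)$, namely $B(y,\delta\langle y\rangle)\subset U_k^\sharp$, then gives $\mathrm{supp}\,\tilde\Theta_k\subset U_k^\sharp$. If the margins become too tight, I would shrink the $\varepsilon$ used in Theorem \ref{thm_part} to $\delta/10$; this is harmless, since that proof works for any small enough $\varepsilon$.

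I then set $\Theta_k=\tilde\Theta_k\circ\varphi_k$ on $\Omega_k^\sharp$ and $\Theta_k=0$ elsewhere. The support statement makes $\Theta_k$ well defined and gives property (a), with $0\le\Theta_k\le1$ coming from Lemma \ref{lem:anpartun}(a). Since $\Theta_k\equiv1$ on $\mathrm{supp}\,\Phi_k=\mathrm{supp}\,\Psi_k^*$, we get $\Theta_k\Phi_k=\Phi_k$.

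For (b), the local estimate in the chart $\varphi_k$ is exactly Lemma \ref{lem:anpartun}(d), the same input that gives Corollary \ref{coro_Phi}(a). In another chart $\varphi_j$, the function $\tilde\Theta_k\circ\varphi_k\circ\varphi_j^{-1}$ must satisfy \eqref{est_Phij}. This follows from $(I1)$ for the coordinate change as in Theorem \ref{thm_part}: by Faà di Bruno (Lemma \ref{faa}) together with Lemma \ref{lemma_prod_M}, composing an SG-type $\E_{[\B]}$ function with a map satisfying $(I1)$ analytic SG bounds preserves $\langle x\rangle^{-|\beta|}B_{|\beta|}$ estimates, using $\{j!\}\prec\B$ (Proposition \ref{incl_quasi_factorial}).

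I expect the main obstacle to be the geometric bookkeeping of the radii. Balls $B(x,r\langle x\rangle)$ are not nested uniformly under iteration, so each fattening must be controlled through $\langle y\rangle\le(1+r)\langle x\rangle$. I would handle this by fixing all radii as small multiples of $\delta$ and verifying the inclusions explicitly.
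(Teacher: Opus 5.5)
Your proposal is correct and is essentially the paper's argument. The paper only refers to \cite[Lemma 3.10]{Schrohe86}, which amounts to repeating the construction of Theorem \ref{thm_part} at an intermediate scale: pull back by $\varphi_k$ a mollified characteristic function $\Phi_V(\varepsilon_1,\cdot)$ of a neighbourhood $V$ of $\mathrm{supp}\,\Psi_k$ lying in $(U_k)_\delta\subset U_k^\sharp$, with (a) coming from Lemma \ref{lem:anpartun}(a)--(c) and (b) from Lemma \ref{lem:anpartun}(d). The only care needed is the radius bookkeeping you flag: your choice $W_k=((U_k)_\varepsilon)_{2\varepsilon_1}$ forces $\varepsilon_1\gtrsim\varepsilon$, so $\varepsilon_1=\delta/6$ does not fit with $\varepsilon=\delta/3$, but your fallback of shrinking $\varepsilon$ settles it (this is the same as running Theorem \ref{thm_part} with a smaller admissible $\delta$).
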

\begin{proof}
	The proof is analogous to that of \cite[Lemma 3.10]{Schrohe86}.
\end{proof}


\section{Gelfand-Shilov spaces on SGA-manifolds}\label{sec:GSspcsGSAmf}

Let $X$ be a manifold with SGA-admissible atlas $\mathscr{A}=\{(\Omega_k^\sharp,\varphi_k)\}_{k=1}^N$. For each $k=1,\dots,N$, 
write $U_k^\sharp = \varphi_k(\Omega_k^\sharp)$. Also, let $\A=\Aj,\B=\Bj,\M=\Mj$ be three non-quasianalytic weight sequences 
such that $\A,\B\preceq\M$.
\begin{definition}
	Let $m,\mu\in\mathbb{R}$. We say that a function $p\in C^\infty(U_k^\sharp\times\mathbb{R}^n)$ belongs to ${\SG}_{[\A,\B]}^{m,\mu}(U_k)$ if, 
	for every $\Phi\in {\SG}_{[\A,\B]}^{0,0}(\mathbb{R}^n)$ with $\mathrm{supp}(\Phi)\subset U_k^\sharp\times\R^n$, setting $\tilde{p}(x,\xi)=\Phi(x)p(x,\xi)$,
	we have $\tilde{p}\in {\SG}_{[\A,\B]}^{m,\mu}(\mathbb{R}^n)$. In this case, we define 
	\[{\SG}_{[\A,\B]}^{m,\mu}(X)=\{(p_1,\dots,p_N)\,:\, p_k\in {\SG}_{[\A,\B]}^{m,\mu}(U_k),\ k=1,\dots,N\}.\]
\end{definition}

By Theorem \ref{inv_analytic}, the symbol classes ${\SG}_{[\A,\B]}^{m,\mu}(X)$ are well defined.

Let $\{\Omega_k\}_{k=1}^N$ be a \textit{good shrinking} of the cover $\mathscr{A}$ and write $\varphi_k(\Omega_k) = U_k\subset\mathbb{R}^n$. 
Its SGA-admissible structure, described in the previous section, allows us to (invariantly) define Gelfand-Shilov spaces on the manifold $X$.

\begin{definition}
	For each $h>0$, we define $\caS_{\M}(X;h)$ as the space of all $f\in C^\infty(X)$ such that
	\[\|f\|_{h}=\sum_{k=1}^N\|f\circ\varphi_k^{-1}\|_{h,U_k}<+\infty,\]
	where
	\begin{equation}\label{gelf_manif}
		\|f\|_{h,U_k} = \sup_{x\in U_k}\sup_{\alpha,\beta\in\mathbb{N}_0^n} 
		\frac{|x^\beta\partial_x^\alpha (f\circ\varphi_k^{-1})(x)|}{h^{|\alpha|+|\beta|}M_{|\alpha|}M_{|\beta|}} < +\infty,\quad k=1,\dots,N.
	\end{equation}
	
	We have that each $\caS_{\M}(X;h)$ is a Banach space with the norm $\|\cdot \|_{h}$. Then, we define
	\[\caS_{\{\M\}}(X) = \underset{h>0}{\mathrm{ind\,lim}}\, \caS_{\M}(X;h),\]
	\[\caS_{(\M)}(X) = \underset{h>0}{\mathrm{proj\,lim}}\, \caS_{\M}(X;h).\]
\end{definition}

\begin{remark}
	The spaces $\caS_{[\M]}(X)$ are well-defined as a consequence of condition $(I1)$ and the Fa\`a di Bruno formula. We clearly recover the usual Gelfand-Shilov spaces when $X=\R^n$.
\end{remark}

As in the case when $X=\R^n$, one can show that the inclusion map $\caS_{\M}(X;h)\hookrightarrow \caS_{\M}(X;h_+)$ is continuous if $h\leq h_+$ and compact if $h<h_+$. This gives us that $\caS_{\{\M\}}(X)$ and $\caS_{(\M)}(X)$ are, respectively, DFS and FS spaces. We denote by $\caS_{[\M]}'(X)$ the topological dual of $\caS_{[\M]}(X)$. Notice that $\caS_{\{\M\}}'(X)$ and $\caS_{(\M)}'(X)$ are FS and DFS spaces, respectively.

Clearly we have
\[\mathcal{D}_{[\M]}(X)\subset \mathcal{S}_{[\M]}(X)\subset \mathcal{E}_{[\M]}(X)\quad\text{and}\quad \mathcal{E}_{[\M]}'(X)\subset \mathcal{S}_{[\M]}'(X)\subset\mathcal{D}_{[\M]}'(X).\]

Alternatively, in view of Proposition \ref{equiv_GS}, one can define the spaces $\caS_{\{\M\}}(X)$ and $\caS_{(\M)}(X)$ as the inductive and projective limits, respectively, of the Banach spaces
\[\caS_{\M}(X;h,\varepsilon) = \{f\in C^\infty(X)\,:\, \|f\|_{h,\varepsilon}<+\infty\},\quad h,\varepsilon>0,\]
where
\[\|f\|_{h,\varepsilon} = \sum_{k=1}^N\sup_{x\in U_k}\sup_{\alpha\in\N_0^n}\frac{e^{\omega_{\M}(\varepsilon|x|)}|\partial_x^\alpha (f\circ\varphi_k^{-1})(x)|}{h^{|\alpha|}M_{|\alpha|}}.\]

\subsection{Gelfand-Shilov spaces on analytic manifolds with ends}

Observe that every manifold with ends (as defined in the previous section) can be identified with the interior of a compact manifold with boundary. For this particular class of manifolds, we can obtain an alternative characterization of $\mathcal{S}_{[\M]}(X)$. 

Let $\X$ be a compact manifold with boundary $\partial X$ and assume that the weight sequence $\M=\Mj$ satisfies also $(M2)$ and $(M4)$. In this case, we can also assume that the weight function $\omega_\M$ satisfies the properties of $\omega_0$ in Proposition \ref{prop_wf_E}.

\begin{definition}
	A boundary defining function is a smooth function $\varrho:\overline{X}\to [0,+\infty)$ such that $\varrho^{-1}(0)=\partial X$ and $\mathrm{d}\varrho_{y}\neq 0$ for every $y\in\partial X$.
\end{definition}

Similarly to the analogous result proved in \cite{CKMT_GH_Gevrey} in the Gevrey setting, we now construct an ultradifferentiable boundary defining function for analytic manifolds with boundary.

\begin{proposition}\label{bdf_ultra}
	Let $\X$ be a smooth compact manifold with boundary endowed with a finite analytic atlas $\{(\Omega_k,\varphi_k)\}_{i=1}^N$. 
	Given a non-quasianalytic sequence $\M=\Mj$, there exists a boundary defining function $\varrho:\X\to[0,+\infty)$ that belongs to $\E_{[\M]}(\X)$.
\end{proposition}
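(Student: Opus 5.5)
We construct $\varrho$ by patching local pieces with an ultradifferentiable partition of unity, following the Gevrey-case construction in \cite{CKMT_GH_Gevrey}. First, using compactness of $\X$, we refine the analytic atlas so that each chart is of one of two types. Interior charts have $\overline{\Omega_k}\cap\partial X=\varnothing$. Boundary charts have $\varphi_k(\Omega_k)\subset\{x\in\R^n : x_n\ge 0\}$ and $\varphi_k(\Omega_k\cap\partial X)=\varphi_k(\Omega_k)\cap\{x_n=0\}$. Analytic collar-type charts of the second kind exist near $\partial X$ because the atlas is analytic and $\partial X$ is an analytic hypersurface in the charts of a manifold with boundary. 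On each boundary chart we set $\varrho_k=x_n\circ\varphi_k$. It is real-analytic, hence lies in $\E_{[\M]}(\Omega_k)$ by Proposition~\ref{incl_quasi_factorial} and Proposition~\ref{incl_E}: analytic functions are $\E_{\{j!\}}$, and $\{j!\}\prec\M$ gives inclusion in both the Roumieu and Beurling classes. It vanishes exactly on $\Omega_k\cap\partial X$, is positive elsewhere in $\Omega_k$, and has $\mathrm{d}\varrho_k\neq0$ there. On interior charts we set $\varrho_k\equiv1$.

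Next, we build a partition of unity $\{\chi_k\}$ subordinate to $\{\Omega_k\}$ with $\chi_k\in\D_{[\M]}$, $\chi_k\ge0$, $\sum\chi_k=1$. For this we pick nonnegative cutoffs $\theta_k\in\D_{[\M]}(\Omega_k)$, which are nontrivial by $(M3)'$. On boundary charts these are taken as restrictions to $\{x_n\ge0\}$ of cutoffs on $\R^n$. We arrange that $\sum\theta_k>0$ on $\X$ and normalize $\chi_k=\theta_k/\sum_\ell\theta_\ell$.

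For the quotient to stay in $\E_{[\M]}$ we need closure under $1/g$ for $g$ in the class and positive. We write $1/g=\exp(-\log g)$, using Proposition~\ref{exp_EM} together with the analyticity of $\log$ and Theorem~\ref{inv_analytic}. Alternatively, we avoid this entirely by a direct Fa\`a di Bruno estimate via Lemmas~\ref{faa}, \ref{lemma_sum}, \ref{lemma_prod_M} applied to $t\mapsto 1/t$ composed with $g$. Since Proposition~\ref{exp_EM} needs $(M4)$, the second route is safer: Lemma~\ref{lemma_prod_M} needs only $(M1)$. Products are handled by Proposition~\ref{E_cl_prod_diff}. Moreover, by Theorem~\ref{inv_analytic}, membership in $\E_{[\M]}$ is chart-independent, so $\E_{[\M]}(\X)$ is meaningful.

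Finally, we define $\varrho=\sum_k\chi_k\varrho_k$. It lies in $\E_{[\M]}(\X)$ by closure under products and finite sums. It is nonnegative, and strictly positive in the interior because every term is nonnegative and some $\chi_k>0$ with $\varrho_k>0$ at each interior point. On $\partial X$, only boundary charts contribute, so $\varrho=0$ there.

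The main obstacle is the nondegeneracy $\mathrm{d}\varrho_y\neq0$ at $y\in\partial X$. There $\mathrm{d}\varrho_y=\sum_k\chi_k(y)\,\mathrm{d}(\varrho_k)_y+\sum_k\varrho_k(y)\,\mathrm{d}(\chi_k)_y$. The second sum vanishes, since $\varrho_k(y)=0$ for every boundary chart containing $y$. Interior charts do not contribute near $y$, as $\chi_k$ vanishes in a neighbourhood of $\partial X$ for them. For the first sum, each $\mathrm{d}(\varrho_k)_y$ is a nonzero covector that is positive on inward-pointing vectors, because $x_n$ increases inward. A convex combination with nonnegative weights summing to $1$ at $y$ therefore pairs positively with a fixed inward vector, so it is nonzero. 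This sign argument, rather than the regularity estimates, is the delicate point. It requires choosing all boundary charts so that the inward normal direction corresponds to $x_n>0$, which we ensure in the first step.
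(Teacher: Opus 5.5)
Your construction is the paper's: $\varrho_k=x_n\circ\varphi_k$ on boundary charts and $\varrho_k\equiv1$ on interior charts, glued by an ultradifferentiable partition of unity, with $\mathrm{d}\varrho_y\neq0$ obtained from the inward-pointing argument of \cite[Proposition 5.41]{Lee_smooth}. You spell that argument out correctly, whereas the paper simply assumes a partition of unity in $\D_{[\M]}(\X)$.

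The step that does not hold as written is your justification that the normalized functions $\chi_k=\theta_k/\sum_\ell\theta_\ell$ stay in the class. Theorem~\ref{inv_analytic} treats $f\circ g$ with $f\in\E_{[\M]}$ and $g$ \emph{analytic}, i.e.\ analytic changes of variables. It therefore says nothing about $\log\circ g$ for $g\in\E_{[\M]}$, so your first route is unjustified.

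The Fa\`a di Bruno route is not ``safer'' either. In one variable, suppose $|g^{(\ell)}|\le C_g h^\ell M_\ell$ and $g\ge c>0$, and recall that $(1/t)^{(j)}=(-1)^j j!\,t^{-j-1}$. Applying Lemma~\ref{lemma_prod_M} to $\M$ itself, i.e.\ using $\prod_\ell M_\ell^{\gamma_\ell}\le M_k/M_{|\gamma|}$, only gives
\[
|(1/g)^{(k)}|\le c^{-1}h^k M_k\sum_{\gamma\in\Delta(k)}\frac{k!\,|\gamma|!}{\gamma!\prod_{\ell}(\ell!)^{\gamma_\ell}}\,\frac{R^{|\gamma|}}{M_{|\gamma|}}=c^{-1}h^k M_k\sum_{j=1}^k S(k,j)\,\frac{j!\,R^j}{M_j},
\]
where $R=C_g/c$ and $S(k,j)$ are the Stirling numbers of the second kind. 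This sum grows super-geometrically in $k$, already for $M_j=j!^s$.

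The estimate closes only if Lemma~\ref{lemma_prod_M} is applied to $\{M_j/j!\}$, i.e.\ under $(M4)$. This is exactly how the paper argues in its proof that $\mathcal{S}_{[\M]}(X)=\dot{\E}_{[\M]}(X)$. So both of your routes need $(M4)$. In general, closure of these classes under $g\mapsto 1/g$ depends on the behaviour of $M_j/j!$ (Rudin's sufficient condition is that $(M_j/j!)^{1/j}$ be almost increasing), not on $(M1)$ alone.

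The gap is easy to remove, because you do not need the normalization at all. Take $\varrho=\sum_k\theta_k\varrho_k$ with $\theta_k\in\D_{[\M]}$, $\theta_k\ge0$ and $\sum_k\theta_k>0$ on $\X$. Your arguments for positivity in the interior, vanishing on $\partial X$, and $\mathrm{d}\varrho_y\neq0$ then go through verbatim. They only use nonnegative weights that do not all vanish at the point in question. Only sums and products enter, which Proposition~\ref{E_cl_prod_diff} covers.

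If you want a genuine partition of unity, take $0\le\theta_k\le1$ with $\theta_k\equiv1$ on compact sets $K_k\subset\Omega_k$ covering $\X$. Set $\chi_k=\theta_k\prod_{\ell<k}(1-\theta_\ell)$; then $\sum_k\chi_k=1-\prod_k(1-\theta_k)=1$ with no division. Alternatively, invoke $(M4)$, which the paper imposes as a standing assumption in the subsection containing this proposition.
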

\begin{proof}
	In an interior chart $(\Omega_k,\varphi_k)$, we just set $\varrho_k\equiv 1$. In a boundary chart $(\Omega_k,\varphi_k)$, with $\varphi_k(\Omega_k)\subset\mathbb{R}^{n-1}\times[0,+\infty)$, we consider $\varrho_k(x_1,\dots,x_n)=x_n$. Then, $\varrho_k=0$ in $\partial X$ and $\varrho_k>0$ outside $\partial X$. Let $\{\Phi_k\}_{k=1}^N$ be a partition of unity of order subordinated to the cover $\{\Omega_k\}_{k=1}^N$, such that each $\Phi_k$ belongs to $\D_{[\M]}(\X)$. Since each $\varrho_k$ is analytic on its domain,
	\[\varrho(x) = \sum_{k=1}^N \Phi_k(x)(\varrho_k\circ\varphi_k^{-1})(x)\]
	belongs to $\E_{[\M]}(\X)$. Moreover, $\mathrm{d}\varrho$ does not vanish in any $y\in \partial X$ as shown in the proof of \cite[Proposition 5.41]{Lee_smooth}.
\end{proof}

Then, close to $\partial X$, the boundary defining function induces coordinates $(\varrho,y)$ on a collar neighbourhood of the boundary $C = [0,\delta)\times\partial X$, for some $\delta>0$. Here, we take $\delta>0$ to be sufficiently small such that $C$ is contained in the union of the exit charts.

Then, in analogy with Melrose's definition of the space $\dot{\mathcal{C}}^\infty(X)$ (see \cite{Melrose_GST}), we define
\begin{equation}\label{eq:GSspacesmwe}
\dot{\E}_{\{\M\}}(X) = \bigcup_{\varepsilon>0}e^{-\omega_\M(\varepsilon /\varrho)}\E_{\{\M\}}(\X) \quad\text{and}\quad \dot{\E}_{(\M)}(X) = \bigcap_{\varepsilon>0}e^{-\omega_\M(\varepsilon /\varrho)}\E_{(\M)}(\X).
\end{equation}
Hence, if $\X$ is an analytic manifold with ends as in the previous section, by radial compactification, we can see $X$ as the interior of a compact manifold with boundary $\X$. Moreover, the boundary defining function constructed in Proposition \ref{bdf_ultra} induces the diffeomorphism $E\simeq (\delta,+\infty)\times\partial X$ as required in Definition \ref{def_mfwe}. Indeed, notice that the transformation $t = 1/\varrho$ induces a diffeomorphism $C\setminus\partial X \simeq (1/\delta,+\infty)$.
\begin{theorem}
	Let $X$ be a manifold with ends equipped with a SGA-atlas and $\M=\Mj$ be a non-quasianalytic weight sequence. Then $\mathcal{S}_{[\M]}(X) = \dot{\E}_{[\M]}(X)$.
\end{theorem}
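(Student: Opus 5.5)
The proof will localize with the partition of unity of Theorem \ref{thm_part}. Away from the collar $C$, i.e. on the compact part $X_0$, both sides reduce to $\E_{[\M]}$-regularity. Indeed, $\caS_{[\M]}(X)$-seminorms over bounded chart domains are equivalent to $\E_{[\M]}$ seminorms on compacts, because $\langle x\rangle$ is bounded there. Likewise $e^{-\omega_\M(\varepsilon/\varrho)}$ is an $\E_{[\M]}$ function bounded above and below on $\{\varrho\ge\delta/2\}$, by Proposition \ref{prop_wf_E}(iv), Proposition \ref{exp_EM} and Proposition \ref{E_cl_prod_diff}. So everything reduces to the exit charts. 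There I fix the model coordinates coming from the end structure: $E\simeq(1/\delta,\infty)\times\partial X$, $t=1/\varrho$, and chart maps $\psi_k(t,y)=(t,t\tilde\varphi_k(y))=:x$. In these coordinates $\langle x\rangle\asymp t$ on the chart images, since $\tilde\varphi_k(\tilde\Omega_k)$ is bounded.

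\emph{Inclusion $\dot\E_{[\M]}(X)\subset\caS_{[\M]}(X)$.} Take $f=e^{-\omega_\M(\varepsilon/\varrho)}g$ with $g\in\E_{[\M]}(\X)$. In exit coordinates, $g\circ\psi_k^{-1}(x)=G(1/x_1,x'/x_1)$ with $G$ an $\E_{[\M]}$ function on a compact set $[0,\delta]\times\overline{\tilde U_k}$. The map $x\mapsto(1/x_1,x'/x_1)$ is analytic and satisfies SG-type estimates $|\partial^\alpha|\le Ch^{|\alpha|}\alpha!\langle x\rangle^{-1-|\alpha|}$ for $x_1\gtrsim\langle x\rangle$. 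By Fa\`a di Bruno (Lemma \ref{faa}, Lemma \ref{lemma_sum}, Lemma \ref{lemma_prod_M}) and Proposition \ref{incl_quasi_factorial}, I get $|\partial_x^\alpha(g\circ\psi_k^{-1})|\le Ch^{|\alpha|}M_{|\alpha|}$. Next, $e^{-\omega_\M(\varepsilon x_1)}$ has derivatives bounded by $Ch^{|\alpha|}M_{|\alpha|}e^{-\omega_\M(\varepsilon x_1)}$. This follows from Proposition \ref{prop_wf_E}(iv), applying Fa\`a di Bruno to $\exp(-\omega_\M)$ and using $\omega_\M(\varepsilon t)\le C\,t$ by $(W2)$ to absorb powers of $\omega_\M$, possibly after shrinking $\varepsilon$. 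Leibniz then gives
\[
|\partial^\alpha(f\circ\psi_k^{-1})(x)|\le Ch^{|\alpha|}M_{|\alpha|}e^{-\omega_\M(\varepsilon' \langle x\rangle)},
\]
using $x_1\asymp\langle x\rangle$ and Proposition \ref{prop_wf_E}(ii)--(iii) to pass between $\omega_\M(\varepsilon x_1)$ and $\omega_\M(\varepsilon'\langle x\rangle)$. By Proposition \ref{equiv_GS} (the chart version recalled in Section \ref{sec:GSspcsGSAmf}), $f\in\caS_{[\M]}(X)$.

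\emph{Reverse inclusion.} Let $f\in\caS_{[\M]}(X)$, so in each exit chart $|\partial^\alpha_x f_k|\le Ch^{|\alpha|}M_{|\alpha|}e^{-\omega_\M(\varepsilon\langle x\rangle)}$. Set $g=e^{\omega_\M(\varepsilon_1/\varrho)}f$ with $\varepsilon_1<\varepsilon/2$ (Roumieu; in the Beurling case take any $\varepsilon_1$ and use the projective hypothesis). The factor $e^{\omega_\M(\varepsilon_1 x_1)}$ has derivatives bounded by $Ch^{|\alpha|}M_{|\alpha|}e^{\omega_\M(\varepsilon_1x_1)}$, exactly as above. By Proposition \ref{prop_wf_E}(i), $\omega_\M(\varepsilon_1x_1)-\omega_\M(\varepsilon\langle x\rangle)\le -\omega_\M(\varepsilon''\langle x\rangle)+C$, so $g$ in $x$-coordinates satisfies Gelfand--Shilov bounds of the same kind. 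It remains to show that $g$ extends to $\varrho=0$ as an $\E_{[\M]}$ function in coordinates $(\varrho,y)$. Write $g(\varrho,y)=g_k(1/\varrho,\tilde\varphi_k(y)/\varrho)$. Differentiating in $(\varrho,y)$, each derivative of the inverse map $(\varrho,y)\mapsto x$ produces factors bounded by $Ch^j j!\,\varrho^{-1-j}\lesssim\langle x\rangle^{1+j}$. Fa\`a di Bruno then gives
\[
|\partial^{\gamma}_{(\varrho,y)}g|\le Ch^{|\gamma|}M_{|\gamma|}\langle x\rangle^{2|\gamma|}e^{-\omega_\M(\varepsilon''\langle x\rangle)}.
\]

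\textbf{Main obstacle.} The hardest step is absorbing the polynomial loss $\langle x\rangle^{2|\gamma|}$ uniformly in $\gamma$. By definition of $\omega_\M$, $t^{2j}e^{-\omega_\M(\varepsilon'' t)}\le M_{2j}(\varepsilon'')^{-2j}$. Condition $(M2)$ gives $M_{2j}\le AH^{2j}M_j^2$, so the loss costs an extra factor $M_{|\gamma|}$, not just a geometric one. This would only give the class $\{M_j^2\}$. So this crude estimate must be refined. I will first try to exploit that only derivatives falling on the inverse map create powers of $\langle x\rangle$ while the total number of such factors is $|\gamma|$, and use $(M4)$ (via $M_kM_{j-k}\le\binom{j}{k}^{-1}\dots$) together with $\langle x\rangle^{j}e^{-\omega_\M(\varepsilon t)/2}\le C(h)^jM_j$ to control the product. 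If this fails, I will instead verify the extension through the characterization: a function in $\E_{[\M]}$ of the interior, flat to all orders at the boundary with the corresponding bounds, extends to $\E_{[\M]}(\X)$ by zero, arguing along the lines of \cite{CKMT_GH_Gevrey}. Finally, vanishing of all derivatives at $\varrho=0$ gives the extension, and $f=e^{-\omega_\M(\varepsilon_1/\varrho)}g\in\dot\E_{[\M]}(X)$.
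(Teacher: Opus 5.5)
Your argument for $\dot{\E}_{[\M]}(X)\subset\caS_{[\M]}(X)$ is essentially the paper's. It uses Leibniz and Fa\`a di Bruno with Proposition~\ref{prop_wf_E}(iv), Lemmas~\ref{lemma_sum} and~\ref{lemma_prod_M}, and $|\gamma|!/M_{|\gamma|}\le C_\M$. You are in fact more careful than the paper in passing to the chart variables $x=(t,t\tilde\varphi_k(y))$. One point needs fixing: the powers $\omega_\M^{|\gamma|}$ should be absorbed as in the paper, via $\omega^j/M_j\le C_\M 2^j e^{\omega/2}$ at the price of half the exponent. Condition $(W2)$ alone only gives $\omega_\M(\varepsilon t)^j/M_j\le e^{\omega_\M(Ct)}$, which $e^{-\omega_\M(\varepsilon t)}$ need not absorb.

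The genuine gap is the reverse inclusion. You stop exactly where $\partial_\varrho=-t^2\partial_t$ (and $\partial_y$, which costs a factor $t$ per derivative in the chart $(t,ty)$) must be controlled uniformly up to $\varrho=0$. Neither fallback can work, because the loss you detected is not an artifact of crude bookkeeping. If $g\in\E_{\{\M\}}(\X)$ is flat at $\varrho=0$, Taylor's formula gives $|g(\varrho)|\le C\inf_k (h\varrho)^kM_k/k!$. Take $M_k=k!^s$ with $s>1$, which satisfies $(M2)$, $(M3)'$ and $(M4)$. Then $|g(\varrho)|\le C\exp(-c\,\varrho^{-1/(s-1)})=C\exp(-c\,t^{1/(s-1)})$. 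This is strictly faster than the decay $\exp(-c\,t^{1/s})$ that membership in $\caS_{\{\M\}}(X)$ provides. So ``flat with the corresponding bounds'' cannot be derived from Gelfand--Shilov estimates, and no use of $(M4)$ will turn your $M_{2j}$-type loss into an $M_j$ bound.

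In fact the inclusion $\caS_{[\M]}(X)\subset\dot{\E}_{[\M]}(X)$ fails in general. Work on $X=\R$ (two ends, $\varrho=1/|x|$ near infinity) and let $f(x)=e^{-\langle x\rangle^{1/s}}\cos x$. Cauchy estimates on discs of radius $1/4$ give $|\partial^\alpha f(x)|\le C4^{\alpha}\alpha!\,e^{-\langle x\rangle^{1/s}}$, so $f\in\caS_{\{j!^s\}}(\R)$. Suppose $f=e^{-\omega_\M(\varepsilon/\varrho)}g$ with $g\in\E_{\{\M\}}(\X)$.
\begin{itemize}
\item Then $g$ changes sign at points accumulating at $\varrho=0$, so it has no nonzero Taylor coefficient there, i.e.\ it is flat.
\item But $g(1/(2\pi m))\ge e^{-\langle 2\pi m\rangle^{1/s}}$, since $\omega_\M\ge 0$.
\item This contradicts the flat bound $C\exp(-c\,(2\pi m)^{1/(s-1)})$, because $1/(s-1)>1/s$.
\end{itemize}
The same argument works in the Beurling case with $e^{-\langle x\rangle^{a}}\cos x$, $1/s<a<\min\{1,1/(s-1)\}$.

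The paper's own proof does not get around this. For this inclusion it only bounds $\partial_t^\alpha\partial_y^\beta g$ on $t>\delta$, reading $\partial_y$ as if it were a chart derivative of $f$. It then asserts $g\in\E_{\{\M\}}(\X)$, which would require uniform bounds on the $\partial_\varrho$-derivatives up to the boundary.

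So you located precisely the weak point of the statement. What your approach (and the paper's) actually proves is $\dot{\E}_{[\M]}(X)\subset\caS_{[\M]}(X)$. A converse would need a strictly larger ultradifferentiable class at the boundary, or a different definition of $\dot{\E}_{[\M]}(X)$.
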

\begin{proof}
	We just need to check the behaviour of the functions in the exit charts, where we have coordinates of the form $(t,ty)$, where $t>\delta$ and $y \in \tilde{U}_k\subset\R^{n-1}$, for $k=1,\dots,N$ and $\delta\geq 1$. Here, $t=1/\varrho$ and $\tilde{U}_k$ is a local chart of the closed manifold $\partial X_0$. By the compactness of $\partial X_0$, we can assume that each $\tilde{U}_k$ is bounded. In this case, we have
	\[|(t,ty)| = (t^2+t^2y^2)^{\frac{1}{2}} = t\langle y\rangle \asymp t,\]
	that is, we can obtain a constant $\kappa>0$ such that
	\begin{equation}\label{ty_asymp_t}
		\kappa^{-1}|(t,ty)| \leq t \leq \kappa|(t,ty)|. 
	\end{equation}
	
	First, suppose that $f\in\mathcal{S}_{\{\M\}}(X)$ and let $h,\varepsilon>0$ be such that
	\[\sup_{(t,y)\in U_k}\sup_{\alpha\in\N_0^n}\frac{e^{\omega_\M(\varepsilon|(t,ty)|)}|\partial^\alpha f(t,y)|}{h^{|\alpha|}M_{|\alpha|}} < +\infty.\]
	
	Write $f(x) = e^{-\omega_\M(\varepsilon/(4H\kappa\varrho))} e^{\omega_\M(\varepsilon/(4H\kappa\varrho))} f(x)$, where $H\geq 1$ is given by $(M2)'$ and $\kappa>0$ is given by \eqref{ty_asymp_t}.
	
	By Propositions, \ref{exp_EM}, \ref{E_cl_prod_diff}, and \ref{prop_wf_E}, we have $e^{\omega_\M(\varepsilon/(4H\kappa\varrho))} f(x)=g(x)\in\E_{\{\M\}}(X)$. In any exit chart, by Leibniz Rule and Fa\`a di Bruno Formula (Lemma \ref{faa}), we obtain
	\begin{align*}
		|\partial_t^\alpha\partial_y^\beta g(t,y)| & \leq \sum_{k=0}^\alpha\binom{\alpha}{k}|\partial_t^{\alpha-k}\partial_y^\beta f(t,y)||\partial_t^k e^{\omega_\M\left(\tfrac{\varepsilon}{4H\kappa}t\right)}|\\
		& \leq \sum_{k=0}^\alpha\binom{\alpha}{k}e^{\omega_\M\left(\tfrac{\varepsilon}{4H\kappa}t\right)}|\partial_t^{\alpha-k}\partial_y^\beta f(t,y)| \sum_{\gamma\in\Delta(k)}\frac{k!}{\gamma!}\prod_{\ell=1}^k\left|\frac{\partial_t^\ell \omega_\M\left(\tfrac{\varepsilon}{4H\kappa}t\right)}{\ell!}\right|^{\gamma_\ell},
	\end{align*}
	for all $\alpha\in\N_0$ and $\beta\in\N_0^{n-1}$, where the set $\Delta(k)$ is defined in \eqref{Delta_k}.
	
	Notice that, since $\M$ satisfies $(M4)$, the sequence $\{M_j/j!\}_{j\in\N_0}$ satisfies $(M1)$. Then, by Proposition \ref{prop_wf_E}, Lemmas \ref{lemma_sum} and \ref{lemma_prod_M}, and the fact that $t^k\leq k!e^t$, for all $t>0$ and $k\in\N_0$, we obtain $C,h>0$ such that
	\begin{align*}
		\sum_{\gamma\in\Delta(k)}\frac{k!}{\gamma!}\prod_{\ell=1}^k\left|\frac{\partial_t^\ell \omega_\M\left(\tfrac{\varepsilon}{4H\kappa}t\right)}{\ell!}\right|^{\gamma_\ell} & \leq \sum_{\gamma\in\Delta(k)}\frac{k!}{\gamma!}\prod_{\ell=1}^k\left|\frac{C\varepsilon^\ell (4H\kappa)^{-\ell}h^\ell M_\ell \omega_\M\left(\tfrac{\varepsilon}{4H}t\right)}{\ell!}\right|^{\gamma_\ell}\\
		& \leq h_0^k \sum_{\gamma\in\Delta(k)} \frac{k!}{\gamma!} C^{|\gamma|} \omega_\M\left(\tfrac{\varepsilon }{4H\kappa}t\right)^{|\gamma|}\frac{M_k}{k!}\frac{|\gamma|!}{M_{|\gamma|}}\\
		& \leq h_0^k \sum_{\gamma\in\Delta(k)} \frac{1}{\gamma!} C^{|\gamma|}|\gamma|! e^{\omega_\M \left(\tfrac{\varepsilon }{4H\kappa}t\right)} M_k\frac{|\gamma|!}{M_{|\gamma|}}\\
		& \leq C_\M h_0^k M_k e^{\omega_\M\left(\tfrac{\varepsilon}{4H\kappa}t\right)} \sum_{\gamma\in\Delta(k)} \frac{|\gamma|!}{\gamma!} C^{|\gamma|}\\
		& \leq C_\M h_0^k M_k e^{\omega_\M \left(\tfrac{\varepsilon}{4H\kappa}t\right)} C(1+C)^{k-1}\\
		& \leq C_\M(h_0C_0)^kM_k e^{\omega_\M \left(\tfrac{\varepsilon }{4H\kappa}t\right)},
	\end{align*}
	where $h_0=h\varepsilon(4H\kappa)^{-1}$ and $C_0=1+C$. Here, the constant $C_\M>0$ is such that $|\gamma|!/M_{|\gamma|}\leq C_\M$, for all $\gamma$, which exists by Proposition \ref{incl_quasi_factorial}.
	
	Hence, combining the previous estimates and using Proposition \ref{prop_wf_E}, we obtain
	\begin{align*}
		|\partial_t^\alpha\partial_y^\beta g(t,y)| & \leq C_\M \sum_{k=0}^\alpha\binom{\alpha}{k}  e^{\omega_\M \left(\tfrac{\varepsilon}{4H}t\right)} e^{\omega_\M \left(\tfrac{\varepsilon}{4H}t\right)}|\partial_t^{\alpha-k}\partial_y^\beta f(t,y)|(C_0h_0)^k M_k\\
		& \leq C_\M \sum_{k=0}^\alpha\binom{\alpha}{k}  e^{\omega_\M(\varepsilon t)}|\partial_t^{\alpha-k}\partial_y^\beta f(t,y)|(C_0h_0)^k M_k\\
		& \leq C_\M \sum_{k=0}^\alpha\binom{\alpha}{k}  e^{\omega_\M(\varepsilon (t+|y|))}|\partial_t^{\alpha-k}\partial_y^\beta f(t,y)|(C_0h_0)^k M_k\\
		& \leq C_\M \sum_{k=0}^\alpha\binom{\alpha}{k} C_fh_f^{\alpha-k+|\beta|}M_{\alpha-k+|\beta|}(C_0h_0)^kM_k\\
		& \leq C_1h_1^{\alpha-k+|\beta|}M_{\alpha+|\beta|},
	\end{align*}
	for all $\alpha\in\N_0$ and $\beta\in\N_0^{n-1}$, where the constants $C_f,h_f>0$ are given by the fact that $f\in\mathcal{S}_{\{\M\}}(X)$, $C_0=C_\M C_f$, and $h_1=2\max\{1,h_f,C_0h_0\}$. Hence $g\in\E_{\{\M\}}(\X)$ and we have $\mathcal{S}_{\{\M\}}(X) \subset \dot{\E}_{\{\M\}}(X)$.
	
	On the other hand, suppose that $f\in \dot{\E}_{\{\M\}}(X)$, that is, $f=e^{-\omega_\M(\varepsilon/\varrho)}g$, for some $g\in \E_{\{\M\}}(\X)$ and $\varepsilon>0$. By Leibniz Rule, Fa\`a di Bruno Formula, Proposition \ref{prop_wf_E}, and the estimate \eqref{ty_asymp_t}, we have
	\begin{align*}
		e^{\omega_\M\left(\tfrac{\varepsilon}{2H\kappa}|(t,ty)|\right)}|\partial_t^\alpha\partial_y^\beta f(t,y)| \leq\ & e^{\omega_\M \left(\tfrac{\varepsilon}{2H} t\right)} \sum_{k=0}^\alpha\binom{\alpha}{k} |\partial_t^{\alpha-k}\partial_y^\beta g(t,y)|e^{-\omega_\M(\varepsilon t)}\\
		& \times  \sum_{\gamma\in\Delta(k)}\frac{k!}{\gamma!}\prod_{\ell=1}^k\left|\frac{\partial_t^\ell \omega_\M(\varepsilon t)}{\ell!}\right|^{\gamma_\ell}\\
		\leq\ & e^{\omega_\M\left(\tfrac{\varepsilon}{2H}t\right)} \sum_{k=0}^\alpha\binom{\alpha}{k}  C_gh_g^{\alpha-k+|\beta|}M_{\alpha-k+|\beta|} \\
		& \times e^{-\omega_\M\left(\tfrac{\varepsilon}{2H} t\right)}e^{-\omega_\M\left(\tfrac{\varepsilon}{2H} t\right)} \sum_{\gamma\in\Delta(k)}\frac{k!}{\gamma!}\prod_{\ell=1}^k\left|\frac{\partial_t^\ell \omega_\M(\varepsilon t)}{\ell!}\right|^{\gamma_\ell}.
	\end{align*}
	
	Similarly to the computations of the first inclusion, we obtain
	\begin{align*}
		\sum_{\gamma\in\Delta(k)}\frac{k!}{\gamma!}\prod_{\ell=1}^k\left|\frac{\partial_t^\ell \omega_\M(\varepsilon t)}{\ell!}\right|^{\gamma_\ell} & \leq \sum_{\gamma\in\Delta(k)}\frac{k!}{\gamma!}\prod_{\ell=1}^k\left|\frac{C\varepsilon^\ell h^\ell M_\ell \omega_\M(\varepsilon t)}{\ell!}\right|^{\gamma_\ell}\\
		& \leq (h\varepsilon)^k \sum_{\gamma\in\Delta(k)} \frac{k!}{\gamma!} C^{|\gamma|}\omega_\M(\varepsilon t)^{|\gamma|}\frac{M_k}{k!}\frac{|\gamma|!}{M_{|\gamma|}}\\
		& = (h \varepsilon)^k \sum_{\gamma\in\Delta(k)} \frac{k!}{\gamma!} C^{|\gamma|}\omega_\M\left(\tfrac{\varepsilon}{2H}2H t\right)^{|\gamma|}\frac{M_k}{k!}\frac{|\gamma|!}{M_{|\gamma|}}\\
		& \leq (h \varepsilon )^k\sum_{\gamma\in\Delta(k)} \frac{k!}{\gamma!} C^{|\gamma|}K_\M\omega_\M\left(\tfrac{\varepsilon}{2H} t\right)^{|\gamma|}\frac{M_k}{k!}\frac{|\gamma|!}{M_{|\gamma|}}\\
		& \leq K_\M (h\varepsilon)^k \sum_{\gamma\in\Delta(k)} \frac{1}{\gamma!} C^{|\gamma|}|\gamma|!e^{\omega_\M\left(\tfrac{\varepsilon}{2H} t\right)} M_k\frac{|\gamma|!}{M_{|\gamma|}}\\
		& \leq K_\M C_\M (h\varepsilon)^k M_k e^{\omega_\M\left(\tfrac{\varepsilon}{2H} t\right)} \sum_{\gamma\in\Delta(k)} \frac{|\gamma|!}{\gamma!} C^{|\gamma|}\\
		& \leq K_\M C_\M h^k M_k e^{\omega_\M\left(\tfrac{\varepsilon}{2H} t\right)}C(1+C)^{k-1}\\
		& \leq K_\M C_\M (h\varepsilon C_0)^k M_k e^{\omega_\M\left(\tfrac{\varepsilon}{2H} t\right)},
	\end{align*}
	where $C_0=1+C$, $K_\M>0$ is such that $\omega_\M(2Ht)\leq K_\M\omega_\M(t)$ for all $t\geq 0$, which exists by condition $(W1)$, and $C_\M>0$ is given by Proposition \ref{incl_quasi_factorial}.
	
	Combining the the estimates above, we obtain
	\begin{align*}
		e^{\omega_\M\left(\tfrac{\varepsilon}{2H\kappa}|(t,ty)|\right)}|\partial_t^\alpha\partial_y^\beta f(t,y)| \leq\ & e^{\omega_\M\left(\tfrac{\varepsilon}{2H}t\right)}  \sum_{k=0}^\alpha\binom{\alpha}{k}  C_gh_g^{\alpha-k+|\beta|}M_{\alpha-k+|\beta|} \\
		& \times e^{-\omega_\M\left(\tfrac{\varepsilon}{2H} t\right)}e^{-\omega_\M\left(\tfrac{\varepsilon}{2H} t\right)}K_\M C_\M(hC_0)^kM_ke^{\omega_\M\left(\tfrac{\varepsilon}{2H} t\right)}\\
		\leq\ & C_0 h_0^{\alpha+|\beta|} M_{\alpha+|\beta|},
	\end{align*}
	where $C_0=K_\M C_\M C_g$, $h_0 = 2\max\{1,h\varepsilon C_0,h_g\}$, and the constants $C_g,h_g>0$ are given by the fact that $g\in\E_{\{\M\}}(\overline{X})$. Hence $f\in\mathcal{S}_{\{\M\}}(X)$ and $\dot{\E}_{\{\M\}}(X) \subset\mathcal{S}_{\{\M\}}(X)$. The projective case can be proved by similar arguments.
\end{proof}
\begin{remark}
	There are different definitions of manifold with corners, see \cite{Melrose_AMwC}, and, for instance, \cite{Joyce,M-RO92}.
	Adopting, as above, an approach based on local charts, let $Y$ be a $n$-dimensional paracompact Hausdorff space. 
	
A $n$-dimensional chart with corners (of codimension $k$) on $Y$ is a pair $(U,\phi)$, where $U$ is an open subset of $[0,+\infty)^k \times \R^{n-k}$ 
for $0 \le k \le n$, and $\phi\colon  U \to \phi(U) \subset Y$ is a homeomorphism. If $k = 1$, we obtain a usual ``boundary chart''. 
Defining compatibility between charts and an atlas of charts, the definitions of manifold with boundary and manifold with corner are obtained.

Observe that, for every manifold with corners $Y$ of dimension $n$, there exists a $n$-dimensional smooth manifold $\widetilde{Y}$ without boundary 
such that $Y\subset \widetilde{Y}$ and the interior of $Y$ is open in $\widetilde{Y}$. Then, all smooth objects and structures on $Y$ are defined by restriction
to $Y$ of the analogous ones on $\widetilde{Y}$.

Assume $Y$ to be compact and analytic, and that there is a finite collection of analytic functions on $Y$, $\{\varrho_j\}_{j\in J}$, called boundary defining functions,
such that $Y=\cap_{j\in J}\{p\in\widetilde{Y},\varrho_j(p)\ge0\}$, and at every point where $\varrho_k =0$ for every $k\in K\subset J$,
the differentials of such $\varrho_k$ are linearly independent. In particular, $d\varrho_j\not= 0$ when $\varrho_j = 0$. 
Then, it is possible to work in local coordinates $x \colon p \mapsto (\varrho_1,\dots,\varrho_k,x_1,...,x_{n-k})(p)$, where $k$ is the number of boundary defining functions.
In \cite[Remark 2.11]{Joyce} these are called compact manifolds with embedded corners. By adapting \cite[Proposition 2.15]{Joyce} to the analytic case, 
taking into account our argument in Proposition \ref{bdf_ultra} above, it follows that, locally, a boundary defining function in $\mathcal{E}_{[\mathcal{M}]}(Y)$ 
always exists, and the property that all corners are embedded implies, as in the smooth case, that a global boundary defining function in that same space 
exists as well.

Hence, definition \eqref{eq:GSspacesmwe} could be extended, in principle, to an analytic manifold with corners $Y$. Clearly, it would be interesting
also to consider spaces of the types
\begin{align*}
\dot{\E}_{\{\M\}}(Y) &= \bigcup_{\varepsilon>0}e^{-\omega_\M(\varepsilon /\varrho_1)+\dots-\omega_\M(\varepsilon /\varrho_k)}\E_{\{\M\}}(\overline{Y}) 
\\
&\text{and}
\\
\dot{\E}_{(\M)}(Y) &= \bigcap_{\varepsilon>0}e^{-\omega_\M(\varepsilon /\varrho_1)+\dots-\omega_\M(\varepsilon /\varrho_k)}\E_{(\M)}(\overline{Y}).
\end{align*}

The study of Gelfand-Shilov spaces in this more complicated geometric situation will appear elsewhere.
\end{remark}


\section{The calculus of ultradifferential SG-operators on SGA-manifolds}\label{sec:SGABmf}

Let $\{\Phi_k\}_{k=1}^N$ be a partition of unity as constructed in Theorem \ref{thm_part} and consider the family $\{\Theta_k\}_{k=1}^N$ 
as in Corollary \ref{coro_Theta}. If $P:\mathcal{S}_{[\M]}(X)\to \mathcal{S}_{[\M]}(X)$ is a linear operator, we can write
\[P = \sum_{k=1}^N\Theta_k P\Phi_k + \sum_{k=1}^N(1-\Theta_k) P\Phi_k.\]

\begin{definition}
	We say that a linear operator $P:\mathcal{S}_{[\M]}(X)\to \mathcal{S}_{[\M]}(X)$ is a (ultradifferential SG-)pseudodifferential operator on $X$
	with symbol $p=(p_1,\dots,p_N)\in {\SG}_{[\A,\B]}^{m,\mu}(X)$ if $(1-\Theta_k)P\Phi_k$ has ultradistributional kernel 
	in $\mathcal{S}_{[\M]}(\mathbb{R}^{2n})$, and, in local coordinates, $(\Theta_k P \Phi_k)_* = (\Theta_k)_*\mathrm{Op}(p_k)(\Phi_k)_*$, $k=1,\dots,N$.
\end{definition}

\begin{remark}
	The definition of a pseudodifferential operator with symbol in ${\SG}_{[\A,\B]}^{m,\mu}(X)$ does not depend on the choice of the cut-off functions
	$\Theta_k$. Indeed, if $\{\tilde{\Theta}_k\}$ has the same properties, then $\Theta_k-\tilde{\Theta}_k$ vanishes on the support of 
	$\Phi_k$ and we can apply Proposition \ref{disj_supp}. Similarly, the definition does not depend on the chosen partition of unity or on the choice
	of the SGA-admissible atlas.
\end{remark}

We can then apply the local results proved above to extend to $X$ the properties of ultradifferential SG-operators on $\mathbb{R}^n$.

\begin{proposition}
	Let $p\in {\SG}_{[\A,\B]}^{m,\mu}(X)$ and $q\in {\SG}_{[\A,\B]}^{m',\mu'}(X)$. 
	Then $\mathrm{Op}(p)\mathrm{Op}(q)$ is a pseudodifferential operator with symbol $s\in {\SG}_{[\A,\B]}^{m'',\mu''}(X)$, 
	with $m''=m+m'$ and $\mu''=\mu+\mu'$, that has asymptotic expansion of the form \eqref{asympt_exp_comp} in local coordinates.
\end{proposition}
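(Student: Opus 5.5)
The plan is to localize the composition through the partition of unity $\{\Phi_k\}$ and the cut-offs $\{\Theta_k\}$ of Theorem \ref{thm_part} and Corollary \ref{coro_Theta}, reduce each piece to a composition on $\R^n$, and then apply Theorem \ref{asympt_exp_comp} together with the coordinate invariance Theorem \ref{coord_inv}. Writing $P=\Op(p)$ and $Q=\Op(q)$, we expand
\[
PQ=\sum_{k=1}^N P\,\Theta_k Q\Phi_k+\sum_{k=1}^N P(1-\Theta_k)Q\Phi_k .
\]
By the definition of pseudodifferential operator on $X$, each $(1-\Theta_k)Q\Phi_k$ has kernel in $\caS_{[\M]}$. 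Since $P$ is continuous on $\caS_{[\M]}(X)$ and on $\caS'_{[\M]}(X)$ (Theorem \ref{cont_SG} and Corollary \ref{cont_SG_ext}, transported chartwise), the second sum is $[\M]$-regularizing. It therefore suffices to treat $P\Theta_kQ\Phi_k$.

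Next, choose a further cut-off $\Lambda_k\in\SG^{0,0}_{[\A,\B]}$ in local coordinates, with $\Lambda_k\equiv1$ on $\mathrm{supp}\,\Theta_k$ and supported in $\Omega_k^\sharp$. Such a $\Lambda_k$ is built exactly as in Corollary \ref{coro_Theta}, using the good shrinking. We split $P\Theta_k=\Lambda_kP\Theta_k+(1-\Lambda_k)P\Theta_k$. The second term is $[\M]$-regularizing by Proposition \ref{disj_supp}. That proposition applies because, in the SG setting, the supports of $1-\Lambda_k$ and $\Theta_k$ are separated at distance comparable to $\langle x\rangle$ (condition $(I2)$/$(SGA2)$). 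This means the kernel estimates of Proposition \ref{est_kernel} on $\Omega_k$-type regions are available; one must verify that this is what the proof of Proposition \ref{disj_supp} actually uses.

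For the first term, $\Lambda_kP\Theta_k$ is entirely supported in the chart $\Omega_k^\sharp$. We rewrite $P$ using the local representatives $p_\ell$ on the overlapping charts, i.e.\ $P\Theta_k=\sum_\ell \Theta_\ell P\Phi_\ell\Theta_k+$ regularizing. Each term $(\Theta_\ell)_*\Op(p_\ell)(\Phi_\ell\Theta_k)_*$ lives in chart $\ell$, and we transfer it to chart $k$ by Theorem \ref{coord_inv}, applied to the analytic change of coordinates $\varphi_\ell\circ\varphi_k^{-1}$. Conditions $(I1)$ and $(I2)$ for this map follow from $(SGA3)$ and \cite[Lemma 3.3]{Schrohe86}. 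This yields a symbol $\tilde p_k\in\SG^{m,\mu}_{[\A,\B]}(\R^n)$ with $(\Lambda_kP\Theta_k)_*=\Op(\tilde p_k)+R$. Multiplication by the cut-offs is a composition with symbols in $\SG^{0,0}_{[\A,\B]}$ and so stays in the class. Then
\[
(\Lambda_kP\Theta_kQ\Phi_k)_*=\Op(\tilde p_k)\,(\Theta_k)_*\Op(q_k)(\Phi_k)_*+\text{regularizing},
\]
and Theorem \ref{asympt_exp_comp} (applied twice, for the multiplications by $(\Theta_k)_*$ and $(\Phi_k)_*$) gives a symbol $s_k\in\SG^{m+m',\mu+\mu'}_{[\A,\B]}(\R^n)$ with the stated asymptotic expansion. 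Finally, we check that $(1-\Theta_j)PQ\Phi_j$ is regularizing, by the same splitting, and set $s=(s_1,\dots,s_N)$. This gives the representation required by the definition.

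I expect the main obstacle to be the claim that the terms with disjoint supports are $[\M]$-regularizing in this global setting. It requires the supports to be separated in the SG sense, i.e.\ $|x-y|\ge c\langle x\rangle$, and not merely at positive distance. It also requires that composing a regularizing operator with $P$, or pulling it back through an analytic admissible diffeomorphism, stays in $\caS_{[\M]}(\R^{2n})$. I would handle the first point through the kernel bounds of Proposition \ref{est_kernel} on $\overline{\Omega}_k$. For the pullback I would use $(I1)$, Fa\`a di Bruno (Lemma \ref{faa}) and Theorem \ref{inv_analytic}, exactly as was done for $\tilde K(x,y)=K(\varphi(x),\varphi(y))$ in the proof of Theorem \ref{coord_inv}.
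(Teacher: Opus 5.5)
Your proposal is correct and follows essentially the route the paper intends. The paper gives no separate proof, only the remark that the local results extend to $X$. Your plan is exactly that Schrohe-type argument: localize with $\Phi_k,\Theta_k$ and an auxiliary $\Lambda_k$, transfer $P$ into the chart by Theorem \ref{coord_inv}, and apply Theorem \ref{asympt_exp_comp}. Your caution that Proposition \ref{disj_supp} needs SG-separated supports, not mere positive distance, is well founded, and your remedy via Proposition \ref{est_kernel} works.

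Two small bookkeeping fixes are needed:
\begin{enumerate}
\item Define $s_k$ as the local symbol of $\Op(\tilde p_k)(\Theta_k)_*\Op(q_k)$, without the right factor $(\Phi_k)_*$. Otherwise $(\Theta_k)_*\Op(s_k)(\Phi_k)_*$ reproduces $\Theta_kPQ\Phi_k^2$ rather than $\Theta_kPQ\Phi_k$.
\item For $(1-\Theta_j)PQ\Phi_j$, insert an intermediate cut-off between $\Phi_j$ and $\Theta_j$. The supports of $1-\Theta_j$ and $\Theta_j$ overlap on the transition region, so the plain splitting $Q\Phi_j=\Theta_jQ\Phi_j+(1-\Theta_j)Q\Phi_j$ does not yield separated supports.
\end{enumerate}
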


Now, let us assume that $X$ is endowed with a Riemannian metric $g$. In each local chart $U_k$, we can write
\begin{equation*}
	g = \sum_{\ell,\nu=1}^n g_{\ell,\nu}^{(k)}(x)\,\mathrm{d}x_\ell\otimes\mathrm{d}x_\nu,
\end{equation*}
where the coefficients $g_{\ell,\nu}^{(k)}(x)$ are smooth functions on $U_k$ satisfying $g_{\ell,\nu}^{(k)}(x)=g_{\nu,\ell}^{(k)}(x)$. Moreover, we can assume that the coefficients $g_{\ell,\nu}^{(k)}$ belong to $\E_{[\M]}(U)$. This is possible since every smooth manifold admits an analytic metric by Grauert's Theorem \cite{Grauert}, in addition to Proposition \ref{incl_quasi_factorial}, since $\M$ is non-quasianalytic.

\begin{remark}
	If $X$ is the interior of an analytic compact manifold with boundary $\X$, we can procceed as in \cite{CKMT_GH_Gevrey} and use the ultradifferentiable boundary defining function $\varrho$ constructed in Proposition \ref{bdf_ultra} and a ultradifferentiable partition of unity to obtain an ultradifferentiable scattering metric on $X$, that is, a Riemannian metric $g$ that has coefficients in $\E_{[\M]}$ in any analytic chart and has the form
	\[g = \dfrac{\mathrm{d}\varrho^2}{\varrho^4}+\dfrac{g'}{\varrho^2}\]
	in a collar neighbourhood of $\partial X$, where $g'$ restricts to a metric in $\partial X$.
\end{remark}

Suppose also that $X$ is orientable. In this case, we can assume that the matrix $\left[g_{\ell,\nu}^{(k)}(x)\right]$ is positive definite for every $x\in U_k$. Hence $g$ induces a volume form $\mathrm{d}V$ on $X$ that is given by
\[g^{(k)}(x)\,\mathrm{d}x_1\wedge\cdots\wedge \mathrm{d}x_n\]
in each local chart, where
\[g^{(k)}(x) = \sqrt{\det \left[g_{\ell,\nu}^{(k)}(x)\right]}.\]

We refer the reader to \cite[Proposition 7.7]{Kumano-go} for the details of this construction.

\begin{definition}
	Given a symbol $p\in {\SG}_{[\A,\B]}^{m,\mu}(X)$, we define the formal adjoint of $\mathrm{Op}(p)$ as the operator $\mathrm{Op}(p)^*$ that satisfies
	\begin{equation*}
		\int_X \mathrm{Op}(p)u(x) \overline{v(x)}\,\mathrm{d}V = \int_X u(x) \overline{\mathrm{Op}(p)^* v(x)}\,\mathrm{d}V,
	\end{equation*}
	for all $u,v\in\mathcal{S}_{[\M]}(X)$. If $\mathrm{Op}(p)=\mathrm{Op}(p)^*$, we say that $\mathrm{Op}(p)$ is formally self-adjoint.
\end{definition}

\begin{theorem}
	Given a symbol $p\in {\SG}_{[\A,\B]}^{m,\mu}(X)$, the operator $\mathrm{Op}(p)^*$ is a (ultradifferential SG-)pseudodifferential operator with symbol 
	$p^*\in {\SG}_{[\A,\B]}^{m,\mu}(X)$ that has, in each local chart $U_k$, $k=1,\dots,N$, asymptotic expansion
	\begin{equation*}
		p_k^*(x,\xi)\sim \sum_{j=0}^\infty\sum_{|\alpha|=j}\frac{1}{\alpha!}\tilde{g}^{(k)}(x)^{-1}\partial_\xi^\alpha D_x^\alpha(\tilde{g}^{(k)}(x) \overline{p_k(x,\xi)}),
	\end{equation*}
	where $\tilde{g}^{(k)}$ is an extension of ${g}^{(k)}$ to a slightly larger domain.
\end{theorem}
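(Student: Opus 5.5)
The plan is to reduce the statement to the local adjoint formula of Proposition \ref{asympt_exp_adj}, applied chart by chart after absorbing the density $g^{(k)}$. Write $P=\Op(p)=\sum_k\Theta_kP\Phi_k+\sum_k(1-\Theta_k)P\Phi_k$. Taking formal adjoints with respect to $\mathrm{d}V$ turns this into $P^*=\sum_k\Phi_k(\Theta_kP\Phi_k)^{\sharp}\Theta_k+\sum_k\Phi_k((1-\Theta_k)P)^\sharp$, where $\sharp$ denotes the $L^2(X,\mathrm{d}V)$-adjoint. The second group consists of operators whose kernels are in $\caS_{[\M]}$. Indeed, the $\mathrm{d}V$-adjoint kernel is $\overline{K(y,x)}$ multiplied by the density factors, and these belong to $\E_{[\M]}$ and satisfy SG-type bounds of order $0$. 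Multiplying an $\caS_{[\M]}(\R^{2n})$ kernel by such factors keeps it in $\caS_{[\M]}(\R^{2n})$, by Proposition \ref{prop:gsspcs_b}-type closure together with the slow growth of the density. For this step I will need $\tilde g^{(k)}$ and $1/\tilde g^{(k)}$ to satisfy SG$^{0,0}$-type ultradifferential estimates in the exit charts. I will take this as part of the standing assumptions on $g$, which hold for instance for the ultradifferentiable scattering metric of the preceding remark.

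For the local terms, I fix $k$ and work in $U_k^\sharp$ with $\tilde g=\tilde g^{(k)}$. For $u,v$ supported in the chart,
\[
\int (\Theta_k)_*\Op(p_k)((\Phi_k)_*u)\,\overline v\,\tilde g\,\mathrm{d}x=\int u\,\overline{(\Phi_k)_*\,\tilde g^{-1}\Op(p_k)^*\big(\tilde g(\Theta_k)_*v\big)}\,\tilde g\,\mathrm{d}x,
\]
where $\Op(p_k)^*$ is the Euclidean formal adjoint. By Proposition \ref{asympt_exp_adj}, $\Op(p_k)^*=\Op(p_k^\flat)$ with $p_k^\flat\sim\sum_\alpha\frac1{\alpha!}\partial_\xi^\alpha D_x^\alpha\overline{p_k}$. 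The composition $\tilde g^{-1}\Op(p_k^\flat)\tilde g$ is then handled with Theorem \ref{asympt_exp_comp}, treating multiplication by $\tilde g^{\pm1}$ as symbols in $\SG_{[\A,\B]}^{0,0}$ independent of $\xi$. Right composition with a function of $x$ is simplest done through the amplitude $\tilde g^{-1}(x)\,\overline{p_k(y,\xi)}\,\tilde g(y)$ and Theorem \ref{symbol_ampl}. This yields directly
\[
p_k^*\sim\sum_\alpha\frac1{\alpha!}\partial_\xi^\alpha D_y^\alpha\big(\tilde g(x)^{-1}\tilde g(y)\overline{p_k(y,\xi)}\big)\big|_{y=x}=\sum_\alpha\frac1{\alpha!}\tilde g(x)^{-1}\partial_\xi^\alpha D_x^\alpha(\tilde g\,\overline{p_k}),
\]
which is exactly the claimed expansion. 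Proposition \ref{prop_p_pj} then realises this formal sum as a genuine symbol in $\SG^{m,\mu}_{[\A,\B]}$, with a $[\M]$-regularizing remainder by Proposition \ref{exp_0_reg}.

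Finally, I need to check that the result is a pseudodifferential operator in the sense of the definition, i.e.\ that its cut-off pieces have the required form. The cut-offs $(\Phi_k)_*$ and $(\Theta_k)_*$ are in $\SG^{0,0}_{[\A,\B]}$ by Theorem \ref{thm_part} and Corollary \ref{coro_Theta}, so composing with them stays in the calculus, again by Theorem \ref{asympt_exp_comp}. Independence of the representation follows from the remark after the definition together with Theorem \ref{coord_inv}.

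I expect the main obstacle to be the uniform ultradifferential SG-estimates for $\tilde g^{\pm1}$ on unbounded charts. One needs $|\partial^\beta\tilde g(x)|\le Ch^{|\beta|}B_{|\beta|}\langle x\rangle^{-|\beta|}$ (up to a fixed power weight). For $\tilde g^{-1}$ I would obtain this through Fa\`a di Bruno, using Lemmas \ref{lemma_sum} and \ref{lemma_prod_M}, once $\tilde g$ is bounded below. Without such a hypothesis, the amplitude $\tilde g(x)^{-1}\tilde g(y)\overline{p_k(y,\xi)}$ need not lie in any $\AG$ class.
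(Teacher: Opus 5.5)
Your proposal takes essentially the same route as the paper, whose proof simply defers to Kumano-go's Theorem 7.9 combined with Proposition~\ref{asympt_exp_adj}: in each chart the $\mathrm{d}V$-adjoint is $\tilde g^{(k)}(x)^{-1}\Op(p_k)^*\tilde g^{(k)}$, and the amplitude $\tilde g^{(k)}(x)^{-1}\tilde g^{(k)}(y)\overline{p_k(y,\xi)}$ yields the stated expansion. You require $\tilde g^{(k)}$ and $1/\tilde g^{(k)}$ to satisfy $\SG^{0,0}_{[\A,\B]}$-type bounds in the exit charts, as they do for the scattering metric; the paper's argument tacitly needs this too, since otherwise already the $|\alpha|=1$ terms do not lower the exit order.
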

\begin{proof}
	The proof follows the same lines of the one of \cite[Theorem 7.9]{Kumano-go}, employing Proposition \ref{asympt_exp_adj}.
\end{proof}

By the invariance properties and the fact that the ellipticity of a local symbol on $\R^n$ is stable under perturbations by symbols of strictly lower order
(in both components), the next Definition \ref{def:ellX} makes sense.

\begin{definition}\label{def:ellX}
	Let $p\in {\SG}_{[\A,\B]}^{m,\mu}(X)$. We say that $p$ is elliptic if, in local coordinates, it is elliptic in the sense of Definition \ref{def_elliptic}. 
	Similarly, we say that $p$ is hypoelliptic if, in every local chart, $p$ satisfies \eqref{hypo_cond1} and \eqref{hypo_cond2} for the same constants $\mu',m'\in\R$.
\end{definition}

We can now prove the global version on the SGA-manifold $X$ of the elliptic regularity for ultradifferential SG-operators.

\begin{theorem}
	If $p\in {\SG}_{[\A,\B]}^{m,\mu}(X)$ is a hypoelliptic symbol, then
	\begin{equation}\label{GH_X}
		u\in\mathcal{S}_{[\M]}'(X),\ \Op(p)u\in \mathcal{S}_{[\M]}(X)\ \Rightarrow\ u\in \mathcal{S}_{[\M]}(X).
	\end{equation}
\end{theorem}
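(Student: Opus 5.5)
The plan is to build a global parametrix on $X$ by gluing the local parametrices of Theorem \ref{thm_para} with the partition of unity of Theorem \ref{thm_part}. Then hypoellipticity follows from the identity $Q\,\Op(p)=I+R$ with $R$ $[\M]$-regularizing on $X$.

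\emph{Step 1: local parametrices.} For each $k=1,\dots,N$, the local symbol $p_k$ satisfies \eqref{hypo_cond1} and \eqref{hypo_cond2} with the same $m',\mu'$. Using Theorem \ref{thm_para}, I pick $q_k\in\SG_{[\A,\B]}^{-m',-\mu'}(\R^n)$ with $\Op(q_k)\Op(p_k)=I+R_k$, where $R_k$ is $[\M]$-regularizing on $\R^n$. Strictly, $p_k$ is only defined over $U_k^\sharp$. So I first multiply it by a cut-off in $\SG^{0,0}_{[\A,\B]}$ that equals $1$ near $\mathrm{supp}(\Theta_k)_*$. Then, off a neighbourhood of that support, I add $\langle x\rangle^{m}\langle\xi\rangle^{\mu}$ times the complementary cut-off. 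The extended symbol is hypoelliptic on all of $\R^n$. I would not expect this to be delicate, since both summands satisfy \eqref{hypo_cond2}, but some care is needed where they overlap. There I would instead take a convex-type combination with a positive elliptic symbol only if the phases are compatible; otherwise I would just localize the argument.

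\emph{Step 2: gluing.} I define $Q=\sum_k \Phi_k\,\Op(q_k)\,\Theta_k$, transported to $X$ via the charts. This is understood in the sense of the definition of pseudodifferential operators on $X$, and it is well defined by Theorem \ref{coord_inv} together with Corollary \ref{coro_Theta}. Next I compute
\[
Q\,\Op(p)=\sum_k \Phi_k\Op(q_k)\Theta_k\Op(p).
\]
I write $\Theta_k\Op(p)=\Theta_k\Op(p_k)\tilde\Theta_k+\Theta_k\Op(p)(1-\tilde\Theta_k)$, with $\tilde\Theta_k\equiv1$ near $\mathrm{supp}\,\Theta_k$, again chosen as in Corollary \ref{coro_Theta} via a further good shrinking. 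The second term is $[\M]$-regularizing by Proposition \ref{disj_supp}; this uses the disjoint supports, which is guaranteed by the $\delta\langle x\rangle$-separation from the good shrinking. For the first term, I commute $\Theta_k$ through $\Op(q_k)$ using the composition Theorem \ref{asympt_exp_comp}. Since $\Phi_k\Theta_k=\Phi_k$ and the commutator terms involve derivatives of $\Theta_k$, which vanish near $\mathrm{supp}\,\Phi_k$, Proposition \ref{disj_supp} gives
\[
\Phi_k\Op(q_k)\Theta_k\Op(p_k)\tilde\Theta_k=\Phi_k(I+R_k)\tilde\Theta_k+\text{regularizing}.
\]
This reduces to $\Phi_k+$regularizing. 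Summing over $k$ gives $Q\,\Op(p)=I+R$, where $R$ is a finite sum of operators whose local kernels lie in $\caS_{[\M]}(\R^{2n})$. Such operators map $\caS'_{[\M]}(X)$ to $\caS_{[\M]}(X)$ by the local definition of the Gelfand--Shilov spaces on $X$ and the duality.

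\emph{Step 3: conclusion.} Let $u\in\caS'_{[\M]}(X)$ with $\Op(p)u\in\caS_{[\M]}(X)$. Then $u=Q\Op(p)u-Ru$. Here $Q$ maps $\caS_{[\M]}(X)$ into itself by Theorem \ref{cont_SG} applied chartwise together with the multiplier property of the $\SG^{0,0}$ cut-offs. Also $Ru\in\caS_{[\M]}(X)$, so $u\in\caS_{[\M]}(X)$.

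The main obstacle is Step 2. There one must check that the off-diagonal remainders, which live in overlapping unbounded exit charts, are genuinely $[\M]$-regularizing. This requires that the cut-offs satisfy the uniform $h^{|\alpha|}M_{|\alpha|}$ bounds needed in Proposition \ref{disj_supp}, and that their supports are separated uniformly at infinity. My first attempt would derive both facts from Lemma \ref{lem:anpartun}(d) and from the conic separation $B(y,\delta\langle y\rangle)\subset U_k^\sharp$.
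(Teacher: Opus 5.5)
Your proposal takes essentially the same route as the paper, whose proof is only a two-line sketch: local parametrices from Theorem~\ref{thm_para} are glued with $\{\Phi_k\}$ and $\{\Theta_k\}$ from Theorem~\ref{thm_part} and Corollary~\ref{coro_Theta} into a global left parametrix $Q$ with $Q\,\Op(p)=I+R$, whence $u=Q\,\Op(p)u-Ru\in\mathcal{S}_{[\M]}(X)$. Your extra details hold up if, in Step~1, you take the ``localize the argument'' option: the additive extension $\chi p_k+(1-\chi)\langle x\rangle^m\langle\xi\rangle^\mu$ can cancel on the overlap, whereas starting from $q_0=\chi\,p_k^{-1}$ away from $Q_B$ and applying the resulting parametrix only to functions supported where $\chi\equiv 1$ works. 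You are also right that Proposition~\ref{disj_supp} has to be applied with supports separated by $\delta\langle x\rangle$ rather than merely by a positive distance.
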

\begin{proof}
	By Theorem \ref{thm_para}, on each local chart we can obtain a local parametrix, from which we can obtain a global parametrix by using the partition of unity 
	and the cutoff functions that we constructed in Theorem \ref{thm_part} and Corollary \ref{coro_Theta}, respectively. This implies \eqref{GH_X}.
\end{proof}

\section{Ultradifferential SG-classical operators on analytic manifolds with ends}\label{sec:SGABmwe}
In this concluding section we shortly discuss how, on the subclass of admissible analytic manifolds introduced in Section \ref{subs:mwe},
even the calculus of ultradifferential SG-classical operators, having local symbols in the classical subclasses described in Section \ref{sec:sgucl}, 
can be invariantly defined.  

This follows by arguments similar to those employed to prove the analogous result in the standard SG-classical case on the manifolds
with ends studied in \cite{MP2002} by employing \textit{admissible charts} of the form $\varphi_j$ described in Section \ref{subs:mwe}.
Indeed, it is not restrictive to assume that the charts $\varphi_j$ take value in $\mathbb{S}^{n-1}$,
so that the change of coordinates on the end $E\simeq[1,+\infty)\times\partial X_0$ are actually of the polar coordinates form 
\[
	F\colon\rho\omega\equiv(\rho,\rho\omega)\mapsto(\rho,\rho\phi(\omega))\equiv\rho\phi(\omega), \quad\rho\in(1,+\infty), \; \omega,\phi(\omega)\in\mathbb{S}^{n-1},
\]
like those employed in \cite{MP2002}. This can be achieved, for instance, by covering $\partial X_0$ with (a finite number of) counterimages, via the
homeomorphisms $\varphi_j^{-1}$, of open balls in $\varphi_j(\Omega_j)\subseteq\mathbb{R}^{n-1}$ with suitably small radius, and then mapping such balls 
to the interior of the upper semisphere $\mathbb{S}^{n-1}_+$ by stereographic projection. This preserves the analiticity of the change of coordinates 
$\phi\colon\mathbb{S}^{n-1}\to\mathbb{S}^{n-1}$.

Now, consider an operator $A=\Op(a)$ with $a\in{\SG}_{[\A,\B],\mathrm{cl}}^{m,\mu}(\R^n)$ supported in the image of an exit chart of $E$. If we consider
the pull-back $B=F^*A$ by the change of coordinates introduced above, we find $B=\Op(b)$ with $b\in{\SG}_{[\A,\B],\mathrm{cl}}^{m,\mu}(\R^n)$.
Indeed, that $B$ is still a SG-ultradifferential operator with the same order components follows by the invariance results in Section \ref{sec_coord_inv}.
That $b$ is also SG-classical follows by iterating the analogous argument in \cite{MP2002}. Indeed, the mentioned result implies that $b$ admits
a homogeneous principal triple $(b_\psi, b_e, b_{\psi e})$, obtained by $(a_\psi, a_e, a_{\psi e})$ via the same transformation laws proved there.
Since 
\[
	a_{m-1,\mu-1}=a-(\chi_\xi a_\psi+\phi_x a_e - \chi_\xi\phi_x a_{\psi e}) \in {\SG}_{[\A,\B],\mathrm{cl}}^{m-1,\mu-1}(\R^n),
\]
we can apply the same result to $a_{m-1,\mu-1}$, obtain a corresponding principal homogeneous triple of orders $m-1,\mu-1$, 
and iterate the procedure an arbitrary number of times. 
The two sequences of homogeneous components with respect to $\xi$ and to $x$ thusly obtained provide the sequences needed in Definition 
\ref{def:classical_symbols}, proving that $b$ is also SG-classical, as claimed.

Moreover, the transformation laws of $a_\psi, a_e, a_{\psi e}$, given in \cite{MP2002}, show that these objects have an invariant meaning, as 
local representations of functions on suitable vector bundles. We collect the consequences of the above results in the next Theorem \ref{thm:sguclmwe},
whose remaining proof details we leave for the reader.

\begin{theorem}\label{thm:sguclmwe}
Let $\A=\Aj,\B=\Bj,\M=\Mj$ be three non-quasianalytic weight sequences such that $\A,\B\preceq\M$, and let a be a linear operator 
$A\colon\mathcal{S}_{[\M]}(X)\to\mathcal{S}_{[\M]}(X)$ on the manifold with ends $X$, equipped with admissible charts on the ends
$E_1,\dots,E_m$. $A$ is a SG-classical operator on $X$, with orders $m,\mu\in\R$, and we write $A\in\Psi_{\A,\B}^{m,\mu}(X)$, if
\begin{enumerate}
	\item[(a)] it is a classical pseudodifferential operator of order $\mu$ in $X_0\cup E_0$, where $E_0\subset E$ is an embedded
	submanifold analytically diffeomorphic to $[1,2)\times\partial X_0$;
	\item[(b)] it is a pseudodifferential operator defined by local symbols in ${\SG}_{[\A,\B],\mathrm{cl}}^{m,\mu}(\R^n)$ in all local charts of $E_1,\dots,E_m$.  
\end{enumerate}

By the local calculus properties, it follows
\[
	\Psi_{\A,\B}^{m,\mu}(X)\circ\Psi_{\A,\B}^{s,\sigma}(X) \subseteq \Psi_{\A,\B}^{m+s,\mu+\sigma}(X), \quad m,s,\mu,\sigma\in\R.
\]

The local principal homogeneous symbol then associated with $A\in\Psi_{\A,\B}^{m,\mu}(X)$, namely 
\[
	\sigma(A)=(\sigma_\psi(A),\sigma_e(A),\sigma_{\psi e}(A))=(a_\psi,a_e,a_{\psi e}),
\]
are local representations of smooth functions on suitable subbundles of $T^*X$, which we denote by the same symbols and names, defined, 
respectively\footnote{We recall that the notation $T^*_YM$, for $Y\subset M$ submanifold of the manifold $M$, denotes the restriction of the
cotangent bundle $T^*M$ to $Y$, that is, $T^*_YM=\{(y,\eta)\colon y\in Y,\ \eta\in T^*_y M\}$, while $T^*M\setminus0$ denotes the cotangent bundle
without its $0$-section.}:
\begin{enumerate}
	\item[(c)] on $T^*X\setminus0$, providing the ``standard'', \textit{interior homogeneous principal symbol} of $A$;
	\item[(d)] on $T^*_{\partial X_0}X$ and $T^*_{\partial X_0}X\setminus0$, collectively providing the so-called \textit{exit principal symbol} of $A$.  
\end{enumerate}

The principal symbol triple behaves multiplicatively, component by component, under composition, that is, for any
$A\in\Psi_{\A,\B}^{m,\mu}(X)$, $B\in\Psi_{\A,\B}^{s,\sigma}(X)$,
\[
	\sigma(A\circ B)=\sigma(A)\cdot\sigma(B)=(\sigma_\psi(A)\cdot\sigma_\psi(B),\sigma_e(A)\cdot\sigma_e(B),\sigma_{\psi e}(A)\cdot\sigma_{\psi e}(B)).
\]

An operator $A\in\Psi_{\A,\B}^{m,\mu}(X)$ is elliptic if and only if the three components of $\sigma(A)$ are everywhere non-vanishing
on the respective domains. In such case, $A$ admits a parametrix $P\in\Psi_{\A,\B}^{-m,-\mu}(X)$, that is
\begin{equation*}\label{eq:paramX}
	AP=I+R_1,\quad PA=I+R_2,\quad R_1,R_2\in\Psi_{\A,\B}^{-\infty,-\infty}(X)=\bigcap_{m,\mu\in\R}\Psi_{\A,\B}^{m,\mu}(X),
\end{equation*}
and
\[
	\sigma(P)=\sigma(A)^{-1}=(\sigma_\psi(A)^{-1},\sigma_e(A)^{-1},\sigma_{\psi e}(A)^{-1})
\]

Assuming $\A=\B=\M$, for any $A\in\Psi_{\M}^{m,\mu}(X)$ we have well-defined characteristic sets 
\[
	\mathrm{char}(A)=(\mathrm{char}_\psi(A), \mathrm{char}_e(A), \mathrm{char}_{\psi e}(A)),
\]
that is, the subsets of $T^*X\setminus0$, $T^*_{\partial X_0}X$ and $T^*_{\partial X_0}X\setminus0$ where,
respectively, the three corresponding components of $\sigma(A)$ vanish. Equivalently to the above definition in this
case, $A\in\Psi_{\M}^{m,\mu}(X)$ is elliptic if and only if $\mathrm{char}(A)=(\emptyset,\emptyset,\emptyset)$.
\end{theorem}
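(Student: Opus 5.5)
The plan is to reduce every assertion of Theorem \ref{thm:sguclmwe} to the local calculus on $\R^n$ by means of the SGA-compatible atlas of Section \ref{subs:mwe}, the partition of unity $\{\Phi_k\}$ of Theorem \ref{thm_part}, and the cut-offs $\{\Theta_k\}$ of Corollary \ref{coro_Theta}.

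\textbf{Step 1: well-definedness of $\Psi_{\A,\B}^{m,\mu}(X)$.} On overlaps of exit charts, the changes of coordinates have the polar form $F(\rho\omega)=\rho\phi(\omega)$ with $\phi$ analytic on $\mathbb{S}^{n-1}$. Such an $F$ satisfies $(I1)$ and $(I2)$. Theorem \ref{coord_inv} then gives that the pull-back of an operator with symbol in ${\SG}_{[\A,\B]}^{m,\mu}$ is again of this type, modulo $[\M]$-regularizing terms. Classicality is preserved by the iteration argument already sketched in the text. One transforms the principal triple via the laws of \cite{MP2002}, subtracts $\chi_\xi a_\psi+\phi_x a_e-\chi_\xi\phi_x a_{\psi e}$, and repeats at orders $(m-1,\mu-1)$. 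The resulting homogeneous sequences are then asymptotically summed with Proposition \ref{prop_p_pj} and its $x$/$\xi$ variants. On the overlaps of exit charts with the bounded charts of $X_0\cup E_0$, the domains are relatively compact. There the SG weights are equivalent to constants, so classical interior symbols and SG-classical symbols coincide up to regularizing terms.

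\textbf{Step 2: composition and multiplicativity.} I would write $A=\sum_k\Theta_kA\Phi_k+\sum_k(1-\Theta_k)A\Phi_k$, and similarly for $B$. Products of the regularizing pieces with any element of the class remain $[\M]$-regularizing, by Theorem \ref{thm_ST} together with the kernel characterization. The remaining terms localize, via Proposition \ref{disj_supp}, to single charts, where Theorem \ref{asympt_exp_comp} applies. The leading term $p\,q$ of the expansion gives $\sigma_\psi(AB)=\sigma_\psi(A)\sigma_\psi(B)$, and the same holds for the $e$ and $\psi e$ components, since all other terms have order lowered in both components. Invariance of the triple as functions on $T^*X\setminus0$, $T^*_{\partial X_0}X$ and $T^*_{\partial X_0}X\setminus0$ follows from the transformation laws in \cite{MP2002}. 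Those laws are purely algebraic in the leading homogeneous parts, so analyticity plays no role at that level.

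\textbf{Step 3: ellipticity and the parametrix.} Proposition \ref{prop:sgclell} shows that non-vanishing of the triple in each chart is equivalent to local ellipticity in the sense of Definition \ref{def_elliptic}. Theorem \ref{thm_para} then gives local parametrices $Q_k$, which I would glue as $P=\sum_k\Theta_kQ_k\Phi_k$. Checking $AP-I$ and $PA-I$ modulo $\Psi^{-\infty,-\infty}$ uses Step 2 together with $\sum\Phi_k=1$ and $\Theta_k\Phi_k=\Phi_k$.

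The \textbf{main obstacle} is showing that the local parametrices are \emph{classical}. Theorem \ref{thm_para} only yields a symbol in ${\SG}_{[\A,\B]}^{-m,-\mu}$. I would handle this by building the parametrix symbol directly as an SG-classical formal sum. Take $q_0$ with principal triple $\sigma(A)^{-1}$, noting that inverses of non-vanishing homogeneous functions remain homogeneous and satisfy the ultradifferential estimates by Lemma \ref{lemma_1.3.5}. Then solve recursively for homogeneous corrections using the classical composition formula, and sum asymptotically. The delicate point is controlling the constants $h^{2j}A_jB_j$ uniformly in $j$ along this recursion, so that the formal sum actually lies in $\FS_{[\A,\B],\xi}$ and $\FS_{[\A,\B],x}$. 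For this I would follow the factorial-counting argument of Theorem \ref{thm_para}, using $(M1)$ through Lemma \ref{lemma_prod_M}. The formula $\sigma(P)=\sigma(A)^{-1}$ then follows from Step 2. Finally, the statements about $\mathrm{char}(A)$ when $\A=\B=\M$ are immediate from the invariance of the triple established in Step 2.
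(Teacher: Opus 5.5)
Your proposal is correct at the paper's level of detail and follows essentially the same route. You reduce to the local calculus via the polar-form exit charts and Theorem \ref{coord_inv}, obtain classicality by iterating the principal-triple transformation argument of \cite{MP2002} on $a_{m-1,\mu-1}$, and read off the invariance of $\sigma(A)$ from those same transformation laws. Composition, gluing of parametrices and the characteristic sets are exactly the details the paper leaves to the reader.

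One point to correct: the uniform-in-$j$ control you flag for the classical parametrix is needed just as much for the classicality of compositions, since Theorem \ref{asympt_exp_comp} by itself only yields a symbol in ${\SG}_{[\A,\B]}^{m+s,\mu+\sigma}$. It is also needed for the iteration in your Step 1, where you currently pass over it, and the paper leaves it implicit in all these places.
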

\begin{remark}\ 
	\begin{enumerate}
		\item[  i)] 	Considering extensions of the operators $A\in\Psi_{\M}^{m,\mu}(X)$, $m,\mu\in\R$, to the dual of $\mathcal{S}_{[\M]}(X)$,
				also the notion of (global) wave-front sets and the corresponding results from Section \ref{sec:globwfs} extend to the case of manifolds
				with ends. 
		\item[ ii)] 	Completely similar conclusions hold on an analytic scattering manifold (see \cite{Melrose_GST}),
				allowing to define the operator spaces $\Psi^{m,\mu}_{[\A,\B]}(X)$ there as well (again locally given by operators
				with symbol in ${\SG}_{[\A,\B],\mathrm{cl}}^{m,\mu}(\R^n)$), the principal symbol maps, characteristic sets, and (global) wave-front sets.
		\item[iii)]	Ultradifferential $\SG$-classical operators operators acting on suitable sections of analytic vector bundles on analytic manifolds with ends
				have analogous properties, with straightforward modifications of the statements in Theorem \ref{thm:sguclmwe}.
	\end{enumerate}
\end{remark}

\bibliographystyle{plain}
\bibliography{references}	
	
\end{document}